\theoremstyle{plain}
\newtheorem{theorem}{Theorem}[section]
\newtheorem{lemma}{Lemma}[section]
\newtheorem{prop}{Proposition}[section]
\newtheorem{example}{Example}[section]
\theoremstyle{remark}
\newtheorem{remark}[theorem]{Remark}
\def\E{\operatorname{E}}
\def\F{\operatorname{F}}
\def\G{\operatorname{G}}
\def\I{\operatorname{I}}
\def\U{\operatorname{U}}
\def\SL{\operatorname{SL}}
\def\GL{\operatorname{GL}}
\def\SO{\operatorname{SO}}
\def\Sp{\operatorname{Sp}}
\def\SU{\operatorname{SU}}
\def\GSp{\operatorname{GSp}}
\def\PSU{\operatorname{PSU}}
\def\Ad{\operatorname{Ad}}
\def\Aut{\operatorname{Aut}}
\def\diag{\operatorname{diag}}
\def\det{\operatorname{det}}
\def\dim{\operatorname{dim}}
\def\exp{\operatorname{exp}}
\def\Hom{\operatorname{Hom}}
\def\Im{\operatorname{Im}}
\def\Int{\operatorname{Int}}
\def\Lie{\operatorname{Lie}}
\def\Pin{\operatorname{Pin}}
\def\rank{\operatorname{rank}}
\def\span{\operatorname{span}}
\def\Spin{\operatorname{Spin}}
\def\Stab{\operatorname{Stab}}
\def\tr{\operatorname{tr}}
\newcommand{\fra}{\mathfrak{a}}
\newcommand{\fre}{\mathfrak{e}}
\newcommand{\frf}{\mathfrak{f}}
\newcommand{\frg}{\mathfrak{g}}
\newcommand{\frh}{\mathfrak{h}}
\newcommand{\frk}{\mathfrak{k}}
\newcommand{\frl}{\mathfrak{l}}
\newcommand{\fru}{\mathfrak{u}}
\newcommand{\bbC}{\mathbb{C}}
\newcommand{\bbF}{\mathbb{F}}
\newcommand{\bbR}{\mathbb{R}}
\newcommand{\bbZ}{\mathbb{Z}}
\begin{document}

\title[Simple Lie groups of type $\bf E$]{Maximal abelian subgroups of compact simple Lie groups of
type $\bf E$}
\author{Jun Yu}
\date{}

\abstract{We classify closed abelian subgroups of a compact simple Lie group of adjoint type and of type
$\bf E_6$, $\bf E_7$ or $\bf E_8$ having centralizer of the same dimension as the dimension of the subgroup
and describe Weyl groups of maximal abelian subgroups.}
\endabstract

\maketitle

\noindent {\bf Mathematics Subject Classification (2010).} 20E07, 20E45, 20K27.

\noindent {\bf Keywords.} Abelian subgroup, centralizer, Levi subgroup.

\tableofcontents

\section{Introduction}

In this paper we continue the study of maximal abelian subgroups of compact simple Lie groups in \cite{Yu2}
and \cite{Yu3}, where we give a classification of such subgroups of matrix groups, simple Lie groups of
type $\bf G_2$, $\bf F_4$ and $\bf D_4$, and finite maximal abelian subgroups of some spin and half-spin
groups. In this paper we discuss such subgroups of simple Lie groups of type $\bf E_6$, $\bf E_7$ and
$\bf E_8$. The same as in \cite{Yu2} and \cite{Yu3}, we study closed abelian subgroups $F$ of a compact
simple Lie group $G$ satisfying the condition of \[\dim\frg_0^{F}=\dim F,\tag{$\ast$}\] where $\frg_0=
\Lie G$ is the Lie algebra of $G$. Precisely, we discuss closed abelian subgroups of $G:=\Aut(\fru_0)$,
the automorphism group of a compact simple Lie algebra $\fru_0$ of type $\bf E_6$, $\bf E_7$ or $\bf E_8$.
In the $\bf E_6$ case, abelian subgroups contained in $G_0=\Int(\fre_6)$ and those being not contained in
$G_0$ are discussed separately and we employ slightly different methods to study them.

Given a closed abelian subgroup $F$ of $G=\Int(\fre_6)$, $\Aut(\fre_7)$, or $\Aut(\fre_8)$ satisfying the
condition $(\ast)$, by Lemma \ref{L:center}, we reduce this classification to the classification of finite
abelian subgroups of the derived groups of some Levi subgroups of $G$. By this in each case we can use
results in \cite{Yu2}, \cite{Yu3} and previous sections of this paper. The more difficult part is the
classification of finite abelian subgroups $F$ of $G$ satisfying the condition $(\ast)$. For this we recall
that a theorem of Steinberg indicates that the centralizer $G^{x}$ is connected if the order of an element
$x$ of $F$ is prime to the order of the center of the universal covering of $G$ and hence $G^{x}$ is semisimple
in this case. There are few semisimple subgroups of $G$ which are centralizers of elements and most of them
possess simple structure. The even more difficult is for finite 3-subgroups of $\Int(\fre_6)$ and finite
2-subgroups of $\Aut(\fre_7)$ or $\Aut(\fre_8)$. We show that there are actually two conjugacy classes of
finite 3-subgroups of $\Int(\fre_6)$ satisfying the condition $(\ast)$. The study of finite 2-subgroups
$F$ of $G=\Aut(\fre_7)$ or $\Aut(\fre_8)$ is more involved. After showing the exponent of $F$ is at most $4$,
we appeal to \cite{Huang-Yu} and \cite{Yu} for detailed knowledge of symmetric subgroups and elementary
abelian 2-subgroups of $G$. The classification reduces to the $\bf \E_6$ or matrix groups while $F$ contains
specific involutions or Klein four subgroups. We are able to show that there are few conjugacy classes of $F$
in the exceptional cases of $F$ containing no such involutions or Klein four subgroups. For closed abelian
subgroups of $G=\Aut(\fre_6)$ being not contained in $G_0$ and satisfying the condition $(\ast)$, by Lemma
\ref{L:center-outer}, we reduce the classification to the classification of finite abelian subgroups of
derived groups of some (disconnected) Levi subgroups. While this disconnected Levi subgroup is a proper
subgroup, its derived subgroup is related to the twisted unitray group or Spin group, in which case we could
apply results from \cite{Yu2} and \cite{Yu3}. For finite abelian subgroups, we show that there is a unique
conjugacy class of $F$ if it contains no outer involutions. While it contains an outer involution, the
question reduces to the group $\F_4$ or $\Sp(4)/\langle-I\rangle$, which are studied in \cite{Yu3} and
\cite{Yu2}.

In the process of our classification, the fusion of each abelian subgroup $F$ obtained is well understood. By
this we are able to describe its Weyl group $W(F):=N_{G}(F/C_{G}(F)$. We also discuss an example in the
$\bf E_8$ case where the small Weyl group of the associated fine group grading is a proper subgroup of the
Weyl group of the maximal abelian subgroup, which disproves a conjecture in \cite{Han-Vogan}. Our classification
in the $\bf E_6$ case gives a classification of fine group gradings of the complex simple Lie algebra
$\mathfrak{e}_{6}(\bbC)$ as a byproduct. After posing a manuscript containing this classification to
arXiv in 2012, the author was notified that a classification of fine group gradings of $\mathfrak{e}_{6}(\bbC)$
was independently obtained in \cite{Draper-Virtuel} in close time.







\section{Connected simple Lie group of type $\bf E_6$}

We follow the notation in \cite{Yu}, Subsection 3.1. Given a complex simple Lie algebra $\frg$, with a
Cartan subalgebra $\frh$, Killing form denoted by $B$, root system $\Delta=\Delta(\frg,\frh)$. For each
vector $\lambda\in\frh^{\ast}$, let $H_{\lambda}\in\frh$ be defined by \[B(H_{\lambda},H)=
\lambda(H),\ \forall H\in\frh,\] in particular we have a vector $H_{\alpha}\in\frh$ for each root $\alpha$.
Let \begin{equation} H'_{\alpha}=\frac{2}{\alpha(H_{\alpha})}H_{\alpha},\label{eq:coroot}\end{equation}
which is called a co-root vector. Choose a simple system $\Pi=\{\alpha_1,\alpha_2,\dots,\alpha_{r}\}$ of
the root system $\Delta$, with the Bourbaki numbering (\cite{Bourbaki}, Pages 265-300). For simplicity we
write $H_{i}$, $H'_{i}$ for $H_{\alpha_{i}}$, $H'_{\alpha_{i}}$, $1\leq i\leq r$. For a root $\alpha$,
denote by $X_{\alpha}$ a non-zero vector in the root space $\frg_{\alpha}$. One can normalize the root
vectors $\{X_{\alpha}\}$ appropriately so that the real subspace \begin{equation}\fru_0=
\span_{\bbR}\{X_{\alpha}-X_{-\alpha},i(X_{\alpha}+X_{-\alpha}),i H_{\alpha}:\alpha \in \Delta^{+}\}
\label{eq:compact real form}\end{equation} is a compact real form of $\frg$ (\cite{Knapp}, Pages 348-354).

Given a compact Lie group $G$, denote by $\frg_0$ the Lie algebra of $G$ and $\frg=\frg_0\otimes_{\bbR}\bbC$
the complexified Lie algebra of $G$. Denote by $\fre_6$ a compact simple Lie algebra of type $\bf E_6$ and by
$\E_6$ a connected and simply connected Lie group with Lie algebra $\fre_6$. Let $\fre_6(\bbC)$ and $\E_6(\bbC)$
denote their complexifications. Similar notations are used for other types. Given a compact semisimple Lie
algebra $\fru_0$, let $\Aut(\fru_0)$ be the group of automorphisms of $\fru_0$ and $\Int(\fru_0)=
\Aut(\fru_0)_0$ be the group of inner automorphisms.

\smallskip



The following proposition is a theorem of Steinberg (cf. \cite{Onishchik-Vinberg}, Pages 214-216) and
a well known result (cf. \cite{Knapp}, Page 260).

\begin{prop}\label{P:Steinberg-centralizer}
Given a connected and simply connected compact Lie group $G$ and an automorphism $\phi$ of $G$, the fixed point
subgroup $G^{\phi}$ is connected. Given a torus $S$ of a connected compact Lie group $G$, the centralizer
$C_{G}(S)$ is connected.
\end{prop}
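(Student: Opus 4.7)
The plan is to treat the two statements separately, in order of increasing difficulty: the torus-centralizer assertion is classical and short, while Steinberg's theorem is substantially deeper and uses the simple-connectedness hypothesis essentially.

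\smallskip

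\emph{Centralizer of a torus.} I would extend $S$ to a maximal torus $T$ of $G$ and pass to the complexification $G_{\bbC}$ of $G$, a connected complex reductive group with maximal torus $T_{\bbC}\supseteq S_{\bbC}$. The root space decomposition $\frg_{\bbC}=\frt_{\bbC}\oplus\bigoplus_{\alpha}\frg_{\alpha}$ relative to $T_{\bbC}$ exhibits $Z_{G_{\bbC}}(S_{\bbC})$ as the standard Levi subgroup with Lie algebra $\frt_{\bbC}\oplus\bigoplus_{\alpha|_{S}=1}\frg_{\alpha}$, and this Levi subgroup is connected. The centralizer $C_{G}(S)$ is $Z_{G_{\bbC}}(S_{\bbC})\cap G$, which is a maximal compact subgroup of a connected complex reductive group; such a maximal compact is connected by the polar (equivalently, Iwasawa) decomposition, giving the result.

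\smallskip

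\emph{Steinberg's theorem.} Reduce to showing that every $g\in G^{\phi}$ lies in $(G^{\phi})^{0}$. Because $g\in G^{\phi}$, the automorphism $\phi$ stabilizes $C_{G}(g)^{0}$, and by the theorem of Borel--Mostow (every automorphism of a compact connected Lie group admits a stable maximal torus) we obtain a $\phi$-stable maximal torus $T\subseteq C_{G}(g)^{0}$; since $C_{G}(g)^{0}$ contains a maximal torus of $G$, this $T$ is in fact a maximal torus of $G$. By the centralizer assertion proved above, $C_{G}(T)=T$, and since $g$ commutes with $T$ we deduce $g\in T^{\phi}$. The crucial remaining step is to show $T^{\phi}\subseteq(G^{\phi})^{0}$; the component group $\pi_{0}(T^{\phi})$ is identified with $H^{1}(\langle\phi\rangle,X_{\ast}(T))$, which need not vanish, so one must instead connect each component of $T^{\phi}$ to the identity by paths in $G^{\phi}$ that leave $T$. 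These paths are constructed by traversing $\phi$-invariant $\SU(2)$-subgroups built from $\phi$-orbits of coroots, and the argument succeeds because simple-connectedness of $G$ guarantees that $X_{\ast}(T)$ equals the full coroot lattice, so enough coroots are available.

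\smallskip

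The main obstacle is this final step of Steinberg's theorem: the fact that $T^{\phi}$ can have multiple components while $G^{\phi}$ remains connected. Controlling how the extra room in $G^{\phi}$ beyond $T^{\phi}$ links up the components of $T^{\phi}$ is precisely where the simple connectedness hypothesis is indispensable, and failure of this condition yields well-known counterexamples for non-simply-connected groups. Steinberg's original algebraic-group argument carries this out via the Bruhat decomposition and parabolic induction; in the compact setting one can either translate that argument or appeal to rank-one reductions along $\phi$-orbits of coroots, but in either case this is the delicate point and is the reason the paper simply cites \cite{Onishchik-Vinberg} rather than reproducing the argument.
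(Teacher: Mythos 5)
There is nothing in the paper to compare your argument against: the paper states this proposition with no proof at all, labelling it a theorem of Steinberg (citing \cite{Onishchik-Vinberg}, pp.\ 214--216) together with a well-known fact (citing \cite{Knapp}, p.\ 260). Judged on its own, your outline has the right structure. For the torus statement your reduction is correct, but observe that it rests entirely on the connectedness of $Z_{G_{\bbC}}(S_{\bbC})$, which is the same theorem transported to the algebraic category (Borel's theorem that centralizers of tori in connected algebraic groups are connected); the self-contained compact-group proof, and the one in \cite{Knapp}, is more elementary: for $g\in C_{G}(S)$ the closure $A$ of the group generated by $S$ and $g$ is a compact abelian Lie group whose component group is cyclic, hence $A$ is topologically generated by a single element $h$; since $h$ lies in some maximal torus $T$ of $G$, one gets $g\in A\subseteq T\subseteq C_{G}(S)_{0}$.

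For Steinberg's theorem, your reduction of an arbitrary $g\in G^{\phi}$ to an element of $T^{\phi}$ for a $\phi$-stable maximal torus $T$ is correct and standard, as is the identification $\pi_{0}(T^{\phi})\cong H^{1}(\langle\phi\rangle,X_{*}(T))$. The genuine gap is the step you yourself flag: the inclusion $T^{\phi}\subseteq(G^{\phi})^{0}$ is the entire content of the theorem, and ``traversing $\phi$-invariant $\SU(2)$-subgroups built from $\phi$-orbits of coroots'' describes what must happen without proving that it can be done for every component of $T^{\phi}$. To close it one must show that each class in $H^{1}(\langle\phi\rangle,X_{*}(T))$ has a representative lying on a $\phi$-stable rank-one subgroup inside $G^{\phi}$ --- equivalently, that the stabilizer of a point of $\frt^{\phi}$ in the relevant (possibly twisted) affine Weyl group is generated by the affine reflections it contains --- and this is exactly where $X_{*}(T)=Q^{\vee}$, i.e.\ simple connectedness, is used; for an intermediate lattice the stabilizer acquires a non-reflection part and the counterexamples you allude to appear. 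Since you defer this step to Steinberg's original argument, your proposal is in the end no more and no less complete than the paper's own treatment, which is a bare citation; if a proof were actually required, this is the step that would still have to be supplied.
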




We recall three lemmas from \cite{Yu3}.

\begin{lemma}\label{L:center}
If $F$ is a closed abelian subgroup of a connected compact Lie group $G$ satisfying the condition $(*)$,
then $\fra=Z(\frl)$ and $F':=F\cap L_{s}$ is a finite abelian subgroup of $L_{s}$ satisfying the condition
$(*)$, where $\fra=\Lie F\otimes_{\bbR}\bbC$, $\frl=C_{\frg}(\fra)$, $L=C_{G}(\fra)$, $L_{s}=[L,L]$.
\end{lemma}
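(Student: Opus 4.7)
The plan is to set $S = F^0$ (the identity component of $F$, a torus), and show step by step that $S = Z(L)^0$, from which the Lie-algebra statement $\fra = Z(\frl)$ follows, and then that $F' := F\cap L_s$ is a finite group satisfying $(\ast)$.

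First I would note that $S \subseteq F$ is a torus with $\fra = \Lie S\otimes_{\bbR}\bbC$. Since $F$ is abelian, $F$ centralizes $S$, so $F \subseteq C_G(S)$; but $C_G(S)$ equals $L = C_G(\fra)$ at the group level (torus = analytic subgroup of its Lie algebra) and is connected by Proposition \ref{P:Steinberg-centralizer}. In particular $S\subseteq Z(L)$, hence $\fra\subseteq Z(\frl)$. For the reverse inclusion, observe that $Z(L)^{0}$ centralizes $F$, so $Z(L)^{0}\subseteq C_G(F)^{0}$; combined with $S\subseteq C_G(F)^{0}$ and the dimension bound $\dim C_G(F)^{0}=\dim \frg_{0}^{F}=\dim F=\dim S$ given by $(\ast)$, we conclude $Z(L)^{0}=S=C_G(F)^{0}$. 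Passing to Lie algebras and complexifying gives $Z(\frl)=\fra$.

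Next I would exploit the almost-direct-product decomposition $L = Z(L)^{0}\cdot L_s = S\cdot L_s$. For any $x\in F$, writing $x=sl$ with $s\in S$, $l\in L_s$, the element $l = s^{-1}x$ lies in $F\cap L_s = F'$; thus $F = S\cdot F'$. Since $S$ acts trivially on $\frl_s$, the adjoint action of $F$ on $\frl_s$ factors through $F'$. Combining this with the fact that every root $\alpha\notin\Delta(L)$ is non-trivial on $\fra$ and hence on $S$, so $\frg^{S}=\frl$, we get
\[
\frg^{F} \;=\; \frl^{F} \;=\; \fra \,\oplus\, \frl_{s}^{F'}.
\]
Taking real dimensions and using $(\ast)$ for $F$ in $G$, namely $\dim\frg_{0}^{F}=\dim F=\dim \fra$, forces $\frl_{s}^{F'}=0$.

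Finally, since $F'$ is a closed abelian subgroup of $L_s$, its identity component is a torus whose Lie algebra lies inside $\frl_{s}^{F'}=0$, so $F'$ is finite; and $(\ast)$ for $F'$ in $L_s$ reduces to $\dim\frl_{s}^{F'}=0=\dim F'$, which has just been verified. The main obstacle is the chain of identifications in step one—checking that the group-theoretic centralizer of the torus $S$ is the same Levi $L$ and that condition $(\ast)$ pins its central torus down to $S$ exactly; once $\fra=Z(\frl)$ is established, the remaining bookkeeping on the $S\cdot L_s$ decomposition is routine.
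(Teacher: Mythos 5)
Your argument is correct and complete: identifying $C_G(F)^{0}$ with the torus $S=F^{0}$ via the dimension count from $(\ast)$, deducing $Z(L)^{0}=S$ and hence $\fra=Z(\frl)$, and then splitting $\frg^{F}=\fra\oplus\frl_{s}^{F'}$ to force $\frl_{s}^{F'}=0$ is exactly the standard route. Note that the paper itself gives no proof here — Lemma \ref{L:center} is merely recalled from \cite{Yu3} — so there is nothing in this text to compare against, but your write-up supplies the expected argument correctly, including the connectedness of $L=C_G(S)$ from Proposition \ref{P:Steinberg-centralizer} and the finiteness of $F'$ from $\Lie F'\subseteq\frl_{s}^{F'}=0$.
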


\begin{lemma}\label{L:Levi}
Let $G$ be a compact connected and simply connected Lie group and $L$ be a closed connected subgroup of $G$.
If $\frl=\Lie L\otimes_{\bbR}\bbC$ is a Levi subalgebra of $\frg$, then $L_{s}$ is simply connected.
\end{lemma}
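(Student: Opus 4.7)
The plan is to compute $\pi_1(L_s)$ directly using the standard description of the fundamental group of a compact connected Lie group as a quotient of lattices, and then reduce the statement to a saturation property of the coroot sublattice inside $Q^{\vee}(G)$ that is forced by the hypothesis that $G$ is simply connected.

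First, I would identify $L$ as the centralizer of a torus. Since $\frl$ is a Levi subalgebra, $\frl = C_{\frg}(Z(\frl))$, so taking $S \subset G$ to be the torus with $\Lie S \otimes_{\bbR} \bbC = Z(\frl)$, Proposition \ref{P:Steinberg-centralizer} gives that $C_G(S)$ is connected with Lie algebra equal to $\Lie L$, hence $L = C_G(S)$. Choose a maximal torus $T$ of $G$ containing $S$, and set $T_s := (T \cap L_s)_0$, a maximal torus of $L_s$. Recall that for any connected compact Lie group $H$ with maximal torus $T_H$ one has $\pi_1(H) \cong X_*(T_H)/Q^{\vee}(H)$, where $X_*(T_H) = \ker \exp_{T_H}$ and $Q^{\vee}(H) \subset \Lie T_H$ is the coroot lattice. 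Applied to $G$, the simply connectedness hypothesis gives $X_*(T) = Q^{\vee}(G)$; applied to $L_s$, it gives $\pi_1(L_s) = X_*(T_s)/Q^{\vee}_L$, where $Q^{\vee}_L = Q^{\vee}(L) = Q^{\vee}(L_s)$. Since $T_s \subseteq T$ and $\Lie T_s$ is the $\bbR$-span of $Q^{\vee}_L$, the cocharacter lattice satisfies
\[
X_*(T_s) = X_*(T) \cap \Lie T_s = Q^{\vee}(G) \cap \span_{\bbR} Q^{\vee}_L.
\]
Thus the lemma reduces to showing that $Q^{\vee}_L$ is a saturated sublattice of $Q^{\vee}(G)$.

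For the saturation, I would use the Weyl group action on $\Lie T$ to conjugate $S$ so that $\Pi_L := \Pi \cap \Delta_L$ is a simple system of $\Delta_L$ with respect to some simple system $\Pi$ of $\Delta(\frg,\frh)$; this rests on the standard fact that every toral subalgebra of $\frh$ can be moved to the closure of a Weyl chamber. Then $\{\alpha^{\vee} : \alpha \in \Pi_L\}$ is a $\bbZ$-basis of $Q^{\vee}_L$ obtained as a subset of the $\bbZ$-basis $\{\alpha^{\vee} : \alpha \in \Pi\}$ of $Q^{\vee}(G) = X_*(T)$. Hence any element of $Q^{\vee}(G)$ lying in $\span_{\bbR} Q^{\vee}_L$ has vanishing coefficients outside $\Pi_L$ and therefore belongs to $Q^{\vee}_L$, giving $\pi_1(L_s) = 0$.

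I expect the main obstacle to be the lattice bookkeeping, in particular the identification $X_*(T_s) = Q^{\vee}(G) \cap \span_{\bbR} Q^{\vee}_L$, which requires comparing cocharacter lattices of the nested tori $T_s \subset T$. Once this is in place, the saturation step is a one-line observation about subsets of bases. An alternative route would use the long exact homotopy sequence of the fibration $L_s \hookrightarrow L \to L/L_s$, with $L/L_s$ a torus, reducing to the same injection $Q^{\vee}_L \hookrightarrow Q^{\vee}(G)$ at the end; I would prefer the direct lattice argument as more self-contained.
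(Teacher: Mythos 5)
Your argument is correct: reducing to $L=C_G(S)$ via Proposition \ref{P:Steinberg-centralizer}, computing $\pi_1(L_s)=X_*(T_s)/Q^{\vee}(L_s)$, and observing that the simple coroots of $L$ form a subset of a $\bbZ$-basis of $X_*(T)=Q^{\vee}(G)$ (so that $Q^{\vee}(L_s)$ is saturated) is the standard proof of this fact. The paper itself gives no proof, recalling the lemma from \cite{Yu3}, so there is nothing to contrast with; your lattice computation is complete and self-contained.
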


\begin{lemma}\label{L:Levi-center}
Let $G$ be a compact connected Lie group and $L$ be a closed connected subgroup of $G$. If $\frl=\Lie L
\otimes_{\bbR}\bbC$ is a Levi subalgebra of $\frg$, then $Z(L_{s})\subset Z(G)Z(L)_0$.
\end{lemma}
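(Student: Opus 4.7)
The plan is to fix a maximal torus $T$ of $L$ (also maximal in $G$), write $\Phi=\Phi(G,T)$ and $\Phi_L=\Phi(L,T)$, and reformulate the desired containment as a root-theoretic condition. Any $z\in Z(L_s)$ lies in $T_s:=T\cap L_s\subset T$ and satisfies $\alpha(z)=1$ for every $\alpha\in\Phi_L$; so finding $t\in Z(L)_0$ with $zt^{-1}\in Z(G)$ is equivalent to arranging $\alpha(t)=\alpha(z)$ for all $\alpha\in\Phi\setminus\Phi_L$ (the condition on $\Phi_L$ holding automatically), i.e., to showing that the tuple $(\alpha(z))_{\alpha\in\Phi\setminus\Phi_L}$ lies in the image of the evaluation homomorphism $\phi\colon Z(L)_0\to\U(1)^{\Phi\setminus\Phi_L}$, $t\mapsto(\alpha(t))_\alpha$.

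The image of $\phi$ is a closed subtorus of $\U(1)^{\Phi\setminus\Phi_L}$, so by Pontryagin duality it is the joint kernel of exactly those characters $\chi$ of $\U(1)^{\Phi\setminus\Phi_L}$ with $\chi\circ\phi=1$ on $Z(L)_0$. Writing such a $\chi$ as $(c_\alpha)\mapsto\prod c_\alpha^{n_\alpha}$ for an integer tuple $(n_\alpha)_\alpha$, this triviality amounts to the character $\mu:=\sum n_\alpha\alpha\in X^*(T)$ being trivial on $Z(L)_0$, equivalently $\mu\in\bbR\Phi_L$ inside $\frt^*$. Any such $\mu$ is automatically in the root lattice $Q:=\bbZ\Phi$, so the whole question reduces to the lattice identity
\[Q\cap\bbR\Phi_L=Q_L:=\bbZ\Phi_L,\]
i.e., to the saturation of $Q_L$ in $Q$.

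This saturation assertion is precisely where the Levi hypothesis is essential: picking a regular element $H\in\frz(\frl_0)$ and using it to order the roots of $\Phi$, one obtains a simple system $\Pi$ of $\Phi$ whose subset $\Pi_L:=\Pi\cap\Phi_L$ is a simple system of $\Phi_L$, and this exhibits $\bbZ\Phi_L=\bbZ\Pi_L$ as a direct summand of $\bbZ\Phi=\bbZ\Pi$. Once the identity is granted, any admissible $\mu$ lies in $Q_L=\bbZ\Phi_L$, and since every $\beta\in\Phi_L$ satisfies $\beta(z)=1$ we get $\mu(z)=1$. Hence $\prod_\alpha\alpha(z)^{n_\alpha}=1$ for all admissible $(n_\alpha)$, placing $(\alpha(z))_\alpha$ inside $\phi(Z(L)_0)$; the resulting $t$ gives $zt^{-1}\in Z(G)$ and so $z\in Z(G)\cdot Z(L)_0$. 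I expect the saturation step to be the main point of the argument; the rest is routine lattice bookkeeping, and the passage does not in fact require Lemma \ref{L:Levi}, because the root lattice of any semisimple $L_s$ automatically annihilates $Z(L_s)$.
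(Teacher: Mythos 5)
The paper does not actually prove this lemma --- it is one of the three statements ``recalled from \cite{Yu3}'' without proof --- so there is no argument in the present text to compare yours against. Judged on its own, your proof is correct and complete. The reduction to the torus is right: $Z(L_s)\subset T$ and $Z(L)_0\subset T$ for a maximal torus $T\subset L\subset G$, the roots of $\Phi_L$ kill both $z$ and any $t\in Z(L)_0$, and $Z(G)=\bigcap_{\alpha\in\Phi}\ker\alpha$ for connected compact $G$, so the problem is exactly to hit $(\alpha(z))_{\alpha\in\Phi\setminus\Phi_L}$ by the image of $Z(L)_0$. The duality step (a subtorus of $\U(1)^{N}$ is cut out by the characters annihilating it) and the translation of ``$\chi\circ\phi=1$ on the connected group $Z(L)_0$'' into ``$\sum n_\alpha\alpha\in\bbR\Phi_L$'' are both sound. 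You correctly isolate the crux, namely the saturation $\bbZ\Phi\cap\bbR\Phi_L=\bbZ\Phi_L$, and your justification via a simple system $\Pi$ adapted to the Levi (so that $\Pi\cap\Phi_L$ is a simple system of $\Phi_L$ and $\bbZ\Phi_L$ is spanned by a subset of a basis of $\bbZ\Phi$) is the standard and correct one; this is precisely where a non-Levi subsystem (e.g.\ the long roots in $\bf B_2$) would break the statement. Your closing remark is also accurate: simple connectedness of $L_s$ (Lemma \ref{L:Levi}) plays no role here, since $\mu\in\bbZ\Phi_L$ automatically annihilates $Z(L_s)$. The only cosmetic quibble is that $T\cap L_s$ need not be connected in general; what you actually need is $Z(L_s)\subset(T\cap L_s)_0\subset T$, which holds.
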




\smallskip

Let $G=\Aut(\fre_6)$. Let $G_1=\E_6$ and $G_2=\E_6\rtimes\langle\tau\rangle$, where $\tau^{2}=1$ and
$\fre_6^{\tau}=\frf_4$. Let $\pi: G_2\longrightarrow G$ be the adjoint homomorphism. Then, $\ker\pi=
\langle c\rangle$, where $c=\exp(\frac{2\pi i}{3}(H'_1+2H'_3+H'_5+2H'_6))$ with $o(c)=3$ and being a
generator of $Z(\E_6)$. Let \[\theta_1=\exp(\frac{2\pi i}{3}(2H'_1+H'_3)), \quad \theta_2=
\exp(\frac{2\pi i}{3}(2H'_1+2H'_3+2H'_4+H'_2+H'_5)).\] They are elements of order 3 in $G_1$. The root
system of $\frg^{\theta_1}$ is of type $3A_2$ with simple roots $\{\alpha_1,\alpha_3,\alpha_6,\alpha_5,
\beta-\alpha_2,\alpha_2\}$. The root system of $\frg^{\theta_2}$ is of type $D_4$ with simple roots
$\{\alpha_3,\alpha_4,\alpha_2,\alpha_5\}$, where $\beta=\alpha_1+\alpha_6+2(\alpha_2+\alpha_3+\alpha_5)+
3\alpha_4$ is the highest root. One has \[(G_1)^{\theta_1}\cong(\SU(3)\times\SU(3)\times\SU(3))/\langle
(\omega I,\omega I,\omega I)\rangle,\] where $c=(\omega I,\omega^{-1}I,I)$; and \[(G_1)^{\theta_2}\cong
(\Spin(8)\times\U(1)\times\U(1))/\langle(c_{8},-1,1),(-c_{8},1,-1)\rangle,\] where the Lie algebra of
$\U(1)\times\U(1)$ is $\span_{\bbR}\{2H'_1+2H'_3+2H'_4+H'_2+H'_5,H'_2+H'_3+2H'_4+2H'_5+2H'_6\}$,
$c=(1,\omega,\omega^{-1})$. Let $\eta\in\Aut(\frg)$ be an automorphism determined by
\[\eta(X_{\pm{\alpha_4}})=X_{\pm{\alpha_4}},\ \eta(X_{\pm{\alpha_2}})=X_{\pm{\alpha_3}},
\ \eta(X_{\pm{\alpha_3}})=X_{\pm{\alpha_5}},\ \eta(X_{\pm{\alpha_5}})=X_{\pm{\alpha_2}},\]
\[\eta(X_{\pm{\alpha_1}})=X_{\pm{\alpha_6}},\ \eta(X_{\pm{\alpha_6}})=X_{\mp{\beta}},
\ \eta(X_{\mp{\beta}})=X_{\pm{\alpha_1}}.\] Then, $\eta$ stabilizes $\fru_0$ and hence $\eta\in
\Aut(\fre_6)=G$. Since $\eta^3=1$, $\eta\in G_0$ and hence it lifts to an element of $G_1=\E_6$. In this
way we could regard $\eta$ as an element of $G$ or as an element of $G_1$. By Lemma \ref{L:E6-eta} below,
we see that $\eta$ is actually conjugate to $\theta_2$ in $G_1$. One has $\eta\theta_1\eta^{-1}=c\theta_1$
and $\eta\theta_2\eta^{-1}=c\theta_2$. For $i=1$, $2$, one has \[(G_0)^{\pi(\theta_{i})}=
\{g\in G_1: g\theta_{i}g^{-1}(\theta_{i})^{-1}\in\langle c\rangle\}/\langle c\rangle.\] Since
$\eta\theta_1\eta^{-1}=c\theta_1$, one has $\{g\in G_1: g\theta_{i}g^{-1}\theta_{i}^{-1}\in\langle c
\rangle\}=(G_1)^{\theta_1}\cdot\langle\eta\rangle$. Thus $(G_0)^{\pi(\theta_1)}$ has three connected
components. Furthermore one has \[G^{\pi(\theta_1)}\cong((\SU(3)\times\SU(3)\times\SU(3))/\langle
(\omega I,\omega I,\omega I),(\omega I,\omega^{-1} I,I)\rangle)\rtimes\langle\eta,\tau\rangle,\] where
$\eta(A_1,A_2,A_3)\eta^{-1}=(A_2,A_3,A_1)$ and $\tau(A_1,A_2,A_3)\tau^{-1}=(A_1,A_3,A_2)$. Similarly one has
\[G^{\pi(\theta_2)}\cong((\Spin(8)\times\U(1)\times\U(1))/\langle(c,-1,1),(-c,1,-1)\rangle)\rtimes\langle
\eta,\tau\rangle,\] where $(\mathfrak{so}(8)\oplus i\bbR\oplus i\bbR)^{\eta}=\frg_2\oplus 0\oplus 0$ and
$(\mathfrak{so}(8)\oplus i\bbR\oplus i\bbR)^{\tau}=\mathfrak{so}(7)\oplus\Delta(i\bbR)$.
Recall that $G$ has an involution $\sigma_1$ with (cf. \cite{Huang-Yu}, Table 2)
$$(G_0)^{\sigma_1}\cong(\SU(6)\times\Sp(1))/\langle(-I,-1),(\omega I,1)\rangle.$$


\smallskip

Given a closed abelian subgroup $F$ of $G$, for any $x,y\in F\cap G_0$, choosing $x',y'\in G_1$ such that
$\pi(x')=x$ and $\pi(y')=y$, then $[x',y']=c^{k}$ for some $k=0,1,2$. Define $m(x,y)=c^{k}$. Then, $m$ is
an antisymmetric bimultiplicative function on $F\cap G_0$. That means: for any $x_1,x_2,x_3\in F\cap G_0$,
$m(x_1,x_1)=1$, $m(x_1,x_2)=m(x_2,x_1)^{-1}$ and $m(x_1x_2,x_3)=m(x_1,x_3)m(x_2,x_3)$.

\begin{lemma}\label{L:E6-inner finite-order}
Let $F$ be a finite abelian subgroup of $G_0$ satisfying the condition $(*)$. For any $1\neq x\in F$,
\begin{enumerate}
\item {if $o(x)=p$ is a prime, then $p=2$ or $3$.}
\item {If $o(x)=2^{k}$, then $o(x)=2$ and $x\sim\sigma_1$.}
\item {If $o(x)=3^{k}$, then $o(x)=3$ and $x\sim\pi(\theta_1)$ or $\pi(\theta_2)$.}
\end{enumerate}
\end{lemma}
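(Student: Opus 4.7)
The overall strategy exploits abelianness: for any $x\in F$ we have $F\subseteq C_{G_0}(x)$, and condition $(*)$ forces the conjugation action of $F$ on $H:=C_{G_0}(x)^0$ to have no nonzero fixed vectors in $\Lie H$. In particular, when $C_{G_0}(x)$ is connected (so $F\subseteq H$), this forces $H$ itself to be semisimple, since otherwise the central torus $Z(H)^0$ would be centralized by $F$. The analysis then combines Proposition \ref{P:Steinberg-centralizer} with the Kac/Borel--de Siebenthal classification of centralizers of finite-order semisimple elements via the extended Dynkin diagram of $\E_6$, whose marks $(m_0,\ldots,m_6)=(1,1,2,2,3,2,1)$ sum to $12$.

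For (1), suppose $o(x)=p$ is prime with $p\geq 5$. Lift $x$ to $x'\in G_1$ of order $p$; since $x'$ and $cx'$ have orders $p$ and $3p$ they are not $G_1$-conjugate, so the Steinberg-connectedness of $G_1^{x'}$ descends to show $C_{G_0}(x)=\pi(G_1^{x'})$ is connected. If $p\geq 7$, then $p\nmid|W(\E_6)|=51840$, forcing $x'$ regular and $G_1^{x'}$ a maximal torus; if $p=5$, no mark $m_i$ equals $5$, so Kac's theorem forces the label of $x'$ to have at least two nonzero coordinates, hence $Z(G_1^{x'})^0\neq 1$. In either case the central torus is fixed by $F$, contradicting $(*)$.

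For (2), if $o(x)=2^k$ with $k\geq 2$, the same lift and Steinberg argument give $C_{G_0}(x)$ connected, and the Kac analysis again rules out semisimple centralizers (no mark equals a power of $2$ exceeding $2$). For $o(x)=2$ the two inner involution classes in $G_0$ have centralizers isomorphic to $(\Spin(10)\cdot\U(1))/Z$ and $(\SU(6)\times\Sp(1))/\langle(-I,-1),(\omega I,1)\rangle$; only the semisimple latter one, which is $\sigma_1$, is compatible with $(*)$. For (3), the Kac labels of weight $3$ produce exactly the two non-identity classes $\pi(\theta_1)$ and $\pi(\theta_2)$ (the latter's non-semisimple centralizer has its $\bbR^2$-center killed by $\eta$, as recorded in the excerpt); for $o(x)=3^k\geq 9$ every centralizer again has nontrivial central torus, and one verifies that no available component-group action on it is fixed-point-free.

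\textbf{Main obstacle.} The hardest case is the $3$-primary one, where Steinberg's connectedness fails (because $3=|Z(\E_6)|$) and $C_{G_0}(x)$ may be genuinely disconnected. The ``semisimple centralizer'' criterion alone is then insufficient, as $\pi(\theta_2)$ demonstrates that $(*)$ can be realized via an outer component action even when $H$ is non-semisimple. Ruling out order $9$ and confirming that $\pi(\theta_1)$ and $\pi(\theta_2)$ exhaust the order-$3$ classes therefore requires explicit analysis of how the component group of $C_{G_0}(x)$ acts on $Z(H)^0$, carried out case by case on the relevant Kac labels.
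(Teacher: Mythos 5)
Your plan for parts (1) and (2) is essentially the paper's argument and is sound: lift $x$ to an element $x'$ of $G_1=\E_6$ of the same order (possible since $o(x)$ is prime to $3$), note $x'\not\sim cx'$ so that Steinberg's theorem gives connectedness of $(G_0)^{x}$, and then observe that a connected centralizer of an element of order at least $4$ has a positive-dimensional centre (at least two nonzero Kac coordinates), which $F$ would fix, contradicting $(*)$. One intermediate claim is false, though harmless: $p\nmid|W(\E_6)|$ does not force $x'$ to be regular (the label $s_0=5$, $s_2=1$ gives an order-$7$ element whose centralizer is of type $A_5$ plus a one-dimensional centre); the correct and sufficient statement is the one you use for $p=5$, namely that no mark equals $p$, so the centralizer cannot be semisimple.

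The genuine gap is in part (3), which is where the paper does all of its real work. First, the assertion that the Kac labels of weight $3$ produce exactly the two classes $\pi(\theta_1)$ and $\pi(\theta_2)$ is false: $G_0$ has five classes of elements of order $3$ (the paper cites \cite{Andersen-Grodal-Moller-Viruel}; the three further classes have centralizers of types $A_5$, $A_4+A_1$ and $D_5$, each with a one-dimensional centre), and these must be excluded. Second, both that exclusion and the exclusion of order $9$ are precisely what you defer to ``one verifies,'' with no indication of how the verification goes. It can be made to work, but only via two observations you never state: the component group of $(G_0)^{x}$ embeds into $\langle c\rangle\cong\bbZ/3\bbZ$, hence acts trivially on a one-dimensional central torus and has nonzero fixed vectors on any odd-dimensional one (this disposes of the three extra order-$3$ classes at once); and the component group is nontrivial only when the Kac label is invariant under the order-$3$ rotation of the affine diagram, and one must check that no rotation-invariant label of weight $9$ yields a two-dimensional central torus. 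Without these, ``no available component-group action is fixed-point-free'' is an unexamined assertion, and a surviving case would a priori need a separate argument. Note also that the paper proceeds differently here: for order $9$ it uses $x^{3}\in\ker m$ to place $F$ inside $(G_1)^{y^{3}}/\langle c\rangle$, forces that group to be $\SU(3)^{3}$ modulo centre, and computes directly that the centralizer of $x$ there has positive-dimensional centre; for order $3$ it rules out the classes 3B, 3E, 3F by noting that $|\Aut(\fre_6^{x})/\Int(\fre_6^{x})|$ is prime to $3$, so $(G_0)^{x}$ is connected and non-semisimple.
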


\begin{proof}
For $(1)$, suppose that $p>3$. Then there exists $y\in G_1$ such that $x=\pi(y)$ and $o(y)=o(x)=p$.
Thus $F\subset(G_0)^{x}=\{g\in G_1: gyg^{-1}y^{-1}\in\langle c\rangle\}/\langle c\rangle$. Since
$o(yc)=o(yc^{2})=3o(y)\neq o(y)$, one has $yc\not\sim y$ and $yc^{2}\not\sim y$. Hence $(G_0)^{x}=
(G_1)^{y}/\langle c\rangle$. By Steinberg's theorem $(G_1)^{y}$ and hence $(G_0)^{x}$ is connected.
Therefore $(G_0)^{x}$ is semisimple as $F$ satisfies the condition $(*)$. By this the root system of
$\frg^{y}$ is of type $3A_2$ or $A_5+A_1$ (cf. \cite{Oshima}). In this case $(G_0)^{x}$ does not contain
any element in its center of order a prime $p>3$, which gives a contradiction. For $(2)$, similar argument
as the above for $(1)$ shows that $(G_0)^{x}$ is connected and the root system of $\frg^{x}$ is of type
$3A_2$ or $A_5+A_1$. As it is assumed that $o(x)=2^{k}$, the root system of $\frg^{x}$ is of type $A_5+A_1$
and hence \[(G_0)^{x}\cong(\SU(6)\times\Sp(1))/\langle(\omega I,1),(-I,-1)\rangle.\] By this $o(x)=2$ and
$x\sim\sigma_1$. For $(3)$, suppose that $o(x)\neq 3$. Then $F$ has an element of order 9. We may and do
assume that $o(x)=9$. Choosing an element $y\in G_1$ such that $\pi(y)=x$, then $F\subset(G_1)^{y^3}/\langle c
\rangle$ since $x^3\in\ker m$. By Steinberg's theorem $(G_1)^{y^3}$ is connected. Since $F$ satisfies the
condition $(*)$, $(G_1)^{y^3}$ is semisimple. Therefore the root system of $\frg^{y}$ is of type $3A_2$ or
$A_5+A_1$. As it is assumed that $o(x)=9$, the root system of $\frg^{y^3}$ is of type $3A_2$. In this case
\[(G_1)^{y^3}/\langle c\rangle\cong(\SU(3)\times\SU(3)\times\SU(3))/\langle(\omega I,\omega I,\omega I),
(\omega I,\omega^{-1}I,I)\rangle,\] where $x^{3}=[y^3]=[(\omega I,I,I)]$ or $[(\omega^{-1}I,I,I)]$. Without
loss of generality, we may assume that $y=(A_1,A_2,A_3)$ with $(A_1)^{3}=\omega^{k}I$ and $(A_2)^{3}=(A_3)^{3}
=\omega^{k+1}I$ for some $k=0$, $1$ or $2$. If $k=1$ or $2$, then $A_1\not\sim\omega A_1$ and $A_1\not\sim
\omega^{2}A_1$. Hence the component of $((G_1)^{y^3}/\langle c\rangle)^{x}$ on the first $\SU(3)$ factor
is $\SU(3)^{A_1}$, which has a center of positive dimension. Thus $\frg_0^{F}\neq 0$. If $k=0$, then
$A_2\not\sim\omega A_2$, $A_2\not\sim\omega^{2}A_2$, $A_3\not\sim\omega A_3$ and $A_3\not\sim\omega^{2}A_3$.
In this case $(G_0)^{x}$ is connected and has a center of positive dimension. Again one has $\frg_0^{F}\neq 0$.
Thus $o(x)=3$. For the terminology in \cite{Andersen-Grodal-Moller-Viruel}, $G_0$ has five conjugacy classes
of elements of order 3 labeled as ``3B'',``3C'', ``3D'',``3E'',``3F''. For an element $x$ in the class ``3B'',
``3E'' or ``3F'', the order of $\Aut(\fre_6^{x})/\Int(\fre_6^{x})$ is coprime to 3 and $\fre_6^{x}$ is not
semisimple, hence $(G_0)^{x}$ is connected and is not semisimple. Therefore it does not contain any finite
abelian subgroup satisfying the condition $(*)$. Then, $x$ is in the class ``3C'' or ``3D''. That is,
$x\sim\pi(\theta_1)$ or $\pi(\theta_2)$.
\end{proof}

\begin{lemma}\label{L:E6-eta}
We have $\eta\sim\pi(\theta_2)$ and $\eta\pi(\theta_1)\sim\eta\pi(\theta_1)^{2}\sim\pi(\theta_1)$.
\end{lemma}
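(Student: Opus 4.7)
The plan is to prove each conjugacy by identifying the $G_0$-conjugacy class of the relevant order-$3$ element through its fixed-point subalgebra, and then invoking the classification of order-$3$ conjugacy classes in $G_0$ recalled in the proof of Lemma \ref{L:E6-inner finite-order}.

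For the first assertion $\eta\sim\pi(\theta_2)$, the defining formulas exhibit $\eta$ as the order-$3$ rotational symmetry of the extended Dynkin diagram $\tilde{\bf E}_6$: it fixes the central node $\alpha_4$, cyclically permutes the three end-nodes $\alpha_1\to\alpha_6\to-\beta\to\alpha_1$, and cyclically permutes $\alpha_2\to\alpha_3\to\alpha_5\to\alpha_2$. By Kac's theory of finite-order inner automorphisms, $\eta$ is then conjugate in $G_0$ to the standard inner automorphism with Kac coordinates $1$ at each of the three end-nodes $\{\alpha_1,\alpha_6,-\beta\}$ and $0$ elsewhere (the marks summing to $1+1+1=3$). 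The associated fixed-point subalgebra is reductive of rank $6$, with semisimple part corresponding to the complementary subdiagram on $\{\alpha_2,\alpha_3,\alpha_4,\alpha_5\}$, which is of type $D_4$, and with a $2$-dimensional center. This matches $\fre_6^{\pi(\theta_2)}\cong\mathfrak{so}(8,\bbC)\oplus\bbC^2$; since class 3D is the unique $G_0$-conjugacy class of order-$3$ elements with this fixed-point structure, one obtains $\eta\sim\pi(\theta_2)$.

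For the second assertion, since $\eta\theta_1\eta^{-1}=c\theta_1$ and $\pi(c)=1$, the images $\pi(\eta)$ and $\pi(\theta_1)$ commute in $G_0$, so $\eta\pi(\theta_1)^k$ has order $3$ for $k=1,2$. The plan is to decompose $\fre_6=\bigoplus_{i,j\in\bbZ/3}V_{(i,j)}$ into joint $(\pi(\eta),\pi(\theta_1))$-eigenspaces with eigenvalues $(\omega^i,\omega^j)$, so that the fixed subalgebra of $\eta\pi(\theta_1)^k$ is $\bigoplus_{i+kj\equiv 0}V_{(i,j)}$. The $\pi(\theta_1)$-eigenspace decomposition gives $V'_0:=\bigoplus_i V_{(i,0)}=\fre_6^{\pi(\theta_1)}\cong 3A_2$ of dimension $24$, while $V'_\omega$ and $V'_{\omega^2}$ have dimension $27$ and realize respectively the representation $\bbC^3\boxtimes\bbC^3\boxtimes\bbC^3$ of $3A_2$ and its conjugate. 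On $V'_0$, $\eta$ cyclically permutes the three $A_2$ summands, forcing $\dim V_{(i,0)}=8$ for $i=0,1,2$. On each of $V'_{\pm\omega}$, $\eta$ cyclically permutes the three tensor factors, and the resulting $\eta$-eigenvalue multiplicities $(11,8,8)$ are pinned down by the trace identity $\tr(\eta|_{V'_\omega})=3$ combined with the already-established $\dim\fre_6^{\eta}=30$. Summing yields $\dim\fre_6^{\eta\pi(\theta_1)}=V_{(0,0)}+V_{(1,2)}+V_{(2,1)}=24$ and $\dim\fre_6^{\eta\pi(\theta_1)^2}=V_{(0,0)}+V_{(1,1)}+V_{(2,2)}=24$. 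Since $24$ is the dimension of class 3C (semisimple part $3A_2$) and not of class 3D, both elements belong to class 3C and are conjugate to $\pi(\theta_1)$.

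The main obstacle is the joint eigenvalue bookkeeping in the second part, in particular the correct identification of the $\pi(\eta)$-action on the $27$-dimensional $\omega$-eigenspace of $\pi(\theta_1)$. One must track the cyclic permutation of the three tensor factors in $\bbC^3\boxtimes\bbC^3\boxtimes\bbC^3$ up to any scalar twist coming from the explicit identifications of the $\SU(3)$ factors inside $(G_1)^{\theta_1}$; the centralizer dimension $30$ from the first part is used as an independent constraint to fix these multiplicities unambiguously. Once the nine joint dimensions are determined, the remainder of the argument reduces to a dimension count and an appeal to the order-$3$ classification.
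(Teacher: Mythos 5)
Your plan reaches the right conclusions and is correct in substance, but it takes a different computational route from the paper and has one step that needs shoring up. The paper first forms the rank-$4$ elementary abelian $3$-group $F=\langle\pi(\theta_1),\eta,(A,A,A),(B,B,B)\rangle$ with $\frg_0^{F}=0$, so that Lemma \ref{L:E6-inner finite-order} already forces every nontrivial element of $F$ --- in particular $\eta$, $\eta\pi(\theta_1)$ and $\eta\pi(\theta_1)^{2}$ --- to lie in class 3C or 3D; it then separates the two classes by directly counting the roots fixed by $\eta$ (six of them, with identity action on their root vectors), getting $\dim\frg^{\eta}=6+\frac{78-6}{3}=30$ and $\dim\frg^{\eta\pi(\theta_1)^{k}}=\frac{78-6}{3}=24$. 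You replace the root count by (i) a Kac-coordinate identification of $\eta$ and (ii) joint eigenspace bookkeeping in $\fre_6(\bbC)=3A_2\oplus(\bbC^3\boxtimes\bbC^3\boxtimes\bbC^3)\oplus\overline{(\bbC^3\boxtimes\bbC^3\boxtimes\bbC^3)}$. Part (ii) is carried out correctly: the multiplicities $(8,8,8)$ on $\fre_6^{\pi(\theta_1)}$ and $(11,8,8)$ on each $27$-dimensional piece, with the scalar twist resolved by $\dim\frg^{\eta}=30$, do give $8+8+8=24$ for both products, and this is a more explicit version of the paper's one-line ``similar consideration shows.''

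Two points to repair. First, ``by Kac's theory'' does not by itself assign Kac coordinates to $\eta$: Kac's theorem classifies the diagonal normal forms, whereas $\eta$ permutes the affine simple root vectors rather than scaling them, and deciding which normal form it is conjugate to requires computing an invariant --- in practice exactly the fixed-subalgebra dimension (or the trace of $\eta$ on $\frg$) that the paper computes. As written your argument risks circularity, since part 2 quotes ``the already-established $\dim\fre_6^{\eta}=30$'': that number must actually be established first, e.g.\ by the paper's count of $\eta$-fixed roots, before either the Kac identification or the twist resolution can be used. Second, your closing inference ``$\dim\frg^{x}=24$ hence $x$ is in class 3C'' implicitly requires that none of the non-semisimple classes 3B, 3E, 3F has centralizer of dimension $24$ or $30$; this is true but is an extra check over all five order-$3$ classes, which the paper avoids entirely by the $\frg_0^{F}=0$ trick. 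Adopting that trick would let you skip the extra check and would also make the Kac step unnecessary once $\dim\frg^{\eta}$ is computed.
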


\begin{proof}
Let $A=\diag\{1,\omega,\omega^{2}\}$ and $B=\left(\begin{array}{ccc}0&1&0\\0&0&1\\1&0&0\\\end{array}
\right)$. Then \[F:=\langle\pi(\theta_1),\eta,(A,A,A),(B,B,B)\rangle\] is an elementary abelian 3-subgroup
of $G_0$ such that $\frg_0^{F}=0$. Hence any non-identity element of $F$ is conjugate to $\pi(\theta_1)$ or
$\pi(\theta_2)$ by Lemma \ref{L:E6-inner finite-order}. The Lie algebra $\frg=\fre_6(\bbC)$ has $36$ positive
roots, among them only the six roots \[\pm{}\alpha_4,\ \pm{}(\alpha_4+\alpha_2+\alpha_3+\alpha_5),
\ \pm{}(2\alpha_4+\alpha_2+\alpha_3+\alpha_5)\] are fixed under the action of $\eta$, and $\eta$ acts as
identity on their root vectors. Moreover the action of $\eta$ permutes the elements
$\{iH_{\alpha_{j}}: 1\leq j\leq 6, j\neq 4\}\cup\{iH_{-\beta}\}$. Hence \[\dim\frg^{\eta}=6+\frac{78-6}{3}=
30=\dim\frg^{\pi(\theta_2)}.\] Similar consideration shows \[\frg^{\eta\pi(\theta_1)}=
\frg^{\eta\pi(\theta_1)^{2}}=\frac{78-6}{3}=24=\dim\frg^{\pi(\theta_1)}.\] By these one gets
$\eta\sim\pi(\theta_2)$ and $\eta_1\pi(\theta_1)\sim\eta_1\pi(\theta_1)^{2}\sim\pi(\theta_1)$.
\end{proof}

Let $K_1=\langle\pi(\theta_1),\eta\rangle$, $K_2=\langle\pi(\theta_2),\eta\rangle$.

\begin{lemma}\label{L:E6-3 group-rank2}
Any rank 2 elementary abelian 3-subgroups of $G_0$ with a nontrivial function $m$ is conjugate to $K_1$
or $K_2$.
\end{lemma}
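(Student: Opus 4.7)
The plan is to pick a distinguished element of $F$ and normalize the second generator using centralizer structure.

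Since $F$ has rank $2$ and $m|_F$ is non-trivial, the antisymmetric bimultiplicative form $m$ is non-degenerate: any non-trivial radical would have to be a proper subgroup of $F$, but the only proper subgroups of $(\bbZ/3)^2$ are cyclic, where antisymmetric pairings vanish. By Lemma~\ref{L:E6-inner finite-order}, after conjugation I may assume $x=\pi(\theta_i)\in F$ for some $i\in\{1,2\}$, and I pick $y\in F$ with $m(x,y)\neq 1$. Any element of $(G_0)^{x}_{0}=(G_1)^{\theta_i}/\langle c\rangle$ lifts to an element of $G_1$ commuting with $\theta_i$, so $m(x,y)\neq 1$ forces $y$ to lie in a non-identity component of $(G_0)^x$. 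As discussed before Lemma~\ref{L:E6-inner finite-order}, the component group of $(G_0)^{\pi(\theta_i)}$ is $\langle\eta\rangle\cong\bbZ/3$ for both $i=1,2$; replacing $y$ by $y^{-1}$ if necessary I write $y=\eta g$ with $g\in(G_0)^{x}_{0}$.

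The remaining task is to conjugate $y$ by some $h\in(G_0)^{x}_{0}$ (which fixes $x$) so that $hyh^{-1}=\eta\cdot x^k$, yielding $F=\langle x,\eta\rangle=K_i$. For $i=1$, I work in the cover $\SU(3)^3$ of $(G_0)^{x}_{0}$ with $\sigma=\Ad(\eta)$ acting by cyclic permutation. Writing $g=(A_1,A_2,A_3)$, the condition $y^3\in\langle c\rangle$ translates to $A_1A_2A_3\in Z(\SU(3))$: the three cyclic rotations of that product are $\SU(3)$-conjugate, and the relation forcing them into the kernel $\langle c,(\omega I,\omega I,\omega I)\rangle$ collapses them to a single central value. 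Twisted conjugation by $h=(A_1,A_1A_2,I)$ transforms $g$ into $(I,I,A_1A_2A_3)$, a central element of $(G_1)^{\theta_1}$. Since $Z((G_1)^{\theta_1})\cong(\bbZ/3)^2$ is generated by $\theta_1$ and $c$, its image in $(G_0)^{x}_{0}$ equals $\langle x\rangle$, giving $y=\eta x^k$ and hence $F=K_1$.

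For $i=2$, the same strategy applies in $(G_1)^{\theta_2}=(\Spin(8)\times\U(1)^2)/\langle(c_8,-1,1),(-c_8,1,-1)\rangle$, with $\eta$ acting as triality on $\Spin(8)$ (with fixed Lie algebra $\frg_2$) and trivially on $\U(1)^2$. A parallel twisted-conjugacy reduction moves $g$ into the $\eta$-fixed subgroup of $Z((G_1)^{\theta_2})$, whose finite part maps to $\langle x\rangle$ in $(G_0)^{x}_{0}$ by an analogous central-generation analysis, yielding $F\sim K_2$. The main obstacle is the $\theta_2$ case: the centre of the centralizer contains a $2$-dimensional torus rather than being finite, so one must identify which finite-order central elements actually occur in the reduction and verify they lie in $\langle x,c\rangle/\langle c\rangle$, whereas the $\theta_1$ case is a transparent matrix calculation.
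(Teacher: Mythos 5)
Your $\theta_1$ branch is correct and is essentially the paper's own argument: the paper likewise reduces a second generator lying in the coset $\eta\,((G_0)^{\pi(\theta_1)})_0$ to the form $(A,I,I)\eta$ with $A$ scalar, which is exactly your twisted conjugation by $(A_1,A_1A_2,I)$. The structural reason this works is that twisted conjugacy classes in $\SU(3)^3\cdot\eta$ are controlled by the single product $A_1A_2A_3$, which the order-$3$ condition forces to be scalar; your identification of the image of the center with $\langle\pi(\theta_1)\rangle$ is also fine.

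The $\theta_2$ branch has a genuine gap. The claim that ``a parallel twisted-conjugacy reduction moves $g$ into the $\eta$-fixed subgroup of $Z((G_1)^{\theta_2})$'' is false: in the triality coset $\Spin(8)\cdot\eta$ there is no product invariant, and there are two distinct classes of elements inducing order-$3$ outer automorphisms of $\mathfrak{so}(8)$ --- the class of $\eta$ itself, with fixed subalgebra $\frg_2$, and the class of $\eta'=[A]\eta$ with $A\in\Spin(8)^{\eta}\cong\G_2$ of order $3$ and $(\G_2)^{A}$ a torus, for which $\Spin(8)^{\eta'}\cong\PSU(3)$. Since the fixed subalgebras have different dimensions, $\eta'$ is not twisted-conjugate to $\eta$ times any central element, so the proposed reduction cannot go through. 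Moreover the conclusion it would yield is wrong: by Lemma \ref{L:E6-eta} one has $\eta\sim\pi(\theta_2)$, so $K_1$ itself contains an element conjugate to $\pi(\theta_2)$; taking $F=K_1$ and choosing $x$ to be that element places you in your $i=2$ branch, where your argument would conclude $K_1\sim K_2$ --- impossible, since $K_1$ contains $\pi(\theta_1)$ while every nontrivial element of $K_2$ is conjugate to $\pi(\theta_2)$. The paper closes exactly this gap by invoking the classification of order-$3$ automorphisms in the triality coset (the citation of Helgason) and then checking separately that $\langle\pi(\theta_2),\eta'\rangle\sim\langle\pi(\theta_1),\eta\rangle=K_1$, so the second possibility still lands on the stated list. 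That extra input is indispensable; the real obstacle is not, as you suggest, the positive-dimensional centre of $(G_1)^{\theta_2}$, but the existence of the second triality class.
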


\begin{proof}
Let $F$ be a rank 2 elementary abelian 3-subgroup of $G_0$ with a nontrivial function $m$. By the same
reason as in the proof for Lemma \ref{L:E6-inner finite-order}, any non-identity element of $F$ is
conjugate to $\pi(\theta_1)$ or $\pi(\theta_2)$. If $F$ contains an element conjugate to $\pi(\theta_1)$,
without loss of generality we assume that $\pi(\theta_1)\in F$. Then, \[F\subset(G_0)^{\pi(\theta_{1})}=
((\SU(3)\times\SU(3)\times\SU(3))/\langle(\omega I,\omega I,\omega I),(\omega I,\omega^{-1} I,I)\rangle)
\rtimes\langle\eta\rangle.\] As the function $m$ on $F$ is nontrivial, $F$ is generated by $\pi(\theta_1)$
and another element of order 3 in the coset $\eta((G_0)^{\pi(\theta_{1})})_0$. Each element of
$\eta((G_0)^{\pi(\theta_{1})})_0$ is conjugate to an element of the form $(A,I,I)\eta$. Since
$((A,I,I)\eta)^{3}=1$ implies that $A\in\langle\omega I\rangle$, one has $F\sim\langle\pi(\theta_1),\eta
\rangle=K_1$ in this case. If $F$ contains no elements conjugate to $\pi(\theta_1)$, all non-identity elements
of $F$ are conjugate to $\pi(\theta_2)$. Without loss of generality we assume that $\pi(\theta_2)\in F$. Then,
\[F\subset(G_0)^{\pi(\theta_2)}=((\Spin(8)\times\U(1)\times\U(1))/\langle(c,-1,1),(-c,1,-1)\rangle)\rtimes
\langle\eta\rangle.\] As the function $m$ is nontrivial on $F$, it is generated by $\pi(\theta_2)$ and another
element of order 3 in the coset $\eta((G_0)^{\pi(\theta_{2})})_0$. Such an element is conjugate to an element
of the form $[(A,1,1)]\eta$, $A\in\Spin(8)$. Moreover it is conjugate to $\eta$ or $\eta'=[A]\eta$ (cf.
\cite{Helgason}), where $A\in\Spin(8)^{\eta}\cong\G_2$, $A^3=1$, $(\G_2)^{A}$ being a torus and
$(\Spin(8))^{\eta'}\cong\PSU(3)$. All nontrivial elements of $\langle\pi(\theta_2),\eta\rangle$ are conjugate
to $\pi(\theta_2)$, hence $\langle\pi(\theta_2),\eta\rangle\not\sim\langle\pi(\theta_1),\eta\rangle$. On the
other hand, one has $\langle\pi(\theta_2),\eta'\rangle\sim\langle\pi(\theta_1),\eta\rangle$ since the latter
contains an element conjugate to $\pi(\theta_2)$.
\end{proof}

\begin{lemma}\label{L:E6-eta'}
We have $\eta'\sim\eta'\pi(\theta_2)\sim\eta'\pi(\theta_2)^2\sim\pi(\theta_1)$.
\end{lemma}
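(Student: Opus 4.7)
The plan is to leverage the subgroup-level conjugacy $\langle \pi(\theta_2), \eta'\rangle \sim \langle \pi(\theta_1), \eta\rangle$ established at the end of the proof of Lemma \ref{L:E6-3 group-rank2}, and then match up individual elements via their $G_0$-conjugacy classes inside $K_1$.

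First I would determine the $G_0$-conjugacy classes of the nine elements of $K_1 := \langle \pi(\theta_1), \eta\rangle$. By Lemma \ref{L:E6-inner finite-order}, every nontrivial element of $K_1$ is conjugate to $\pi(\theta_1)$ or $\pi(\theta_2)$, and these two classes are distinguished by the centralizer dimensions $24$ and $30$ respectively. Lemma \ref{L:E6-eta} already supplies $\eta \sim \pi(\theta_2)$, $\eta\pi(\theta_1) \sim \pi(\theta_1)$ and $\eta\pi(\theta_1)^{2} \sim \pi(\theta_1)$. The remaining nontrivial elements $\eta^{2}$, $\eta^{2}\pi(\theta_1)$, $\eta^{2}\pi(\theta_1)^{2}$ are the inverses of $\eta$, $\eta\pi(\theta_1)^{2}$, $\eta\pi(\theta_1)$ respectively (using that $\eta$ and $\pi(\theta_1)$ commute in $G_0$), so each has the same centralizer dimension as its inverse and must therefore lie in the same $G_0$-conjugacy class. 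Consequently, the elements $\eta, \eta^{2}$ lie in the $\pi(\theta_2)$-class, while all six remaining nontrivial elements of $K_1$ lie in the $\pi(\theta_1)$-class.

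Next I would exploit the subgroup conjugacy. Set $K_2' := \langle \pi(\theta_2), \eta'\rangle$; note $K_2'$ has rank $2$, since $\eta'$ has a nontrivial $\Spin(8)$-component while $\pi(\theta_2)$ sits in the $\U(1)\times\U(1)$-factor of $(G_0)^{\pi(\theta_2)}$. By Lemma \ref{L:E6-3 group-rank2} choose $g \in G_0$ with $g K_2' g^{-1} = K_1$. Then $g\pi(\theta_2) g^{-1}$ is a nontrivial element of $K_1$ which is $G_0$-conjugate to $\pi(\theta_2)$, so by the enumeration above it lies in $\{\eta, \eta^{2}\}$; in particular $g\langle \pi(\theta_2)\rangle g^{-1} = \langle \eta\rangle$. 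Since $\eta' \notin \langle\pi(\theta_2)\rangle$, each of the three elements $\eta'$, $\eta'\pi(\theta_2)$, $\eta'\pi(\theta_2)^{2}$ lies outside $\langle \pi(\theta_2)\rangle$, so their $g$-conjugates lie in $K_1 \setminus \langle \eta\rangle$, a set whose six members are all conjugate to $\pi(\theta_1)$. This gives the desired $\eta' \sim \eta'\pi(\theta_2) \sim \eta'\pi(\theta_2)^{2} \sim \pi(\theta_1)$.

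The only genuinely delicate step is the bookkeeping in the first paragraph --- confirming that the inverses of $\eta\pi(\theta_1)^{\pm 1}$ fall back into the $\pi(\theta_1)$-class rather than producing new classes --- but this is handled cleanly by the identity $\dim \frg^{x} = \dim \frg^{x^{-1}}$ together with the dimensional dichotomy $24$ vs.\ $30$ forced by Lemma \ref{L:E6-inner finite-order}. No direct centralizer computation for $\eta'$ itself is needed.
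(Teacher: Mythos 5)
Your proposal is correct and follows essentially the same route as the paper: the paper's proof likewise combines the enumeration of conjugacy classes inside $K_1$ (only $\eta$ and $\eta^{2}$ lie in the $\pi(\theta_2)$-class, via Lemma \ref{L:E6-eta}) with the subgroup conjugacy $\langle\pi(\theta_2),\eta'\rangle\sim\langle\pi(\theta_1),\eta\rangle$ from the proof of Lemma \ref{L:E6-3 group-rank2}. Your extra bookkeeping via $\dim\frg^{x}=\dim\frg^{x^{-1}}$ just makes explicit a step the paper leaves implicit.
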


\begin{proof}
By Lemma \ref{L:E6-eta}, the only elements conjugate to $\pi(\theta_2)$ in $\langle\pi(\theta_1),\eta\rangle$
are $\eta$ and $\eta^2$. Since $\langle\pi(\theta_2),\eta'\rangle\sim\langle\pi(\theta_1),\eta\rangle$ as
the above proof of Lemma \ref{L:E6-3 group-rank2} shows, one gets $\eta'\sim\eta'\pi(\theta_2)\sim
\eta'\pi(\theta_2)^2\sim\pi(\theta_1)$.
\end{proof}

In $(G_0)^{\sigma_1}\cong(\SU(6)\times\Sp(1))/\langle(-I,-1),(\omega I,1)\rangle$, let \[F_1=\langle[(-I,1)],
[(A_6,\mathbf{i})],[(B_6,\mathbf{j})]\rangle,\] where \[A_6=e^{\frac{2\pi i}{12}}\diag\{1,\omega_6,\omega_6^{2},
\omega_6^{3},\omega_6^{4},\omega_6^{5}\}\] and \[B_6=e^{\frac{2\pi i}{12}}\left(\begin{array}{cccccc}
0&1&&&&\\&0&1&&&\\&&0&1&&\\&&&0&1&\\&&&&0&1\\1&&&&&0\\\end{array}\right).\]

\begin{prop}\label{P:E6-inner finite 1}
Let $F$ be a finite abelian subgroup of $G_0$ satisfying the condition $(*)$. If $F$ is not a 3-group, then
$F\sim F_1$.
\end{prop}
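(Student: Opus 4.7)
The plan is to use Lemma \ref{L:E6-inner finite-order} to place $\sigma_1$ inside $F$, then analyze $F$ via the projection $H := (G_0)^{\sigma_1} \to \PU(6)\times\SO(3)$, and finally apply the classification of finite abelian subgroups of $\PU(n)$ with finite centralizer from \cite{Yu2}. By Lemma \ref{L:E6-inner finite-order}, every non-trivial element of $F$ has prime order $2$ or $3$, so $F=F_2\times F_3$ with both summands elementary abelian. Since $F$ is not a $3$-group, $F_2\neq 1$, and up to conjugation $\sigma_1\in F$, whence $F\subset H\cong(\SU(6)\times\Sp(1))/\langle(-I,-1),(\omega I,1)\rangle$. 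Let $q:H\to\PU(6)\times\SO(3)$ be the quotient map, whose kernel is $\langle\sigma_1\rangle$, and set $\bar F:=q(F)$. Condition $(*)$ forces $\bar F$ to have finite centralizer in $\PU(6)\times\SO(3)$: any positive-dimensional identity component would lift to a connected subgroup of $H$ whose commutators with $F$ lie in $\langle\sigma_1\rangle$ and hence vanish by connectedness. Splitting centralizers over direct products, $\pi_2(\bar F)\subset\SO(3)$ must equal the Klein four group $V:=\langle[\mathbf{i}],[\mathbf{j}]\rangle$, while by \cite{Yu2} applied to $\PU(n)$ with $n=6$ squarefree, $\pi_1(\bar F)\subset\PU(6)$ is conjugate to the Heisenberg-type subgroup $H_6:=\langle[A_6],[B_6]\rangle\cong(\bbZ/6)^2$. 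After a further conjugation, $\bar F\subset H_6\times V$.

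The commutators of lifts in $H$ define a $\langle\sigma_1\rangle$-valued alternating form $\omega_F$ on $\bar F$. Using $[A_6,B_6]=\omega_6 I$, $[\mathbf{i},\mathbf{j}]=-1$, and the identity $(\omega_6 I,-1)=(-I,-1)\cdot(\omega I,1)^{-1}$ which trivializes in $H$, one obtains
\[
\omega_F\bigl(([A_6]^a[B_6]^c,[\mathbf{i}]^p[\mathbf{j}]^q),\ ([A_6]^{a'}[B_6]^{c'},[\mathbf{i}]^{p'}[\mathbf{j}]^{q'})\bigr)\equiv (ac'-a'c)+(pq'-p'q)\pmod{2}.
\]
For $F$ to be abelian, $\bar F$ must be $\omega_F$-isotropic. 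The surjectivity $\bar F\twoheadrightarrow H_6$ forces the $3$-part of $\bar F$ to be $H_6[3]\cong(\bbZ/3)^2$, which is automatically isotropic; the $2$-part is a Lagrangian of the hyperbolic $(\bbZ/2)^4$ surjecting onto both $(\bbZ/2)^2$-summands. Such Lagrangians are precisely the graphs of symplectomorphisms $(\bbZ/2)^2\to(\bbZ/2)^2$ and form a single orbit under $\GL_2(\bbZ/2)\times\GL_2(\bbZ/2)$, represented by the diagonal. Hence $\bar F$ is conjugate to $\bigl\{([A_6]^a[B_6]^c,[\mathbf{i}]^a[\mathbf{j}]^c):a,c\in\bbZ/6\bigr\}$, and since $\sigma_1\in F$ the subgroup $F$ is the full preimage $q^{-1}(\bar F)$ in $H$, which is exactly $F_1=\langle\sigma_1,[(A_6,\mathbf{i})],[(B_6,\mathbf{j})]\rangle$ of order $72$.

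The main obstacle is twofold: verifying the combinatorial claim that surjective Lagrangians of the hyperbolic $(\bbZ/2)^4$ form a single $\GL_2(\bbZ/2)\times\GL_2(\bbZ/2)$-orbit (an easy enumeration on the six symplectomorphism graphs), and checking that the normalizer actions used in the reduction --- $\SL_2(\bbZ/6)$ on $H_6$ coming from $N_{\PU(6)}(H_6)$, and $\SL_2(\bbZ/2)$ on $V$ coming from $N_{\SO(3)}(V)$ --- together with permutation of the basis elements are genuinely realized by $G_0$-conjugation inside $H$, so that all isotropy-compliant $\bar F$ yield $G_0$-conjugate subgroups $F\sim F_1$.
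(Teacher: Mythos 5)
Your proposal is correct, and its opening move coincides exactly with the paper's: invoke Lemma \ref{L:E6-inner finite-order} to place $\sigma_1$ in $F$ and reduce to $(G_0)^{\sigma_1}\cong(\SU(6)\times\Sp(1))/\langle(-I,-1),(\omega I,1)\rangle$. Where you diverge is in what happens next. The paper simply observes that the central kernel is the graph $\{(\lambda I,\lambda^{3})\}$ and then cites \cite{Yu2}, Proposition 2.1, as a black box; you instead unwind that citation and carry out the classification inside $(G_0)^{\sigma_1}$ by hand, via the quotient $q$ onto $\PU(6)\times\SO(3)$, the Heisenberg subgroup $H_6$ of $\PU(6)$, the Klein four group in $\SO(3)$, and the $\langle\sigma_1\rangle$-valued commutator form whose isotropic subgroups surjecting onto both factors are the Lagrangian graphs of symplectomorphisms $(\bbF_2)^{2}\to(\bbF_2)^{2}$. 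All the individual steps check out: the finiteness of $C_{\PU(6)\times\SO(3)}(\bar F)$ does follow from $(*)$ by your connectedness argument, the computation of $\omega_F$ is right (including the identity $(\omega_6 I,-1)=(-I,-1)(\omega I,1)^{-1}$), and the two ``obstacles'' you flag at the end are both non-issues. The Lagrangian orbit count is immediate because $\Sp(V)\cong\GL(2,\mathbb{F}_2)$ already acts simply transitively on the six symplectomorphism graphs by post-composition; and realizability is automatic because $\PU(6)\times\SO(3)$ is precisely $\Int\big((G_0)^{\sigma_1}\big)$, so every conjugation you perform in the quotient is induced by an element of $(G_0)^{\sigma_1}\subset G_0$ (in particular you never need the full $\SL(2,\bbZ/6)$ on $H_6$ --- after fixing $\pi_1(\bar F)=H_6$ you only conjugate by $N_{\SO(3)}(V)$, which leaves the $3$-part untouched). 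One small imprecision: Lemma \ref{L:E6-inner finite-order} does not say every nontrivial element of $F$ has prime order (elements of order $6$ occur, e.g.\ in $F_1$ itself); it says elements of $2$-power or $3$-power order have order $2$ or $3$, which still gives $F\cong(\bbZ/2)^{a}\times(\bbZ/3)^{b}$ as you use. What your longer route buys is a self-contained proof independent of \cite{Yu2}, Proposition 2.1; what the paper's route buys is brevity, at the cost of the reader having to check that the quoted proposition applies to quotients by graph-type central subgroups, which is exactly what the remark $\lambda_2=\lambda_1^{3}$ is verifying.
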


\begin{proof}
By Lemma \ref{L:E6-inner finite-order}, we may assume that $\sigma_1\in F$. Then, \[F\subset(G_0)^{\sigma_1}
\cong(\SU(6)\times\Sp(1))/\langle(-I,-1),(\omega I,1)\rangle.\] For any element $(\lambda_1 I,\lambda_2)\in
\langle(-I,-1),(\omega I,1)\rangle$, one has $\lambda_2=\lambda_1^{3}$. Hence $F\sim F_1$ by \cite{Yu2},
Proposition 2.1.
\end{proof}

\begin{prop}\label{P:Weyl-E6 inner finite 1}
We have $W(F_1)=N_{G}(F_1)/C_{G}(F_1)\cong\GL(3,\mathbb{F}_2)\times\GL(2,\mathbb{F}_3)$.
\end{prop}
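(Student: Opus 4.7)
The first step is to analyze the structure of $F_1$. Setting $y := [(A_6, \mathbf{i})]$ and $z := [(B_6, \mathbf{j})]$, both of order $6$, one sees that $F_1 \cong \bbZ/2 \times (\bbZ/6)^2$ decomposes canonically as $F_1 = V_2 \oplus V_3$, where $V_2 := \langle \sigma_1, y^3, z^3\rangle \cong (\bbZ/2)^3$ is the $2$-primary part and $V_3 := \langle y^2, z^2\rangle \cong (\bbZ/3)^2$ is the $3$-primary part. Since the primary components are characteristic in $F_1$, $\Aut(F_1) = \Aut(V_2) \times \Aut(V_3) = \GL(3, \bbF_2) \times \GL(2, \bbF_3)$, which bounds $W(F_1)$ from above. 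The plan is to show this bound is attained by producing enough normalizing elements.

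To pin down the upper bound I would compute the antisymmetric pairing $m \colon F_1 \times F_1 \to \langle c\rangle$ by lifting to the simply connected cover $(\E_6)^{\sigma_1} \cong (\SU(6) \times \Sp(1))/\langle(-I,-1)\rangle$. The relations $A_6 B_6 = e^{2\pi i/6} B_6 A_6$ in $\SU(6)$ and $\mathbf{i}\mathbf{j} = -\mathbf{j}\mathbf{i}$ in $\Sp(1)$ give $[\tilde y, \tilde z] = c^{-1}$, and by bilinearity $m(y^a z^b, y^p z^q) = c^{bp - aq}$. Consequently $m$ vanishes on any pair involving a $2$-primary element (by coprimality of $2$ and $o(c)=3$) and is nondegenerate on $V_3$. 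Inner $G_0$-conjugations preserve $m$, while conjugation by the outer involution $\tau \in G \setminus G_0$ inverts $c$; thus $W(F_1)|_{V_3}$ is contained in the similitude group $\GSp(V_3) = \GL(2, \bbF_3)$ (the two groups agree in rank~$2$), confirming $W(F_1) \subseteq \GL(3, \bbF_2) \times \GL(2, \bbF_3)$.

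For the lower bound I would realize each of the two factors separately. On $V_3$: the normalizer of the Heisenberg-like subgroup $\langle A_6, B_6\rangle \cdot Z(\SU(6))$ in $\SU(6)$ projects onto $\SL(2, \bbZ/6) \cong \SL(2, \bbF_2) \times \SL(2, \bbF_3)$ acting on $(\bbZ/6)^2$; the $\SL(2, \bbF_3)$-factor induces an $\SL(V_3)$-action on $V_3$, and conjugation by $\tau$ (realizing the determinant~$-1$ coset on $V_3$) extends this to the full $\GL(V_3)$. On $V_2$: the same Heisenberg normalizer together with that of $\langle \mathbf{i}, \mathbf{j}\rangle$ in $\Sp(1)$ contributes a copy of the stabilizer of $\sigma_1$ inside $\GL(V_2)$ (of order $24$); and Lemma~\ref{L:E6-inner finite-order} furnishes conjugations in $G$ exchanging $\sigma_1$ with each of the other six involutions of $V_2$ (all of which lie in the single $G_0$-conjugacy class of $\sigma_1$), enlarging the stabilizer to the full $\GL(V_2) = \GL(3, \bbF_2)$.

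The main obstacle I anticipate is the independence step: to obtain the product $\GL(V_2) \times \GL(V_3)$ rather than merely a subgroup, I must show the actions on $V_2$ and $V_3$ are independently realizable, i.e., $N_G(F_1) \cap C_G(V_3)$ surjects onto $\GL(V_2)$ and $N_G(F_1) \cap C_G(V_2)$ onto $\GL(V_3)$. The $V_3$-direction follows cleanly from the construction above, but the $V_2$-direction requires explicitly producing elements of $C_G(V_3)$ exchanging $\sigma_1$ with, say, $y^3$; this is where the identification of $V_3$ with a conjugate of $K_1$ or $K_2$ (Lemma~\ref{L:E6-3 group-rank2}) together with a concrete description of $C_G(V_3)$ is used. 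Once independence is verified, $W(F_1) \supseteq \GL(3, \bbF_2) \times \GL(2, \bbF_3)$, matching the upper bound.
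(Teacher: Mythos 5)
Your setup (primary decomposition $F_1=V_2\times V_3$, the upper bound $W(F_1)\subseteq\Aut(V_2)\times\Aut(V_3)=\GL(3,\mathbb{F}_2)\times\GL(2,\mathbb{F}_3)$, the computation of $m$, and the identification of $V_3$ with a conjugate of $K_2$) matches the paper's, and the individual ingredients you list for the lower bound are all correct. But the proof is not complete: the step you yourself flag as ``the main obstacle'' --- showing that $W(F_1)$ is the \emph{full} direct product rather than a proper subdirect product --- is exactly the content of the proposition, and you defer it (``once independence is verified\dots''). Surjectivity of $W(F_1)$ onto each of $\GL(V_2)$ and $\GL(V_3)$ separately does not imply $W(F_1)=\GL(V_2)\times\GL(V_3)$ (Goursat: a subdirect product can be a proper subgroup, e.g.\ a fiber product over a common quotient), and you never actually produce elements of $N_G(F_1)\cap C_G(V_3)$ realizing all of $\GL(V_2)$, nor of $N_G(F_1)\cap C_G(V_2)$ realizing all of $\GL(V_3)$. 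So as written this is a genuine gap, not merely an omitted routine verification.

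The paper closes this gap by a counting argument that avoids the independence question altogether: working inside $G^{\sigma_1}\cong((\SU(6)\times\Sp(1))/\langle(-I,-1),(\omega I,1)\rangle)\rtimes\langle\tau\rangle$ it computes the \emph{whole} stabilizer $\Stab_{W(F_1)}(\sigma_1)\cong P(1,2,\mathbb{F}_2)\times\GL(2,\mathbb{F}_3)$, of order $24\times 48$; transitivity of $W(F_1)$ on the seven involutions of $V_2$ (all conjugate to $\sigma_1$ by Lemma \ref{L:E6-inner finite-order}, and the conjugating elements can be adjusted to normalize $F_1$ by uniqueness of the conjugacy class of $F_1$) then gives $|W(F_1)|=7\times 24\times 48=|\GL(3,\mathbb{F}_2)|\times|\GL(2,\mathbb{F}_3)|$, which together with your upper bound forces equality. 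If you want to salvage your route, the most economical fix is to replace the two separate surjectivity claims by this single order computation: the essential new input you are missing is the explicit description of $\Stab_{W(F_1)}(\sigma_1)$ as a product, which one reads off from the normalizers of $\langle A_6,B_6\rangle$ in $\SU(6)$ and of $\langle\mathbf{i},\mathbf{j}\rangle$ in $\Sp(1)$ together with $\tau$.
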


\begin{proof}
Let $K$ be the Sylow 3-subgroup of $F_1$. Then the function $m$ on $K$ is nontrivial and non-identity
elements of $K$ are conjugate to each other. Hence $K\sim K_2$ by Lemma \ref{L:E6-3 group-rank2}. Let
$K'$ be the Sylow 2-subgroup of $F_1$. There is an injecrtive homomorphism $\phi: W(F_1)
\longrightarrow W(K)\times W(K')$. By \cite{Yu} Proposition 6.10, $W(K')\cong\GL(3,\mathbb{F}_2)$; and
by the descripotion of $K_2$, $W(K_2)\cong\GL(2,\mathbb{F}_3)$. Since $$G^{\sigma_1}\cong((\SU(6)\times
\Sp(1))/\langle(-I,-1),(\omega I,1)\rangle)\rtimes\langle\tau\rangle,$$ where $$\tau[(X,\lambda)]\tau^{-1}
=[(J_3\overline{X}J_3^{-1},\lambda)],$$ $X\in\SU(6)$, $\lambda\in\Sp(1)$, we have $\Stab_{W(F_1)}(\sigma_1)
\cong P(1,2,\mathbb{F}_2)\times\GL(2,\mathbb{F}_3)$. Thus $|W(F_1)|=7\times |\Stab_{W(F_1)}(\sigma_1)|=
|\GL(3,\mathbb{F}_2)|\times|\GL(2,\mathbb{F}_3)|$. Therefore $W(F_1)\cong\GL(3,\mathbb{F}_2)\times
\GL(2,\mathbb{F}_3)$.
\end{proof}



In \[G^{\pi(\theta_1)}\cong((\SU(3)\times\SU(3)\times\SU(3))/\langle(\omega I,\omega I,\omega I),
(\omega I,\omega^{-1} I,I)\rangle)\rtimes\langle\eta\rangle,\] let \[F_2=\langle\pi(\theta_1),[(A,A,A)],
[(B,B,B)]\rangle\] and \[F_3=\langle\pi(\theta_1),\eta,[(A,A,A)],[(B,B,B)]\rangle.\] It is clear that
$F_3$ is a maximal abelian subgroup of $G$, while $F_2$ is not.

\begin{prop}\label{P:E6-inner finite}
Any elementary abelian 3-subgroup $F$ of $G_0$ satisfying the condition $(*)$ is conjugate to $F_2$ or $F_3$.
\end{prop}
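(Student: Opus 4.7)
The plan is to show first that $\rank F \ge 3$, and then to split the analysis according to whether the antisymmetric bimultiplicative form $m$ restricted to $F$ is trivial or not; these two branches will produce the two conjugacy classes $F_{2}$ and $F_{3}$.

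To rule out rank $\le 2$: by Lemma \ref{L:E6-inner finite-order} every nontrivial $x \in F$ is conjugate to $\pi(\theta_{1})$ or $\pi(\theta_{2})$, whose centralizer Lie algebras $\frg_{0}^{x}$ have dimensions $24$ and $30$, so rank $1$ is impossible. For rank $2$, either $m|_{F}$ is nontrivial and $F \sim K_{1}$ or $K_{2}$ by Lemma \ref{L:E6-3 group-rank2}, in which case one reads off from the descriptions of $(G_{0})^{\pi(\theta_{i})}$ that $\eta$ fixes a diagonal $\mathfrak{su}(3)$ (respectively $\frg_{2}$) inside the identity component, giving $\dim \frg_{0}^{F} > 0$; or $m|_{F}$ is trivial, $F$ lifts to $\E_{6}$, and Steinberg's theorem (Proposition \ref{P:Steinberg-centralizer}) yields a connected and positive-dimensional centralizer. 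So $\rank F \ge 3$.

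Assume first $m|_{F}$ is nontrivial. The goal is to locate a subgroup $K \subset F$ conjugate to $K_{1}$. Pick any rank-$2$ subgroup $K$ on which $m$ is nontrivial; by Lemma \ref{L:E6-3 group-rank2}, $K \sim K_{1}$ or $K_{2}$. In the $K_{2}$ case, choose $z \in F \setminus K$ and examine the rank-$2$ planes in $\langle K,z\rangle \cong \bbF_{3}^{3}$, applying Lemmas \ref{L:E6-eta} and \ref{L:E6-eta'} to classify which elements of these planes are conjugate to $\pi(\theta_{1})$ versus $\pi(\theta_{2})$; a counting argument combined with the nontriviality of $m|_{F}$ exhibits a rank-$2$ plane with nontrivial $m$ containing an element conjugate to $\pi(\theta_{1})$, and such a plane is $\sim K_{1}$ by Lemma \ref{L:E6-3 group-rank2} (since $K_{2}$ contains no element conjugate to $\pi(\theta_{1})$). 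After conjugation, assume $K_{1} \subset F$. Then $F \subset C_{G_{0}}(K_{1})$, and from the description of $(G_{0})^{\pi(\theta_{1})}$ the identity component of $C_{G_{0}}(K_{1})$ is the diagonal $\PU(3)$ inside $(\SU(3))^{3}/Z$ (as $\eta$ cyclically permutes the three factors). The image of $F/K_{1}$ in this $\PU(3)$ is an abelian $3$-subgroup with zero-dimensional centralizer, which up to conjugacy is forced to be the non-toral subgroup $\langle [A],[B]\rangle$ of order $9$. Reassembling yields $F = \langle \pi(\theta_{1}), \eta, [(A,A,A)], [(B,B,B)]\rangle = F_{3}$.

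Assume now $m|_{F}$ is trivial, so $F$ lifts to an abelian $3$-subgroup $\tilde F \subset \E_{6}$ of the same rank; since $\ker \pi$ is central and finite, $\tilde F$ also satisfies $(*)$. The exact analogue of Lemma \ref{L:E6-inner finite-order}, using Steinberg's theorem for the simply connected $\E_{6}$, shows that every nontrivial element of $\tilde F$ is conjugate to $\theta_{1}$ or $\theta_{2}$. After conjugation assume $\theta_{1} \in \tilde F$, so $\tilde F \subset C_{\E_{6}}(\theta_{1}) = (\SU(3))^{3}/\langle (\omega I, \omega I, \omega I)\rangle$. Projecting to $\PU(3)^{3}$, each of the three projections of $\tilde F$ must be an abelian $3$-subgroup of $\PU(3)$ with zero-dimensional centralizer in $\mathfrak{su}(3)$, hence conjugate to $\langle [A],[B]\rangle$. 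The abelian constraint on $\tilde F$ then forces the three projections to be compatibly ``diagonalized''; accounting for the central identifications gives $F = \langle \pi(\theta_{1}), [(A,A,A)], [(B,B,B)]\rangle = F_{2}$.

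The main obstacle I anticipate is the promotion step in the nontrivial-$m$ case: showing that a rank-$2$ subgroup $\sim K_{2}$ forces the existence of a rank-$2$ subgroup $\sim K_{1}$ inside $F$ is not automatic from Lemma \ref{L:E6-3 group-rank2} and must be squeezed out of Lemma \ref{L:E6-eta'} together with careful tracking of the form $m$ on $\bbF_{3}^{3}$. A secondary difficulty is the diagonalization step in the trivial-$m$ case, where the abelian condition on $\tilde F$ has to be used to match the three $\PU(3)$-projections simultaneously.
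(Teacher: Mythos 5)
Your overall architecture — splitting on whether $m|_{F}$ is trivial, sending the trivial case into $(G_1)^{\theta_1}\cong(\SU(3)\times\SU(3)\times\SU(3))/\langle(\omega I,\omega I,\omega I)\rangle$, and the nontrivial case into Lemma \ref{L:E6-3 group-rank2} — is exactly the paper's, and the trivial-$m$ branch and the $K_1$ branch are essentially right. One small omission in the trivial-$m$ branch: you assume ``WLOG $\theta_1\in\tilde F$'' without excluding the possibility that every nontrivial element of $\tilde F$ is conjugate to $\theta_2$; this is excluded because $(G_1)^{\theta_2}$ has a two-dimensional center, which would centralize $\tilde F$ and violate $(*)$.

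The genuine gap is the ``promotion'' step in the nontrivial-$m$ case, which you yourself flag as the main obstacle. The paper does not promote at all: if $K_2\subset F$ then $F\subset(G_0)^{K_2}=\G_2\times K_2$, so $F=(F\cap\G_2)\times K_2$ and $F\cap\G_2$ would be an elementary abelian $3$-subgroup of $\G_2$ with $\frg_2^{F\cap\G_2}=0$; no such subgroup exists, so the $K_2$ alternative is simply impossible. Your promotion, as sketched, is confined to a single rank-$3$ subgroup $\langle K,z\rangle$ and fails there: the radical of $m|_{\langle K_2,z\rangle}$ is exactly $\langle z\rangle$, so every plane containing $z$ is isotropic, and it can happen (take $z\in\G_2$ of order $3$ with $(\G_2)^{z}\cong\SU(3)$) that every element of $\langle K_2,z\rangle$ outside $\langle z\rangle$ is conjugate to $\pi(\theta_2)$ — the trace identity $\dim\frg^{E}=\frac{1}{27}\bigl(78-3a+6b\bigr)$, where $a,b$ count elements conjugate to $\pi(\theta_1),\pi(\theta_2)$, is satisfied with $(a,b)=(2,24)$ and $\dim\frg^{E}=8=\dim\frg_2^{z}$. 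Moreover Lemmas \ref{L:E6-eta} and \ref{L:E6-eta'} only identify the classes inside $K_1$ and $\langle\pi(\theta_2),\eta'\rangle$, not inside an arbitrary $\langle K_2,z\rangle$, so they do not give the classification you invoke. A correct counting argument does exist, but it must be run on all of $F$ using $\dim\frg^{F}=0$: one finds that exactly $\frac{|F|-27}{3}$ nontrivial elements of $F$ are conjugate to $\pi(\theta_2)$, while at least $\frac{8}{9}|F|$ elements lie outside $\ker(m|_{F})$, so some element conjugate to $\pi(\theta_1)$ pairs nontrivially under $m$ with another element and spans a plane conjugate to $K_1$. As written, your proposal supplies neither this global computation nor the $\G_2$ fact, so the nontrivial-$m$ case is not closed.
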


\begin{proof}
If the function $m$ on $F$ is trivial, $F$ lifts to an isomorphic elementary abelian 3-subgroup $F'$ of
$G_1=\E_6$. By Lemma \ref{L:E6-inner finite-order}, any non-identity element of $F'$ is conjugate to
$\theta_1$. Without loss of generality we assume that $\theta_1\in F'$. Then \[F\subset(G_1)^{\theta_1}
\cong(\SU(3)\times\SU(3)\times\SU(3))/\langle(\omega I,\omega I,\omega I)\rangle\] and hence $F\sim F_2$.
If the function $m$ is nontrivial on $F$, by Lemma \ref{L:E6-3 group-rank2} $F$ contains a subgroup
conjugate to $K_1$ or $K_2$. We may assume that $K_1\subset F$ or $K_2\subset F$. In the first case, one
has \[F\subset(G_0)^{K_1}=\PSU(3)\times K_1\] and hence $F\sim F_3$. In the second case, one has
\[F\subset(G_0)^{K_2}=\G_2\times K_2.\] However $\G_2$ does not possess any elementary abelian 3-subgroup
satisfying the condition $(*)$.
\end{proof}

\begin{prop}\label{P:Weyl-E6-F2 and F3}
We have $W(F_2)\cong\SL(3,\mathbb{F}_3)$ and there is an exact sequence \[1\longrightarrow\mathbb{F}_3^{3}
\longrightarrow W(F_3)\longrightarrow P(2,1,\mathbb{F}_3)\longrightarrow 1.\]
\end{prop}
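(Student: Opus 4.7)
The plan for $W(F_2) \cong \SL(3,\mathbb{F}_3)$ is to apply orbit-stabilizer to the natural action of $W(F_2) \leq \GL(3,\mathbb{F}_3)$ on $F_2 \setminus \{1\}$. First, for the stabilizer of $\pi(\theta_1)$ I would work inside $G^{\pi(\theta_1)}$: the outer-component generators $\eta$ and $\tau$ centralize $F_2$ pointwise (immediate from the explicit formulas) and so contribute only to $C_G(F_2)$; inside the identity component $L$, the diagonal copy of $N_{\SU(3)}(\langle A,B\rangle)$ normalizes $F_2$ and acts on $H := \langle[(A,A,A)],[(B,B,B)]\rangle$ as the standard $\SL(2,\mathbb{F}_3)$ on $\langle A,B\rangle/\langle\omega I\rangle$, while non-diagonal elements such as $(B,I,I) \in \SU(3)^3$ produce the transvections on $F_2$ sending $[(A,A,A)]$ to $[(A,A,A)]\pi(\theta_1)$ and fixing the other generators. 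These together realize the stabilizer $\SL(2,\mathbb{F}_3) \ltimes \mathbb{F}_3^2$ of order $216$.

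Next I would show that $W(F_2)$ acts transitively on $F_2 \setminus \{1\}$: by Lemma \ref{L:E6-inner finite-order} all $26$ non-identity elements of $F_2$ are $G$-conjugate to $\pi(\theta_1)$, and by Proposition \ref{P:E6-inner finite} the subgroup $F_2$ is unique up to $G$-conjugacy among elementary abelian 3-subgroups of $G_0$ satisfying $(\ast)$ with trivial commutator form, so for each $x \in F_2 \setminus \{1\}$ the element of $G$ sending $\pi(\theta_1)$ to $x$ can be corrected by a further conjugation to land in $N_G(F_2)$. Combining, $|W(F_2)| = 26 \cdot 216 = 5616 = |\SL(3,\mathbb{F}_3)|$; since $\SL(3,\mathbb{F}_3)$ is the unique index-two subgroup of $\GL(3,\mathbb{F}_3)$, the identification $W(F_2) \cong \SL(3,\mathbb{F}_3)$ follows.

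For the exact sequence on $W(F_3)$, I would first compute the commutator form $m$ on $F_3$: direct verification on generators identifies $R := \langle[(A,A,A)],[(B,B,B)]\rangle$ as the radical, with $m$ non-degenerate on $F_3/R$; hence $W(F_3)$ preserves $R$. Among the four Lagrangian lifts of lines in $F_3/R$, I would argue using Lemma \ref{L:E6-eta} together with Lemma \ref{L:E6-3 group-rank2} applied to rank-2 sub-quotients and dimension counts for fixed-point subalgebras that only $F_2 = \pi^{-1}(\langle\pi(\theta_1)\rangle)$ consists entirely of $\pi(\theta_1)$-type elements (the other lifts contain $\pi(\theta_2)$-type elements such as $\eta$ or $[(A,A,A)]\eta$); since conjugacy classes are preserved by $N_G(F_3)$, the subgroup $F_2$ is $W(F_3)$-invariant. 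The restriction map $W(F_3) \to \Aut(F_2) = \GL(3,\mathbb{F}_3)$ then has image in the parabolic $P(2,1,\mathbb{F}_3)$ stabilizing $R$ and kernel $\Hom(F_3/F_2,F_2) \cong \mathbb{F}_3^3$ consisting of transvections along $F_2$. The main obstacle I anticipate is establishing surjectivity onto $P(2,1,\mathbb{F}_3)$, in particular producing an element of $N_G(F_3)$ realizing the $\mathbb{F}_3^\ast$-scaling by $-1$ on $F_2/R = \langle\pi(\theta_1)\rangle$, which amounts to inverting $\pi(\theta_1)$ while preserving $F_3$; the $\GL_2(\mathbb{F}_3)$-Levi on $R$ and the unipotent $\mathbb{F}_3^2$ can be realized by the same type of normalizing elements used in the analysis of $W(F_2)$.
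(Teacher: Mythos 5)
Your treatment of $W(F_2)$ is correct and is essentially the paper's argument: transitivity of $W(F_2)$ on $F_2\setminus\{1\}$ together with the stabilizer computation gives $|W(F_2)|=26\cdot 216=|\SL(3,\mathbb{F}_3)|$, and your identification of the stabilizer as $\mathbb{F}_3^{2}\rtimes\SL(2,\mathbb{F}_3)$ is in fact stated more carefully than in the paper, which records only the Levi factor $\SL(2,\mathbb{F}_3)$.

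The $W(F_3)$ argument, however, breaks at its pivot: the subgroup $F_2=R\langle\pi(\theta_1)\rangle$ is \emph{not} $W(F_3)$-invariant, and the characterization of it as the unique lift of a line in $F_3/R$ consisting entirely of $\pi(\theta_1)$-type elements is false. Since $\frg^{F_3}=0$ and $\tr\Ad(x)=-3$ or $6$ according as $x\sim\pi(\theta_1)$ or $x\sim\pi(\theta_2)$, exactly $18$ of the $80$ nonidentity elements of $F_3$ are conjugate to $\pi(\theta_2)$, and these form precisely the two nontrivial $R$-cosets inside the single rank-$3$ subgroup $K=R\langle\eta\rangle$ (this is the paper's assertion that, in its model $F'_3$, the $\pi(\theta_2)$-class equals $\eta''\ker(m|_{F_3})\cup\eta''^{2}\ker(m|_{F_3})$). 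Hence all three of the remaining lifts $R\langle\pi(\theta_1)\eta^{j}\rangle$, $j=0,1,2$, are pure of type $\pi(\theta_1)$; note that Lemma \ref{L:E6-eta} already gives $\pi(\theta_1)\eta\sim\pi(\theta_1)$, and both of your proposed witnesses $\eta$ and $[(A,A,A)]\eta$ lie in the one lift $K$, not in the other two. Worse, these three pure lifts are permuted transitively by $W(F_3)$: the paper exhibits elements of $N_G(F_3)$ coming from $T_3\times T_3\times T_3$ that centralize $K$ and translate the fourth generator by arbitrary elements of $K$, i.e.\ nontrivial transvections on $F_3/R$ with axis $K/R$, and such a transvection carries $F_2$ onto $R\langle\pi(\theta_1)\eta\rangle$. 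So your restriction map $W(F_3)\to\Aut(F_2)$ is not defined. The repair is to run the identical argument with the genuinely characteristic subgroup $K$ (radical together with the $\pi(\theta_2)$-class) in place of $F_2$: then $W(F_3)\to W(K)\subset P(2,1,\mathbb{F}_3)$ has kernel $\Hom(F_3/K,K)\cong\mathbb{F}_3^{3}$ (the possibility of acting by $-1$ on $F_3/K$ while fixing $K$ is excluded because $W(F_3)$ preserves the symplectic form $m$ on $F_3/R$), and surjectivity follows from transitivity on the $18$ elements conjugate to $\pi(\theta_2)$ together with the explicit normalizing elements --- which is exactly the paper's proof.
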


\begin{proof}
Since non-identity elements in $F_2$ are all conjugate to $\pi(\theta_1)$, $W(F_2)$ acts transitively on
$F_2-\{1\}$. On the other hand, from the construction of $F_2$ one can show that
$\Stab_{W(F_1)}(\pi(\theta_1))=\SL(2,\mathbb{F}_3)$. Therefore $W(F_2)\cong\SL(3,\mathbb{F}_3)$.
In $G^{\pi(\theta_1)}$, let \[F'_3=\langle\pi(\theta_1),[(A,A,A)],[(I,A,A^2)],[(B,B,B)]\rangle\] and
$K=\langle\pi(\theta_1),[(A,A,A)]\rangle$. Apparently $F'_3$ is a maximal finite abelian subgroup of $G_0$.
Hence $F'_3\sim F_3$. One can show that $$\ker(m|_{F'_3})=\langle\pi(\theta_1),[(A,A,A)]\rangle$$ and
\[\{x\in F'_3: x\in\pi(\theta_2)\}=\eta''\ker(m|_{F_3})\cup\eta''^2\ker(m|_{F_3}),\] where $\eta''=
[(I,A,A^2)]$. Let $$K=\langle\pi(\theta_1),[(A,A,A)],[(I,A,A^2)]\rangle,\quad K'=\langle\pi(\theta_1),
[(A,A,A)]\rangle.$$ Then, $K'$ and $K$ are stable under the action of $W(F_3)$. Hence there are homomorphisms
$p: W(F'_3)\longrightarrow W(K)$ and $p': W(F'_3)\longrightarrow W(K')$. By the uniqueness of conjugacy class
of maximal abelian subgroups of $G$ being isomorphic to $(c_3)^{4}$ and both $K.K'$ are stable under $W(F_3)$,
one sees that $p$ and $p'$ are surjective. The action of $W(K)$ also stabilizes $K'$, hence
$W(K)\subset P(2,1,\mathbb{F}_3)$. It is clear that $W(K')\cong\GL(2,\mathbb{F}_3)$. Calculation in
$G^{\pi(\theta_1)}$ shows \[C_{G}(K')=\langle F'_3,T_3\times T_3\times T_3,[(I,B,B^2)],\eta,\tau\rangle\]
and \[C_{G}(K)=\langle F'_3,T_3\times T_3\times T_3\rangle,\] where $T_3$ is the group of diagonal matrices
in $\SU(3)$. From the equation about $C_{G}(K')$, we get $\ker p'\cong\mathbb{F}_3^{3}\times
(\mathbb{F}_3^{3}\times\{\pm{1}\})$. Here we note that the elements $[(I,B,B^2)]$, $\eta$, $\tau\rangle$
commute with $[(B,B,B)]$ and their conjugation action could map $\eta''=[(I,A,A^{2})]$ to any element in
$\eta''\ker(m|_{F_3})\cup\eta''^2\ker(m|_{F_3})$; elements in $T_3\times T_3\times T_3$ commute with $\eta''$
and their conjugation action could map $[(B,B,B)]$ to any element in $[(B,B,B)]K$. Hence $|W(F_3)|=
|\GL(2,\mathbb{F}_3)|\times 2\times 3^2\times 3^3=|\mathbb{F}_3^{3}|\times|P(2,1,\mathbb{F}_3)|$. From the
equation about $C_{G}(K')$, we get $\ker p'\cong\mathbb{F}_3^{3}$ since the conjugation action of elements in
$T_3\times T_3\times T_3$ could map $[(B,B,B)]$ to any element in $[(B,B,B)]K$. In turn we get the exact
sequence in the proposition.
\end{proof}


Let \[H_1=(T_3\times\SU(3)\times\SU(3))/\langle(\omega I,\omega I,\omega I),(\omega I,\omega^{-1} I,I)\rangle,\]
and \[H_2=(\Spin(8)\times\U(1)\times\U(1))/\langle(c,-1,1),(-c,1,-1)\rangle,\] which are Levi subgroups of
$G_0$ with root systems $2A_2$ and $D_4$, where $T_3$ is a maximal torus of $\SU(3)$. In $H_1$, let \[F_4=
T_3\cdot\langle[(A,A)],[(B,B^{-1})]\rangle.\] In $H_2$, let \[F_5=\langle e_1e_2e_3e_4,e_5e_6e_7e_8,e_1e_2e_5e_6,
e_1e_3e_5e_7\rangle\cdot(\U(1)\times\U(1)).\]

\begin{prop}\label{P6:E6-inner-nonfinite}
Any closed non-finite abelian subgroup $F$ of $G_0$ satisfying the condition $(*)$ is either a maximal torus or
is conjugate to one of $F_4$, $F_5$.
\end{prop}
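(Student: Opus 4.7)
The plan is to invoke Lemma \ref{L:center} to reduce to a finite-group classification inside a Levi subgroup. Writing $\fra = \Lie F\otimes_{\bbR}\bbC$, $\frl = C_{\frg}(\fra)$, and $L = C_{G_0}(\fra)$, the lemma gives $\fra = Z(\frl)$, and $F' := F\cap L_s$ is a finite abelian subgroup of $L_s := [L,L]$ satisfying $(\ast)$ inside $L_s$; Proposition \ref{P:Steinberg-centralizer} makes $L$ connected. Hence $F = Z(L)_0\cdot F'$, and classifying non-finite $F$ reduces to classifying pairs $(L, F')$ where $L$ is a proper Levi subgroup of $G_0$ and $F'\subset L_s$ is a finite abelian subgroup with $\dim\frl_s^{F'}=0$. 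The extreme case $\fra = \frh$ (a Cartan subalgebra) gives $L = T$ a maximal torus and $F' = \{1\}$, recovering $F = T$.

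Next I would run through the proper Levi subalgebras of $\fre_6$, parametrized by Dynkin subdiagrams, realise $L_s\subset G_0$ as the image of the simply-connected form from $\E_6$ modulo $L_s\cap\ker\pi$, and apply the classification of finite abelian subgroups of $L_s$ satisfying $(\ast)$ from \cite{Yu2}, \cite{Yu3} and the earlier lemmas of this section. The key filter is that every finite abelian subgroup of $\SU(n+1)$ is simultaneously diagonalisable, hence lies in a maximal torus, and so has centralizer in $\mathfrak{su}(n+1)$ of dimension at least $n$. Thus any $A_n$-factor ($n\ge 1$) of $\frl_s$ which remains isomorphic to $\SU(n+1)$ in $G_0$ forces $\dim\frl_s^{F'}>0$. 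Since $|\ker\pi|=3$ is coprime to the centers of simply-connected $A_1, A_3, A_4$ and $D_5$, every Levi type containing such a factor is eliminated; the $A_5$ case reduces to $\SU(6)/(\bbZ/3)$, where a refinement of the commutator argument for the Heisenberg lift in $\SU(6)$ eliminates it as well. The survivors are $\frl_s=2A_2$ (where the diagonal $\bbZ/3$ absorbed into $\ker\pi$ turns the two $\SU(3)$-factors of $L_s$ into a quotient admitting the Heisenberg-type abelian 3-subgroup) and $\frl_s=D_4$ (where $\Spin(8)$ itself already admits the rank-$4$ elementary abelian 2-subgroup with zero Lie algebra centralizer).

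In the surviving cases, $L$ is conjugate to $H_1$ or $H_2$ respectively. For $L=H_1$, the finite abelian subgroups of $L_s$ satisfying $(\ast)$ are unique up to $L$-conjugacy by \cite{Yu2}, Proposition 2.1, giving $F\sim F_4$; for $L=H_2$, the rank-$4$ elementary abelian 2-subgroups of $\Spin(8)\subset L_s$ with trivial Lie algebra centralizer are unique up to $L$-conjugacy by the $\Spin(8)$ classification recalled in \cite{Yu3}, giving $F\sim F_5$. The hard part will be the case-by-case analysis of the $A_5$- and $D_5$-containing Levi types, where neither the simply-connected obstruction nor the maximal-torus argument is immediate and one must inspect the finite abelian subgroups of $\SU(6)/(\bbZ/3)$ and of $\Spin(10)$ directly, together with the careful matching of $\ker\pi$ with the relevant centers of $L_s$ so that the $2A_2$-Levi really gives rise to the quotient appearing in $H_1$. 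After that, uniqueness of $F$ up to $G_0$-conjugacy follows from uniqueness of $L$ and of $F'\subset L_s$.
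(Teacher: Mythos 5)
Your proposal follows essentially the same route as the paper: reduce via Lemma \ref{L:center} to finite abelian subgroups of $L_s$ for the proper Levi subgroups $L$, use the toral-subgroup obstruction to kill every simply connected type-$A$ factor, check which Levi types have $c\in \tilde L_s$ (so that $L_s$ fails to be simply connected), and conclude that only $\emptyset$, $2A_2$ and $D_4$ survive, giving the maximal torus, $F_4$ and $F_5$. If anything you are slightly more careful than the paper's own write-up, which lists only $\emptyset$, $2A_2$, $2A_2+A_1$ as the non-simply-connected possibilities and thereby passes silently over the $A_5$ Levi, where $L_s\cong\SU(6)/\langle c\rangle$ is also not simply connected and must be excluded by exactly the commutator/Heisenberg argument you sketch.
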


\begin{proof}
Let $L=C_{G_0}(\Lie F)$. By Lemma \ref{L:center}, $F=Z(L)_0\cdot(F\cap L_{s})$ and $F':=F\cap L_s$ is a
finite abelian subgroup of $L_{s}$ satisfying the condition $(*)$, where $L_{s}=[L,L]$. If $L_{s}$ is not
simply connected, then the root system of $\frl$ is of type $\emptyset$, $2A_2$ or $2A_2+A_1$. In the case
of the root system of $\frl$ is $\emptyset$, $F$ is a maximal torus of $G_0$. In the case of the root system
of $\frl$ is if type $2A_2$, $L_{s}\cong(\SU(3)\times\SU(3))/\langle(\omega I,\omega^{-1} I)\rangle$ and hence
$F\sim F_4$. In the case of the root system of $\frl$ is of type $2A_2+A_1$, \[L_{s}\cong(\SU(3)\times
\SU(3))/\langle(\omega I,\omega I)\rangle\times\Sp(1),\] which possesses no finite abelian subgroups satisfying
the condition $(*)$. If $L_{s}$ is simply connected, it can not have a type $A$ factor. Hence the root system of
$\frl$ is of type $D_4$ or $D_5$. By \cite{Yu3}, Proposition 2.2, in this case $F\sim F_5$.
\end{proof}

\begin{prop}\label{P:Weyl-E6-F4 and F5}
There are exact sequences \[1\longrightarrow(\mathbb{F}_3)^2\rtimes\SL(2,\mathbb{F}_3)\longrightarrow W(F_4)
\longrightarrow D_6\longrightarrow 1\] and \[1\longrightarrow\Hom((\mathbb{F}_2)^3,(\mathbb{F}_2)^2)\rtimes
\GL(3,\mathbb{F}_2)\longrightarrow W(F_5)\longrightarrow D_6\longrightarrow 1.\]
\end{prop}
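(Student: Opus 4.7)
My plan for both $W(F_4)$ and $W(F_5)$ is to analyze the conjugation action through the filtration by the identity component. Writing $F_i^0$ for the $2$-dimensional identity-component torus and $\Gamma_i := F_i/F_i^0$ for the finite component group, the restriction of the action of $N_G(F_i)$ to $F_i^0$ yields a homomorphism
\[
W(F_i) \longrightarrow \Aut(F_i^0),
\]
whose image I will show is the dihedral group $D_6$ of order $12$ in both cases. The kernel then lies in the Levi subgroup $L_i := C_G(F_i^0)$ and can be further analyzed through its action on $\Gamma_i$, producing the claimed exact sequence.

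To identify the image: in the $F_4$ case, $F_4^0$ has Lie algebra $Z(\frl)$ for $\frl$ of type $2A_2$, and the image in $\Aut(F_4^0)$ is generated by the Weyl group of $2A_2$ acting on the central torus together with the involution of $\fre_6$ swapping the two $A_2$ components, which together produce the full $D_6$. In the $F_5$ case, $F_5^0 = \U(1) \times \U(1)$ is the center of the $D_4$-Levi, and the triality $S_3$-action of $\eta$ on $\mathfrak{so}(8)$ together with the reflection induced by $\tau$ on $\U(1) \times \U(1)$ --- both already visible in the description of $G^{\pi(\theta_2)}$ given earlier in this section --- generate the dihedral symmetries of the resulting hexagonal lattice. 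In each case I must then verify that every generator of $D_6$ actually lifts to an element of $N_G(F_i)$ and not merely of $N_G(F_i^0)$; this is accomplished by correcting each candidate lift by an element of $L_i$ that absorbs the discrepancy on $\Gamma_i$.

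For the kernel $K_i$ of $W(F_i) \to D_6$: this kernel is contained in $L_i$, and restriction to $\Gamma_i = F_i \cap (L_i)_s$ produces a further short exact sequence
\[
1 \longrightarrow T_i \longrightarrow K_i \longrightarrow Q_i \longrightarrow 1,
\]
where $Q_i$ is the image in $\Aut(\Gamma_i)$ and $T_i$ is the subgroup of ``translations'' --- classes in $N_{L_i}(F_i)$ acting trivially on both $F_i^0$ and $\Gamma_i$. For $F_4$, $(L_4)_s \cong (\SU(3) \times \SU(3))/\langle(\omega I, \omega^{-1} I)\rangle$ and $\Gamma_4 \cong (\mathbb{F}_3)^2$; the matrix-group analysis of \cite{Yu2}, Proposition 2.1 identifies $Q_4$ with the symplectic group $\Sp(\Gamma_4, m) = \SL(2, \mathbb{F}_3)$. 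For $F_5$, $(L_5)_s \cong \Spin(8)$ and $\Gamma_5 \cong (\mathbb{F}_2)^3$; the analysis of \cite{Yu}, Proposition 6.10 identifies $Q_5$ with $\GL(3, \mathbb{F}_2)$. The translations in $T_i$ are controlled by the coboundary $t \mapsto (x \mapsto t x t^{-1} x^{-1})$, which defines a homomorphism $\Gamma_i \to F_i^0[\ell_i]$ (with $\ell_4 = 3$, $\ell_5 = 2$); a direct computation inside a maximal torus of $L_i$ realizes exactly $(\mathbb{F}_3)^2$ of translations for $F_4$ and the entirety of $\Hom((\mathbb{F}_2)^3, (\mathbb{F}_2)^2)$ for $F_5$. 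The natural $Q_i$-action on $T_i$ derived from its action on $\Gamma_i$ makes $K_i$ the claimed semidirect product.

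The main obstacle will be confirming that the image of $W(F_i) \to \Aut(F_i^0)$ is precisely $D_6$: showing it is no larger follows from the rigidity of the lattice structure induced by the ambient root system, while showing it is no smaller requires explicit exhibition of $D_6$-generators as elements of $N_G(F_i)$ rather than merely $N_G(F_i^0)$. Once this geometric input is secured, everything else reduces to Weyl-group computation inside the Levi $L_i$ and appeal to the matrix-group results already cited.
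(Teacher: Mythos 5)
Your strategy coincides with the paper's: both arguments restrict the conjugation action to the identity component $(F_i)_0$, identify the resulting image with $W((F_i)_0)\cong D_6$ (for $F_4$ via the ambient $\SU(3)$ whose maximal torus is $T_3$ together with the embedding into $H_2$; for $F_5$ via the hexagonal weight lattice of $(F_5)_0$, triality, and an outer involution giving $-1$), check that the map to $D_6$ is surjective, and then compute the kernel inside $C_{G}((F_i)_0)$. Your further two-step filtration of the kernel by the induced action on $\Gamma_i=F_i\cap(L_i)_s$ is just a more explicit bookkeeping of what the paper does in one stroke when it writes down $\ker p$ from the explicit description of $C_{G}(T_3)$ and $C_{G}(T')=H_2$.

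One step does not go through as you wrote it. For $F_4$ you identify $Q_4$ with $\Sp(\Gamma_4,m)\cong\SL(2,\mathbb{F}_3)$ on the grounds that conjugation must preserve the commutator pairing $m$ on $\Gamma_4\subset(\SU(3)\times\SU(3))/\langle(\omega I,\omega^{-1}I)\rangle$. But the Weyl group is taken in $G=\Aut(\fre_6)$, and the element $\tau$ with $\tau[(X_1,X_2,X_3)]\tau^{-1}=[(X_1,X_3,X_2)]$ lies in $C_{G}(T_3)\cap N_{G}(F_4)$: it fixes $[(A,A)]$, inverts $[(B,B^{-1})]$, and hence acts on $\Gamma_4$ as $\diag\{1,-1\}$. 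This element inverts the pairing $m$ rather than preserving it (it conjugates $(\omega I,\omega^{-1}I)$ to its inverse), so it is a similitude of multiplier $-1$ and has determinant $-1$; the group induced on $\Gamma_4$ is therefore all of $\GL(2,\mathbb{F}_3)$, not $\SL(2,\mathbb{F}_3)$. The paper's own proof indeed computes $\ker p\cong(\mathbb{F}_3)^2\rtimes\GL(2,\mathbb{F}_3)$, in tension with the $\SL(2,\mathbb{F}_3)$ appearing in the statement of the proposition; whichever of the two is the typo, your justification via the symplectic group alone would miss the pairing-inverting contribution of $\tau$, and you should argue with similitudes of $m$ rather than isometries. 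The $F_5$ half of your argument matches the paper and is fine.
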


\begin{proof}
Since $T_3$ is the maximal torus of a subgroup $H$ of $G$ isomorphic to $\SU(3)$, one has $W(T_3)\subset
\Aut(T)\cong D_6$. On the other hand, by embedding $H$ into $H_2$ one can show that $W(T_3)\cong D_6$.
There is a homomorphism $p: W(F_4)\longrightarrow W(T_3)$, which is clearly a surjective map. Using
\[C_{G}(T_3)=((T_3\times\SU(3)\times\SU(3))/\langle(\omega I,\omega I,\omega I)),(\omega I,\omega^{-1} I,I)
\rangle)\rtimes\langle\tau\rangle,\] where $\tau[(X_1,X_2,X_3)]\tau^{-1}=[(X_1,X_3,X_2)]$,
$X_1,X_2,X_3\in\SU(3)$, we get $\ker p\cong(\mathbb{F}_3)^2\rtimes\GL(2,\mathbb{F}_3)$. Therefore it follows
the exact sequence for $W(F_4)$.

Denote by $T'=(F_5)_0$. The integral weight lattice of $T'$ has a basis $\{\lambda_1,\lambda_2\}$, where
$\lambda_1=2\alpha_1+2\alpha_3+2\alpha_4+\alpha_2+\alpha_5$, $\lambda_2=\alpha_2+\alpha_3+2\alpha_4+2\alpha_5
+2\alpha_6$. Since $|\lambda_1|^2=|\lambda_2|^2=-2(\lambda_1,\lambda_2)$, the integral weight lattice of $T'$ is
isomorphic to the root lattice of $A_2$. Thus $\Aut(T')\cong D_6$. By emebedding $H_2$ into \[G^{\pi(\theta_2)}=
((\Spin(8)\times\U(1)\times\U(1))/\langle(c,-1,1),(-c,1,-1)\rangle)\rtimes\langle\eta,\tau\rangle,\] where
$(\mathfrak{so}(8)\oplus i\bbR\oplus i\bbR)^{\eta}=\frg_2\oplus 0\oplus 0$ and
$(\mathfrak{so}(8)\oplus i\bbR\oplus i\bbR)^{\eta}=\mathfrak{so}(7)\oplus\Delta(i\bbR)$, one gets $D_3\subset W(T')$.
On the other hand, the root system of type $D_4$, $\langle\alpha_2,\alpha_4,\alpha_3,\alpha_5\rangle$, is conjugate
to the root system $\langle\alpha_4,\alpha_2,\alpha_3+\alpha_4+\alpha_5,\alpha_1+\alpha_3+\alpha_4+\alpha_5+
\alpha_6\rangle$. As $\tau$ acts as identity on the latter, there is an outer involution acting as identity on
$\langle\alpha_2,\alpha_4,\alpha_3,\alpha_5\rangle$ and hence as $-1$ on $\{\lambda_1,\lambda_2\}$. That means
$-1\in W(T')$. Therfore $W(T')\cong D_6$. There is a homomorphism $p: W(F_5)\longrightarrow W(T')$, which is
apparently a surjective map. Using $\pi(\theta_2)\in T'$ and the expression of $G^{\pi(\theta_2)}$, one shows
$C_{G}(T')=H_2$. By this we get $\ker p\cong\Hom((\mathbb{F}_2)^3,(\mathbb{F}_2)^2)\rtimes\GL(3,\mathbb{F}_2)$.
\end{proof}



\section{Disconnected simple Lie group of type $\bf E_6$}

Following the notation of last section, we have $G=\Aut(\fre_6)$, $G_1=\E_6$, $G_2=\E_6\rtimes\langle\tau
\rangle$, where $\tau^{2}=1$ and $\fre_6^{\tau}=\frf_4$. Let $\pi: G_2\longrightarrow G$ be the adjoint
homomorphism with $\ker\pi=\langle c\rangle$, $c=\exp(\frac{2\pi i}{3}(H'_1+2H'_3+H'_5+2H'_6))$.

\begin{lemma}\label{L:E6-outer finite-order1}
If $F$ is a finite abelian subgroup of $G$ satisfying the condition $(*)$ and being not contained in $G_0$, then for
any element $y\in F-F\cap G_0$, $o(y)=2,4,6$ or $8$.
\end{lemma}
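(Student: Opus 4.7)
The quotient $G\to G/G_0\cong\bbZ/2\bbZ$ sends $y$ to the non-trivial coset, so $o(y)$ is necessarily even. Write $o(y)=2^{k}m$ with $m$ odd and $k\geq 1$; the primary decomposition of the cyclic group $\langle y\rangle$ yields $y=uv$ with $u,v$ commuting powers of $y$ of orders $2^{k}$ and $m$ respectively. Since $v$ has odd order it lies in $G_0$, while $y\notin G_0$ forces $u\notin G_0$, so $u$ is an outer element of $2$-power order and $v$ is an inner element of odd order, both in $F$. The lemma amounts to showing $(k,m)\in\{(1,1),(2,1),(1,3),(3,1)\}$.

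To bound the odd part, the plan is to argue $m\in\{1,3\}$ along the lines of Lemma \ref{L:E6-inner finite-order}. If $m$ had a prime factor $p\geq 5$, pass to a power of $v$ of order $p$, lift it to $\tilde v\in G_1=\E_6$ of the same order, and apply Proposition \ref{P:Steinberg-centralizer} to get $(G_1)^{\tilde v}$ connected. Cross-referencing with the known list of Steinberg-type centralizers in $\E_6$ (root systems $3A_2$ or $A_5+A_1$), and combining with the fact that the outer element $u\in F$ must act on $Z(C_{G_0}(v)_0)_0$ without fixed points on its Lie algebra (a consequence of $(\ast)$ together with Lemma \ref{L:center}), yields a contradiction. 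The excluded odd case $m=9$ is then ruled out exactly as in Lemma \ref{L:E6-inner finite-order}(3).

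To bound the $2$-part, consider $u^{2}\in G_0$, an inner $2$-element of order $2^{k-1}$ centralized by all of $F$. An induction on $k$ parallel to Lemma \ref{L:E6-inner finite-order}(2), but tracking the additional constraint that $u\in C_G(u^{2})$ is an outer square root of $u^{2}$, iteratively restricts the admissible centralizer root system at each squaring and gives $k\leq 3$.

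What remains is the coupled exclusion of $(k,m)=(2,3)$ and $(3,3)$, i.e.\ $o(y)\in\{12,24\}$. In each case $v=y^{2^{k}}$ is inner of order $3$, conjugate to $\pi(\theta_1)$ or $\pi(\theta_2)$ by Lemma \ref{L:E6-inner finite-order}(3), and $u=y^{m}$ is an outer element of order $4$ or $8$ inside $C_G(v)$. Using the explicit descriptions of $G^{\pi(\theta_1)}$ and $G^{\pi(\theta_2)}$ recalled in the previous section, one would enumerate outer elements of the required order whose square is an inner involution of $C_G(v)$, and check case by case that none extends to an abelian $F$ satisfying $(\ast)$. This joint step is the principal obstacle: the bounds on $m$ and $k$ separately follow fairly directly from the inner-element analysis, whereas excluding $o(y)=12$ and $24$ requires simultaneous control of the $2$-part and $3$-part of $y$ inside the already well-understood order-$3$ centralizers.
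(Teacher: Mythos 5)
There is a genuine gap: your write-up is a plan whose decisive step is explicitly left open. You reduce the lemma to showing $(k,m)\in\{(1,1),(2,1),(1,3),(3,1)\}$, but then you acknowledge that excluding $(k,m)=(2,3)$ and $(3,3)$ --- that is, $o(y)=12$ or $24$ --- is ``the principal obstacle'' and you do not carry it out. Since separate bounds $2^{k}\leq 8$ and $m\leq 3$ still allow $o(y)=12$ and $24$, the lemma is not proved. Moreover, the two sketches you do give are not substantiated: the odd-part argument leans on the list ``$3A_2$ or $A_5+A_1$,'' but that list is the list of \emph{semisimple} centralizers, which is the relevant list in Lemma \ref{L:E6-inner finite-order} only because there $F\subset(G_0)^{x}$ forces $F$ to act trivially on $Z(\frg_0^{x})$; here the outer element acts as $-1$ on that centre, the centralizer need not be semisimple, and almost all of the work in this lemma is precisely the treatment of the non-semisimple types $A_4$, $A_3+A_1$, $2A_2$, $A_2+2A_1$, $D_5$, $A_3+2A_1$, etc. Similarly, ``an induction on $k$ \dots\ iteratively restricts the admissible centralizer root system'' is asserted, not argued.

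The ingredient you are missing is the one that makes the paper's proof work in a single pass: set $x=y^{2}\in F\cap G_0$ (no primary decomposition), observe that $\Ad(y)$ acts as $-1$ on $Z((G^{x})_0)_0$, and invoke $\rank G^{y}=4$ (\cite{Yu2}, Lemma 5.3), which forces $\rank[\frg_0^{x},\frg_0^{x}]^{y}=4$ and hence $\rank[\frg_0^{x},\frg_0^{x}]\geq 4$. Running through Oshima's list of possible root systems for $\frg^{x}$ under these two constraints, one either reaches a contradiction (by exhibiting a simple root sent outside $\Delta$ by $\Ad(y)$, or because the rank condition fails) or concludes directly that $o(x)\in\{1,2,3,4\}$ (writing $x=x_{s}x_{z}$ and bounding each factor, or computing $x=yxy^{-1}=x^{-1}$ in the $D_5$ and $A_3+2A_1$ cases). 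Since $o(y)=2\,o(x)$ or $o(x)$ is odd of order $3$, this yields $o(y)\in\{2,4,6,8\}$ with the $2$-part, the odd part, and the exclusion of $12$ and $24$ handled simultaneously --- exactly the coupling your decomposition into $u$ and $v$ loses.
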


\begin{proof}
Since $F$ is an abelian subgroup of $G$ not contained in $G_0$, the function $m$ on $F\cap G_0$ is trivial.
By Steinberg's theorem $F\cap G_0\subset(G^{x})_0$ for any $x\in F\cap G_0$. For any $y\in F-F\cap G_0$, let
$x=y^2$. Then we have $F\subset\langle(G^{x})_0,y\rangle$. Let $Z_{x}=Z((G^{x})_0)_0$. Since $Z_{x}$ commutes
with $F\cap G_0$, $y^2=x\in F\cap G_0$ and $\frg_0^{F}=0$, $\Ad(y)$ acts as $-1$ on $Z_{x}$.
Apparently one has $G^{y}\subset G^{x}$. By \cite{Yu2} Lemma 5.3, one has $\rank G^{y}=4$. Hence
$\rank[\frg_0^{x},\frg_0^{x}]^{y}=4$, in particular $\rank[\frg_0^{x},\frg_0^{x}]\geq 4$. Write $x$ as the form
$x=x_{s}x_{z}$ with $x_s\in Z([(G^{x})_0,(G^{x})_0])$ and $x_{z}\in Z_{x}$. The root system of $\frg^{x}$ is one
of the following types (cf. \cite{Oshima}) $D_4$, $A_4$, $A_3+A_1$, $2A_2$, $A_2+2A_1$, $4A_1$, $D_5$, $A_5$,
$A_4+A_1$, $A_3+2A_1$, $2A_2+A_1$, $A_5+A_1$, $3A_2$. We may assume that $\Ad(y)$ maps a Cartan subalgebra $\frh$
of $\frg$ to itself and furthermore it maps a simple system $\Pi$ of the root system $\Delta=\Delta(\frg,\frh)$
to itself.

In the case of $\rank[\frg_0^{x},\frg_0^{x}]=4$, $y$ acts on $[\frg_0^{x},\frg_0^{x}]$ as an inner automorphism
since $\rank[\frg_0^{x},\frg_0^{x}]^{y}=4$. Hence $y$ commutes with $x_{s}$. Thus $y$ commutes with $x_{z}$ as it
commutes with $x$. Therefore $x_{z}^{2}=1$. If the root system of $\frg^{x}$ is of type $A_4$, without loss of
generality we may assume that it has $\{\alpha_1,\alpha_3,\alpha_4,\alpha_2\}$ as a simple system. The orthogonal
complement of it is $\span\{\alpha_6,\gamma\}$, where $\gamma=2\alpha_1+4\alpha_3+6\alpha_4+3\alpha_2+5\alpha_5$.
Thus $\Ad(y)$ acts as identity on $\span\{\alpha_1,\alpha_3,\alpha_4,\alpha_2\}$ and as $-1$ on
$\span\{\alpha_6,\gamma\}$. In this case $\Ad(y)\alpha_5\not\in\Delta$, which is a contradiction.
If the root system of $\frg^{x}$ is of type $A_3+A_1$, without loss of generality we may assume that
it has $\{\alpha_3,\alpha_4,\alpha_5\}\cup\{\beta\}$ as a simple system. The orthogonal complement of it is
$\span\{\gamma_1,\gamma_2\}$. Here $\beta=\alpha_1+2\alpha_2+2\alpha_3+3\alpha_4+2\alpha_5+\alpha_6$,
$\gamma_1=4\alpha_1+3\alpha_3+2\alpha_4+\alpha_5$, $\gamma_2=\alpha_3+2\alpha_4+3\alpha_5+4\alpha_6$. Thus $\Ad(y)$
acts as identity on $\span\{\alpha_3,\alpha_4,\alpha_5,\beta\}$ and as $-1$ on $\span\{\gamma_1,\gamma_2\}$.
In this case $\Ad(y)\alpha_1\not\in\Delta$, which is a contradiction. If the root system of $\frg^{x}$ is of type
$2A_2$, without loss of generality we may assume that it has $\{\alpha_1,\alpha_3\}\cup\{\alpha_5,\alpha_6\}$ as a
simple system. The orthogonal complement it is $\span\{\beta,\alpha_2\}$. Thus $\Ad(y)$ acts as identity on
$\span\{\alpha_1,\alpha_3,\alpha_5,\alpha_6\}$ and as $-1$ on $\span\{\beta,\alpha_2\}$. In this case
$\Ad(y)\alpha_4\not\in\Delta$, which is a contradiction. If the root system of $\frg^{x}$ is of type $A_2+2A_1$,
without loss of generality we may assume that it has $\{\alpha_2,\alpha_4\}\cup\{\alpha_1\}\cup\{\alpha_6\}$ as a
simple system. The orthogonal complement of it is $\span\{\gamma_4,\gamma_5\}$, where
$\gamma_4=3\alpha_1+6\alpha_3+4\alpha_4+2\alpha_2$, $\gamma_5=2\alpha_2+4\alpha_4+6\alpha_5+3\alpha_6$. Thus $\Ad(y)$
acts as identity on $\span\{\alpha_1,\alpha_2,\alpha_4,\alpha_6\}$ and as $-1$ on $\span\{\gamma_4,\gamma_5\}$.
In this case $\Ad(y)\alpha_3\not\in\Delta$, which is a contradiction. If the the root system of $\frg^{x}$ is of
type $D_4$ or $4A_1$, then $x_{s}^{2}=1$ and hence $x^2=(x_s)^2(x_z)^2=1$.

In the case of $\rank[\frg_0^{x},\frg_0^{x}]=5$, the root system of $\frg^{x}$ is not of type $A_5$ or $A_4+A_1$
since $\rank[\fru_0^{x},\fru_0^{x}]^{y}=4$ can not hold for these cases. If the root system $\frg^{x}$ is of type
$2A_2+A_1$, similarly as in the $A_4$ case we get a contraction by considering the action of $\Ad(y)$ on $\Delta$.
If the root system of $\frg^{x}$ is of type $D_5$, one has $(G^{x})_0=(\Spin(10)\times\U(1))/\langle(c,i)\rangle$.
Thus $x=(1,t)$ for some $t\in\U(1)$. Therefore $x=yxy^{-1}=x^{-1}$ and hence $x^{2}=1$. If the root system of
$\frg^{x}$ is of type $A_3+2A_1$, without loss of generality we may assume that it has
$\{-\beta,\alpha_2,\alpha_4\}\sqcup\{\alpha_1\}\sqcup\{\alpha_6\}$ as a simple system. Then,
\[(G^{x})_0\cong(\SU(4)\times\Sp(1)\times\Sp(1)\times\U(1))/\langle(iI,-1,1,i),(I,-1,-1,-1)\rangle.\]
Hence $x=[(1,1,(-1)^{k},t)]$ for some $t\in\U(1)$ and $k=0$ or $1$. If $\Ad(y)$ stabilizes the two roots
$(\alpha_1,\alpha_6)$, then $x=yxy^{-1}=(1,1,(-1)^{k},t^{-1})=x^{-1}$. Hence $x^{2}=1$. If $\Ad(y)$ permutes
the two roots $(\alpha_1,\alpha_6)$, then \[x=yxy^{-1}=(1,(-1)^{k},1,t^{-1})=x(1,1,1,(-1)^{k}t^{-2}).\] Thus
$t^{2}=(-1)^{k}$. In this case $x$ is a power of $x_0=(1,1,-1,i)$. As $o(x_0)=4$, we have $o(x)=1$, $2$ or $4$.

In the case of $\rank[\frg_0^{x},\frg_0^{x}]=6$, the root system of $\frg^{x}$ is of type $A_5+A_1$ or $3A_2$.
Thus $o(x)=2$ or $3$.
\end{proof}


For the involution $\sigma_1\in G$, we have \[G^{\sigma_1}=((\SU(6)\times\Sp(1))/\langle(\omega I,1),
(-I,-1)\rangle)\rtimes\langle\tau\rangle,\] where $\tau^2=1$ and $\tau([X,\lambda])\tau^{-1}=
[(J_3\overline{X}(J_3)^{-1},\lambda)]$, $X\in\SU(6)$, $\lambda\in\Sp(1)$. Let \[F_6=\langle[(J_3,1)\tau],
[(\diag\{I_{1,1},I_{1,1},J_1\},\textbf{i})],[(\diag\{J'_{1},J_1,I_{1,1}\},\textbf{j})]\rangle.\] It is
clear that $F_6$ is a maximal abelian subgroup of $G$.

\begin{lemma}\label{L:E6-outer finite1}
Any finite abelian subgroup $F$ of $G$ satisfying the condition $(*)$ and being not contained in $G_0$
either contains an outer involution, or is conjugate to $F_6$.
\end{lemma}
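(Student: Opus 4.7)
The plan is to suppose $F$ contains no outer involution and deduce $F\sim F_6$. By Lemma \ref{L:E6-outer finite-order1}, every $y\in F\setminus G_0$ has order in $\{2,4,6,8\}$. The hypothesis directly excludes $o(y)=2$, and it excludes $o(y)=6$ because then $y^3$ would be an outer involution. So every outer element of $F$ has order $4$ or $8$.

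Next I would rule out $o(y)=8$. If $o(y)=8$, then $x:=y^2\in F\cap G_0$ has order $4$. Since $F\subset G^{x}$ and $\frg_0^{F}=0$ (as $F$ satisfies $(\ast)$), the center of $\Lie G^{x}$ lies in $\frg_0^{F}=0$, so $G^{x}$ is semisimple. Running the argument of Lemma \ref{L:E6-inner finite-order}(2) for the single element $x$ (Steinberg connectedness applies as $x$ has $2$-power order and $|Z(\E_6)|=3$ is coprime to $2$), the root system of $\frg^{x}$ must be $3A_2$ or $A_5+A_1$. In the first case $Z((G_0)^{x})\cong\bbZ/3$ has no non-trivial $2$-elements; in the second $(G_0)^{x}\cong(\SU(6)\times\Sp(1))/\langle(\omega I,1),(-I,-1)\rangle$ has $Z((G_0)^{x})\cong\bbZ/2$. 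Since $x\in Z((G_0)^{x})$, neither allows $o(x)=4$, a contradiction. Hence every outer element of $F$ has order $4$.

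Now fix $y\in F\setminus G_0$ of order $4$; then $x:=y^2\in F\cap G_0$ is an involution, and the same centralizer analysis with $o(x)=2$ forces $x\sim\sigma_1$. After conjugation assume $y^2=\sigma_1$, so $F\subset G^{\sigma_1}=((\SU(6)\times\Sp(1))/\langle(\omega I,1),(-I,-1)\rangle)\rtimes\langle\tau\rangle$. A direct computation, using that $J_3$ implements the quaternionic structure on $\bbC^6$ (so $J_3^2=-I$) and is real (hence $\tau$-fixed), gives $[(J_3,1)\tau]^2=[(J_3^2,1)]=[(-I,1)]=[(I,-1)]=\sigma_1$, so $[(J_3,1)\tau]$ is an outer element of order $4$ with square $\sigma_1$. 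The outer order-$4$ elements of $G^{\sigma_1}$ squaring to $\sigma_1$ form a single conjugacy class under the identity component of $G^{\sigma_1}$ (a direct classification of twisted involutions in the symmetric pair $(\SU(6),\Sp(3))$), so after a further conjugation $y=[(J_3,1)\tau]$.

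It remains to classify $F\cap G_0$. Conjugation by $[(J_3,1)\tau]$ acts on inner elements $[(X,\lambda)]$ by $[(X,\lambda)]\mapsto[(J_3^2\overline{X}J_3^{-2},\lambda)]=[(\overline{X},\lambda)]$, so $F\cap G_0$ lies in the fixed subgroup of this involution on $(\SU(6)\times\Sp(1))/\langle(\omega I,1),(-I,-1)\rangle$, a real form of the matrix group whose identity component is a quotient of $\SO(6)\times\Sp(1)$. Invoking \cite{Yu2} Proposition 2.1 on this reduced matrix group, together with the condition $\frg_0^{F}=0$, pins down $F\cap G_0$ up to conjugacy as $\langle[(\diag\{I_{1,1},I_{1,1},J_1\},\mathbf{i})],[(\diag\{J'_1,J_1,I_{1,1}\},\mathbf{j})]\rangle$, whence $F\sim F_6$. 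The main obstacle lies in this last paragraph: identifying the twisted centralizer of $[(J_3,1)\tau]$ precisely (with all its kernel identifications), transferring the $(\ast)$ condition from $G$ to the reduced matrix group, and extracting uniqueness from \cite{Yu2} without introducing spurious conjugacy classes; if needed, a further reduction to the centralizer of a well-chosen involution inside $F\cap G_0$ and a direct matrix-group computation completes the argument.
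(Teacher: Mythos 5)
Your overall skeleton (reduce to outer elements of order $4$ or $8$, take $x=y^2$, locate $F$ in $G^{\sigma_1}$, and finish with the matrix-group classification from \cite{Yu2}) matches one branch of the paper's proof, but there are two genuine gaps, both stemming from the same false premise. You repeatedly argue that ``$F\subset G^{x}$ and $\frg_0^{F}=0$ imply $G^{x}$ is semisimple.'' That inference is only valid when $F\subset G_0$ (as in Lemma \ref{L:E6-inner finite-order}); here $F\not\subset G_0$, and the outer elements of $F$ normalize $G^{x}$ but need not fix $Z(\Lie G^{x})$ pointwise. Indeed, the proof of Lemma \ref{L:E6-outer finite-order1} shows precisely that $\Ad(y)$ acts as $-1$ on $Z((G^{x})_0)_0$, which is how $\frg_0^{F}=0$ is achieved even when $\frg^{x}$ has a nontrivial center. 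Consequently: (a) for $o(y)=4$ the involution $x=y^2$ need \emph{not} be conjugate to $\sigma_1$ --- it can be $\sigma_2$, whose centralizer $(\Spin(10)\times\U(1))/\langle(c,i)\rangle$ is not semisimple. The paper must and does treat this case separately, reducing to $G^{y}=((\Spin(7)\times\Spin(3))/\langle(-1,-1)\rangle)\cdot\langle y\rangle$ and deriving a contradiction by an analysis of the projections of $F$; this case is entirely absent from your argument. (b) Your exclusion of $o(y)=8$ collapses for the same reason: there $\frg^{y^2}$ has root system $A_3+2A_1$ with a one-dimensional center, so it is not semisimple and your dichotomy ``$3A_2$ or $A_5+A_1$'' does not apply. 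The paper instead pins down $y$ up to conjugacy and computes $\dim Z(G^{y})=1$ directly, contradicting $(\ast)$.

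A smaller but still real issue: the outer order-$4$ elements of $G^{\sigma_1}$ squaring to $\sigma_1$ do \emph{not} form a single conjugacy class. Both $[(I,\mathbf{i})]\tau$ and $[(J_3,1)]\tau$ square to $\sigma_1$ and are non-conjugate (their twisted centralizers are of type $\Sp(3)\times\U(1)$ and $\SO(6)\times\Sp(1)$ respectively). The first possibility has to be eliminated by noting that $(\Sp(3)\times\U(1))/\langle(-I,-1)\rangle$ has a one-dimensional center; only then does the reduction to $\SO(6)\times\Sp(1)$ and \cite{Yu2} go through as in your last paragraph.
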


\begin{proof}
Suppose that $F$ does not contain an outer involution. By Lemma \ref{L:E6-outer finite-order1},
$o(y)=4$ or $8$ for any $y\in F-F\cap G_0$. In the case of $o(y)=4$, let $x=y^2$. Then $x\sim\sigma_1$
or $\sigma_2$. If $x\sim\sigma_1$, we may and do assume that $x=\sigma_1$. Then we have
$$F\subset G^{\sigma_1}=((\SU(6)\times\Sp(1))/\langle(\omega I,1),(-I,-1)\rangle)\rtimes\langle\tau
\rangle.$$ Since $y^2=x=\sigma_1$, one has $y\sim[(I,\mathbf{i})]\tau$ or $[(J_3,1)]\tau$. Then,
$F\subset((\Sp(3)\times\U(1))/\langle(-I,-1)\rangle)\cdot \langle y\rangle$ or $F\subset((\SO(6)
\times\Sp(1))/\langle(-I,-1)\rangle)\cdot \langle y\rangle$. Since the first group has a one-dimensional
center, it does not possess any finite abelian subgroup satisfying the condition $(*)$. For the second
group, let $F'$ be the image of the projection of $F$ to $\SO(6)/\langle-I\rangle$. In the terminology
of \cite{Yu2} Section 3, one has $k=1$ and $s_0=3$. Moreover considering determinant one gets
$\mu_1\neq\mu_2,\mu_3$ and $\mu_2\neq\mu_3$. Therefore $F\sim F_6$. If $x\sim\sigma_2$, we may and do
assume that $x=\sigma_2$. Then, \[F\subset G^{\sigma_2}=((\Spin(10)\times\U(1))/\langle(c,i)\rangle)
\rtimes\langle\tau\rangle,\] where $\tau([x,\lambda])=[(e_{10}x e_{10}^{-1},\lambda^{-1})]$,
$x\in\Spin(10)$, $\lambda\in\U(1)$. Since $y^2=x=\sigma_1$, one has $y\sim(e_1e_2,1)\tau$ or
$y\sim(e_1e_2e_3e_4e_5e_6,1)\tau$. Thus \[F\subset G^{y}=((\Spin(7)\times\Spin(3))/\langle(-1,-1)\rangle)
\cdot\langle y\rangle.\] For the reason of clarity, we denote by $e_1,e_2,\dots,e_7$ and $e_8,e_9,e_{10}$
respectively orthonomal base of Euclidean spaces defining $\Spin(7)$ and $\Spin(3)$. Let $F'$ be the
intersection of $F$ with $(\Spin(7)\times\Spin(3))/\langle(-1,-1)\rangle$. We may assume that
$F'\subset(\langle e_1e_2,\dots,e_1e_7\rangle\times\langle e_8e_9,e_8e_{10}\rangle)/\langle-1\rangle$.
Let $F'_1$, $F'_2$ be the images of the projections of $F'$ to $\Spin(7)/\langle-1\rangle$ and
$\Spin(3)/\langle-1\rangle$, and $F''_1=F\cap\Spin(7)$. Since $F$ does not contain any outer involution,
for any $x\in F''_1$, $x^2=1$; for any $x=[(x_1,x_2)]\in F'$ with $x_2\neq\pm{1}$, $x_1^{2}=-1$. By this
one can show that $\mathfrak{so}(7)^{F'_1}$ can not hold. This is a contradiction. In the case of $o(y)=8$,
let $x=y^{2}$. By the proof of Lemma \ref{L:E6-outer finite-order1}, one has \[(G_0)^{x}\cong(\SU(4)\times
\Sp(1)\times\Sp(1)\times\U(1))/\langle(iI,-1,1,i),(I,-1,-1,-1)\rangle,\] the action of $\Ad(y)$ on $\frg^{x}$
permutes the two $\mathfrak{sp}(1)$ factors, and $x=[(1,1,-1,i)]$. One can show that \[G^{x}\cong((\SU(4)
\times\Sp(1)\times\Sp(1)\times\U(1))/\langle(iI,-1,1,i),(I,-1,-1,-1)\rangle)\rtimes\langle\tau\rangle,\]
where $\tau[(X,A,B,\lambda)]\tau^{-1}=[(X,B,A,\lambda^{-1})]$, $X\in\SU(4)$, $A,B\in\Sp(1)$,
$\lambda\in\U(1)$. Let $y=[(X,A,B,\lambda)]\tau$. Then \[y^{2}=[(X^{2},AB,BA,1)].\] Since $y^2=x=[(1,1,-1,i)]
=[(-iI,-1,-1,1)]=[(iI,1,1,1)]$, one has $y\sim(\pm\frac{1\pm{i}}{\sqrt{2}}I_{1,3},\pm{1},1,1)\tau$.
Then, $\dim Z(G^{y})=1$ and hence $F$ does not satisfy the condition $(*)$.
\end{proof}


\begin{prop}\label{P:Weyl-E6-F6}
There is an exact sequence \[1\longrightarrow\Hom'((\mathbb{F}_2)^3,(\mathbb{F}_2)^3)
\longrightarrow W(F_6)\longrightarrow P(2,1,\mathbb{F}_2)\longrightarrow 1,\] where
$\Hom'((\mathbb{F}_2)^3,(\mathbb{F}_2)^3)$ means the space of linear maps
$f: (\mathbb{F}_2)^3\longrightarrow(\mathbb{F}_2)^3$ with $\tr f=0$
\end{prop}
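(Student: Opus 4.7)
The plan is to filter $F_6$ by its characteristic subgroup of squares and analyze $W(F_6)$ via the reduction-mod-$2$ sequence for $\Aut(F_6) \cong \GL(3, \mathbb{Z}/4)$.

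I would first pin down the structure of $F_6$. Writing $g_1 = [(J_3,1)\tau]$, $g_2 = [(\diag\{I_{1,1},I_{1,1},J_1\},\mathbf{i})]$, $g_3 = [(\diag\{J'_1,J_1,I_{1,1}\},\mathbf{j})]$, a direct computation shows each $g_i$ has order $4$ (with $g_1^2 = [(J_3^2, 1)] = \sigma_1$), the three squares $g_1^2$, $g_2^2$, $g_3^2$ are linearly independent in $F_6^2$, and all pairwise commutators in $\SU(6) \times \Sp(1)$ are of the form $(-I, -1)$ and so trivial in the quotient. Hence $F_6 \cong (\mathbb{Z}/4)^3$, the characteristic subgroup $F_6^2 = F_6[2] = \langle g_1^2, g_2^2, g_3^2\rangle$ is isomorphic to $(\mathbb{F}_2)^3$, and the squaring map gives a canonical isomorphism $q : F_6/F_6^2 \xrightarrow{\sim} F_6^2$. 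Since $g_1$ is the only outer generator, $F_6 \cap G_0 = \langle g_1^2, g_2, g_3\rangle$ has index $2$, with image $V = \langle[g_2], [g_3]\rangle$ in $F_6/F_6^2 \cong (\mathbb{F}_2)^3$.

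The subgroup $F_6^2$ being characteristic and $F_6 \cap G_0$ being preserved under conjugation, we obtain a homomorphism $p : W(F_6) \to P(2, 1, \mathbb{F}_2) := \Stab_{\GL(3, \mathbb{F}_2)}(V)$ induced by the action on $F_6/F_6^2$. Surjectivity I would establish in the two usual pieces: the Levi $\GL(2, \mathbb{F}_2) \cong S_3$ acting on $V$ is realized by block-permutation conjugators in $\SU(6) \subset (G_0)^{\sigma_1}$, exactly as in the proof of Proposition \ref{P:Weyl-E6 inner finite 1} for $W(F_1)$; and the unipotent radical, which shifts $[g_1]$ by arbitrary elements of $V$, is realized by conjugating with elements of $(G_0)^{\sigma_1}$ built from the maximal torus of $\SU(6)$ together with center elements of $\Sp(1)$.

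An element of $\ker p$ acts trivially on $F_6/F_6^2$ and therefore, via $q$, also trivially on $F_6^2$; consequently it has the form $\phi_s : g \mapsto g \cdot s(g)$ for some linear $s : F_6/F_6^2 \to F_6^2$. Identifying both sides with $(\mathbb{F}_2)^3$, this yields an embedding $\ker p \hookrightarrow \Hom((\mathbb{F}_2)^3, (\mathbb{F}_2)^3)$. To finish, I need to show the image is exactly the trace-zero subspace $\Hom'$. For the reverse inclusion $\Hom' \subseteq \ker p$, I would realize a spanning set of trace-zero maps by conjugating with carefully chosen elements of the maximal torus of $\SU(6)$ and of the $\Sp(1)$ factor in $(G_0)^{\sigma_1}$, each producing a $\phi_s$ supported on one or two coordinates with automatic trace $0$. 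For the containment $\ker p \subseteq \Hom'$, the key is the $\mathbb{F}_2$-valued pairing on $F_6$ inherited from the covering $G_2 = \E_6 \rtimes \langle\tau\rangle \to G$, which is preserved by all conjugation-induced automorphisms; after transport through $q$ this invariance translates precisely into the condition $\tr s = 0$. The main obstacle is this last invariance step: correctly identifying the natural pairing coming from $G_2$, and verifying that under $q$ it pins down the trace-zero condition rather than some other codimension-one subspace of $\Hom$. Once this is in hand, the order count $|P(2, 1, \mathbb{F}_2)| \cdot |\Hom'| = 24 \cdot 2^8$ matches the expected size of $W(F_6)$, and the stated exact sequence follows.
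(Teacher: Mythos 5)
Your skeleton matches the paper's: filter $F_6$ by its subgroup of squares $K=F_6^2=F_6[2]\cong(\mathbb{F}_2)^3$, map $W(F_6)$ onto $P(2,1,\mathbb{F}_2)$ (the paper phrases this as $p\colon W(F_6)\to W(K)$ with $W(K)\cong P(2,1,\mathbb{F}_2)$ quoted from \cite{Yu}, Proposition 6.10, which is the same map as yours after transport through the squaring isomorphism $q$), embed $\ker p$ into $\Hom((\mathbb{F}_2)^3,(\mathbb{F}_2)^3)$, and realize $\Hom'$ inside $\ker p$ by explicit conjugators from $C_G(K)=\langle F_6,[(X_1,X_2,X_3,\lambda)]:X_i\in\U(2),\det(X_1X_2X_3)=1,\lambda\in\Sp(1)\rangle$. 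Your structural computations ($F_6\cong(\mathbb{Z}/4)^3$, $g_1^2=\sigma_1$, commutators equal to $(-I,-1)$) are correct, and the surjectivity and lower-bound steps are sound in outline.

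The gap is exactly where you flagged it: the containment $\ker p\subseteq\Hom'$. The mechanism you propose cannot work, because the covering $G_2=\E_6\rtimes\langle\tau\rangle\to G$ has kernel $\langle c\rangle$ of order $3$; any bimultiplicative commutator pairing valued in $\langle c\rangle$ is therefore identically trivial on the $2$-group $F_6$, so there is no $\mathbb{F}_2$-valued pairing ``inherited from the covering'' to be preserved, and no invariant of this kind can cut out the trace-zero hyperplane. The paper closes the containment by a sandwich argument instead: since $\Hom'$ has index $2$ in $\Hom$, it suffices to exhibit one trace-one homomorphism not in $\ker p$. Concretely, it shows that any $g\in C_G(K)$ which also centralizes $[(\diag\{I_{1,1},I_{1,1},J_1\},\mathbf{i})]$ and $[(\diag\{J'_1,J_1,I_{1,1}\},\mathbf{j})]$ lies in $\langle F_6,[(\lambda_1I_2,\lambda_2I_2,\lambda_3I_2,\lambda)]\rangle$, and no such $g$ satisfies $g([(J_3,1)]\tau)g^{-1}([(J_3,1)]\tau)^{-1}=\sigma_1$; hence the map sending $[g_1]\mapsto\sigma_1=q([g_1])$ and $[g_2],[g_3]\mapsto 1$ (which has trace $1$) is not realized, and $\Hom'\subseteq\ker p\subsetneq\Hom$ forces $\ker p=\Hom'$. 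Note this is a relative statement (non-conjugacy by elements centralizing the rest of $F_6$), so even a conjugacy-class invariant on single elements would not substitute for it; you need this explicit centralizer computation or an equivalent.
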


\begin{proof}
Let $K=\{x\in F_6: x^2=1\}$ and $K'=\{x\in K: x\sim\sigma_2\}\cup\{1\}$ (cf. \cite{Huang-Yu}).
Then we have $K=\{x^2: x\in F_6\}$. Morever $K$ and $K'$ are elementary abelian 2-subgroups with
$\rank K=3$ and $\rank K'=2$. In the terminology of \cite{Yu}, $K$ is the subgroup $F'_{2,1}$ and one
has $W(K)\cong P(2,1,\mathbb{F}_2)$ by \cite{Yu}, Proposition 6.10. There is a homomorphism $p: W(F_6)
\longrightarrow W(K)$, which is clearly a surjective map. By considering in \[G^{\sigma_1}=
((\SU(6)\times\Sp(1))/\langle(\omega I,1),(-I,-1)\rangle)\rtimes\langle\tau\rangle,\] one shows
\[C_{G}(K)=\langle F_6, [(X_1,X_2,X_3,\lambda)]: X_1,X_2,X_3\in\U(2),\det(X_1X_2X_3)=1,\lambda\in
\Sp(1)\rangle.\] Using the elements $(iI_{1,1},1,1)$, $(1,iI_{1,1},1)$, $(1,1,iI_{1,1})$, $(J_1,1,1)$,
$(1,J_1,1)$, $(1,1,J_1)$, $(iI_2,-iI_2,I_2)$, $(I_2,iI_2,-iI_2)$ of $\U(2)^{3}$, one can show that
$\Hom'((\mathbb{F}_2)^3,(\mathbb{F}_2)^3)\subset\ker p$, where we identify both of $F_6/K$ and $K$ with
$(\mathbb{F}_2)^3$. On the other hand, if an element $g\in C_{G}(K)$ also cummutes with
$[(\diag\{I_{1,1},I_{1,1},J_1\},\textbf{i})]$ and $[(\diag\{J'_{1},J_1,I_{1,1}\},\textbf{j})]$, then it
lies in $\langle F_6, [(\lambda_1 I_2,\lambda_2 I_2,\lambda_3 I_3,\lambda)]: \lambda_1,\lambda_2,
\lambda_3\in\U(1),\lambda_1\lambda_2\lambda_3=1,\lambda\in\Sp(1)\rangle$. Thus
$g([(J_3,1)]\tau)g^{-1}([(J_3,1)]\tau)^{-1}$ can not be equal to $\sigma_1=[(I,-1)]$ and hence $\ker p$
is a proper subgroup of $\Hom((\mathbb{F}_2)^3,(\mathbb{F}_2)^3)$. Therfore $\ker p=\Hom'((\mathbb{F}_2)^3,
(\mathbb{F}_2)^3)$.
\end{proof}

In $G^{\sigma_3}=\F_4\times\langle\sigma_3\rangle$, let $F_7$ be a subgroup isomorphic to $(C_3)^{3}
\times C_2$, containing $\sigma_3$ an with each element $x$ of order $3$ satisfying that $\frf_4^{x}
\cong\mathfrak{su}(3)\oplus\mathfrak{su}(3)$ (\cite{Yu3}, Section 3); let $F_8$ be an elementary
abelian 2-subgroup of rank $6$ with $\sigma_3\in F_7$, i.e., the subgroup $F_{2,3}$ as in \cite{Yu},
Section 6;  In $G^{\sigma_4}=\Sp(4)/\langle-I\rangle\times\langle\sigma_3\rangle$, let
\[F_9=\langle[\textbf{i}I],[\textbf{j}I],[\diag\{-1,-1,1,1\}],[\diag\{-1,1,-1,1\}]\rangle],\]
\[F_{10}=\langle[\textbf{i}I],[\textbf{j}I],[I_{2,2}],[\diag\{I_{1,1},I_{1,1}\}],[\diag\{J'_{1},J'_{1}\}],\]
\[F_{11}=\langle[\textbf{i}I],[\textbf{j}I],[I_{2,2}],[\diag\{I_{1,1},I_{1,1}\}],[\diag\{J'_{1},J_{1}]\},\]
\[F_{12}=\langle[\textbf{i}I],[\textbf{j}I],[I_{2,2}],[J'_2],[\diag\{I_{1,1},I_{1,1}\}],[\diag\{J'_{1},J'_{1}]\}.\]
Among $F_7$, $F_{8}$, $F_{9}$, $F_{10}$, $F_{11}$, $F_{12}$, the maximal abelian ones are $F_7$, $F_{8}$, $F_{11}$,
$F_{12}$.

\begin{prop}\label{P:E6-outer finite2}
Let $F$ be a finite abelian subgroup of $G$ satisfying the condition $(*)$ and being not contained in
$G_0$. If $F$ contains an outer involution, then it is conjugate to one of $F_7$, $F_{8}$, $F_{9}$,
$F_{10}$, $F_{11}$, $F_{12}$.
\end{prop}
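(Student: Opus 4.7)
The plan is to exploit the hypothesis that $F$ contains an outer involution by conjugating so that this involution is one of the standard representatives, and then reducing the classification to known results for the centralizer groups $\F_4$ and $\Sp(4)/\langle -I\rangle$ proved in \cite{Yu3} and \cite{Yu2}.

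First I would recall that $\Aut(\fre_6)$ has exactly two conjugacy classes of outer involutions, represented by $\sigma_3$ and $\sigma_4$, with centralizers
\[
G^{\sigma_3}\cong \F_4\times\langle\sigma_3\rangle,\qquad G^{\sigma_4}\cong (\Sp(4)/\langle -I\rangle)\times\langle\sigma_4\rangle
\]
(cf.\ \cite{Huang-Yu}). Pick an outer involution $\sigma\in F$ and conjugate so that $\sigma=\sigma_3$ or $\sigma=\sigma_4$. In either case $F\subset G^{\sigma}$, so $F$ splits as $F=F'\cdot\langle\sigma\rangle$ where $F'$ is the projection of $F$ to the semisimple (resp.\ simple modulo center) factor. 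One checks directly from $\frg_0^{F}=0$ and the product decomposition of $G^{\sigma}$ that $F'$ also satisfies the condition $(\ast)$ inside $\F_4$ or $\Sp(4)/\langle -I\rangle$ respectively.

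In the case $\sigma=\sigma_3$, the classification of finite abelian subgroups of $\F_4$ satisfying $(\ast)$ is given in \cite{Yu3}, Section 3: $F'$ is either the rank-$3$ elementary abelian $3$-subgroup whose nonidentity elements have centralizer $\mathfrak{su}(3)\oplus\mathfrak{su}(3)$, or the rank-$5$ elementary abelian $2$-subgroup. Appending $\sigma_3$ then produces $F_7$ and $F_8$ respectively. In the case $\sigma=\sigma_4$, the projection $F'$ is a finite abelian subgroup of $\Sp(4)/\langle -I\rangle$ satisfying $(\ast)$; such subgroups are classified in \cite{Yu2} (the matrix group analysis of Section 2 there). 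Running through the list---distinguishing $F'$ according to whether it contains the class $[-I]$, whether it meets the symplectic outer coset via Klein four patterns of the form $I_{2,2}$, $J'_2$, $\diag\{I_{1,1},I_{1,1}\}$, etc.---produces exactly the four subgroups $F_9$, $F_{10}$, $F_{11}$, $F_{12}$ after multiplying by $\langle\sigma_4\rangle$.

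The main obstacle I expect is the $\Sp(4)/\langle -I\rangle$ case: one must be careful to enumerate all translation classes of Klein four subgroups of $\Sp(4)/\langle -I\rangle$ that are compatible with condition $(\ast)$, and then verify that no two of the resulting groups $F_9,\dots,F_{12}$ are fused inside $G$ itself (even though they might look similar inside the centralizer). For this I would compare invariants such as the number of elements lying in the class of $\sigma_3$ versus $\sigma_4$, and the isomorphism type of the Lie algebra centralizer, using the description of $G^{\sigma_3}$ and $G^{\sigma_4}$ to rule out accidental conjugacies. Closing the two cases then gives the asserted list.
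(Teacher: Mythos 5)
Your overall strategy---conjugate the outer involution to $\sigma_3$ or $\sigma_4$, split $F$ as $F'\times\langle\sigma\rangle$ inside the direct product $G^{\sigma}$, and quote the classifications for $\F_4$ and $\Sp(4)/\langle-I\rangle$ from \cite{Yu3} and \cite{Yu2}---is exactly the paper's, and the $\sigma_3$ branch is handled the same way (via \cite{Yu3}, Proposition 3.2, giving $F_7$ and $F_8$).

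The gap is in how you set up the $\sigma_4$ branch. The paper's case split is not ``$\sigma=\sigma_3$ or $\sigma=\sigma_4$'' but ``$F$ contains an element conjugate to $\sigma_3$, or $F$ contains $\sigma_4$ and \emph{no} element conjugate to $\sigma_3$.'' This extra exclusion is what makes the second branch work: it translates (via \cite{Huang-Yu}, p.~413) into the statement that $F'=F\cap\Sp(4)/\langle-I\rangle$ contains no element conjugate to $[I_{1,3}]$, and it is precisely this constraint, fed into the invariants $(k,s_0)$ with $4=2^{k-1}s_0$ and the functions $\mu_i$ of \cite{Yu2}, Proposition 4.1, that cuts the list down to exactly $F_9$, $F_{10}$, $F_{11}$, $F_{12}$ (one class for $k=1$, two for $k=2$ according to $\mu_1=\mu_2$ or not, one for $k=3$). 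Without that exclusion, the full classification of finite abelian subgroups of $\Sp(4)/\langle-I\rangle$ satisfying $(\ast)$ produces additional conjugacy classes---those meeting the class of $[I_{1,3}]$---so your assertion that ``running through the list produces exactly the four subgroups'' is not justified as written; the dichotomy you propose (whether $F'$ contains $[-I]$, which Klein four patterns appear) is not the one that does the work. The fusion worry you raise at the end is also resolved by these same invariants rather than by comparing centralizer Lie algebras. To repair the argument, first dispose of the case where $F$ meets the class of $\sigma_3$, and only then pass to $G^{\sigma_4}$ with the standing hypothesis that no element of $F$ is conjugate to $\sigma_3$.
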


\begin{proof}
Since $F$ contains an outer involution, we may assume that it contains $\sigma_3$, or it contains
$\sigma_4$ and without elements conjugate to $\sigma_3$. In the case of $\sigma_3\in F$, by \cite{Yu3},
Proposition 3.2, one has $F\sim F_7$ or $F\sim F_8$. In the case of $F$ contains $\sigma_4$ and without
elements conjugate to $\sigma_3$, one has $F\subset G^{\sigma_4}=\Sp(4)/\langle-I\rangle\times\langle
\sigma_3\rangle$. Let $F'=F\cap\Sp(4)/\langle-I\rangle$. Since $F$ contains no elements conjugate to
$\sigma_3$, $F'$ contains no elements conjugate to $[I_{1,3}]$ in $\Sp(4)/\langle-I\rangle$
(cf. \cite{Huang-Yu}, Page 413). By \cite{Yu2}, Proposition 4.1, it is associated two integers $s_0\geq 1$
and $k\geq 1$ with $4=2^{k-1}s_0$. Then, $k=1$, $2$, $3$. If $k=1$, then $s_0=4$ and $F\sim F_9$. If
$k=2$, then $s_0=2$ and we have two functions $\mu_1,\mu_2$ on $F'/B_{F'}$. If $\mu_1=\mu_2$, then
$F\sim F_{10}$. If $\mu_1\neq\mu_2$, then $F\sim F_{11}$. If $k=3$, then $F\sim F_{12}$.
\end{proof}

\begin{prop}\label{P:Weyl-E6-F7 to F12}
We have \[W(F_7)\cong\SL(3,\mathbb{F}_3),\]
\[W(F_8)\cong(\mathbb{F}_2)^2\rtimes P(2,3;\mathbb{F}_2)),\]
\[W(F_9)\cong(\mathbb{F}_2)^4\rtimes((\Hom((\mathbb{F}_2)^2,(\mathbb{F}_2)^2)
\rtimes(\GL(2,\mathbb{F}_2)\times\GL(2,\mathbb{F}_2))),\]
\[W(F_{10})\cong(\mathbb{F}_2)^5\rtimes\Sp(1,1;0,1),\]
\[|W(F_{11})|=3\times 2^{12},\]
\[W(F_{12})\cong(\mathbb{F}_2)^6\rtimes\Sp(2;0,1),\]
\end{prop}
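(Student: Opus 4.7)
My plan is to handle all six Weyl group computations by the same template: for each $F_i$, identify a characteristic subgroup $K_i\trianglelefteq F_i$ that must be preserved by the $N_G(F_i)$-action, producing a short exact sequence
\[1\longrightarrow\ker p_i\longrightarrow W(F_i)\stackrel{p_i}{\longrightarrow}Q_i\longrightarrow 1,\]
where $Q_i$ is a subgroup of $\Aut(K_i)$ arising from a smaller Weyl group computation in $\F_4$ or $\Sp(4)/\langle -I\rangle$ already available in \cite{Yu}, \cite{Yu2}, \cite{Yu3}, and $\ker p_i$ is read off by asking which elements of $C_G(K_i)$ conjugate the remaining generators of $F_i$ within $F_i$. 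The characteristic subgroup in each case will be chosen as the Sylow subgroup for an appropriate prime, the subgroup of squares, or the subgroup of involutions with a prescribed centralizer type; the key point each time is $G$-conjugacy invariance of the chosen subgroup.

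For $F_7$ and $F_8$, both living in $G^{\sigma_3}=\F_4\times\langle\sigma_3\rangle$, I would take $K_7$ to be the Sylow $3$-subgroup $(C_3)^3$ of $F_7$ and $K_8=F_8\cap\F_4\cong(\mathbb{F}_2)^5$, each characteristic as a $p$-part or as the $G_0$-intersection. By the classifications in \cite{Yu3}, Section~3 and \cite{Yu}, Proposition~6.10 (which identifies $K_8$ with the subgroup $F_{2,3}\subset\F_4$), the Weyl groups in $\F_4$ are $\SL(3,\mathbb{F}_3)$ and $P(2,3;\mathbb{F}_2)$. For $F_7$ the kernel vanishes since the unique outer involution $\sigma_3$ is fixed by $W(F_7)$, yielding $W(F_7)\cong\SL(3,\mathbb{F}_3)$. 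For $F_8$ there is a nontrivial kernel $(\mathbb{F}_2)^2$: although $\sigma_3$ is not itself characteristic, the $G$-conjugates of $\sigma_3$ inside $F_8$ form a coset of a codimension-$2$ subgroup of $F_8/K_8$ by \cite{Huang-Yu}, and this forced choice of lift contributes the $(\mathbb{F}_2)^2$ factor.

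For $F_9,F_{10},F_{11},F_{12}$ living in $G^{\sigma_4}=\Sp(4)/\langle -I\rangle\times\langle\sigma_3\rangle$, I would take $K_i=F_i\cap\Sp(4)/\langle -I\rangle$, which is characteristic because by Proposition~\ref{P:E6-outer finite2} these $F_i$ contain no $G$-conjugate of $\sigma_3$, so $K_i$ is the unique coset of ``$\sigma_3$-free'' outer involutions together with the inner part. The invariants $(s_0,k,\mu_1,\mu_2,B_{F_i})$ from \cite{Yu2}, Section~4 classify $K_i$ up to conjugacy inside $\Sp(4)/\langle -I\rangle$, and the Weyl group formulas of \cite{Yu2}, Proposition~4.2 give the images $\Hom((\mathbb{F}_2)^2,(\mathbb{F}_2)^2)\rtimes(\GL_2\times\GL_2)$, $\Sp(1,1;0,1)$, and $\Sp(2;0,1)$ claimed for $F_9,F_{10},F_{12}$ respectively. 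The kernels are elementary abelian $2$-groups whose ranks ($4,5,6$) count the characters $F_i/B_{F_i}\to\langle\sigma_3\rangle$ realized by elements of $C_G(K_i)$ adjusting the $\sigma_3$-coordinate, and these are easily exhibited using the block diagonal matrices in $\Sp(4)$ centralizing the inner part.

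The main obstacle will be $F_{11}$, where the condition $\mu_1\neq\mu_2$ breaks the symmetry between the two halves of the decomposition, so $W(F_{11})$ does not decompose as a clean semidirect product and only its order is reported. Computing this order reduces to a stabilizer count: inside the group $\Sp(1,1;0,1)$ that would appear if $\mu_1=\mu_2$ (as for $F_{10}$), the subgroup fixing the unordered pair of distinct functions $\{\mu_1,\mu_2\}$ has index $2$, so the image of $p_{11}$ should have order $|\Sp(1,1;0,1)|/2$; combined with the kernel of order $2^5$ this yields $|W(F_{11})|=3\times 2^{12}$. Simultaneously pinning down this index-$2$ subgroup and the kernel is the delicate bookkeeping step, and is the only place where I expect the uniform strategy above to require noticeably more care than the other cases.
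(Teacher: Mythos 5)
Your overall template (restrict the action of $N_G(F_i)$ to a characteristic subgroup, then compute kernel and image of the resulting map) is the right general idea and is what the paper uses where an argument is actually needed; the final group structures you quote also match the statement. But the execution has genuine gaps in exactly the places where work is required.

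For $F_8$, $F_9$, $F_{10}$, $F_{12}$ the paper does essentially nothing: these are elementary abelian $2$-subgroups of $\Aut(\fre_6)$, and their Weyl groups are read off directly from \cite{Yu}, Proposition 6.10, which is already a computation carried out in the full group $G$. Your proposed rederivation via $K_i=F_i\cap\Sp(4)/\langle -I\rangle$ and the Weyl-group formulas of \cite{Yu2} is not justified as stated: the image of the restriction map $W(F_i)\to\Aut(K_i)$ is \emph{not} the Weyl group of $K_i$ computed inside $(G^{\sigma_4})_0$, because $N_G(F_i)$ is not contained in $G^{\sigma_4}$ --- the outer coset of $F_i$ contains many elements conjugate to $\sigma_4$, and $W(F_i)$ permutes them, so normalizing elements that move $\sigma_4$ contribute automorphisms of $K_i$ that the computation inside $\Sp(4)/\langle -I\rangle$ cannot see. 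In the other direction you would also need to show that every automorphism of $K_i$ realized inside $(G^{\sigma_4})_0$ extends to one of $F_i$. Neither point is addressed.

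More concretely, your treatment of $F_{11}$ --- the one case where the paper does real work --- cannot be repaired as written. An element of $\ker p_{11}$ fixes $K_{11}=F_{11}\cap\Sp(4)/\langle -I\rangle\cong C_2^3\times C_4$ pointwise and sends $\sigma_4$ to $\sigma_4k$ with $(\sigma_4k)^2=1$, hence $k^2=1$; so $\ker p_{11}$ embeds into the group of involutions of $K_{11}$, which has order $2^4$. A kernel of order $2^5$ is therefore impossible, and combined with an image of index $2$ in $\Sp(1,1;0,1)$ your count does not reach $3\times 2^{12}$. The paper instead takes $K=\{x\in F_{11}:x^2=1\}$, the rank-$5$ subgroup of \emph{all} involutions including the outer ones, observes that $W(F_{11})$ fixes $z=[I_{2,2}]$ (the unique nontrivial square), maps $W(F_{11})$ onto $\Stab_{W(K)}(z)$ --- an index-$3$ subgroup of the group $W(K)$ already known from \cite{Yu}, Proposition 6.10 --- proves surjectivity by a transitivity argument on the $16$-element coset $\sigma_4K'$, and computes $\ker p\cong(\mathbb{F}_2)^2$ from the explicit description $C_G(K)=\langle F_{11},[J'_2],[\diag\{J'_1,J'_1\}]\rangle$, giving $|W(F_{11})|=\frac{4}{3}|W(K)|=3\times 2^{12}$. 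Your ``stabilizer of the unordered pair $\{\mu_1,\mu_2\}$ has index $2$'' heuristic has no counterpart in the paper and is not substantiated.
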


\begin{proof}
For $F_7$, $W(F_7)$ must fixes $\sigma_3$. The conclusion follows from \cite{Yu3}, Proposition 3.
Since $F_8$, $F_{9}$, $F_{10}$ and $F_{12}$ are elementary abelian 2-subgroups, the description of their
Weyl groups is given in \cite{Yu}, Proposition 6.10. For $F_{11}$, let $K=\{x\in F_{11}: x^2=1\}$,
$K'=K\cap G_0$ and $z=[I_{2,2}]$. Since $\{x^2: x\in F_{11}\}=\langle z\rangle$, the action of
$W(F_{11})$ on $F_{11}$ fixes $z$ and stabilizes $K$. Hence there is a homomorphism $p: W(F)
\longrightarrow\Stab_{W(K)}(z)$. One can calculate that $C_{G}(K)=\langle F_{11},[J'_2],
[\diag\{J'_1,J'_1\}]\rangle$. The conjugation action of $[J'_2]$, $[\diag\{J'_1,J'_1\}]$ maps
$[\diag\{J'_{1},J_{1}\}]$ to the element $[\diag\{I_{1,1},I_{1,1}\}]\diag\{J'_{1},J_{1}\}]$,
$[I_{2,2}\diag\{J'_{1},J_{1}\}]$ respectively. Hence $\ker p\cong(\mathbb{F}_2)^2$. We show that $p$
is surjective. Since the elements in $\sigma_4 K'$ are all conjugate to $\sigma_4$, both $W(F_{11})$
and $\Stab_{W(K)}(z)$ act transively on this set. Note that $\Stab_{W(K)}(z)$ is an index $3$
subgroup of $W(K)$ while $\sigma_4 K'$ has $16$ elements, hence $\Stab_{W(K)}(z)$ also acts transitively
on the set $\sigma_4 K'$. By this, to show $p$ is surjective we just need to show
$\Stab_{W(F_{11})}(\sigma_4)\longrightarrow\Stab_{W(K)}(\{z,\sigma_4\})$ is surjective. Consideration in
$G^{\sigma_4}$ shows it is of this case. By \cite{Yu}, Proposition 6.10, one has
$|W(K)|=16\times|\Sp(2,0;0,1)|=16\times 16\times 6^2=3^2\times 2^{10}$. Therefore
$|W(F_{11})|=\frac{4}{3}|W(K)|=3\times 2^{12}$.
\end{proof}

The proof of the following lemma is along the same line as that of \cite{Yu3}, Lemma 4.1, we omit it
here.
\begin{lemma}\label{L:center-outer}
Let $F$ be a non-finite closed abelian subgroup of $G$ satisfying the condition $(*)$ and being not
contained in $G_0$. Then $\Lie F\otimes_{\bbR}\bbC$ is conjugate to the center of a Levi subalgebra of
$\fre_6(\bbC)^{\sigma_3}$.
\end{lemma}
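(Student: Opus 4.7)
The plan is to mirror the proof of \cite{Yu3}, Lemma 4.1: first identify $\fra := \Lie F\otimes_{\bbR}\bbC$ with the center of a Levi subalgebra of $\fre_6(\bbC)$ via condition $(\ast)$, then use an outer involution inside $F$ to transport this Levi into $\fre_6(\bbC)^{\sigma_3} = \frf_4(\bbC)$, and finally invoke Lemma \ref{L:center} inside $\F_4$.

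\textbf{Levi structure.} Setting $\frl := C_{\frg}(\fra)$ and letting $L := C_{G_0}(\fra)$ denote the connected Levi subgroup of $G_0$ with complexified Lie algebra $\frl$, the identity component $F_0$ of $F$ is a subtorus of $Z(L)_0$. The argument of Lemma \ref{L:center} applies verbatim to $F$ (its hypothesis on $G$ being connected is not needed here since the statement concerns only the complex Lie algebra $\frl$): condition $(\ast)$ forces $\fra = Z(\frl)$, so $\frl$ is a Levi subalgebra of $\fre_6(\bbC)$ with center exactly $\fra$.

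\textbf{Outer involution in $F$.} Because $F_0\subset G_0$ while $F\not\subset G_0$, every outer element of $F$ lies in a finite complement. Writing $F = F_0\cdot D$ for a finite abelian subgroup $D\subset F$ mapping onto $F/F_0$ (which exists by divisibility of the torus $F_0$), I would choose an outer element $\phi\in D$, absorb a suitable square root from $F_0$ of a power of $\phi^{-2}$, and so replace $\phi$ by an outer involution in $F$, which I continue to denote $\phi$.

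\textbf{Reduction to $\F_4$ and main obstacle.} Up to $G$-conjugation there are precisely two outer involution classes, with representatives $\sigma_3$ (centralizer $\F_4\times\langle\sigma_3\rangle$) and $\sigma_4$ (centralizer $\Sp(4)/\langle -I\rangle\times\langle\sigma_4\rangle$). If $\phi\sim\sigma_3$, after conjugation we may assume $\sigma_3\in F$, whence $F = F''\times\langle\sigma_3\rangle$ with $F'':= F\cap\F_4$; the identity $\frg_0^F = \frf_4^{F''}$ combined with $\dim F = \dim F''$ shows that $F''$ satisfies $(\ast)$ inside $\F_4$, so Lemma \ref{L:center} applied there gives $\fra = Z(C_{\frf_4(\bbC)}(\fra))$, which is the desired conclusion. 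The main obstacle is to exclude, or otherwise to handle, the case $\phi\sim\sigma_4$: here I would project $F$ to $\Sp(4)/\langle -I\rangle$ and, using the classification of non-finite closed abelian subgroups of $\Sp(4)/\langle -I\rangle$ satisfying $(\ast)$ from \cite{Yu2}, run through the (short) resulting list and verify case by case that the corresponding abelian subalgebras $\fra$ are nevertheless $G$-conjugate inside $\fre_6(\bbC)$ to centers of Levi subalgebras of $\frf_4(\bbC)$. This final tracking of Levi centers across the two rank-$4$ subalgebras $\fre_6(\bbC)^{\sigma_3}$ and $\fre_6(\bbC)^{\sigma_4}$ of $\fre_6(\bbC)$ is where the bulk of the work lies.
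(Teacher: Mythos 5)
The paper itself omits the proof of this lemma, saying only that it follows the line of \cite{Yu3}, Lemma 4.1, so there is no line-by-line comparison to make; judged on its own terms, your proposal contains two genuine gaps. The first is the ``Levi structure'' step: the claim that condition $(\ast)$ forces $\fra=Z(\frl)$ for $\frl=C_{\frg}(\fra)$ is false here, and the parenthetical assertion that the connectedness hypothesis of Lemma \ref{L:center} is dispensable is exactly where it breaks. That lemma's dimension count needs $Z(\frl_0)\subset\frg_0^{F}$, which holds when $F$ sits inside the \emph{connected} group $C_{G}(\fra)$; in the present situation the outer elements of $F$ normalize $L_0=C_{G_0}(\fra)$ but can act as $-1$ on part of $Z(\frl_0)$, so only $Z(\frl_0)^{F}=\Lie F$ follows. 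The subgroup $F_{13}$ of Proposition \ref{P:E6-outer-nonfinite3} is a concrete counterexample: there $\fra$ is one-dimensional, while $C_{\fre_6(\bbC)}(\fra)$ is the Levi of type $D_4$ with two-dimensional center, on whose extra $\U(1)$-direction $\tau$ acts by inversion. (This step is fortunately not used by the rest of your argument, but as written it is a false statement.)

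The second gap is the reduction to an outer involution. Writing $F=F_0\cdot D$ with $D$ finite and taking $\phi\in D$ outer, the element $\phi^2$ lies in $D$, not in $F_0$, so ``absorbing a square root from $F_0$'' only works when $\phi^2\in F_0$; a priori every outer element of $F$ could have order $4$ or $8$ with square a nontrivial element of the finite part, exactly as happens for the finite subgroup $F_6$ of Lemma \ref{L:E6-outer finite1}, which satisfies $(\ast)$, is not contained in $G_0$, and contains no outer involution. That every \emph{non-finite} such $F$ does contain an outer involution is true, but in the paper it emerges only after the classification (see the remark following Proposition \ref{P:E6-outer-nonfinite3}), so it cannot be assumed at this stage; one must instead argue directly with an outer element $\phi\in D$ of finite but possibly higher order (e.g.\ by placing $F_0$ inside a maximal torus of $(G^{\phi})_0$, which has rank $4$ and is conjugate to a maximal torus of $\F_4=(G^{\sigma_3})_0$, and then running the $Z(\frl')$-argument inside $\frk=\fre_6(\bbC)^{\sigma_3}$). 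Finally, even granting an outer involution, your $\sigma_3$ branch is correct but the $\sigma_4$ branch --- which is where the lemma's actual content lies, since being the center of a Levi subalgebra of $\frf_4(\bbC)$ is a genuine condition and not merely containment in a Cartan subalgebra --- is only announced as a plan, not carried out.
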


Let $H_1,H_2,H_3$ be the following subgroups of $G=\Aut(\fre_6)$,
\[H_1=((\SU(3)\times\SU(3)\times\SU(3))/\langle(\omega I,\omega I,\omega I),(I,\omega I,\omega^{-1}I)
\rangle)\rtimes\langle\tau\rangle,\]
\[H_2=((\SU(6)\times\Sp(1))/\langle(\omega I,1),(-I,-1)\rangle)\rtimes\langle\tau\rangle,\]
\[H_3=((\Spin(10)\times\U(1))/\langle(c,i)\rangle)\rtimes\langle\tau\rangle,\] where
$$\tau[(X_1,X_2,X_3)]\tau^{-1}=[(X_1,X_3,X_2)],$$
$$\tau[(X,\lambda)]\tau^{-1}=[(J_3\overline{X}J_3^{-1},\lambda)],$$
$$\tau[(x,\mu)]\tau^{-1}=[(e_{10}xe_{10}^{-1},\mu^{-1})],$$ for $X_1,X_2,X_3\in\SU(3)$, $X\in\SU(6)$,
$x\in\Spin(10)$, $\lambda\in\Sp(1)$, $\mu\in\U(1)$. The root system of the subalgebra $\frh_1^{\tau}$,
$\frh_2^{\tau}$, $\frh_3^{\tau}$ is the sub-root system $A_2^{L}+A_2^{S}$, $A_1^{L}+C_3$, $B_4$ of
$\F_4$, the root system of $\fre_6(\bbC)^{\tau}$.

Denote by $\frk=\fre_6(\bbC)^{\tau}$. By Lemma \ref{L:center-outer}, we may assume that $\fra\subset
\frk$, where $\fra=\Lie F\otimes_{\bbR}\bbC$. Write for $K=G^{\sigma_3}$, $\frl=C_{\frg}(\fra)$,
$\frl'=C_{\frk}(\fra)$, $L=C_{G}(\fra)$, $L'=C_{K}(\fra)$, $A=Z(L)_0$.


\begin{lemma}\label{L:E6-outer-nonfinite2}
Let $F$ be a non-finite closed abelian subgroup of $G$ satisfying the condition $(*)$ and being not
contained in $G_0$. If $\fra\subset\frk$, then the root system of the Levi subalgebra
$\frl'=C_{\frk}(\fra)$ is one of the types $\emptyset$, $B_2$, $B_3$, $C_3$.
\end{lemma}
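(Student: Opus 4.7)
The plan is to carry out a case analysis on the type of the Levi subalgebra $\frl'$ of $\frk=\frf_4(\bbC)$. First I would apply Lemma \ref{L:center-outer} to recognize $\fra$ as the center of $\frl'$, and observe that the ambient Levi $\frl = C_\frg(\fra)$ of $\fre_6(\bbC)$ is $\tau$-stable with $\frl^\tau = \frl'$. The type of $\frl$ is then determined from that of $\frl'$ via the folding $\fre_6 \to \frf_4$ induced by $\tau$: a long simple root of $\frl'$ is the restriction of a single $\tau$-fixed root of $\fre_6$, while a short simple root is the restriction of a $\tau$-exchanged pair. Running through the standard Levi types of $\frf_4$ gives the correspondences $\emptyset \leftrightarrow \emptyset$, $A_1^L \leftrightarrow A_1$, $A_1^S \leftrightarrow A_1+A_1$ (swapped), $A_2^L \leftrightarrow A_2$, $A_2^S \leftrightarrow A_2+A_2$ (swapped), $B_2 \leftrightarrow A_3$, $B_3 \leftrightarrow D_4$, $C_3 \leftrightarrow A_5$, and the analogous mixed types.

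Next I would pick $y \in F \setminus G_0$; then $y \in L \setminus L_0$ and $yL_0 = \tau L_0$, so $y = \ell_0\tau$ for some $\ell_0 \in L_0$. Therefore on each simple factor of $\frl_s = [\frl,\frl]$, the action of $\Ad(y)$ equals the composition of an inner automorphism (from $\ell_0$) with the $\tau$-action. On a factor arising from a long root of $\frl'$, $\tau$ acts trivially, so $y$ acts by an inner automorphism; on a pair of factors arising from a $\tau$-exchanged short-root pair, $y$ swaps them; on the folded simple factors $A_3$, $D_4$, $A_5$ (in the cases $\frl' = B_2, B_3, C_3$), $y$ acts via a genuine outer involution of that factor. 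Condition $(*)$ is equivalent to $\frl_s^F = 0$.

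Now I would rule out each $\frl'$ that contains a long-type $A_n$ summand or a short-type $A_n$ summand. In the first situation, the corresponding simple factor of $\frl_s$ is an $\mathfrak{sl}(n+1,\bbC)$ on which $y$ acts as an inner automorphism. By Lemma \ref{L:Levi}, this summand lifts to a simply connected $\SU(n+1)$ in $\E_6$, so $(F \cap L_s) \cdot \langle y|_{\SU(n+1)} \rangle$ is a finite abelian subgroup of $\SU(n+1)$. Any finite abelian subgroup of $\SU(n+1)$ consists of simultaneously diagonalizable unitary matrices and hence lies in a maximal torus, so its centralizer algebra in $\mathfrak{sl}(n+1,\bbC)$ has dimension at least $n \geq 1$, giving $\frl_s^F \neq 0$ and contradicting $(*)$. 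In the second situation, the paired factors $A_n + A_n$ in $\frl_s$ have $y$-centralizer a diagonal simply connected $\SU(n+1)$, and the same toral argument yields a nonzero centralizer algebra. The mixed cases $A_1^L+A_1^S$, $A_2^L+A_1^S$, $A_1^L+A_2^S$ are excluded by applying either of these arguments to one of their summands.

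This leaves only $\frl' \in \{\emptyset, B_2, B_3, C_3\}$, corresponding to $\frl_s = 0, A_3, D_4, A_5$ with $y$ realizing an outer involution whose fixed subalgebra is $\mathfrak{sp}(2), \mathfrak{so}(7), \mathfrak{sp}(3)$ respectively; these cases do admit finite abelian subgroups satisfying $(*)$, constructed later in the paper along the lines of the matrix and Spin classifications in \cite{Yu2} and \cite{Yu3}. The main difficulty is the bookkeeping that pins down the inner-versus-outer character of $y$ on each simple factor of $\frl_s$ and, in particular, the observation that in the excluded cases $y$ never acts as an outer involution on a simple type-$A$ factor, so the simply-connectedness from Lemma \ref{L:Levi} forces the toral conclusion.
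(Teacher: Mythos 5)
Your overall strategy coincides with the paper's: list the Levi types of $\frf_4(\bbC)$, and kill every type containing an $A_n^{L}$ or $A_n^{S}$ summand by showing that the corresponding type-$A$ factor of $\frl_s$ forces a positive-dimensional centralizer, leaving exactly $\emptyset$, $B_2$, $B_3$, $C_3$. The difference is one of packaging: you run a single uniform argument through the folding $\fre_6\to\frf_4$ (long root $=$ $\tau$-fixed factor, $y$ acts inner; short root $=$ $\tau$-swapped pair, $y$ acts by a twisted swap), whereas the paper writes out all seven excluded types explicitly inside $H_1=((\SU(3)^3)/\langle\cdots\rangle)\rtimes\langle\tau\rangle$ and, in each case, manufactures a subgroup $L''$ of $L$ with $F=Z(L)_0\cdot(F\cap L'')$ in which the relevant $\SU(n+1)$ is a genuine direct factor. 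That case-by-case construction is not mere pedantry: it is where the paper discharges the two points your write-up glosses over. First, $L_0$ carries nontrivial central identifications (e.g.\ $\langle(\omega I,\omega I,\omega I),(I,\omega I,\omega^{-1}I)\rangle$), so Lemma \ref{L:Levi} alone does not give you a projection of $F\cap L_s$ landing in the simply connected $\SU(n+1)$ rather than in a central quotient of it; without an honest direct-factor splitting (this is what $L''$ and, implicitly, Lemma \ref{L:Levi-center} provide), ``finite abelian, hence toral'' fails, since finite abelian subgroups of $\PSU(n+1)$ need not be toral. Relatedly, your element $y|_{\SU(n+1)}$ is a priori only defined up to center and only commutes with the image of $F\cap L_s$ up to center, so the group you call ``a finite abelian subgroup of $\SU(n+1)$'' is only visibly abelian after the splitting is in place. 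Second, in the short-root cases the $y$-fixed subgroup of the swapped pair is not a full diagonal $\SU(n+1)$ but a twisted diagonal isomorphic to $\SU(n+1)^{g}$ for some element $g$ (the paper computes $(L''_0)^{x}=\Delta(\SU(n)^{A})\times(\text{finite})$ and then lists $\SU(3)^{A}\cong\SU(3)$, $\U(2)$ or $T_3$); your conclusion survives because any such centralizer is still positive-dimensional with only toral finite abelian subgroups, but the statement as written is inaccurate. With those two verifications supplied, your argument is a clean and arguably more transparent reorganization of the paper's proof.
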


\begin{proof}
As $\frl'$ is a Levi subalgebra of $\frk=\frf_4(\bbC)$, its root system is one of the following types:
$\emptyset$, $A_1^{L}$, $A_1^{S}$, $A_2^{L}$, $A_2^{S}$, $A_1^{L}+A_1^{S}$, $B_2$, $B_3$, $C_3$,
$A_2^{L}+A_1^{S}$, $A_2^{S}+A_1^{L}$. Each of them is contained in one of $B_4$, $A_2^{L}+A_2^{S}$,
$A_1^{L}+C_3$. Hence a conjugate of $L$ is contained in some $H_{i}$, $i=1$, $2$ or $3$. In the case
of the root system of $\frl'$ is of type $A_1^{L}$, one has $L\subset H_1$ and \[L=(\U(2)\times T_3
\times T_3)/\langle(\omega I,\omega I,\omega I),(I,\omega I,\omega^{-1}I)\rangle\rtimes\langle\tau
\rangle.\] Let $L''=(\SU(2)\times(T'_3/\langle\omega I\rangle))\rtimes\langle\tau\rangle$, where
$T_3'=\{(X,X^{-1}):X\in T_3\}\subset T_3\times T_3$. Then $F'':=F\cap L''$ is a finite abelian subgroup
of $L''$ satisfying the condition $(*)$. Since $\SU(2)$ is a direct factor of $L''$, it does not possess
any finite abelian subgroup satisfying the condition $(*)$, which is a contradiction. In the case of
the root system of $\frl'$ is of type $A_1^{S}$, one has $L\subset H_1$ and \[L=(T_3\times\U(2)\times
\U(2))/\langle(\omega I,\omega I,\omega I),(I,\omega I,\omega^{-1}I)\rangle\rtimes\langle\tau\rangle.\]
Let $L''=((\SU(2)\times\SU(2)\times\U(1))/\langle(-I,-I,-1),(I,I,\omega)\rangle)\rtimes\langle\tau\rangle$,
where $\U(1)=\{(1,\diag\{\lambda,\lambda,\lambda^{-2}\},\diag\{\lambda^{-1},\lambda^{-1},\lambda^{2}\}):
\lambda\in\U(1)\}$ Then $F'':=F\cap L''$ is a finite abelian subgroup of $L''$ satisfying the condition
$(*)$. Choosing an element $x\in F''-F''\cap G_0$, we may assume that $x=[(A,I,1)]\tau$ for some
$A\in\SU(2)$. Since $x[(I,-I,i)]x^{-1}=(-I,I,-i)=[(-I,-I,-1)](I,-I,i)$, one has $(L''_0)^{x}=
\Delta(\SU(2)^{A})\times\langle[(I,-I,i)]\rangle$. As $\SU(2)^{A}\cong\SU(2)$ or $\U(1)$, both have
no finite abelian subgroups satisfying the condition $(*)$, we get a contradiction. In the case of the
root system of $\frl'$ is of type $A_2^{L}$, we have $L\subset H_1$ and \[L=(\SU(3)\times T_3
\times T_3)/\langle(\omega I,\omega I,\omega I),(I,\omega I,\omega^{-1}I)\rangle\rtimes\langle\tau
\rangle.\] Let $L''=((\SU(3)\times T'_3)/\langle(I,\omega I)\rangle)\rtimes\langle\tau\rangle$. Then
$F'':=F\cap L''$ is a finite abelian subgroup of $L''$ satisfying the condition $(*)$. Since $\SU(3)$ is
a direct factor of $L''$, it does not possess any finite abelian subgroup satisfying the condition $(*)$.
We get a contradiction. In the case of the root system of $\frl'$ is of type $A_2^{S}$. We have
$L\subset H_1$ and \[L=(T_3\times\SU(3)\times\SU(3))/\langle(\omega I,\omega I,\omega I),
(I,\omega I,\omega^{-1}I)\rangle\rtimes\langle\tau\rangle.\] Let $L''=((\SU(3)\times\SU(3))/\langle
(\omega I,\omega^{-1} I)\rangle)\rtimes\langle\tau\rangle$. Then $F'':=F\cap L''$ is a finite abelian
subgroup of $L''$ satisfying the condition $(*)$. Choosing an element $x\in F''-F''\cap G_0$, we may
assume that $x=[(A,I)]\tau$ for some $A\in\SU(3)$. Since $x[(\omega I,\omega^{-1} I)]x^{-1}=
(\omega I,\omega^{-1} I)^{-1}$, one has $(L''_0)^{x}=\Delta(\SU(3)^{A})$. As $\SU(3)^{A}\cong\SU(3)$,
$\U(2)$ or $T_3$, all having no finite abelian subgroups satisfying the condition $(*)$. We get a
contradiction. In the case of the root system of $\frl'$ is of type $A_1^{L}+A_1^{S}$, we have
$L\subset H_1$ and \[L=(\U(2)\times\U(2)\times\U(2))/\langle(\omega I,\omega I,\omega I),
(I,\omega I,\omega^{-1}I)\rangle\rtimes\langle\tau\rangle.\] Let \[L''=(\SU(2)\times\SU(2)\times\SU(2)
\times\U(1)/\langle(I,-I,-I,-1),(I,I,I,\omega)\rangle)\rtimes\langle\tau\rangle,\] where
$\U(1)=\{(1,\diag\{\lambda I_2,\lambda^{-2}\},\diag\{\lambda^{-1} I_2,\lambda^{2}\}):\lambda\in\U(1)\}$.
Then $F'':=F\cap L''$ is a finite abelian subgroup of $L''$ satisfying the condition $(*)$. Since the
first $\SU(2)$ is a direct factor of $L''$, it does not possess any finite abelian subgroup satisfying
the condition $(*)$. We get a contradiction. In the case of the root system of $\frl'$ is of type
$A_2^{L}+A_1^{S}$, we have $L\subset H_1$ and \[L=(\SU(3)\times\U(2)\times\U(2))/\langle(\omega I,
\omega I,\omega I),(I,\omega I,\omega^{-1}I)\rangle\rtimes\langle\tau\rangle.\] Let
$L''=((\SU(3)\times\SU(2)\times\SU(2)\times\U(1)/\langle(I,-I,-I,-1),(I,I,I,\omega)\rangle)\rtimes
\langle\tau\rangle$, where $\U(1)=\{(1,\diag\{\lambda I_2,\lambda^{-2}\},\diag\{\lambda^{-1} I_2,
\lambda^{2}\}):\lambda\in\U(1)\}$. Then $F'':=F\cap L''$ is a finite abelian subgroup of $L''$ satisfying
the condition $(*)$. Since $\SU(3)$ is a direct factor of $L''$, it does not possess any finite abelian
subgroup satisfying the condition $(*)$. We get a contradiction. In the case of the root system of
$\frl'$ is of type $A_1^{L}+A_2^{S}$, we have $L\subset H_1$ and \[L=(\U(2)\times\SU(3)\times
\SU(3))/\langle(\omega I,\omega I,\omega I),(I,\omega I,\omega^{-1}I)\rangle\rtimes\langle\tau
\rangle.\] Let $L''=((\SU(2)\times\SU(3)\times\SU(3))/\langle(I,\omega I,\omega^{-1} I)\rangle)\rtimes
\langle\tau\rangle$. Then $F'':=F\cap L''$ is a finite abelian subgroup of $L''$ satisfying the
condition $(*)$. Since $\SU(2)$ is a direct factor of $L''$, it does not possess any finite abelian
subgroup satisfying the condition $(*)$. We get a contradiction.
\end{proof}

In \[H_3=((\Spin(10)\times\U(1))/\langle(c,i)\rangle)\rtimes\langle\tau\rangle,\] let
\[F_{13}=\langle S_3,[(-1,1)],\tau,[(e_1e_2e_3e_4,1)],[(e_1e_2e_5e_6,1)],[(e_1e_3e_5e_7,1)]\rangle,\]
where $S_3=\{\cos\theta+\sin\theta e_{9}e_{10}: \theta\in\bbR\}$.
In \[H_2=((\SU(6)\times\Sp(1))/\langle(\omega I,1),(-I,-1)\rangle)\rtimes\langle\tau\rangle,\] let
\[F_{14}=\langle S_1,S_2,[I_{4,2}],[I_{2,4}],[\diag\{I_{1,1},I_{1,1},I_{2}\}],[J_3]\tau\rangle,\]
\[F_{15}=\langle S_1,[I_{2,4}],[I_{4,2}],[\diag\{-1,1,-1,1,1,1\}],[\diag\{1,1,-1,1,-1,1\}],[J_3]\tau
\rangle,\] where $S_1=\U(1)\subset\Sp(1)$ and $S_2=\{\diag\{1,1,1,1,\lambda,\lambda^{-1}\}\in
\SU(6): |\lambda|=1\}$. Let $F_{16}=\langle T',\tau\rangle$

\begin{prop}\label{P:E6-outer-nonfinite3}
Any non-finite closed abelian subgroup of $G$ satisfying the condition $(*)$ and being not contained
in $G_0$ is conjugate to one of $F_{13}$, $F_{14}$, $F_{15}$, $F_{16}$.
\end{prop}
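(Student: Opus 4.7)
The plan is to parallel the reduction used for non-finite abelian subgroups of $G_{0}$ in Proposition \ref{P6:E6-inner-nonfinite}. By Lemma \ref{L:center-outer}, after conjugation one may assume $\fra := \Lie F \otimes_{\bbR} \bbC \subseteq \frk = \fre_{6}(\bbC)^{\tau}$ and that $\fra = Z(\frl')$ for $\frl' = C_{\frk}(\fra)$, a Levi subalgebra of $\frf_{4}(\bbC)$. Lemma \ref{L:E6-outer-nonfinite2} then leaves only four possibilities for the root system of $\frl'$: $\emptyset$, $B_{2}$, $B_{3}$, or $C_{3}$. In each case I set $L := C_{G}(\fra)$, $L_{s} := [L, L]$, $A := Z(L)_{0}$, and write $F = A \cdot F'$ with $F' := F \cap L_{s}$; the disconnected analogue of Lemma \ref{L:center} reduces the problem to classifying finite abelian subgroups $F'$ of $L_{s}$ satisfying $(*)$, subject to the constraint that $F'$ realizes an outer automorphism (so that $F \not\subset G_{0}$).

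The empty case is immediate: here $\fra$ is a Cartan of $\frf_{4}(\bbC)$, so $L_{0}$ is the maximal torus $T$ of $G_{0}$ and $L = T \cdot \langle \tau \rangle$. Since $F$ is abelian and contains an outer element, $F \cap G_{0}$ must lie in $T^{\tau} = T'$; a direct verification of $(*)$ on $\langle T', \tau \rangle$ then yields $F \sim F_{16}$. For the remaining three root systems I match $\frl'$ against the $\tau$-fixed root systems $A_{2}^{L} + A_{2}^{S}$, $A_{1}^{L} + C_{3}$, $B_{4}$ of $\frh_{1}^{\tau}$, $\frh_{2}^{\tau}$, $\frh_{3}^{\tau}$. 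The inclusions $B_{3} \subset B_{4}$, $C_{3} \subset A_{1}^{L} + C_{3}$, and $B_{2} \subset C_{3}$ let me place $\fra$ so that $L$ becomes a (disconnected) Levi subgroup of $H_{3}$, $H_{2}$, and $H_{2}$ respectively.

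In each of these three cases $L_{s}$ is a quotient of a spin or special unitary group and the outer component comes from the $\tau$-factor of the ambient $H_{i}$, placing $L$ exactly in the class of twisted classical groups whose finite abelian subgroups satisfying $(*)$ are described in \cite{Yu2} and \cite{Yu3}. For the $B_{3}$ case inside $H_{3}$, $L_{s}$ is essentially a quotient of $\Spin(10)$ extended by $\tau$, and applying the Spin-group classification in \cite{Yu3} produces $F \sim F_{13}$. For the $C_{3}$ case inside $H_{2}$, $L_{s}$ is derived from the twisted $A_{5}$-type group $\SU(6) \cdot \langle \tau \rangle$, and the unitary-group classification in \cite{Yu2} gives $F \sim F_{15}$. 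For the $B_{2}$ case, again inside $H_{2}$, $L$ has smaller semisimple part but the same two-step approach, combined with checking that the $2$-dimensional torus $A$ lies in the centers of the two classical factors, yields $F \sim F_{14}$.

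The main obstacle I expect is book-keeping in these last three cases: one must track the specific central quotient defining each $H_{i}$, verify how $L$ inherits its outer component from $\tau$, and check that the classification from \cite{Yu2}, \cite{Yu3} applied to this twisted picture produces the single conjugacy class $F_{i}$ rather than additional ones. A further subtlety is to confirm that the central torus $A = Z(L)_{0}$ couples to the finite data in the manner prescribed by the definitions of $F_{13}, F_{14}, F_{15}$; in particular, that the distinguished circles $S_{1}, S_{2}, S_{3}$ appearing in those definitions arise naturally as the central torus of the relevant Levi subgroup.
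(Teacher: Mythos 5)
Your proposal follows essentially the same route as the paper's proof: reduce via Lemma \ref{L:center-outer} and Lemma \ref{L:E6-outer-nonfinite2} to the four root-system types $\emptyset$, $B_2$, $B_3$, $C_3$, embed $L$ into $H_2$ or $H_3$, split off $A=Z(L)_0$ and classify the finite part $F''=F\cap L''$ in the resulting twisted classical group using \cite{Yu2} and \cite{Yu3}, arriving at $F_{16}$, $F_{14}$, $F_{13}$, $F_{15}$ respectively. The one imprecision is in the $B_3$ case, where the twisted group that actually carries the finite data is $\Spin(8)\rtimes\langle\tau\rangle\cong\Pin(8)$ (the derived group of the Levi is $\Spin(8)$, with only a $\Spin(2)\times\U(1)$ central part left over), not a quotient of $\Spin(10)$; otherwise the case-by-case identifications match the paper's.
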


\begin{proof}
By Lemma \ref{L:E6-outer-nonfinite2}, the root system of $\frl'$ is one of the types: $\emptyset$,
$B_2$, $B_3$, $C_3$. In the case of the root system of $\frl'$ is of type $\emptyset$, one has
$F\sim F_{16}$. In the case of the root system of $\frl'$ is of type $B_2$, one has $L\subset H_2$
and \[L=(S(\U(4)\times\U(1)\times\U(1))\times\U(1)/\langle(\omega I,\omega,\omega,1),(-I,-1,-1,-1)
\rangle\rtimes\langle\tau\rangle,\] where $\tau[(X,\lambda_1,\lambda_2,\lambda_3)]\tau^{-1}=
[(J_2\overline{X}J_2^{-1},\overline{\lambda_2},\overline{\lambda_1},\lambda_3)]$. Let
$$L''=((\SU(4)\times\U(1))/\langle(-I,-1),(I,\omega I)\rangle)\rtimes\langle\tau\rangle,$$ where
$\U(1)=\{(\diag\{\lambda I_4,\lambda^{-2},\lambda^{-2}\},1):\lambda\in\U(1)\}$. With calculation
one shows that $Z(L'')\subset A:=Z(L)_0$. Then, $F=A\cdot F''$ and $F''=F\cap L''$ is a finite abelian
subgroup of $L''$ satisfying the condition $(*)$. Since $F''\not\subset(L'')_0$, for any
$[(X,\lambda)]\in F''\cap(L'')_0$, one has $\tau[(X,\lambda)]\tau^{-1}=[(J_2\overline{X}(J_2)^{-1},
\lambda^{-1})]$ being conjugate to $[(X,\lambda)]$ in $L''$. Hence $\lambda=\pm{1}$ or $\pm{i}$. Thus
$F''$ is conjugate to a subgroup of the form $F''=F'\times\langle[(iI,i)]\rangle$, where $F'\subset
\SU(4)\rtimes\langle\tau\rangle$. By \cite{Yu2}, Proposition 5.1, one shows that $k=0$ for the
terminology there and \[F'\sim\langle[(\diag\{-1,-1,1,1\},1)],[(\diag\{-1,1,-1,1\},1)],[(I,-1)],
[J_2]\tau\rangle.\] Therefore $F\sim F_{14}$. In the case of the root system of $\frl'$ is of type
$B_3$, we have $L\subset H_3$ and \[L\cong((\Spin(8)\times\Spin(2)\times\U(1)/\langle(c',c'',i),
(-1,-1,1)\rangle)\rtimes\langle\tau\rangle,\] where $c'=e_1e_2\cdots e_{8}$, $c''=e_{9}e_{10}$,
and $\tau[(x,y,\lambda)]\tau^{-1}=[(e_8 x e_8^{-1},y,\lambda^{-1})]$. Let $L''=((\Spin(8)\times
\U(1))/\langle(-1,-1)\rangle)\rtimes\langle\tau\rangle$. With calculation one shows that
$Z(L'')\subset A:=Z(L)_0$. Then, $F=A\cdot F''$ and $F''=F\cap L''$ is a finite abelian subgroup of
$L''$ satisfying the condition $(*)$. Since $F''\not\subset(L'')_0$, for any
$[(x,\lambda)]\in F''\cap(L'')_0$, one has $\tau[(x,\lambda)]\tau^{-1}=[(e_8 x e_8^{-1},\lambda^{-1})]$
being conjugate to $[(x,\lambda)]$ in $L''$. Hence $\lambda\in\langle i\rangle$. Thus $F''$ is
conjugate to a subgroup of the form $F''=F'\times\langle[(c',i)]\rangle$, where
$F'\subset\Spin(8)\rtimes\langle\tau\rangle\cong\Pin(8)$. Similarly as the consideration in \cite{Yu3},
Section 2, one can show that $$F'\sim\langle e_{8},e_1e_2e_3e_4,e_1e_2e_5e_6,e_1e_3e_5e_7
\rangle\ \textrm{or}\ \langle -1,c'e_8,e_1e_2e_3e_4,e_1e_2e_5e_6,e_1e_3e_5e_7\rangle.$$ In $L''$,
$-c'e_{8}=e_1e_2e_3e_4e_5e_6e_{7}=\tau$ and $e_8=[(-c',1)]\tau$. Since
$$[(c',1)]\tau=[(c',i)][(\frac{1+i}{\sqrt{2}},1)]\tau[(\frac{1+i}{\sqrt{2}},1)]^{-1},$$ both cases
lead to $F\sim F_{13}$. In the case of the root system of $\frl'$ is of type $C_3$, we have $L\subset H_2$
and \[L=((\SU(6)\times\U(1)/\langle(\omega I,1),(-I,-1)\rangle)\rtimes\langle\tau\rangle.\]
Let $L''=(\SU(6)/\langle\omega I \rangle) \rtimes\langle\tau\rangle$. Then $F=Z(L)_0\cdot F''$,
$F'':=F\cap L''$ is a finite abelian subgroup of $L''$ satisfying the condition $(*)$ and
$Z(L'')\subset Z(L)_0\subset F''$. Using \cite{Yu2}, Proposition 5.1, one can show that $k=0$ for the
terminology there and \[F''\sim\langle[-I],I_{2,4},I_{4,2},[\diag\{-1,1,-1,1,1,1\}],
[\diag\{1,1,-1,1,-1,1\}],[J_3]\tau\rangle.\] Therefore $F\sim F_{15}$.
\end{proof}

The proof of Proposition \ref{P:E6-outer-nonfinite3} indicates that each subgroup $F$ obtained
there contains an outer involution. In the case of $F$ contains an element conjugate to
$\sigma_3$, reducing to $\F_4$ one gets a unique conjugacy class of subgroups with dimension 1
from finite abelian subgroups of $\Spin(7)$, which is the one with $F_{13}$. In the case of $F$
contains an element conjugate to $\sigma_4$ and without elements conjugate to $\sigma_3$,
reducing to $\Sp(4)/\langle-I\rangle$ one gets a conjugacy class of dimension 2 from finite
abelian subgroups of $(\Sp(1)\times\Sp(1))/\langle(-1,-1)\rangle\cong\SO(4)$, which is the one
with $F_{14}$; and two conjugacy classes of dimension 1 from finite abelian subgroups of
$\SU(4)/\langle-I\rangle\cong\SO(6)$ and of $(\Sp(1)\times\Sp(2))/\langle(-1,-I)\rangle$,
the ones with $F_{15}$ and $F_{13}$ respectively.

\begin{prop}\label{P:Weyl-E6-F13 to F15}
We have $$W(F_{13})=(\mathbb{F}_2^{3}\rtimes\GL(3,\mathbb{F}_2))\times\{\pm{1}\}\times\{\pm{1}\},$$
$$W(F_{15})=(\mathbb{F}_2^{5}\rtimes S_6)\times\{\pm{1}\},$$ and there is an exact sequence
\[1\longrightarrow\mathbb{F}_2^{4}\rtimes S_4\longrightarrow W(F_{14})\longrightarrow D_4
\longrightarrow 1.\]
\end{prop}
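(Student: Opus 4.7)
The plan is to treat $F_{13}$, $F_{14}$, $F_{15}$ in parallel by a common two-step scheme. For each $i$ let $T_i := (F_i)_0$ be the identity component (a torus of dimension $1$, $2$, $1$ respectively). The natural restriction homomorphism $\rho_i \colon W(F_i) \to \Aut(T_i)$ has kernel $W_i^0$ (the subgroup of $W(F_i)$ acting trivially on $T_i$) and image $W_i^T$, and I will compute each and then assemble the extension.

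\emph{The image $W_i^T$.} Since $\dim T_i \le 2$, $\Aut(T_i)$ is a small finite group and $W_i^T$ is determined by exhibiting explicit normalizing elements of $F_i$ in $G$. For $F_{13}$ and $F_{15}$ we have $\Aut(T_i) = \{\pm 1\}$; the inverting involution on $T_i$ is produced by an explicit element of $N_G(F_i)$ found inside $H_3$ and $H_2$ respectively (using $\tau$ and/or a suitable involution in the $\U(1) \subset \Sp(1)$ factor). For $F_{14}$, $\dim T_{14} = 2$ and the image $W_{14}^T$ contains the Weyl group of the $B_2$-type Levi $\frl'$ attached to $F_{14}$ in Lemma~\ref{L:E6-outer-nonfinite2}; this Weyl group $W(B_2) = D_4$ is realized in $N_G(F_{14})$ by lifts of the $B_2$-simple reflections inside $H_2$, and a dimension/lattice count shows it is the full image.

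\emph{The kernel $W_i^0$.} Applying Lemma~\ref{L:center} to $T_i$ gives $C_G(T_i) = T_i \cdot L_s$ for an explicit semisimple $L_s$, disconnectedly extended by $\langle\tau\rangle$. The subgroup $W_i^0$ is then the Weyl group of the finite piece $F_i \cap (L_s \rtimes \langle\tau\rangle)$ inside this disconnected extension, and in each case this finite piece is an elementary abelian $2$-subgroup of a classical factor whose Weyl group is already available. Concretely: for $F_{13}$ one reduces to a rank-$4$ subgroup of $\Spin(8)$ generated by $[(-1,1)]$ together with the three triality elements $e_1e_2e_3e_4$, $e_1e_2e_5e_6$, $e_1e_3e_5e_7$, whose Weyl group is $\mathbb{F}_2^3 \rtimes \GL(3,\mathbb{F}_2)$ by \cite{Yu}, Proposition~6.10; for $F_{14}$ one reduces to a rank-$4$ subgroup of $\SU(4)\rtimes\langle\tau\rangle$ with Weyl group $\mathbb{F}_2^4 \rtimes S_4$ by \cite{Yu2}, Proposition~5.1; and for $F_{15}$ one reduces to a rank-$5$ subgroup of $(\SU(6)/\langle\omega I\rangle)\rtimes\langle\tau\rangle$ with Weyl group $\mathbb{F}_2^5 \rtimes S_6$ by the same result. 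For $F_{13}$ there is in addition an independent involution in $N_G(F_{13})$ which centralizes the $\Spin(8)$-factor but acts nontrivially on the $\tau$-coset, contributing the second $\{\pm 1\}$ direct factor in the stated decomposition.

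\emph{Assembly and main obstacle.} For $F_{13}$ and $F_{15}$ one finally verifies that the extension $1 \to W_i^0 \to W(F_i) \to W_i^T \to 1$ splits as a direct product by choosing an inverting involution on $T_i$ inside a centralizer of the generators of $W_i^0$; the extension for $F_{14}$ is recorded without a splitting claim. The main obstacle will be the precise identification of $F_i \cap (L_s \rtimes \langle\tau\rangle)$ and of its Weyl group inside this disconnected classical extension, especially for $F_{14}$ where the $\tau$-twist inside $\SU(4)\rtimes\langle\tau\rangle$ makes the computation of $\mathbb{F}_2^4 \rtimes S_4$ the most delicate point, and for $F_{13}$ where one must carefully separate the two $\{\pm 1\}$ factors so as not to conflate the automorphism $-1$ on $T_{13}$ with the independent involution acting on the $\tau$-coset of the finite part.
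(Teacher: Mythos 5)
Your overall scheme---restrict $W(F_i)$ to the identity component $T_i=(F_i)_0$, compute the image and the kernel separately, and then split the extension for $F_{13}$ and $F_{15}$ by locating an inverting involution commuting with the finite part---is exactly the paper's proof. The identifications of the kernels ($\mathbb{F}_2^{3}\rtimes\GL(3,\mathbb{F}_2)$ times an extra $\{\pm1\}$ from the action on the $\tau$-coset for $F_{13}$; $\mathbb{F}_2^{4}\rtimes S_4$ from $\SU(4)$ for $F_{14}$; $\mathbb{F}_2^{5}\rtimes S_6$ from $\SU(6)$ for $F_{15}$) and the splitting arguments (realizing the inversion of $T_{13}$, resp.\ $T_{15}$, inside a commuting $\Spin(3)$, resp.\ $\Sp(1)$, factor) all match the paper.

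There is, however, one genuine error in your treatment of the image for $F_{14}$. You claim that $W_{14}^{T}$ contains the Weyl group of the $B_2$-type Levi $\frl'=C_{\frk}(\fra)$ attached to $F_{14}$, realized by ``lifts of the $B_2$-simple reflections inside $H_2$.'' But $T_{14}=Z(L)_0$ and $\Lie T_{14}\otimes\bbC=\fra=Z(\frl)$ is precisely the subspace that the reflections in roots of $\frl'$ fix pointwise: the Weyl group of the centralizing Levi acts \emph{trivially} on $T_{14}$ and contributes to the kernel $W_{14}^{0}$, not to the image. As written, this step fails---you would get the identity on $T_{14}$, not $D_4$. The correct source of the $D_4$, which is what the paper uses, is the \emph{complementary} $B_2$ in the sub-root system $2B_2\subset\F_4$: the roots orthogonal to $\frl'$ form a second $B_2$, $T_{14}$ is a maximal torus of the corresponding $\Sp(2)$-subgroup, and it is the Weyl group of that $\Sp(2)$ (acting through $N_G(T_{14})$) that realizes the full $D_4$ on $T_{14}$. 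The upper bound $W_{14}^{T}\subseteq D_4$ then comes from the automorphism group of the weight lattice of $T_{14}$, which for this square-type lattice is itself $D_4$, so no further count is needed. With that substitution your argument goes through and coincides with the paper's.
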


\begin{proof}
Since $(F_{13})_0$ is a maximal torus of an $\SU(2)$ subgroup, one has $W((F_{13})_0)\cong\{\pm{1}\}$.
There is a homomorphsim $p: W(F_{13})\longrightarrow W((F_{13})_0)$, which is clearly a surjective map.
Considering in $$C_{G}((F_{13})_0)\cong((\Spin(8)\times\Spin(2)\times\U(1)/\langle(c',c'',i),(-1,-1,1)
\rangle)\rtimes\langle\tau\rangle$$ shows $\ker p=(\mathbb{F}_2^{3}\rtimes\GL(3,\mathbb{F}_2))\times
\{\pm{1}\}$, where $\{\pm{1}\}$ means the conjugation action of $c'\in\Spin(8)$, which maps $\tau$ to
$[(-1,1)]\tau$ and acts as identity on $$\langle[(-1,1)],[(e_1e_2e_3e_4,1)],[(e_1e_2e_5e_6,1)],
[(e_1e_3e_5e_7,1)]\rangle;$$ $\mathbb{F}_2^{3}\rtimes\GL(3,\mathbb{F}_2))$ means the conjugation action
on $$\langle[(-1,1)],[(e_1e_2e_3e_4,1)],[(e_1e_2e_5e_6,1)],[(e_1e_3e_5e_7,1)]\rangle$$ coming from
elements in $\Spin(7)=\Spin(8)^{\tau}$. On the other hand, since $F_{13}$ is contained in a subgroup
$(\Spin(7) \times\Spin(3)\times\U(1)/\langle(-1,1,-1),(-1,-1,1)\rangle)\rtimes\langle\tau\rangle$ with
$(F_{13})_0$ being a maximal torus of $\Spin(3)$, we actually have $W(F_{13})=(\mathbb{F}_2^{3}
\rtimes\GL(3,\mathbb{F}_2)) \times\{\pm{1}\}\times\{\pm{1}\}$.

Recall that the root system $\F_4$ has a sub-root system of type $2B_2$. Hence $(F_{14})_0$ is the
maximal torus of a subgroup isomorphic to $\Sp(2)$. Thus $W((F_{14})_0)\cong D_4$. There is a homomorphsim
$p: W(F_{14})\longrightarrow W((F_{14})_0)$. It is clear that $p$ is surjective. Consideration in
$$C_{G}((F_{14})_0)\cong(S(\U(4)\times\U(1)\times\U(1))\times\U(1)/\langle(\omega I,\omega,\omega,1),
(-I,-1,-1,-1)\rangle\rtimes\langle\tau\rangle$$ shows $$\ker p\cong\mathbb{F}_{2}^{4}\rtimes S_4.$$
Here in $L''=((\SU(4)\times\U(1))/\langle(-I,-1),(I,\omega I)\rangle)\rtimes\langle\tau\rangle$,
$$F'=F\cap L''=\langle[(\diag\{-1,-1,1,1\},1)],[(\diag\{-1,1,-1,1\},1)],[(I,-1)],[J_2]\tau\rangle.$$
The $\mathbb{F}_{2}^{4}$ comes from the conjugation action of elements $$[(\diag\{i,-i,1,1\},1)],
[(\diag\{i,1,-i,1\},1)], [(iI,1)], [(\diag\{\eta,\eta,\eta,-\eta\},\eta)],$$ where
$\eta=\frac{1+i}{\sqrt{2}}$. The $S_4$ means permutation on the diagonal matrices in $\SU(4)$.

Since $(F_{15})_0$ is a maximal torus of an $\SU(2)$ subgroup, one has $W((F_{15})_0)\cong\{\pm{1}\}$.
There is a homomorphsim $p: W(F_{15})\longrightarrow W((F_{15})_0)$. It is clear that $p$ is surjective.
Considering in $$C_{G}((F_{15})_0)\cong((\SU(6)\times\U(1)/\langle(\omega I,1),(-I,-1)\rangle)\rtimes
\langle\tau\rangle$$ shows $\ker p=(\mathbb{F}_2)^{5}\rtimes S_{6}$. The $\mathbb{F}_{2}^{5}$ comes
from the conjugation action of elements $$[(\diag\{iI_{1,1},I_4\},1)], [(\diag\{1,iI_{1,1},I_3\},1)],
[(\diag\{I_2,iI_{1,1},I_2\},1)],$$ $$[(\diag\{I_3,iI_{1,1},1\},1)], [(\diag\{I_4,iI_{1,1}\},1)].$$
The $S_6$ means permutation on the diagonal matrices in $\SU(6)$. On the other hand, since $F_{15}$ is
contained in $H_2=((\SU(6)\times\Sp(1))/\langle(\omega I,1),(-I,-1)\rangle)\rtimes\langle\tau\rangle$
with $(F_{15})_0$ being a maximal torus of $\Sp(1)$, we actually have
$W(F_{15})=(\mathbb{F}_2^{5}\rtimes S_6)\times\{\pm{1}\}$.
\end{proof}

\section{Simple Lie group of type $\bf E_7$}

Let $G=\Aut(\fre_7)$ and $\pi: \E_7\longrightarrow G$ be the adjoint homomorphism. Denote by
$c=\exp(\pi i(H'_{2}+H'_{5}+H'_{7}))\in\E_7$. Then, $o(c)=2$ and $\ker\pi=Z(\E_7)=\langle c\rangle$
(cf. \cite{Huang-Yu} Section 2).

\begin{lemma}\label{L:E7-order}
Let $F$ be a finite abelian subgroup of $G$ satisfying the condition $(*)$. For any $x\in F$,
\begin{enumerate}
\item {if $o(x)=p$ is a prime, then $p=2$ or $3$.}
\item {If $o(x)=3^{k}$, $k\geq 1$, then $k=1$ and the root system of $\frg^{x}$ is of type $A_5+A_2$.}
\end{enumerate}
\end{lemma}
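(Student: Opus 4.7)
The plan is to emulate the proof of Lemma \ref{L:E6-inner finite-order}. Given $x\in F$ of order $n$ coprime to $|Z(\E_7)|=2$, I would lift $x$ to $\tilde x\in\E_7$ of the same order $n$. Since $c$ is central, $\tilde x$ and $\tilde x c$ commute, so $o(\tilde xc)=2n\neq n$ and the two elements are not $\E_7$-conjugate. Combined with Steinberg's theorem (Proposition \ref{P:Steinberg-centralizer}) this forces $G^x=\E_7^{\tilde x}/\langle c\rangle$, which is connected. Because $F\subset G^x$ is a finite abelian subgroup satisfying $(*)$, the subtorus $Z(G^x)_0$ commutes with $F$, so $\Lie Z(G^x)_0\subset\frg_0^F=0$ and $G^x$ is semisimple of full rank $7$. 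Therefore $\Delta(\frg^{\tilde x})$ is obtained from the extended Dynkin diagram of $E_7$ by deleting a single node of mark $n$ (cf.\ \cite{Oshima}; Borel--de Siebenthal).

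For (1), the marks of the extended Dynkin diagram of $E_7$ lie in $\{1,2,3,4\}$, so no such deletion exists for a prime $p\geq 5$, which forces $p\in\{2,3\}$.

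For (2) the same reduction applies with $n=3^k$. When $k=1$, inspection of the affine diagram shows that removing either of the two mark-$3$ nodes $\alpha_3,\alpha_5$ yields the root system $A_5+A_2$, which is exactly the claim. For $k\geq 2$ I aim for a contradiction. Replacing $x$ by $x^{3^{k-2}}\in F$, I may assume $o(x)=9$; set $y=\tilde x^3$, of order $3$. By the $k=1$ case, $\E_7^{y}$ is connected with root system $A_5+A_2$, and its isogeny type can be pinned down as $(\SU(6)\times\SU(3))/Z$ for an explicit finite central subgroup $Z$, with $y$ a central element of order $3$. Lifting $\tilde x$ to $(\tilde x_1,\tilde x_2)\in\SU(6)\times\SU(3)$, the value of $\tilde x^3$ in the quotient is prescribed, and a check on orders forces at least one of $\tilde x_1,\tilde x_2$ to have order divisible by $9$. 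However, a scalar $\lambda I_n\in\SU(n)$ satisfies $\lambda^n=1$, so for $n\in\{3,6\}$ together with $\lambda^9=1$ one gets $\lambda^{\gcd(n,9)}=\lambda^3=1$; hence no scalar in either factor has order $9$. Thus $\tilde x_1$ or $\tilde x_2$ has at least two distinct eigenvalues, and its centraliser in the corresponding $\SU$-factor contains a positive-dimensional central torus. This torus descends to $G^x$, contradicting $Z(G^x)_0=1$.

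The hard part will be this last step: pinning down the exact isogeny type of $\E_7^{y}$ and tracking which pairs $(\tilde x_1,\tilde x_2)\in\SU(6)\times\SU(3)$ actually descend to elements of order $9$ in the quotient, so that non-scalarity in at least one factor is guaranteed. Once this book-keeping is done, the appearance of a central torus in the centraliser of $\tilde x$ is automatic and the contradiction with $(*)$ closes the argument.
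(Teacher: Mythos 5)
Your argument is correct and follows essentially the same route as the paper: lift $x$ to $\tilde x\in\E_7$ of the same (odd) order, use Steinberg's theorem together with $(*)$ to force $G^{x}=\E_7^{\tilde x}/\langle c\rangle$ to be connected and semisimple, and then read off the possibilities from the extended Dynkin diagram of $E_7$ (the paper quotes Oshima's list of full-rank subsystems and discards those whose centers are $2$-groups, while you discard those whose mark is not $o(x)$; the two filters land on the same answer $A_5+A_2$). One remark: your own reduction already finishes part (2) for every $k$, since $\frg^{\tilde x}$ semisimple forces $o(\tilde x)=3^{k}$ to equal a mark of $\widetilde E_7$, hence $3^{k}\leq 4$ and $k=1$; the entire $k\geq 2$ detour through the isogeny type of $\E_7^{y}$ and the eigenvalue bookkeeping in $\SU(6)\times\SU(3)$ --- the step you flag as the ``hard part'' and leave unfinished --- is therefore unnecessary and can simply be deleted.
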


\begin{proof}
Let $1\neq x\in F$ be an element of order prime to 2. Then there exists $y\in\E_7$ such that $\pi(y)=x$
and $o(y)=o(x)$. Since $y\not\sim cy$, one has $ G^{x}=(\E_7)^{y}/\langle c\rangle$ and hence it is
connected by Steinberg's theorem. Since $F$ satisfies the condition $(*)$, $G^{x}$ is semisimple. Thus
the root system of $\frg^{x}$ is one of the following types (cf. \cite{Oshima}): $A_7$, $D_6+A_1$,
$D_4+3A_1$, $A_5+A_2$, $2A_3+A_1$, $7A_1$. As it is assumed that $o(x)$ is prime to $2$, the root
system of $\frg^{x}$ is of type $A_5+A_2$. Therefore $G^{x}\cong(\SU(6)\times\SU(3))/\langle(-I,I),
(\omega I,\omega^{-1} I)\rangle$ and $o(x)=3$.
\end{proof}

In $(\SU(6)\times\SU(3))/\langle(-I,I),(\omega I,\omega^{-1}I)\rangle$, let $$F_1=\langle[(A_6,A_3)],
[(B_6,B_3)],[(I,\omega I)]\rangle,$$ where $A_{3}=\diag\{1,e^{\frac{2\pi i}{3}},e^{\frac{4\pi i}{3}}\}$,
$A_{6}=\diag\{e^{\frac{\pi i}{6}},e^{\frac{3\pi i}{6}},e^{\frac{5\pi i}{6}},e^{\frac{7\pi i}{6}},
e^{\frac{9\pi i}{6}},e^{\frac{11\pi i}{6}}\}$, \[B_3=\left(\begin{array}{ccc}0&1&0\\0&0&1\\1&0&
0\end{array}\right),\quad B_6=e^{\frac{\pi i}{6}}\left(\begin{array}{cccccc}0&1&0&0&0&0\\0&0&1&0&0
&0\\0&0&0&1&0&0\\0&0&0&0&1&0\\0&0&0&0&0&1\\1&0&0&0&0&0\end{array}\right).\]

\begin{prop}\label{P:E7-finite1}
Let $F$ be a finite abelian subgroup of $G$ satisfying the condition $(*)$. If $F$ is not a 2-group,
then $F\sim F_1$.
\end{prop}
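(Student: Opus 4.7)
The plan is to imitate the proof of Proposition \ref{P:E6-inner finite 1}. Since $F$ is not a $2$-group, Lemma \ref{L:E7-order}(1) forces $F$ to contain an element $x$ of odd prime order, which must be $3$; and Lemma \ref{L:E7-order}(2) pins down the centralizer as
\[
G^{x}\;\cong\;(\SU(6)\times\SU(3))/\langle(-I,I),\,(\omega I,\omega^{-1}I)\rangle.
\]
After conjugation we may assume $x$ is the standard central element $[(I,\omega I)]$ of $G^{x}$, and then $F\subset G^{x}$.

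Next, I would record the scalar compatibility built into the covering map $\SU(6)\times\SU(3)\to G^{x}$: a central element $(\lambda_{1}I_{6},\lambda_{2}I_{3})$ lies in the kernel $\langle(-I,I),(\omega I,\omega^{-1}I)\rangle$ if and only if $\lambda_{2}=\lambda_{1}^{3}$. Both generators obviously satisfy this relation, and a short count shows that it characterizes the order-$6$ kernel inside $Z(\SU(6))\times Z(\SU(3))$. This is the direct analogue of the scalar check carried out in the proof of Proposition \ref{P:E6-inner finite 1}.

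With this relation in hand, I would apply \cite{Yu2}, Proposition 2.1 to $F$ viewed inside the quotient $(\SU(6)\times\SU(3))/\langle(-I,I),(\omega I,\omega^{-1}I)\rangle$. The condition $(\ast)$ forces $F$ to have trivial centralizer in each simple factor, which by that matrix-group classification means the projections of $F$ to $\SU(6)$ and $\SU(3)$ are conjugate to the standard Heisenberg-type subgroups $\langle A_{6},B_{6}\rangle$ and $\langle A_{3},B_{3}\rangle$, paired via the relation $\lambda_{2}=\lambda_{1}^{3}$. Together with the central element $[(I,\omega I)]=x$ already in $F$, this yields exactly $F\sim F_{1}$.

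The main obstacle will be the bookkeeping for the central extension: one must verify that the scalar-matching condition $\lambda_{2}=\lambda_{1}^{3}$ is the only obstruction to pairing the two Heisenberg pieces, so that the resulting $F$ is uniquely determined up to conjugation. This is exactly what the cited Proposition 2.1 of \cite{Yu2} handles once the central extension has been identified, so after the calculation of the kernel the conclusion is immediate.
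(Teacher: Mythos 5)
Your overall strategy is exactly the paper's: use Lemma \ref{L:E7-order} to produce an element $x$ of order $3$ with $G^{x}\cong(\SU(6)\times\SU(3))/\langle(-I,I),(\omega I,\omega^{-1}I)\rangle$, record the scalar relation satisfied by the kernel of the covering $\SU(6)\times\SU(3)\to G^{x}$, and then invoke \cite{Yu2}, Proposition 2.1. However, the scalar relation you state is wrong, and the sentence ``both generators obviously satisfy this relation'' is false. For the generator $(-I,I)$ one has $\lambda_{1}=-1$, $\lambda_{2}=1$, and $\lambda_{1}^{3}=-1\neq 1=\lambda_{2}$; moreover, since any central $\lambda_{2}I_{3}\in\SU(3)$ satisfies $\lambda_{2}^{3}=1$, the graph of $\lambda_{1}\mapsto\lambda_{1}^{3}$ inside $Z(\SU(6))\times Z(\SU(3))$ consists only of the elements with $\lambda_{2}=1$ and $\lambda_{1}^{3}=1$, which is a group of order $3$, not the order-$6$ kernel you claim it characterizes. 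The correct relation, and the one the paper uses, is $\lambda_{2}=\lambda_{1}^{2}$: indeed $(-1)^{2}=1$ and $\omega^{2}=\omega^{-1}$, so both generators lie on the graph of squaring, and that graph has order $6$. You appear to have carried over the relation $\lambda_{2}=\lambda_{1}^{3}$ from the $\E_6$ case (Proposition \ref{P:E6-inner finite 1}), where the kernel $\langle(-I,-1),(\omega I,1)\rangle$ inside $\SU(6)\times\Sp(1)$ genuinely satisfies it.

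This matters because the scalar relation is precisely the datum fed into \cite{Yu2}, Proposition 2.1: that result classifies the relevant abelian subgroups of a quotient of a product of special unitary groups by a central subgroup which is the graph of a homomorphism between the centers, and the answer depends on which homomorphism it is. With the relation corrected to $\lambda_{2}=\lambda_{1}^{2}$, the rest of your argument (pairing the Heisenberg-type subgroups $\langle A_{6},B_{6}\rangle$ and $\langle A_{3},B_{3}\rangle$ compatibly with the central identification, together with the central element $[(I,\omega I)]$) goes through and coincides with the paper's proof.
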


\begin{proof}
By the proof of Lemma \ref{L:E7-order}, $F$ contains an element $x$ of order 3 and \[F\subset G^{x}
\cong(\SU(6)\times\SU(3))/\langle(-I,I),(\omega I,\omega^{-1}I)\rangle.\] For any
$(\lambda_1 I,\lambda_2 I)\in\langle(-I,I),(\omega I,\omega^{-1}I)\rangle$, one has $\lambda_2=
\lambda_1^{2}$, Using \cite{Yu2}, Proposition 2.1, one gets $F\sim F_1$.
\end{proof}

\begin{prop}\label{P:Weyl-E7-F1}
We have $W(F_1)\cong\SL(3,\mathbb{F}_3)\times\SL(2,\mathbb{F}_2)$.
\end{prop}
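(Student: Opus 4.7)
The group $F_1$ is abelian of order $108=2^2\cdot 3^3$: a direct computation in $\SU(6)\times\SU(3)$ shows that the commutators $A_6B_6A_6^{-1}B_6^{-1}$ and $A_3B_3A_3^{-1}B_3^{-1}$ are scalar matrices whose combined pair $(e^{\pi i/3}I,\omega I)=(-I,I)(\omega I,\omega^{-1}I)^2$ lies in $\langle(-I,I),(\omega I,\omega^{-1}I)\rangle=\ker\pi$, so $[(A_6,A_3)]$, $[(B_6,B_3)]$, $[(I,\omega I)]$ commute pairwise in $G$. Its Sylow decomposition is $F_1=K\times K'$ where
\[K=\langle[(A_6^2,A_3^2)],[(B_6^2,B_3^2)],[(I,\omega I)]\rangle\cong(\mathbb{F}_3)^3,\quad K'=\langle[(A_6^3,I)],[(B_6^3,I)]\rangle\cong(\mathbb{F}_2)^2,\]
both of which are characteristic in $F_1$. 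Restriction therefore gives an injection
\[\phi:W(F_1)\hookrightarrow W(K)\times W(K')\subseteq\GL(3,\mathbb{F}_3)\times\GL(2,\mathbb{F}_2).\]
The plan is to compute $|W(F_1)|$ by orbit--stabilizer on $K\setminus\{1\}$, check that it equals $|\SL(3,\mathbb{F}_3)\times\SL(2,\mathbb{F}_2)|=5616\cdot 6=33696$, and then use order considerations to identify each factor.

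By Lemma~\ref{L:E7-order}, every non-identity $y\in K$ has order $3$ and centralizer $G^y\cong(\SU(6)\times\SU(3))/\langle(-I,I),(\omega I,\omega^{-1}I)\rangle$, so all $26$ such elements are $G$-conjugate; applying the uniqueness clause of Proposition~\ref{P:E7-finite1} inside $G^y$ to the pair $F_1$, $gF_1g^{-1}$ upgrades this to $N_G(F_1)$-conjugacy. Hence $W(F_1)$ acts transitively on $K\setminus\{1\}$ and $|W(F_1)|=26\cdot|\Stab_{W(F_1)}(y)|$ for any fixed $y\in K\setminus\{1\}$; I take $y=[(I,\omega I)]$. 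To compute $\Stab_{W(F_1)}(y)=N_{G^y}(F_1)/C_{G^y}(F_1)$, I descend to $G^y/\langle y\rangle\cong\PSU(6)\times\PSU(3)$: there $\bar F_1:=F_1/\langle y\rangle\cong(C_6)^2$ projects to the maximal abelian subgroup $\langle[A_6],[B_6]\rangle$ of $\PSU(6)$ and to the maximal abelian subgroup $\langle[A_3],[B_3]\rangle$ of $\PSU(3)$, embedded diagonally as the graph of the reduction $(C_6)^2\twoheadrightarrow(C_3)^2$. Applying \cite{Yu2}, Proposition~2.1 to each factor and using the Chinese-remainder isomorphism $\SL(2,\mathbb{Z}/6)\cong\SL(2,\mathbb{F}_2)\times\SL(2,\mathbb{F}_3)$, the Weyl group of $\bar F_1$ in $\PSU(6)\times\PSU(3)$ is identified with $\SL(2,\mathbb{Z}/6)$ of order $144$. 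Lifting back to $G^y$, the kernel of $W_{G^y}(F_1)\to W(\bar F_1)$ consists of automorphisms of $F_1$ of the form $f\mapsto fy^{\phi(f)}$ with $\phi(y)=0$, i.e.\ of $\Hom(\bar F_1,\langle y\rangle)\cong(\mathbb{F}_3)^2$ of order $9$. Thus $|\Stab_{W(F_1)}(y)|=9\cdot 144=1296$ and $|W(F_1)|=26\cdot 1296=33696$.

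Injectivity of $\phi$ together with $|W(F_1)|=|\SL(3,\mathbb{F}_3)|\cdot|\SL(2,\mathbb{F}_2)|$ forces both coordinate projections of $\phi$ to be surjective onto subgroups of orders $5616$ and $6$ respectively. Since $\SL(3,\mathbb{F}_3)$ is the unique index-$2$ subgroup of $\GL(3,\mathbb{F}_3)$ (being the kernel of $\det:\GL(3,\mathbb{F}_3)\to\mathbb{F}_3^{\times}$), we must have $W(K)=\SL(3,\mathbb{F}_3)$; and the only subgroup of $\GL(2,\mathbb{F}_2)=\SL(2,\mathbb{F}_2)\cong S_3$ of order $6$ is the whole group, so $W(K')=\SL(2,\mathbb{F}_2)$. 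Combining gives $W(F_1)\cong\SL(3,\mathbb{F}_3)\times\SL(2,\mathbb{F}_2)$. The main obstacle is the stabilizer count $|\Stab_{W(F_1)}(y)|=1296$: one must justify both that the Weyl group of the diagonal $\bar F_1\subset\PSU(6)\times\PSU(3)$ is the full $\SL(2,\mathbb{Z}/6)$ predicted by \cite{Yu2}, Proposition~2.1 (that the reduction $(C_6)^2\twoheadrightarrow(C_3)^2$ does not impose an extra compatibility cutting the Weyl group down) and that the order-$9$ kernel is realized by actual conjugations inside $G^y$ rather than merely by abstract automorphisms; both amount to unwinding the Heisenberg commutator relations in $\SU(6)\times\SU(3)$.
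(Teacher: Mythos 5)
Your proposal follows the same route as the paper: show $W(F_1)$ acts transitively on the $26$ order-$3$ elements of $F_1$, compute the stabilizer of one of them inside $G^{y}\cong(\SU(6)\times\SU(3))/\langle(-I,I),(\omega I,\omega^{-1}I)\rangle$, and multiply. Your stabilizer count $9\cdot 144=1296$ is exactly the paper's $(\mathbb{F}_3^{2}\rtimes\SL(2,\mathbb{F}_3))\times\SL(2,\mathbb{F}_2)$, just repackaged via $\SL(2,\bbZ/6)\cong\SL(2,\mathbb{F}_2)\times\SL(2,\mathbb{F}_3)$ acting on $\bar F_1\cong(C_6)^2$ together with the $\Hom(\bar F_1,\langle y\rangle)\cong(\mathbb{F}_3)^2$ kernel; the transitivity upgrade via the uniqueness clause of Proposition \ref{P:E7-finite1} is the standard argument the paper leaves implicit. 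The computations are right.

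One step is a non sequitur as written: from $|W(F_1)|=33696$ and the embedding into $\GL(3,\mathbb{F}_3)\times\GL(2,\mathbb{F}_2)$ you cannot conclude that the two coordinate projections have orders $5616$ and $6$ --- the subgroup $\GL(3,\mathbb{F}_3)\times A_3$ has the same order $11232\cdot 3=33696$ with projections of orders $11232$ and $3$. The repair is already in your stabilizer computation: the $\SL(2,\mathbb{F}_2)$ factor of $\SL(2,\bbZ/6)$ acts on the $2$-torsion of $\bar F_1$, i.e.\ on $K'$, so the projection $p_2:W(F_1)\to\GL(2,\mathbb{F}_2)$ is surjective. Then $\ker p_2$ has order $5616$ and embeds in $\GL(3,\mathbb{F}_3)$, hence equals its unique index-$2$ subgroup $\SL(3,\mathbb{F}_3)$; since $\GL(3,\mathbb{F}_3)=\SL(3,\mathbb{F}_3)\times\langle -I\rangle$, the group $W(F_1)$ is then $\SL(3,\mathbb{F}_3)\times H$ with $H\le\langle -I\rangle\times S_3$ of order $6$ surjecting onto $S_3$, so $H\cong S_3$ and the claimed isomorphism follows. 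With that adjustment your argument is complete.
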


\begin{proof}
Choosing an element $x$ of $F_1$ with order $3$, consideration in $G^{x}$ shows that
$\Stab_{W(F_1)}(x)\cong(\mathbb{F}_3^{2}\rtimes\SL(2,\mathbb{F}_3))\times\SL(2,\mathbb{F}_2)$.
Apparently $W(F_1)$ acts transitively on the set of elements of $F_1$ with order $3$. Therefore
$W(F_1)\cong\SL(3,\mathbb{F}_3)\times\SL(2,\mathbb{F}_2)$.
\end{proof}

In $G^{\sigma_2}\cong((\E_6\times\U(1))/\langle(c',e^{\frac{2\pi i}{3}})\rangle)\rtimes\langle
\omega\rangle$ (cf. \cite{Huang-Yu}, Table 2), let $F_2$, $F_3$, $F_4$, $F_5$, $F_6$, $F_7$ be finite
abelian 2-subgroups of $\E_6\rtimes\langle\omega\rangle$ with the image of projection to
$\Aut(\fre_6)$ being the subgroups $F_6$, $F_8$, $F_9$, $F_{10}$, $F_{11}$, $F_{12}$ of $\Aut(\fre_6)$
respectively. Here $c'$ is an generator of $\E_6$, $(\fre_6\oplus i\bbR)^{\omega}=\frf_4\oplus 0$.

\begin{prop}\label{P:E7-finite2}
If $F$ is a finite abelian 2-subgroup of $G$ satisfying the condition $(*)$ and containing an element conjugate to
$\sigma_2$, then $F$ is conjugate to one of $F_2$, $F_3$, $F_4$, $F_5$, $F_6$, $F_7$.
\end{prop}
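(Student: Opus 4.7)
The plan is to reduce to the classification of finite abelian $2$-subgroups of $\Aut(\fre_6)$ established in the previous section, by projecting $F$ through the central torus of $G^{\sigma_2}$. Since $F$ contains a conjugate of $\sigma_2$, we may assume $\sigma_2 \in F$, so that $F \subset G^{\sigma_2} \cong ((\E_6 \times \U(1))/\langle(c', e^{2\pi i/3})\rangle) \rtimes \langle\omega\rangle$. Let $S \subset G^{\sigma_2}_0$ denote the central $\U(1)$-factor; then $\sigma_2$ is the unique order-$2$ element of $S$, and $\omega$ inverts $S$.

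First I would observe that $F$ cannot be contained in $G^{\sigma_2}_0$: otherwise $S$ would centralize $F$ and $\dim \frg_0^F \geq 1$, violating $(*)$. Any $y \in F \setminus G^{\sigma_2}_0$ inverts $S$, so commutativity of $F$ forces $F \cap S \subseteq \{\pm 1\} = \langle\sigma_2\rangle$, hence equality. Setting $\bar F := p(F)$ for the quotient $p \colon G^{\sigma_2} \twoheadrightarrow G^{\sigma_2}/S \cong \Aut(\fre_6)$, I then have $|F| = 2|\bar F|$, and the decomposition $\fre_7^{\sigma_2} = \fre_6 \oplus i\bbR$ identifies $\frg_0^F$ with $(\fre_6)^{\bar F}$ (the outer element $y$ kills the $i\bbR$ part). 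So $\bar F$ is a finite abelian $2$-subgroup of $\Aut(\fre_6)$ satisfying $(*)$ and containing outer elements; by Lemma~\ref{L:E6-outer finite1} and Proposition~\ref{P:E6-outer finite2} of the preceding section, $\bar F$ is conjugate to one of $F_6, F_8, F_9, F_{10}, F_{11}, F_{12}$ of $\Aut(\fre_6)$.

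To finish, I would show that for each such $\bar F$, the $G^{\sigma_2}$-conjugacy class of the lift $F$ is uniquely determined, producing $F_2, \ldots, F_7$ respectively. The subgroup $\E_6 \rtimes \langle\omega\rangle \subset G^{\sigma_2}$ surjects onto $\Aut(\fre_6)$ with kernel $\langle c'\rangle \cong C_3$; since $\gcd(|\bar F|, 3) = 1$, Schur--Zassenhaus provides a unique up-to-conjugation $2$-group lift $\tilde F \subset \E_6 \rtimes \langle\omega\rangle$ of $\bar F$, and these lifts are precisely the groups $F_2, \ldots, F_7$ of the statement. To align the given $F$ with $\tilde F \cdot \langle\sigma_2\rangle$ up to $G^{\sigma_2}$-conjugation, I would exploit the $S$-action on lifts: conjugation by $s \in S$ fixes $F \cap G^{\sigma_2}_0$ pointwise and twists each outer element $y$ by $s^2$, so $2$-divisibility of $S = \U(1)$ absorbs any resulting ambiguity in outer lifts. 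The main obstacle will be the following: for an outer $\bar y \in \bar F$ with $\bar y^2 \in \Int(\fre_6)$, the lift $y \in F$ has $y^2 \in F \cap G^{\sigma_2}_0$ determined only modulo $\langle\sigma_2\rangle$ by the inner part, so one must verify case-by-case for each of the six possibilities for $\bar F$ that this $\langle\sigma_2\rangle$-ambiguity is indeed absorbed by $S$-conjugation, yielding no conjugacy class beyond the listed $F_2, \ldots, F_7$.
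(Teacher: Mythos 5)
Your proposal is correct and follows essentially the same route as the paper: pass to $G^{\sigma_2}\cong((\E_6\times\U(1))/\langle(c',e^{2\pi i/3})\rangle)\rtimes\langle\omega\rangle$, rule out $F\subset(G^{\sigma_2})_0$ via the one-dimensional center, use the outer element's inversion of the central $\U(1)$ to pin the $\U(1)$-components down to $\pm 1$, and reduce to Lemma~\ref{L:E6-outer finite1} and Proposition~\ref{P:E6-outer finite2}. The only remark worth making is that the ``main obstacle'' you flag is vacuous: for $y=[(g,\mu)]\omega$ one computes $y^2=[(g\,\omega(g)\,,1)]$, so the square of any outer element automatically lands in the image of $\E_6$ with no $\langle\sigma_2\rangle$-ambiguity, and after a single $S$-conjugation normalizing one outer representative the decomposition $F=F'\times\langle\sigma_2\rangle$ with $F'\subset\E_6\rtimes\langle\omega\rangle$ holds uniformly, with no case-by-case check needed.
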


\begin{proof}
We may and do assume that $\sigma_2\in F$. Then, \[F\subset G^{\sigma_2}\cong((\E_6\times\U(1))/\langle(c',
e^{\frac{2\pi i}{3}})\rangle)\rtimes\langle\omega\rangle.\] One has $F\not\subset(G^{\sigma_2})_0$ as $F$
satisfies the condition $(\ast)$ and $(G^{\sigma_2})_0$ has a one-dimensional center. Given $[(x,\lambda)]\in
F\cap(G^{\sigma_2})_0$, one has $\omega[(x,\lambda)]\omega^{-1}=[(\cdot,\lambda^{-1})]$ being conjugate to
$[(x,\lambda)]$ in $(G^{\sigma_2})_0$. Hence $\lambda=\pm{1}$. Thus $F$ is conjugate to a subgroup of the form
$F=F'\times\langle[(1,-1)]\rangle$ with $F'$ a finite abelian subgroup of $\E_6\rtimes\langle\omega\rangle$
satisfying the condition $(\ast)$ and being not contained in $\E_6$. By Lemma \ref{L:E6-outer finite1} and
Proposition \ref{P:E6-outer finite2}, $F$ is conjugate to one of $F_2$, $F_3$, $F_4$, $F_5$, $F_6$, $F_7$.
\end{proof}


Among $F_2$, $F_3$, $F_4$, $F_5$, $F_6$, $F_7$, the maximal abelian ones are $F_2$, $F_3$, $F_6$, $F_7$.

\begin{prop}\label{P:Weyl-E7-F2 to F7}
There is an exact sequence
\[1\longrightarrow\Hom'(\mathbb{F}_2^{3},\mathbb{F}_2^{3})\times\mathbb{F}_2\longrightarrow W(F_2) \longrightarrow(\mathbb{F}_2)^{2}\rtimes P(2,1,\mathbb{F}_2)\longrightarrow 1,\] and we have
\[W(F_3)\cong\Hom((\mathbb{F}_2)^{2},(\mathbb{F}_2)^{2})\rtimes(\GL(2,\mathbb{F}_2)
\times P(2,3,\mathbb{F}_2)),\]
\[W(F_4)\cong((\bbF_2)^{5}\rtimes\Hom((\bbF_2)^{3},(\bbF_2)^{2}))\rtimes(\GL(2,\bbF_2)
\times\GL(2,\bbF_2)),\]
\[W(F_5)\cong((\bbF_2)^{6}\rtimes\Hom((\bbF_2)^{5},\bbF_2))\rtimes\Sp(2,\mathbb{F}_2),\]
\[|W(F_{6})|=3\times 2^{15},\]
\[W(F_7)\cong(\bbF_2)^{7}\rtimes\Sp(3,\mathbb{F}_2).\]
\end{prop}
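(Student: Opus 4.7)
The plan is to bootstrap from Propositions \ref{P:Weyl-E6-F6} and \ref{P:Weyl-E6-F7 to F12} via the centralizer $G^{\sigma_2}$, exploiting the construction of each $F_i$ in Proposition \ref{P:E7-finite2} as $F_i = F'_i\times\langle z\rangle$, where $z=[(1,-1)]$ and $F'_i$ projects in $\Aut(\fre_6)$ to one of the subgroups $F_6, F_8, F_9, F_{10}, F_{11}, F_{12}$ treated in the previous section.

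For the elementary abelian $2$-subgroups $F_3$, $F_4$, $F_5$, $F_7$, I would invoke \cite{Yu}, Proposition 6.10, directly: each such $F_i$ is a standard form $F'_{k,l}$, and its Weyl group is determined by the symplectic/orthogonal data attached to the pair $(F_i, \{x\in F_i : x\sim\sigma_2\})$. The adjustments relative to the $\bf E_6$ picture amount to the central rank-one extension by $\langle z\rangle$ together with the distinguished involution $\sigma_2$, and these multiply the ranks and the Dickson-type invariants in a predictable way, producing the stated $\Sp(3,\bbF_2)$, $\Sp(2,\bbF_2)$, and related extensions.

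For $F_2$ and $F_6$, I would follow a three-step reduction. First, since $\sigma_2\in F_i$, the $W(F_i)$-orbit of $\sigma_2$ equals $T_i := \{x\in F_i : x\sim_G\sigma_2\}$, yielding
\[ |W(F_i)| = |T_i|\cdot|\Stab_{W(F_i)}(\sigma_2)|, \qquad \Stab_{W(F_i)}(\sigma_2) = W_{G^{\sigma_2}}(F_i). \]
Second, inside
\[ G^{\sigma_2}\cong ((\E_6\times\U(1))/\langle(c',e^{\frac{2\pi i}{3}})\rangle)\rtimes\langle\omega\rangle, \]
the central torus $A\cong\U(1)$ satisfies $A\cap F_i = \langle z\rangle$, and the quotient $G^{\sigma_2}/A\cong\Aut(\fre_6)$ sends $F_i$ onto the known subgroup $\bar F_i$ with kernel $\langle z\rangle$. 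Because $A$ is central, $W_{G^{\sigma_2}}(F_i)$ fixes $z$ pointwise and descends to $W_{\Aut(\fre_6)}(\bar F_i)$, which is known from Section 3. Third, the kernel of the descent consists of ``$z$-twists'' $f\mapsto f\cdot z^{\chi(f)}$ for homomorphisms $\chi\colon \bar F_i \to \langle z\rangle$, and the realized $\chi$ are pinned down by computing $C_{G^{\sigma_2}}(\bar F_i)$ via explicit matrix calculations in the $(\SU(6)\times\Sp(1))/\langle(\omega I,1),(-I,-1)\rangle$ or $(\Spin(10)\times\U(1))/\langle(c,i)\rangle$ models, mirroring the proofs of Propositions \ref{P:Weyl-E6-F6} and \ref{P:Weyl-E6-F7 to F12}. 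Counting $|T_i|$ from the conjugacy-class data in \cite{Huang-Yu} then assembles the stated extension.

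The main obstacle will be steps two and three for $F_6$: the squaring map on $F_6$ is nontrivial, so $F_6$ is not elementary abelian, and reconciling the factor $3$ in $|W(F_6)|=3\times 2^{15}$ requires producing an explicit element of $G$ outside $G^{\sigma_2}$ that realises a $3$-cycle on a distinguished subset of involutions of $F_6$ — precisely mirroring the role of the $\sigma_4$-trick in the proof of Proposition \ref{P:Weyl-E6-F7 to F12} for $F_{11}$. For $F_2$ the subtler point is showing that the kernel of descent is exactly $\Hom'(\bbF_2^3,\bbF_2^3)\times\bbF_2$ and not all of $\Hom(\bbF_2^3,\bbF_2^3)\times\bbF_2$; this trace-zero obstruction is the same one isolated in the proof of Proposition \ref{P:Weyl-E6-F6}, and can be verified by checking that no element of $C_{G^{\sigma_2}}(\bar F_i)$ conjugates $[(J_3,1)\tau]$ into $[(J_3,1)\tau]\cdot\sigma_1$.
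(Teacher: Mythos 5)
Your handling of the elementary abelian cases $F_3$, $F_4$, $F_5$, $F_7$ coincides with the paper's: both simply quote the classification of Weyl groups of elementary abelian $2$-subgroups from \cite{Yu}. (One slip: for subgroups of $\Aut(\fre_7)$ the relevant statement is Proposition 7.26 of \cite{Yu}; Proposition 6.10 is the $\E_6$ version used in the earlier sections.)

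For $F_2$ and $F_6$ you take a genuinely different route, and it has two concrete gaps. First, the assertion for $F_2$ is a specific exact sequence whose quotient $(\mathbb{F}_2)^{2}\rtimes P(2,1,\mathbb{F}_2)$ arises in the paper as $W(K)$ for $K=\{x\in F_2:x^2=1\}$, identified with the standard subgroup $F'_{2,1}$ of \cite{Yu}; the sequence is the restriction homomorphism $W(F_2)\to W(K)$ together with its kernel, computed as in Proposition \ref{P:Weyl-E6-F6}. Your orbit--stabilizer decomposition $|W(F_2)|=|T_2|\cdot|\Stab_{W(F_2)}(\sigma_2)|$ is not a group extension: at best it recovers the order of $W(F_2)$, and it never identifies the surjection onto $W(K)$ nor the kernel $\Hom'(\mathbb{F}_2^{3},\mathbb{F}_2^{3})\times\mathbb{F}_2$, so the claimed exact sequence is not established by it. Second, the descent step is not automatic. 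A lift to $G^{\sigma_2}$ of an element of $N_{\Aut(\fre_6)}(\bar F_i)$ normalizes $F_iA$ but need not normalize $F_i$ itself: conjugation can send $f\in F'_i$ (which has elements of order $4$) to $f'a$ with $a\in A$ of order $4$, which lies outside $F_i=F'_i\times\langle z\rangle$. So surjectivity of $W_{G^{\sigma_2}}(F_i)\to W_{\Aut(\fre_6)}(\bar F_i)$ requires an argument you do not give, the set of realized $z$-twists must still be pinned down, and the transitivity of $W(F_i)$ on $T_i$ that your first step relies on is itself something to be proved. Finally, for $F_6$ your anticipated ``$3$-cycle'' element is looking in the wrong place: in the paper the factor $3$ in $|W(F_6)|=3\times 2^{15}$ is obtained by \emph{dividing}, namely $W(F_6)$ maps onto $\Stab_{W(K)}(z)$ for $z$ the generator of $\{x^2:x\in F_6\}$ (not $\sigma_2$), with $K\sim F_4$ and the $W(K)$-orbit of $z$ of size $3$, giving $|W(F_6)|=\tfrac{4}{3}|W(F_4)|$; no extra symmetry of order $3$ outside $G^{\sigma_2}$ needs to be produced.
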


\begin{proof}
The abelian subgroups $F_3$, $F_4$, $F_5$, $F_7$ are elementary abelian 2-subgroups. The description
of their Weyl groups is given in \cite{Yu}, Proposition 7.26. For $F_2$, let $K=\{x\in F_2: x^2=1\}$
and $K'=\{x^2: x\in F_2\}$. Then, $K'\subset K$, $\rank K=4$, and $\rank K'=3$. From the description
of the subgoup $F_6$ in \cite{Yu3}, one sees that $K$ is conjugate to the elementary abelian
$2$-subgroup $F'_{2,1}$ of $G$ in the terminology of \cite{Yu}. Hence $W(K)\cong(\mathbb{F}_2)^{2}
\rtimes P(2,1,\mathbb{F}_2)$ by \cite{Yu}, Proposition 7.26. There is a homomorphism $p: W(F_2)
\longrightarrow W(K)$. Using the fact that $W(F_2)$ acts transitively on the set of elements of $F_2$
being conjugate to $\sigma_2$, one gets that $p$ is surjective. Similarly as the proof Proposition
\ref{P:Weyl-E6-F6}, one shows that $\ker p\cong\Hom'(\mathbb{F}_2^{3},\mathbb{F}_2^{3})\times
\mathbb{F}_2$, where $\Hom'(\mathbb{F}_2^{3},\mathbb{F}_2^{3})$ comes from conjugation action of
elements in $\E_6\subset G^{\sigma_2}$ and $\mathbb{F}_2$ comes from the conjugation action of
$[(1,i)]\in G^{\sigma_2}$. For $F_6$, let $K=\{x\in F_{6}: x^2=1\}$ and $K'=\{x^2: x\in F_{6}\}$.
Then, $K'\cong\mathbb{F}_2$. Denote by $z$ a generator of $K'$. Then $W(F_6)$ fixes $z$ and stabilizes
$K$. Hence there is a homomorphism $p: W(F)\longrightarrow\Stab_{W(K)}(z)$. Similar argument as
\cite{Yu3}, Proposition 3.3 shows that $\ker p\cong(\mathbb{F}_2)^2$ and $p$ is surjective. Since
$K\sim F_{4}$, the orbit $W(K)z$ has three elements. One has $|W(K)|=|W(F_4)|=32\times 64
\times 6^2=3^2\times 2^{13}$ and hence $|W(F_{6})|=\frac{4}{3}|W(K)|=3\times 2^{15}$.
\end{proof}

In $G^{\sigma_3}\cong(\SU(8)/\langle iI\rangle)\rtimes\langle\omega\rangle$ (\cite{Huang-Yu}, Table 2), let
\[F_8=\langle\sigma_3,[A_4],[B_4],[\diag\{I_{1,1},I_{1,1},I_{1,1},I_{1,1}\}],
[\diag\{J'_1,J'_1,J'_1,J'_1\}]\rangle,\] \begin{eqnarray*}F_9&=&\langle\sigma_3,[I_{4,4}],[J'_4],
[\diag\{I_{2,2},I_{2,2}\}],[\diag\{J'_2,J'_2\}],[\diag\{I_{1,1},I_{1,1},I_{1,1},I_{1,1}\}],\\&&
[\diag\{J'_1,J'_1,J'_1,J'_1\}]\rangle,\end{eqnarray*}
\[F_{10}=\langle\sigma_3,[J_4]\omega,[I_{4,4}],[\diag\{I_{2,2},I_{2,2}\}],[\diag\{I_{1,1},I_{1,1},I_{1,1},I_{1,1}\}]\rangle,\]
\begin{eqnarray*} F_{11}&=&\langle\sigma_3,[J_4]\omega,[I_{4,4}],[\diag\{I_{2,2},I_{2,2}\}],
[\diag\{I_{1,1},I_{1,1},I_{1,1},I_{1,1}\}],\\&&[\diag\{J'_{1},J'_{1},J_{1},J_{1}\}]\rangle.
\end{eqnarray*}

Here $A_4=\diag\{I_2,iI_2,-I_2,-iI_2\}$ and $B_4=\left(\begin{array}{cccc}0_2&I_2&0_2&0_2\\0_2
&0_2&I_2&0_2\\0_2&0_2&0_2&I_2\\I_2&0_2&0_2&0_2\\\end{array}\right)$.

\begin{prop}\label{P:E7-finite5}
If $F$ is a finite abelian 2-subgroup of $G$ satisfying the condition $(*)$, containing an element
conjugate to $\sigma_3$, and without elements conjugate to $\sigma_2$, then $F$ is conjugate to
one of $F_8$, $F_9$, $F_{10}$, $F_{11}$.  
\end{prop}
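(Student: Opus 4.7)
The plan is to reduce to the centralizer $G^{\sigma_3}\cong(\SU(8)/\langle iI\rangle)\rtimes\langle\omega\rangle$ and split according to whether $F$ meets only the identity component. After conjugation I may assume $\sigma_3\in F$, so $F\subset G^{\sigma_3}$; I would then set $F_0=F\cap(G^{\sigma_3})_0$ and treat the two cases $F\subset(G^{\sigma_3})_0$ and $F\not\subset(G^{\sigma_3})_0$ separately. In each case the strategy is to apply the matrix-group classifications of \cite{Yu2} and then use the hypothesis that $F$ contains no element conjugate to $\sigma_2$ to cut down the allowed parameters.

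In the first case, the image of $F$ in $\SU(8)/\langle iI\rangle$ is a finite abelian 2-subgroup satisfying $(*)$. I would apply \cite{Yu2}, Proposition 5.1 to obtain integer invariants $k\ge 1$, $s_0\ge 1$ with $2^{k-1}s_0=8$ together with characters $\mu_1,\dots,\mu_k$, and use \cite{Huang-Yu} to identify which of the resulting diagonal involutions are $G$-conjugate to $\sigma_2$. The no-$\sigma_2$ hypothesis should eliminate all invariants except $k=1$ (yielding $F\sim F_8$) and $k=2$ with $\mu_1=\mu_2$ (yielding $F\sim F_9$).

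In the second case, I would pick $y\in F\setminus F_0$; then $y$ lies in the outer coset of $G^{\sigma_3}$ and $y^2\in F_0$ is 2-power torsion. Up to conjugation in $(G^{\sigma_3})_0$ there are two classes of involutions in the outer coset, with fixed subgroups of type $D_8$ and of type $C_4$, but the $D_8$-type representative lifts to an element of $G$ conjugate to $\sigma_2$, which is forbidden. Hence after absorbing an element of $F_0$ I may assume $y=[J_4]\omega$, whose centralizer in $\SU(8)/\langle iI\rangle$ is $\Sp(4)/\langle-I\rangle$. I would then apply \cite{Yu2}, Proposition 4.1 to $F\cap(\Sp(4)/\langle-I\rangle)$, now with $2^{k-1}s_0=4$; the no-$\sigma_2$ hypothesis eliminates $k=1$ and $k=3$, while the two sub-cases $\mu_1=\mu_2$ and $\mu_1\ne\mu_2$ at $k=2$ give $F\sim F_{10}$ and $F\sim F_{11}$ respectively.

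The main obstacle will be the fusion bookkeeping. I must translate the no-$\sigma_2$ condition into explicit constraints on the parameters of \cite{Yu2}, Propositions 4.1 and 5.1. Concretely, one must determine which images of $I_{a,b}$ in $\SU(8)/\langle iI\rangle$, and which analogous diagonal involutions in $\Sp(4)/\langle-I\rangle$ and in its outer coset, have ambient $G$-conjugacy class equal to $\sigma_2$ (as opposed to $\sigma_3$ or $\sigma_4$); it is exactly this fusion data that selects the surviving invariants and pins down the four subgroups $F_8,\dots,F_{11}$.
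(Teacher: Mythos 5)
Your overall skeleton (reduce to $G^{\sigma_3}\cong(\SU(8)/\langle iI\rangle)\rtimes\langle\omega\rangle$ and split on $F\subset(G^{\sigma_3})_0$ versus $F\not\subset(G^{\sigma_3})_0$) matches the paper, but your treatment of the outer case contains a genuine error: the fusion of the two types of outer involutions is backwards. With the convention $\omega[X]\omega^{-1}=[J_4\overline{X}J_4^{-1}]$, the element $\omega$ itself is the \emph{symplectic}-type outer involution, with fixed subgroup $\Sp(4)$ in $\SU(8)$; since $\dim\frg^{\omega}=\dim\mathfrak{sp}(4)+\dim(\Lambda^4\bbC^8)^{\omega}=36+43=79=\dim\frg^{\sigma_2}$, it is exactly $\omega$ that is conjugate to $\sigma_2$ in $G$ and hence forbidden. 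The element $[J_4]\omega$ acts on $\SU(8)/\langle iI\rangle$ by $[X]\mapsto[\overline{X}]$ (because $J_4^2=-I$ is central), so its fixed subgroup is the image of $\SO(8)$, not $\Sp(4)/\langle-I\rangle$, and $\dim\frg^{[J_4]\omega}=28+35=63$, i.e.\ $[J_4]\omega\sim\sigma_3$, which is allowed. Your reduction therefore lands in the wrong centralizer: classifying $F\cap(\Sp(4)/\langle-I\rangle)$ via Proposition 4.1 of the matrix-group paper would describe subgroups built on the symplectic side, whereas $F_{10}$ and $F_{11}$ are visibly generated by $[J_4]\omega$ together with \emph{real} diagonal and permutation matrices, i.e.\ they live on the orthogonal side. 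The paper avoids this reduction entirely: in the outer case it applies the invariants $(k,s_0,B_F,\mu_i)$ of the classification for abelian subgroups of $(\SU(8)/\langle iI\rangle)\rtimes\langle\omega\rangle$ directly, with $8=2^k s_0$, and uses the absence of elements conjugate to $[I_{2,6}]$ or to $\omega$ to force either $k=0$ (giving $F_{10}$) or $k=1$ with $\mu_1=\mu_2\neq\mu_3=\mu_4$ (giving $F_{11}$), while $k=2,3$ produce an element conjugate to $\omega$.

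Two secondary points. First, you tacitly assume that $F\smallsetminus F_0$ contains an involution; a priori every outer element of $F$ could have order $4$, and your argument gives no way to start in that situation, whereas the paper's use of the invariants for the full disconnected group handles it uniformly. Second, in the inner case the relevant citation is the classification for the connected group $\SU(8)/\langle iI\rangle$ (Proposition 2.1 of the matrix-group paper), not Proposition 5.1 (which concerns the disconnected $\SU(n)\rtimes\langle\text{outer}\rangle$ setting), and your guessed surviving invariants do not match the actual subgroups: $F_8$ contains the pair $[A_4],[B_4]$ with $2\times 2$ blocks, so it does not correspond to the parameter $k=1$. These are repairable, but the symplectic/orthogonal mix-up in the outer case is the step that would actually derail the proof.
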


\begin{proof}
We may and do assume that $\sigma_3\in F$. Then,  $F\subset G^{\sigma_3}\cong(\SU(8)/\langle iI
\rangle)\rtimes\langle\omega\rangle$, where $\sigma_3=[\frac{1+i}{\sqrt{2}}I]$ and $\omega[X]
\omega^{-1}=[J_4\overline{X}J_4^{-1}]$ for $X\in\SU(8)$. Since it is assumed that $F$ does not
contain any element conjugate to $\sigma_2$, $F$ has no elements conjugate to $[I_{2,6}]$ or
$\omega$ in $G^{\sigma_3}$ (cf. \cite{Huang-Yu}, Page 414). We consider the cases of
$F\subset(G^{\sigma_3})_0$ and $F\not\subset(G^{\sigma_3})_0$ separately. In the case of
$F\subset(G^{\sigma_3})_0$, by \cite{Yu}, Proposition 2.1, one gets $F\sim F_8$ or $F\sim F_9$.
In the case of $F\not\subset(G^{\sigma_3})_0$, by \cite{Yu2}, we have integers $k$, $s_0$ with
$8=2^{k}\cdot s_0$. The condition of $F$ has no elements conjugate to $[I_{2,6}]$ or $\omega$ in
$G^{\sigma_3}$ simplies the consideration. From this, first any element of $B_{F}$ (cf. \cite{Yu},
Section 5) is conjugate to $[I_{4,4}]$ and hence the conjugacy class of $B_{F}$ is determined by
$\rank B_{F}$, which is an integer at most 3. If $k=0$, then $F\sim F_{10}$. If $k=1$, we have
four maps $\mu_1,\mu_2,\mu_3,\mu_4: F\cap(G^{\sigma_3})_0/B_{F}\longrightarrow\{\pm{1}\}$ compatible
with an antisymmetric bimultiplicative function $m$ on $F\cap(G^{\sigma_3})_0$. In the case of $\mu_1=\mu_2\neq\mu_3=\mu_4$, one has $F\sim F_{11}$. In the other cases one can show that $F$
contains an element conjugate to $[I_{2,6}]$ or $\omega$, which is a contradiction. In the case
of $k=2$ or $3$, $F$ contains an element conjugate to $\omega$, which is a contradiction.
\end{proof}

Among $F_8$, $F_9$, $F_{10}$, $F_{11}$, only $F_{8}$ is a maximal abelian subgroup.

\begin{prop}\label{P:Weyl-E7-F8 and F9}
There is an exact sequence \begin{eqnarray*}1&\rightarrow&\Hom(\mathbb{F}_2^2,\mathbb{F}_2^2)
\rightarrow W(F_8)\\&\rightarrow&(\bbF_{2}^{4}\rtimes\Hom(\bbF_{2}^{2},\bbF_2^{2}))\rtimes(\GL(2,\bbF_2)
\times\GL(2,\bbF_2))\rightarrow 1.\end{eqnarray*}
\end{prop}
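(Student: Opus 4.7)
My plan is to isolate the order-$4$ structure of $F_{8}$ and reduce to a known elementary abelian subgroup. Let $K=\{x\in F_{8}:x^{2}=1\}$ and $K'=\{x^{2}:x\in F_{8}\}$; both are characteristic in $F_{8}$ and hence preserved by every element of $W(F_{8})$. A direct computation with the given generators gives $[A_{4}]^{2}=[\diag\{I_{2},-I_{2},I_{2},-I_{2}\}]$ and shows that $[B_{4}]^{2}$ is an order-$2$ block permutation, whence $K'=\langle [A_{4}]^{2},[B_{4}]^{2}\rangle\cong\mathbb{F}_{2}^{2}$. Meanwhile $K\cong\mathbb{F}_{2}^{5}$ contains $\sigma_{3}$, the two order-$2$ generators of $F_{8}$, and $K'$; and $F_{8}/K\cong\mathbb{F}_{2}^{2}$ is represented by $[A_{4}]$ and $[B_{4}]$. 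In particular $K$ matches one of the rank-$5$ elementary abelian 2-subgroups of $\Aut(\fre_{7})$ classified in \cite{Yu}, so $W(K)$ is known by \cite{Yu}, Proposition 7.26.

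I would then introduce the restriction map $p:W(F_{8})\longrightarrow\Stab_{W(K)}(K')$ and identify its target. Imposing the condition that $K'$ is stabilized inside the known $W(K)$ produces $\Stab_{W(K)}(K')\cong(\mathbb{F}_{2}^{4}\rtimes\Hom(\mathbb{F}_{2}^{2},\mathbb{F}_{2}^{2}))\rtimes(\GL(2,\mathbb{F}_{2})\times\GL(2,\mathbb{F}_{2}))$, where the two $\GL(2,\mathbb{F}_{2})$ factors act on $K'$ and on a chosen complement to $K'$ in $K$ modulo $\langle\sigma_{3}\rangle$. Surjectivity of $p$ I would establish by producing explicit lifts inside $G^{\sigma_{3}}\cong(\SU(8)/\langle iI\rangle)\rtimes\langle\omega\rangle$: permutations of the four $2\times 2$ blocks together with the $\omega$-twist realize the two $\GL(2,\mathbb{F}_{2})$ factors, while block-scalar diagonal elements and conjugates by $[J_{4}]\omega$ realize the shear $\Hom(\mathbb{F}_{2}^{2},\mathbb{F}_{2}^{2})$ and the $\mathbb{F}_{2}^{4}$ translations.

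For the kernel, any $\phi\in\ker p$ sends $[A_{4}]$ and $[B_{4}]$ to $[A_{4}]k_{1}$ and $[B_{4}]k_{2}$ with $k_{1},k_{2}\in K$, and since $\phi$ is realized by conjugation in $G$ the elements $[A_{4}]$ and $[A_{4}]k_{1}$ must be $G$-conjugate. Comparing eigenvalue multisets in $\SU(8)/\langle iI\rangle$ (taking the $\omega$-action into account) shows that $[A_{4}]$ and $[A_{4}]k$ are $G$-conjugate precisely when $k\in K'$, and analogously for $[B_{4}]$. This forces $k_{1},k_{2}\in K'$ and yields an injection $\ker p\hookrightarrow\Hom(F_{8}/K,K')\cong\Hom(\mathbb{F}_{2}^{2},\mathbb{F}_{2}^{2})$; surjectivity follows by exhibiting elements of $C_{G}(K)$, for example block-scalar diagonal matrices commuting with every order-$2$ generator of $F_{8}$, whose conjugation action realizes each generator. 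The main obstacle I expect is precisely this conjugacy-class analysis ruling out the a priori larger kernel $\Hom(F_{8}/K,K)\cong\Hom(\mathbb{F}_{2}^{2},\mathbb{F}_{2}^{5})$: distinguishing which modifications of $[A_{4}]$ within its $K$-coset remain in the same $G$-orbit is what pins the kernel down to $\Hom(\mathbb{F}_{2}^{2},\mathbb{F}_{2}^{2})$.
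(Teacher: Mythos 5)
Your overall strategy coincides with the paper's: set $K=\{x\in F_8:x^2=1\}$ and $K'=\{x^2:x\in F_8\}$, identify $K$ with a rank-$5$ elementary abelian $2$-subgroup classified in \cite{Yu} (the paper pins it down as $F''_{2,1}$ with $K'=A_{K}$, so that $K'$ is automatically $W(K)$-stable and the target of $p$ is all of $W(K)$, not a proper stabilizer), and then compute the kernel and image of the restriction map $p$. However, your argument for the kernel has a genuine gap. You claim that $[A_4]$ and $[A_4]k$ are $G$-conjugate precisely when $k\in K'$, detected by eigenvalue multisets in $\SU(8)/\langle iI\rangle$. This is false: for $k=[\diag\{I_{1,1},I_{1,1},I_{1,1},I_{1,1}\}]\in K\setminus K'$ one has $A_4\cdot\diag\{I_{1,1},I_{1,1},I_{1,1},I_{1,1}\}=\diag\{1,-1,i,-i,-1,1,-i,i\}$, whose eigenvalue multiset is exactly that of $A_4=\diag\{I_2,iI_2,-I_2,-iI_2\}$, and the same happens for $k=[\diag\{J'_1,J'_1,J'_1,J'_1\}]$. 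These elements are therefore conjugate to $[A_4]$ already inside $\SU(8)$, hence in $G$. So mere $G$-conjugacy only bounds $\ker p$ by $\Hom(\bbF_2^2,\bbF_2^{4})$, not by $\Hom(F_8/K,K')\cong\Hom(\bbF_2^2,\bbF_2^2)$. To get the stated kernel you must exploit the full condition that the conjugating element $g$ centralizes $K$ pointwise and normalizes $F_8$: compute $C_{G^{\sigma_3}}(K)$ explicitly and determine which commutators $g[A_4]g^{-1}[A_4]^{-1}$ and $g[B_4]g^{-1}[B_4]^{-1}$ can actually occur. This is what the paper's terse ``by considering in $G^{\sigma_3}$'' amounts to.

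There is a second, smaller omission on the surjectivity side: all the lifts you propose lie in $G^{\sigma_3}$ and hence fix $\sigma_3$ under conjugation, so they can realize at most $\Stab_{W(K)}(\sigma_3)$. But $W(K)$ does not fix $\sigma_3$ --- the set of elements of $K$ conjugate to $\sigma_3$ in $G$ is strictly larger than $\{\sigma_3\}$, and the paper's proof explicitly combines the transitivity of $W(F_8)$ on that set with a stabilizer computation in $G^{\sigma_3}$. Without the transitivity step your construction does not show that $p$ hits all of the target group.
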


\begin{proof}
Let $K=\{x\in F_8: x^2=1\}$ and $K'=\{x^2: x\in F_8\}$. Then, $K\sim F''_{2,1}$ and $K'=A_{K}$ in
the terminology of \cite{Yu}. Hence $W(K)\cong(\bbF_{2}^{4}\rtimes\Hom(\bbF_{2}^{2},\bbF_2^{2}))
\rtimes(\GL(2,\bbF_2)\times\GL(2,\bbF_2))$ and $K'$ is stable under $W(K)$. There is a homomorphism
$p: W(F_8)\longrightarrow W(K)$. Using the fact that the set of elements of $K$ conjugate to $\sigma_3$
is transitive under the action of $W(F_8)$, one can show that $p$ is surjective by considering in
$G^{\sigma_3}$. Similarly one shows $\ker p=\Hom(F_8/K, K')\cong\Hom(\bbF_{2}^{2},\bbF_2^{2})$ by
considering in $G^{\sigma_3}$.
\end{proof}

Recall that $\Gamma_1$ is a Klein four subgroup of $G$ with non-identity elements all conjugate
to $\sigma_1$ and (cf. \cite{Huang-Yu}, Table 6) \[G^{\Gamma_1}\cong((\SU(6)\times\U(1)\times
\U(1))/\langle(e^{\frac{2\pi i}{3}}I,e^{\frac{-2\pi i}{3}},1),(-I,1,1)\rangle)\rtimes\langle z
\rangle,\] where $\Gamma_1=\langle[(I,-1,1)],[(I,1,-1)]\rangle$ and $\Ad(z)[(X,\lambda_1,\lambda_2)]=[(J_3\overline{X}J_3^{-1},\lambda_1^{-1},\lambda_2^{-1})]$.

\begin{lemma}\label{L:E7-finite2}
Let $F$ be a finite abelian 2-subgroup of $G$ satisfying the condition $(*)$ and without elements
conjugate to $\sigma_2$ or $\sigma_3$. Then $F$ contains no Klein four subgroups conjugate to
$\Gamma_1$.
\end{lemma}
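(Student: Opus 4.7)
The plan is to argue by contradiction. Suppose $\Gamma_1\subset F$ (after replacing $F$ by a conjugate); then $F\subset G^{\Gamma_1}$. Write $L_0:=(G^{\Gamma_1})_0$, so $G^{\Gamma_1}=L_0\sqcup L_0 z$. The centre of $L_0$ contains the two-dimensional torus $T'=\U(1)\times\U(1)$ centralised by all of $L_0$, while $\Ad(z)$ inverts $\Lie T'$. Hence the condition $\frg_0^F=0$ forces $F\cap L_0 z\ne\emptyset$; pick $y\in F\cap L_0 z$ and write $y=[(X,\lambda_1,\lambda_2)]z$. Using $z^2=1$ and the explicit formula for $\Ad(z)$, a direct computation gives $y^2=[(XJ_3\overline{X}J_3^{-1},1,1)]$; the $\lambda_i$ drop out.

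A short analysis of the centre $\langle(e^{2\pi i/3}I,e^{-2\pi i/3},1),(-I,1,1)\rangle$ shows that $y^2\notin\Gamma_1\setminus\{1\}$: every representative of a non-identity element of $\Gamma_1$ must have at least one $\U(1)$-coordinate equal to $-1$, whereas the representative of $y^2$ exhibited above has both equal to $1$. Combined with the hypothesis (no element of $F$ is conjugate to $\sigma_2$ or $\sigma_3$), this leaves two cases: either $y^2=1$ and $y$ is an involution in $L_0 z$ satisfying $XJ_3\overline{X}J_3^{-1}=\pm I$, or $y$ has $2$-power order $\geq 4$ and some power of $y$ is a $\sigma_1$-type involution in $F\cap L_0$ lying outside $\Gamma_1$. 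In the second case, using $\Gamma_1\subset F$ one can build a rank-$3$ elementary abelian $2$-subgroup of $F\cap L_0$ with all non-identity elements of $\sigma_1$-type, then return to the outer element $y$.

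The crucial remaining step is to identify the $G$-conjugacy class of the (involution) $y$ and show it must be $\sigma_2$ or $\sigma_3$. Using the $\Gamma_1$-isotypic decomposition $\frg_0=\frg_0^{\Gamma_1}\oplus\bigoplus_{\chi\neq 1}(\frg_0)_{\chi}$, with $\frg_0^{\Gamma_1}=\mathfrak{su}(6)\oplus i\bbR\oplus i\bbR$ of dimension $37$ and three non-trivial isotypic pieces of dimension $32$ each, one computes $\dim\frg_0^y$ as follows. On $\frg_0^{\Gamma_1}$, $\Ad(y)$ fixes either $\mathfrak{sp}(3)$ or $\mathfrak{u}(3)$ (according to the sign of $XJ_3\overline{X}J_3^{-1}$) and inverts each $i\bbR$ summand; on each non-trivial isotypic piece, $\Ad(y)$ acts as an involution whose fixed dimension can be extracted from the branching of $\fre_7$ under the Levi subalgebra $\mathfrak{su}(6)\oplus i\bbR^{2}$. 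The resulting total matches $\dim\frg_0^{\sigma_2}=79$ or $\dim\frg_0^{\sigma_3}=63$, never $\dim\frg_0^{\sigma_1}=69$; hence $y$ is conjugate to $\sigma_2$ or $\sigma_3$, contradicting the hypothesis. The main obstacle is this dimension computation, which may alternatively be bypassed by invoking the classification of rank-$3$ elementary abelian $2$-subgroups of $G$ containing $\Gamma_1$ from \cite{Huang-Yu} and \cite{Yu}, and checking directly that each such subgroup contains an element with centraliser of type $\mathfrak{e}_6\oplus i\bbR$ or $\mathfrak{su}(8)$.
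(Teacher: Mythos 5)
Your setup is correct (after conjugation $F\subset G^{\Gamma_1}$, the inversion of the central torus by the outer component forces $F\cap L_0z\neq\emptyset$, and $y^2=[(XJ_3\overline{X}J_3^{-1},1,1)]$), but the step the whole proof rests on --- that an involution $y$ in the outer coset must be conjugate to $\sigma_2$ or $\sigma_3$ --- is false, and the very dimension count you propose refutes it. For an involution $y=[(X,\lambda_1,\lambda_2)]z$ one has $XJ_3\overline{X}J_3^{-1}=\pm I$, and $\Ad(y)$ fixes $\mathfrak{sp}(3)$ (dimension $21$) or $\mathfrak{so}(6)$ (dimension $15$) inside $\frg^{\Gamma_1}=\mathfrak{su}(6)\oplus i\bbR\oplus i\bbR$ according to that sign. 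Now sum $\dim\frg^{\gamma y}$ over the four elements $\gamma\in\Gamma_1$: since each nontrivial isotypic component $\frg_{\chi_i}$ has dimension $32$ and $\chi_i(\gamma)$ takes each value $\pm1$ twice, the sum equals $4\dim(\frg^{\Gamma_1})^{y}+2(32+32+32)$, i.e.\ $276$ or $252$. Each summand is the centralizer dimension of an involution of $\fre_7$, hence lies in $\{63,69,79\}$, and the only decompositions are $69+69+69+69$ and $63+63+63+63$. So in the $\mathfrak{sp}(3)$ case --- which does occur, e.g.\ for $y=z$ itself --- every element of the coset $\Gamma_1y$ is conjugate to $\sigma_1$, and no contradiction with the hypotheses arises from any single outer element. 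This is precisely why the paper does not argue element-by-element: it writes $F\sim F'\times\Gamma_1$ with $F'\subset(\SU(6)/\langle-I\rangle)\rtimes\langle z\rangle$ and invokes the classification of such subgroups satisfying $(*)$ (\cite{Yu2}, Proposition 5.1, via the invariants $k$, $s_0$ and the maps $\mu_i$) to show that the \emph{whole group} $F'$ is forced to contain an element conjugate to $[J_3]$ or $[J_3]z$; these are of the $\mathfrak{so}(6)$ type and hence conjugate to $\sigma_3$ in $G$, which gives the contradiction.

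Two further gaps. Your case $o(y)\geq 4$ is not actually argued: ``build a rank-$3$ elementary abelian $2$-subgroup of $\sigma_1$-type elements, then return to $y$'' derives nothing, and indeed $\langle\Gamma_1,z\rangle$ is a rank-$3$ elementary abelian $2$-subgroup all of whose involutions are conjugate to $\sigma_1$. For the same reason your proposed bypass fails: it is not true that every rank-$3$ elementary abelian $2$-subgroup of $G$ containing $\Gamma_1$ contains an element with centralizer of type $\fre_6\oplus i\bbR$ or $\mathfrak{su}(8)$.
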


\begin{proof}
Suppose the conclusion does not hold. We may and do assume that $\Gamma_1\subset F$. Then,
\[F\subset G^{\Gamma_1}\cong((\SU(6)\times\U(1)\times\U(1))/\langle(e^{\frac{2\pi i}{3}}I,
e^{\frac{-2\pi i}{3}},1),(-I,1,1)\rangle)\rtimes\langle z\rangle.\] Thus $F\not\subset
(G^{\Gamma_1})_0$ and it is conjugate to a subgroup of the form $F'\times\Gamma_1$, where $F'$
is a subgroup of $(\SU(6)/\langle-I\rangle)\rtimes\langle z\rangle$ satisfying the condition
$(\ast)$. Applying \cite{Yu2}, Proposition 5.1, we get integers $k$, $s_0$ with $6=2^{k}\cdot s_0$.
Then, $k=0$ or $1$. In the case of $k=0$, one can show that $F'$ contains an element conjugate
to $[J_3]z$. In the case of $k=1$, we have three maps $\mu_1,\mu_2,\mu_3: F'\cap(G^{\Gamma_1})_0
\rightarrow\{\pm{1}\}$ compatible with a bimultiplicative function $m'$ on $F'\cap(G^{\Gamma_1})_0$.
Considering determinat one can show that $\mu_1=\mu_2=\mu_3$, or $\mu_1$, $\mu_2$, $\mu_3$ are
distinct. If $\mu_1=\mu_2=\mu_3$, one shows that $F'$ contains an element conjugate to $[J_3]$.
If $\mu_1$, $\mu_2$, $\mu_3$ are distinct, one shows that $F'$ contains an element conjugate to
$[J_3]z$. Thus $F$ contains an element conjugate to $[(J_3,1,1)]$ or $[(J_3,1,1)]z$. From
\cite{Huang-Yu}, Page 413, one can show that $[(J_3,1,1)]$ and $[(J_3,1,1)]z$ are conjugate to
$\sigma_3$ in $G$, which contradicts the assumption that $F$ contains no elements conjugate to
$\sigma_3$.
\end{proof}

\begin{lemma}\label{L:E7-finite3}
If $F$ is a finite abelian 2-subgroup of $G$ satisfying the condition $(*)$ and with an element
conjugate to $\sigma_1$, then $y^{4}=1$ for any $y\in F$.
\end{lemma}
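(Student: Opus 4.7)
The plan is to work inside the centralizer
\[G^{\sigma_1}\cong(\Spin(12)\times\Sp(1))/\langle(-1,-1)\rangle\]
(from \cite{Huang-Yu}, Table 2) after conjugating so that $\sigma_1\in F$. Write each $y\in F$ as $y=[(a,b)]$ with $a\in\Spin(12)$ and $b\in\Sp(1)$, and let $\pi_2\colon G^{\sigma_1}\to\Sp(1)/\{\pm1\}\cong\SO(3)$ be the projection on the second factor. The argument proceeds in two stages.

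In the first stage I would show that condition $(*)$ forces $\pi_2(F)$ to contain the Klein four subgroup of $\SO(3)$: otherwise $\pi_2(F)$ is cyclic, so its centralizer in $\SO(3)$ contains a $1$-torus, which lifts to a $1$-torus in $\Sp(1)$ centralizing the preimage of $\pi_2(F)$, and in turn to a $1$-torus in $C_G(F)$ via the embedding $t\mapsto[(1,t)]$, contradicting $(*)$. Hence $F$ contains elements $y_1,y_2$ whose $\Sp(1)$-components are (up to sign) $\mathbf{i}$ and $\mathbf{j}$. Abelianity of $F$ together with $y_1,y_2$ then restricts the $\Sp(1)$-component $b$ of every $y\in F$ to the quaternion subgroup $Q_8\subset\Sp(1)$. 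In particular $b^4=1$, so $y^4=[(a^4,1)]$; and since $\sigma_1=[(-1,1)]\neq 1$ in $G$, the statement $y^4=1$ reduces to $a^4=1$ in $\Spin(12)$.

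In the second stage I would rule out $a^4=-1$ (the only $2$-power alternative when $a^4\ne1$). If $b\in\{\pm1\}$, replace $(a,b)$ by $(-a,-b)$ if necessary so that $b=1$. If $b\notin\{\pm1\}$, then $b^2=-1$ in $\Sp(1)$, and $y^2=[(-a^2,1)]\in F$ has central $\Sp(1)$-component; here $-a^2$ is non-central in $\Spin(12)$ because $(-a^2)^2=a^4=-1$ makes $-a^2$ of order $4$, while $Z(\Spin(12))=\{\pm1,\pm c_{12}\}$ has exponent $2$. So in either case $F$ contains an element $y'=[(a',1)]$ with $a'$ non-central in $\Spin(12)$. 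Let $T_{a'}\subset\Spin(12)$ be the $1$-torus through $a'$; lifted to $G^{\sigma_1}$ as $\{[(t,1)]:t\in T_{a'}\}$, it is a positive-dimensional torus. Abelianity of $F$ with $y'=[(a',1)]$ forces, for any $y_3=[(a_3,b_3)]\in F$, the identity $a'a_3=a_3a'$ in $\Spin(12)$ (the anti-commutation option $(a'a_3,b_3)=(-a_3a',-b_3)$ would require $b_3=-b_3$ in $\Sp(1)$, impossible), and hence $a_3$ commutes with all of $T_{a'}$. Therefore $\{[(t,1)]:t\in T_{a'}\}\subset C_G(F)$, a positive-dimensional centralizer contradicting $(*)$.

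The main obstacle is the uniform handling of the non-central $b$ case, resolved by passing to $y^2$ and invoking the fact that $Z(\Spin(12))$ has exponent $2$ to ensure $-a^2$ is non-central. The remaining steps—the Klein four reduction in the first stage and the $1$-torus centralizer argument in the second—follow a standard template once this reduction is in place.
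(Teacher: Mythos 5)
Your first stage is sound and is essentially an elementary substitute for the paper's citation of \cite{Yu2}, Proposition 2.1: since $F$ is abelian, its image in $\Sp(1)/\{\pm1\}\cong\SO(3)$ is cyclic or Klein four, the cyclic case is killed by the circle $\{[(1,t)]\}$ in $C_G(F)$, and in the Klein four case every second component lies in $Q_8$, so $y^4=[(a^4,1)]$. (One correction: the paper has $G^{\sigma_1}=(\Spin(12)\times\Sp(1))/\langle(-c,-1),(c,1)\rangle$, a quotient by a Klein four group with $c=e_1\cdots e_{12}$, not by $\langle(-1,-1)\rangle$; this only makes your task easier in that $a^4\in\{1,c\}$ already gives $y^4=1$, but it also adds the possibility that first components commute only up to $c$.)

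The second stage has two genuine gaps, the second of which is fatal. First, the dichotomy ``$a^4\neq1$ implies $a^4=-1$'' is unjustified: a priori $a$ could have order $16$ in $\Spin(12)$ with $a^4$ non-central (e.g.\ $a^4\sim e_1e_2$), or $a^4=\pm c$; ruling these out is exactly the content of the lemma, and you have assumed it. Second, and decisively, the concluding torus argument is false. Commuting with a finite-order element $a'$ does not imply commuting with a one-parameter subgroup through $a'$ (everything in $\Sp(1)$ commutes with $-1$ but not with a circle containing it), and the statement you are driving at --- that $F$ containing $[(a',1)]$ with $a'$ non-central forces a positive-dimensional centralizer --- is contradicted by the answer to this very classification: the subgroup $F_{12}$ of Proposition \ref{P:E7-finite3} contains $[(e_1e_2e_3e_4e_5e_6,1)]$, where $e_1\cdots e_6$ is non-central of order $4$ in $\Spin(12)$, and $C_{\Spin(12)}(e_1\cdots e_6)$ is the preimage of $\SO(6)\times\SO(6)$, a semisimple group with finite center containing no $1$-torus central in it. So no contradiction with $(*)$ arises from such an element. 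The paper closes this gap differently: it projects $F$ to $\SO(12)/\langle-I\rangle$ and invokes the structure theory of \cite{Yu2}, Proposition 3.2, which attaches invariants $k,s_0$ with $12=2^k s_0$ (hence $k\leq2$) and then checks case by case that $x^2\sim\pm e_1\cdots e_{4j}$ or $x^2=\pm1$, whence $x^4=1$. Some input of this kind on the $\Spin(12)$ components is unavoidable, and your proposal does not supply it.
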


\begin{proof}
Without loss of generality we assume that $\sigma_1\in F$. Then, $F\subset G^{\sigma_1}=(\Spin(12)
\times\Sp(1))/\langle(-c,-1),(c,1)\rangle$. By \cite{Yu2}, Proposition 2.1, for any $y=[(x,\lambda)]
\in F$ with $x\in\Spin(12)$ and $\lambda\in\Sp(1)$, one has $\lambda^2=\pm{1}$. By \cite{Yu2}
Proposition 3.2, by considering the image of the projection of $G^{\sigma_1}$ to
$\SO(12)/\langle-I\rangle$, one gets integers $k$, $s_0$ with $12=2^{k}\cdot s_0$. In the case of
$k=0$, one has $x^2\in\{\pm{1}\}$. In the case of $k=2$, one has $x^2\sim\pm{e_1e_2\cdots e_{k}}$,
$k=0$, $4$, $8$ or $12$. In the case of $k=1$, $x$ is conjugate to an element of the form
$x=x_1x_2x_3x_4x_5x_6$, where $(x_1,x_2,x_3,x_4,x_5,x_6)\in\Pin(2)^{6}\cap\Spin(12)$,
$x_{i}=\pm{e_{i}}$ or $\pm{\frac{1\pm{}e_{2i-1}e_{2i}}{\sqrt{2}}}$ for each $1\leq i\leq 6$, or
$x_{i}\in\{\pm{1},\pm{e_{2i-1}e_{2i}}\}$ for each $1\leq i\leq 6$. In the first case, using the
condition of $x\in\Spin(12)$, one gets $x^2\sim\pm{e_1e_2\cdots e_{k}}$, $k=0$, $4$, $8$ or $12$.
In the latter case, $x^2=\pm{1}$. Hence in any case $x^4=1$ and $\lambda^4=1$. Therefore $y^{4}=1$
for any $y\in F$.
\end{proof}

In $G^{\sigma_1}=(\Spin(12)\times\Sp(1))/\langle(-c,-1),(c,1)\rangle$, let \[F_{12}=\langle[(\delta_1,\mathbf{i})],
[(\delta_2,\mathbf{j})],[(e_1e_2e_3e_4e_5e_6,1)]\rangle,\] where \[\delta_1=\frac{1+e_1e_2}{\sqrt{2}}e_3e_5
\frac{1+e_7e_8}{\sqrt{2}}e_{9}e_{11}\] and \[\delta_2=e_1\frac{1+e_3e_4}{\sqrt{2}}\frac{e_5+e_6}{\sqrt{2}}
e_7\frac{1+e_9e_{10}}{\sqrt{2}}\frac{e_{11}-e_{12}}{\sqrt{2}}.\]

\begin{prop}\label{P:E7-finite3}
If $F$ is a finite abelian 2-subgroup of $G$ satisfying the condition $(*)$ an without elements conjugate to
$\sigma_2$ or $\sigma_3$, then $F\sim F_{12}$.
\end{prop}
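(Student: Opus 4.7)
The plan is to reduce to an analysis inside $G^{\sigma_1}$, combining the exponent bound of Lemma~\ref{L:E7-finite3} with the classification of finite abelian subgroups of $\Spin(n)$ from \cite{Yu2}. Condition $(*)$ forces $F$ to be nontrivial (else $\frg_0^{F}=\frg_0\neq 0$), so $F$ contains an involution; by the classification of involution classes in $G=\Aut(\fre_7)$ and the hypotheses of the proposition, every such involution must be conjugate to $\sigma_1$. Thus we may assume $\sigma_1\in F$ and $F\subset G^{\sigma_1}=(\Spin(12)\times\Sp(1))/\langle(-c,-1),(c,1)\rangle$; by Lemma~\ref{L:E7-finite3} the exponent of $F$ divides $4$.

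Using the orthogonal decomposition $\Lie G^{\sigma_1}=\mathfrak{so}(12)\oplus\mathfrak{sp}(1)$, the condition $(*)$ yields $\mathfrak{so}(12)^{F}=0$ and $\mathfrak{sp}(1)^{F}=0$. Projecting $F$ along $p\colon G^{\sigma_1}\to\Sp(1)/\{\pm 1\}\cong\SO(3)$, the only finite abelian subgroup of $\SO(3)$ with trivial centralizer in $\mathfrak{sp}(1)$ is the full $2$-torsion Klein four, so $p(F)$ equals this Klein four. Hence $F$ is generated by $F_0:=\ker(p|_F)$, which descends to a finite abelian subgroup of $\Spin(12)/\langle c\rangle$ with $\mathfrak{so}(12)^{F_0}=0$, together with two representatives $y_1=[(x_1,\mathbf{i})]$ and $y_2=[(x_2,\mathbf{j})]$ for some $x_1,x_2\in\Spin(12)$. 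Abelianness of $F$ forces the commutator $[x_1,x_2]$ to lie in $\{-1,-c\}$, and the exponent-$4$ constraint together with the identification $(-1,-1)\sim(1,1)$ in the quotient forces $x_1^{4},x_2^{4}\in\langle c\rangle$.

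I would then apply \cite{Yu2}, Proposition~3.2 to $F_0\subset\Spin(12)/\langle c\rangle$, obtaining integer invariants $(k,s_0)$ with $12=2^{k}s_0$, and enumerate the extensions by admissible pairs $(x_1,x_2)$. The three compatibility conditions to verify case-by-case are: (a) $\langle F_0,y_1,y_2\rangle$ is abelian; (b) every involution of $F$ is $\sigma_1$-conjugate, which excludes Clifford monomials of the wrong length, their $G$-conjugacy classes being listed in \cite{Huang-Yu}, Page~414; (c) no Klein four subgroup of $F$ is conjugate to $\Gamma_1$, as required by Lemma~\ref{L:E7-finite2}. This enumeration is the main obstacle: pure Clifford monomials $\pm e_{i_1}\cdots e_{i_{2r}}$ generically produce forbidden $\sigma_2$- or $\sigma_3$-type involutions or else fail the commutation constraint with $y_1,y_2$, while the half-rotation factors $\frac{1+e_ie_j}{\sqrt{2}}$ appearing in $\delta_1,\delta_2$ are forced precisely because they are the minimal Clifford elements simultaneously delivering the correct commutation class with the monomial $e_1e_2e_3e_4e_5e_6$ and the prescribed square in $Z(\Spin(12))$. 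Once the admissible $(x_1,x_2)$ are pinned down, conjugation in $\Spin(12)$ absorbs the remaining freedom and one obtains $F\sim F_{12}$.
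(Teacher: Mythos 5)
Your reduction to $G^{\sigma_1}$ and the observation that the image of $F$ in $\Sp(1)/\{\pm1\}$ must be a Klein four group are fine, but there are two genuine problems. First, the subgroup $F_0=\ker(p|_F)$ does \emph{not} satisfy $\mathfrak{so}(12)^{F_0}=0$: condition $(*)$ gives $(\mathfrak{so}(12)\oplus\mathfrak{sp}(1))^{F}=0$ for the whole group $F$, and the adjoint action on $\mathfrak{so}(12)$ of the elements $y_1,y_2$ with nontrivial $\Sp(1)$-part is essential. Indeed, for $F_{12}$ itself $F_0$ contains $\langle\sigma_1,[(e_1e_2e_3e_4e_5e_6,1)]\rangle$ and $\mathfrak{so}(12)^{e_1\cdots e_6}=\mathfrak{so}(6)\oplus\mathfrak{so}(6)\neq 0$. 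So Proposition 3.2 of \cite{Yu2} cannot be applied to $F_0$ as you intend, and the invariants $(k,s_0)$ you would extract do not control the situation. Second, and more seriously, the entire classification is deferred to an ``enumeration of admissible pairs $(x_1,x_2)$'' that you yourself flag as the main obstacle and do not carry out; the assertions that Clifford monomials ``generically'' produce forbidden involutions and that the factors $\frac{1+e_ie_j}{\sqrt{2}}$ are ``forced'' are precisely the claims that require proof.

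The paper closes this gap with a structural device absent from your plan: the bimultiplicative function $m$ on $F$ with values in $\langle c\rangle=Z(\E_7)$ coming from the covering $\pi:\E_7\to G$. Every involution of $F$ lies in $\ker m$, so setting $A_1=\{x\in F:x^2=1\}$ there are two cases. If $\ker m=A_1$, then $F/\ker m$ carries a nondegenerate alternating form and hence has even rank, forcing $\rank A_2=2$ for $A_2=\{x^2:x\in F\}$; comparing $|F'|\leq 2^{4}$ for the image $F'$ in $\SO(12)/\langle -I\rangle$ with the lower bound $|F'|\geq 12\cdot 2^{k}$ of \cite{Yu2}, Proposition 3.1 forces $k=0$ and then $\rank A_2\leq 1$, a contradiction. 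Otherwise $\ker m$ contains an element $x$ of order $4$ with $x^2=\sigma_1$, and since $x\in\ker m$ Steinberg's theorem gives $F\subset\pi(\E_7^{x'})=(G^{\sigma_1})^{x}_0$, which must be semisimple; running through the possible conjugacy classes of such $x$ leaves only $x\sim[(e_1e_2e_3e_4e_5e_6,1)]$, so $F$ sits inside $(\SU(4)\times\SU(4)\times\Sp(1))/\langle(iI,-iI,-1),(I,-I,-1)\rangle$, where the classical-group classification yields $F\sim F_{12}$. Without some replacement for this step your proposal remains a plan rather than a proof.
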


\begin{proof}
We may and do assume that $\sigma_1\in F$. Then, \[F\subset G^{\sigma_1}=(\Spin(12)\times\Sp(1))/\langle(-c,-1),
(c,1)\rangle.\] Since $F$ contains no elements conjugate to $\sigma_2$ or $\sigma_3$, it contains no Klein four
subgroups conjugate to $\Gamma_1$ by Lemma \ref{L:E7-finite2}. By \cite{Huang-Yu}, Page 413, any involution in
$F-\langle\sigma_1\rangle$ is conjugate to $[(e_1e_2e_3e_4,1)]$ in $G^{\sigma_1}$.
By Lemmas \ref{L:E7-finite3}, $x^4=1$ for any
$x\in F$. Let $A_1=\{x\in F: x^2=1\}$, $A_2=\{x^2: x\in F\}$, $r_{1}=\rank A_{1}$, $r_{2}=\rank A_{2}$. Then,
there is an exact sequence $1\rightarrow A_1\rightarrow F\rightarrow A_2\rightarrow 1$ with the map
$F\longrightarrow A_2$ given by $x\mapsto x^2$, $x\in F$; $A_1$ is conjugate to a subgroup of $\langle\sigma_1,
[(e_1e_2e_3e_4,1)],[(e_5e_6e_7e_8,1)]\rangle$; $r_2\leq r_1\leq 3$. From the covering $\pi: \E_7\longrightarrow G$,
we define a bimultiplicative function $m: F\times F\longrightarrow\langle c\rangle$ by $[x',y']=m(x,y)$ for
$x=\pi(x'),y=\pi(y')\in F$, $x',y'\in\E_7$. Since $G^{x}$ is connected for any $x\in A_1$, one has $A_1\subset
\ker m$. We consider the case of $\ker m=A_1$ and the case of $\ker m\neq A_1$ separately. In the case of $\ker m=
A_1$. Since $F/\ker m$ is an elementary abelian 2-subgroup of even rank, $F\neq A_1$ due to $A_1$ does not satisfy
the condition $(\ast)$, and $F/A_1\cong A_2$ is an elementary abelian 2-subgroup of rank at most 3, one has
$A_2\cong F/A_1=F/\ker m$ being an elementary abelian 2-subgroup of rank 2.  Hence $(r_1,r_2)=(2,2)$ or $(3,2)$.
Considering the image $F'$ of the projection of $F$ to $\Spin(12)/\langle-1,-c\rangle\cong\SO(12)/\langle-I\rangle$,
then $|F'|\leq |F|/2\leq 2^{4}$. By \cite{Yu2}, Proposition 3.1, we have integers $k$, $s_0$ associated to $F'$ with
$12=2^{k}\cdot s_0$ and $|F'|\geq 2^{2k}s_0=12\cdot 2^{k}$. Hence $k=0$. That means $F$ is conjugate to a subgroup
of the image of $\langle e_1e_2,e_1e_3,\dots,e_1e_{12}\rangle\times\langle\textbf{i},\textbf{j}\rangle$ in
$G^{\sigma_1}$. By this one gets $A_2\subset\langle\sigma_2\rangle$, which contradicts $\rank A_2=2$. In the case
of $\ker m\neq A_1$, there is an element $x$ in $\ker m$ of order 4. We may and do assume that $x^2=\sigma_1$. Then
$x$ is conjugate to one of $[(e_1e_2,1)]$, $[(e_1e_2e_4e_4e_5e_6,1)]$, $[(e_1\Pi e_1,1)]$, $[(-e_1\Pi e_1,1)]$,
$[(e_1e_2e_3e_4,\textbf{i})]$, $[(1,\textbf{i})]$, $[(\Pi,\textbf{i})]$. Choosing $x'\in\pi^{-1}(x)$, then
$F\subset\pi(\E_7^{x'})$ since $x\in\ker m$. By Steinberg's theorem $\E_7^{x'}$ is connected and hence
$F\subset(G^{\sigma_1})^{x}_0$. Except $[(e_1e_2e_4e_4e_5e_6,1)]$, any other choice of the conjuagacy class of
$x$ leads to a non-semisimple centralizer in $G^{\sigma_1}$. Therefore $x\sim[(e_1e_2e_4e_4e_5e_6,1)]$ and
\begin{eqnarray*}F&\subset&(G^{\sigma_1})^{x}_0=(\Spin(6)\times\Spin(6)\times\Sp(1))/\langle(c_{6},-c_{6},-1),
(-1,1,-1)\rangle\\&&\cong(\SU(4)\times\SU(4)\times\Sp(1))/\langle(iI,-iI,-1),(I,-I,-1)\rangle.\end{eqnarray*}
One can show that \[F\sim\langle[(A_4,A_4,\textbf{i})],[(B_4,B_4^{-1},\textbf{j})],[(iI,I,1)]\rangle,\] where
$A_4=\frac{1+i}{\sqrt{2}}\diag\{1,i,-1,-i\}$ and $B_4=\frac{1+i}{\sqrt{2}}\left(\begin{array}{cccc}0&1&0&0
\\0&0&1&0\\0&0&0&1\\1&0&0&0\end{array}\right)$. This means $F\sim F_{12}$.
\end{proof}

\begin{prop}\label{P:Weyl-E7-3C_4}
One has $C_{G}(F_{12})/F_{12}\cong(\mathbb{F}_2)^3$. There is an exact sequence $$1\rightarrow
\Hom'(\mathbb{F}_2^3,\mathbb{F}_2^3)\rightarrow W(F_{12})\rightarrow\GL(3,\mathbb{F}_2)\rightarrow 1,$$
where $\Hom'(\mathbb{F}_2^3,\mathbb{F}_2^3)=\{f\in\Hom(\mathbb{F}_2^3,\mathbb{F}_2^3): \tr f=0\}$.
\end{prop}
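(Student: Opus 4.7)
The plan is to compute $C_G(F_{12})$ and then describe $W(F_{12})$ via its natural action on the $2$-torsion subgroup of $F_{12}$. Using the model
\[F_{12}\sim\langle[(A_4,A_4,\mathbf{i})],[(B_4,B_4^{-1},\mathbf{j})],[(iI,I,1)]\rangle\]
inside $(\SU(4)\times\SU(4)\times\Sp(1))/\langle(iI,-iI,-1),(I,-I,-1)\rangle$ established at the end of the proof of Proposition \ref{P:E7-finite3}, I first verify that $F_{12}\cong(\mathbb{Z}/4\mathbb{Z})^3$, that the $2$-torsion subgroup $A:=\{x\in F_{12}:x^2=1\}$ coincides with the image of squaring $\{x^2:x\in F_{12}\}$ and is of rank $3$, and that squaring therefore induces a canonical $W(F_{12})$-equivariant isomorphism $F_{12}/A\cong A$. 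Under the hypothesis defining $F_{12}$ (no elements conjugate to $\sigma_2$ or $\sigma_3$), all seven non-identity elements of $A$ are $G$-conjugate to $\sigma_1$.

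For the centralizer: since $\sigma_1\in F_{12}$, we have $C_G(F_{12})\subset G^{\sigma_1}$, so I compute in the quotient model above. A direct calculation shows that the simultaneous centralizer of the three generators in $\SU(4)\times\SU(4)\times\Sp(1)$ lifts to $\langle iI\rangle\times\langle iI\rangle\times\{\pm 1\}$: in each $\SU(4)$ factor the only matrices commuting with both the diagonal $A_4$ and the cyclic permutation $B_4$ (or $B_4^{-1}$) are scalar fourth-roots of unity, and only $\pm 1\in\Sp(1)$ centralizes both $\mathbf{i}$ and $\mathbf{j}$. Passing to the quotient by $\langle(iI,-iI,-1),(I,-I,-1)\rangle$ yields $|C_G(F_{12})/F_{12}|=8$, hence $C_G(F_{12})/F_{12}\cong(\mathbb{F}_2)^3$.

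For the Weyl group, restrict the conjugation action to $A$ to define $p:W(F_{12})\to\GL(A)\cong\GL(3,\mathbb{F}_2)$. Surjectivity of $p$ follows from Proposition 7.26 of \cite{Yu} (which gives $W(A)\cong\GL(3,\mathbb{F}_2)$ for the rank-$3$ elementary abelian $2$-subgroup of $G$ with all non-identity elements conjugate to $\sigma_1$), combined with the uniqueness in Proposition \ref{P:E7-finite3}: for any $\gamma\in N_G(A)$, the conjugate $\gamma F_{12}\gamma^{-1}$ is again a $(\ast)$-satisfying finite abelian $2$-subgroup of $G$ containing $A$ and without $\sigma_2$- or $\sigma_3$-conjugates, so some $h\in C_G(A)$ conjugates it back to $F_{12}$, and then $h\gamma\in N_G(F_{12})$ induces the same automorphism of $A$ as $\gamma$.

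Any $\gamma\in\ker p$ acts trivially on $A$ and, via the canonical squaring isomorphism, also trivially on $F_{12}/A$; hence it acts on $F_{12}$ by $x\mapsto x\cdot f(\bar x)$ for a unique $f\in\Hom((\mathbb{F}_2)^3,(\mathbb{F}_2)^3)$, embedding $\ker p$ into $\Hom((\mathbb{F}_2)^3,(\mathbb{F}_2)^3)$. To identify the image as $\Hom'$, observe that $\Aut(F_{12})\cong\Hom((\mathbb{F}_2)^3,(\mathbb{F}_2)^3)\rtimes\GL(3,\mathbb{F}_2)$ has abelianization $\mathbb{F}_2$ (since $\GL(3,\mathbb{F}_2)$ is simple and the unique nonzero coinvariant of its conjugation action on $\Hom((\mathbb{F}_2)^3,(\mathbb{F}_2)^3)$ is the trace), so $\Aut(F_{12})$ has a unique index-$2$ subgroup, namely $\Hom'\rtimes\GL(3,\mathbb{F}_2)$. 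The containment $W(F_{12})\subset\Hom'\rtimes\GL(3,\mathbb{F}_2)$ will follow from a natural $G$-invariant of the embedding $F_{12}\subset G$ preserved by conjugation but not by non-trace-zero automorphisms, while the reverse inclusion requires explicit realization of each trace-zero $f$ by conjugation inside $C_G(A)$, computed via the description of $C_G(A)$ in \cite{Yu}, Section 7. The main obstacle is this last identification: pinning down the trace-zero constraint demands either a clean $G$-invariant argument or a careful enumeration exhibiting all $2^8$ realizations.
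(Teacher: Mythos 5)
There are genuine gaps in both halves of your argument.

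First, the centralizer computation is not correct. The simultaneous centralizer of lifts of the three generators in $\SU(4)\times\SU(4)\times\Sp(1)$ does not compute $C_{G}(F_{12})$: an element of the quotient group centralizes $F_{12}$ as soon as its commutator with each generator lands in the kernel $\langle(iI,-iI,-1),(I,-I,-1)\rangle$, not only when the lifted commutator is trivial. (Indeed the generators $[(A_4,A_4,\mathbf{i})]$ and $[(B_4,B_4^{-1},\mathbf{j})]$ themselves commute only modulo that kernel.) Your count is also internally inconsistent: the group you produce has order $8$, yet $C_{G}(F_{12})$ contains $F_{12}$, which has order $64$. Moreover $C_{G}(F_{12})$ need not lie in $(C_{G}(x))_0$ at all: the paper points out that $C_{G}(x)$ has a second component, generated by an element $[(\Pi,1)]$ centralizing the $2$-torsion subgroup. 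The paper instead builds a homomorphism $\phi\colon C_{G}(F_{12})/F_{12}\to\Hom(F_{12}/K,\mathbb{F}_2)$ from the commutator pairing of lifts to $\E_7$, and proves it is bijective (injectivity by a computation in $(G^{x})_0$, surjectivity by exhibiting the element $(e_1e_2,\mathbf{i})$ and invoking $W(F_{12})$-equivariance).

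Second, for the Weyl group the decisive step --- that $\ker p$ is exactly the trace-zero subspace --- is precisely the step you leave open (``will follow from \dots requires \dots the main obstacle is this last identification''). The observation that $\Hom'(\mathbb{F}_2^3,\mathbb{F}_2^3)\rtimes\GL(3,\mathbb{F}_2)$ is the unique index-two subgroup of $\Aut(F_{12})$ only helps once $|W(F_{12})|$ is known, which you have not computed. The paper pins this down by a different mechanism: since the bimultiplicative function $m$ is trivial on $F_{12}$, $W(F_{12})$ acts transitively on the $56$ elements of order $4$ (by the proof of Proposition \ref{P:E7-finite3}), and an explicit computation in $(C_{G}(x))_0$ for $x=[(e_1e_2e_3e_4e_5e_6,1)]$ gives $\Stab_{W(F_{12})}(x)\cong\Hom((\bbZ/4\bbZ)^{2},\bbZ/4\bbZ)\rtimes\SL(2,\bbZ/4\bbZ)$; the resulting order $56\times 768=2^{8}\times|\GL(3,\mathbb{F}_2)|$ together with the structure of this stabilizer forces $p$ to be surjective with kernel $\Hom'(\mathbb{F}_2^3,\mathbb{F}_2^3)$. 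A smaller gap: in your surjectivity argument, Proposition \ref{P:E7-finite3} produces some $g\in G$ with $g(\gamma F_{12}\gamma^{-1})g^{-1}=F_{12}$, hence $g\in N_{G}(A)$ because $A$ is characteristic, but it does not produce the element $h\in C_{G}(A)$ that your argument requires; you would need to show separately that $C_{G}(A)$ acts transitively on the conjugates of $F_{12}$ containing $A$.
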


\begin{proof}
Let $K=\{x\in F_{12}: x^2=1\}$. Then, there is a homomorphism $p: W(F_{12})\longrightarrow W(K)$. From the
construction of $F_{12}$ in $G^{\sigma_1}$, one sees that the bimultiplicative function $m$ on $F_{12}$ is
trivial. Hence by the proof of Proposition \ref{P:E7-finite3}, $W(F_{12})$ acts transitively on the set of
elements of order $4$ in $F_{12}$. On the other hand, one can calculate $\Stab_{W(F_{12})}(x)$ for an element
$x$ of $F_{12}$ with order $4$ by consideration in $G^{x}$. For this, we could assume that
$x=[(e_1e_2e_3e_4e_5e_6,1)]$. Denote by $x'_1=(\delta_1,\mathbf{i})$ and $x'_2=(\delta_2,\mathbf{j})$.
Let $$\Pi=\frac{1+e_{1}e_{7}}{\sqrt{2}}\frac{1+e_{2}e_{8}}{\sqrt{2}}\frac{1+e_{3}e_{9}}{\sqrt{2}}\frac{1+e_4e_{10}}{\sqrt{2}}\frac{1+e_{5}e_{11}}{\sqrt{2}}\frac{1-e_{6}e_{12}}{\sqrt{2}}.$$ Then, $[(\Pi,1)]\in C_{G}(K)$
and $C_{G}(x)$ is generated by $(C_{G}(x))_0$ and $[(\Pi,1)]\in C_{G}(K)$. By this $\Stab_{W(F_{12})}(x)$
can be calculated in $(C_{G}(x))_0$. By calculation in $(C_{G}(x))_0$ we get
$\Stab_{W(F_{12})}(x)\cong\Hom((\bbZ/4\bbZ)^{2},\bbZ/4\bbZ)\rtimes\SL(2,\bbZ/4\bbZ)$. By this
the homomorphism $p$ is surjective and moreover $\ker p=\Hom'(\mathbb{F}_2^3,\mathbb{F}_2^3)$.

For any $x=\pi(x')\in F_{12}$ and $g=[g']\in C_{G}(F_{12})$, $x,g'\in\E_7$, one has $g'x'g'^{-1}x'^{-1}=1$ or
$c$. Since $G^{x}$ is connected while $x\in K$ and the bimultiplicative function $m$ on $F_{12}$ is trivial,
this defines a homomorphism $\phi: C_{G}(F_{12})/F_{12}\longrightarrow\Hom(F_{12}/K,\mathbb{F}_2)$. Here we
identify $\langle c\rangle$ with $\mathbb{F}_2$. We show that $\phi$ is surjective and injective, therefore
one has $C_{G}(F_{12})/F_{12}\cong(\mathbb{F}_2)^3$. Suppose that $\phi(g)=0$ for some $g\in N_{G}(F_{12})$.
As in the above, we denote by $x=[(e_1e_2e_3e_4e_5e_6,1)]$, $x'_1=(\delta_1,\mathbf{i})$ and $x'_2=(\delta_2,
\mathbf{j})$. Since $\phi(g)=0$, one has $g\in (G^{x})_0\cong(\SU(4)\times\SU(4)\times\Sp(1))/\langle
(iI,-iI,-1),(I,-I,-1)\rangle$. Denote by $g=[g']$ for some $g'\in \SU(4)\times\SU(4)\times\Sp(1)$. Then,
$gx'_{i}g^{-1}(x'_{i})^{-1}\in\langle(iI,-iI,-1)\rangle$, $i=1,2$. Therefore $g\in F_{12}$ and hence $\phi$ is
injective. Let $g'=(e_1e_2,\textbf{i})\in G^{\sigma_1}$. Then $[g',x']=[g',x'_1]=1$ and $[g',x'_2]=c$. Hence
$\phi\neq 0$. As $\phi$ is $W(F_{12})$ invariant, one sees that $\phi$ is surjective.
\end{proof}

\begin{remark}
From the construction of $F_{12}$ in $G^{\sigma_1}$, one sees that $[(e_1e_2,\textbf{i})]\in C_{G}(F_{12})$.
From \cite{Huang-Yu}, Page 413, one has $[(e_1e_2,\textbf{i})]\sim\sigma_2$ in $G$. By this, one sees that
$F_{12}$ is conjugate to a subgroup of $F_2$, which in turn corresponds to the subgroup $F_6$ of $\Aut(\fre_6)$
in Section 3. As a subgroup of $F_2$, $F_{12}$ is not stable under the action of $W(F_2)$. In the terminology
of \cite{Yu}, one has $\{x\in F_{12}: x^2=1\}=F'_3$ and $\{x\in F_{2}: x^2=1\}=F'_{2,1}$. As a subgroup of
$F'_{2,1}$, $F'_{3}$ is also not stable under the action of $W(F'_{2,1})$.

By construction, the subgroup $F_2$ is related to the subgroup $F_6$ of $\Aut(\fre_6)$. On the other hand,
the iamge of projection of $F_{12}$ to $\Spin(12)/\langle-1,c\rangle$ is conjugate to the image of projection
of the subgroup $F_1$ of $\Spin(12)/\langle c\rangle$ to it.
\end{remark}



\smallskip

In $G$, let $L_1$, $L_2$, $L_3$, $L_4$, $L_5$, $L_6$ be Levi subgroups of $G$ with root systems $3A_1$,
$D_4$, $D_4+A_1$, $D_5+A_1$, $D_6$, $E_6$ respectively. Then, $$(L_1)_{s}\cong\Sp(1)^3/\langle(-1,-1,-1)
\rangle,$$ $$(L_2)_{s}\cong\Spin(8),$$ $$(L_3)_{s}\cong(\Spin(8)\times\Sp(1))/\langle(-1,-1)\rangle,$$
$$(L_4)_{s}\cong(\Spin(10)\times\Sp(1))/\langle(-1,-1)\rangle,$$ $$(L_5)_{s}\cong\Spin(12)/\langle c\rangle,$$
$$(L_6)_{s}\cong\E_6.$$ In each case, one has $Z((L_{i})_{s})\subset Z(L_{i})_0$.
In $L_1$, let $$F_{13}=\langle Z(L_1)_0,[(\textbf{i},\textbf{i},\textbf{i})],[(\textbf{j},\textbf{j},
\textbf{j})]\rangle.$$
In $L_2$, let $$F_{14}=\langle Z(L_2)_0,[e_1e_2e_3e_4],[e_1e_2e_5e_6],[e_1e_3e_5e_7]\rangle.$$
In $L_3$, let $$F_{15}=\langle Z(L_3)_0,[(e_1e_2e_3e_4,1)],[(e_1e_2e_5e_6,1)],[(e_1e_3e_5e_7,\textbf{i})],
[(e_1e_2,\textbf{j})]\rangle.$$
In $L_4$, let $$F_{16}=\langle Z(L_4)_0,[(e_1e_2e_3e_4,1)],[(e_1e_2e_5e_6,1)],[(e_1e_3e_5e_7,1)],
[(e_8e_9,\textbf{i})],[(e_8e_{10},\textbf{j})]\rangle.$$
In $L_5$, let $F_{17}$ be \begin{eqnarray*}&&\langle Z(L_5)_0, [e_1e_2e_3e_4e_5e_6],
[e_1e_3\frac{1+e_5e_6}{\sqrt{2}}e_7e_9\frac{1+e_{11}e_{12}}{\sqrt{2}}],\\&&
[\frac{e_1+e_2}{\sqrt{2}}\frac{1+e_3e_4}{\sqrt{2}}e_5\frac{e_7+e_8}{\sqrt{2}}
\frac{1+e_9e_{10}}{\sqrt{2}}e_{11}]\rangle,\end{eqnarray*}
\[F_{18}=\langle[e_1e_2e_3e_4],[e_5e_6e_7e_8],[e_3e_4e_5e_6],[e_7e_8e_9e_{10}],
[e_1e_3e_5e_7e_9e_{11}],[-1]\rangle,\] and
\[F_{19}=\langle[e_1e_2e_3e_4],[e_5e_6e_7e_8],[e_3e_4e_5e_6],[e_7e_8e_9e_{10}],[e_1e_3e_5e_7e_9e_{11}],
[\delta],[-1] \rangle,\] where \[\delta=\frac{1+e_1e_2}{\sqrt{2}}\frac{1+e_3e_4}{\sqrt{2}}
\frac{1+e_5e_6}{\sqrt{2}}\frac{1+e_7e_8}{\sqrt{2}}\frac{1+e_9e_{10}}{\sqrt{2}}\frac{1+e_{11}e_{12}}{\sqrt{2}}.\]
In $L_6$, let $$F_{20}=Z(L_6)_0\times F',$$ where $F'$ is an elementary abelian 3-subgroup of $\E_6$ projecting
isomorphically to the subgroup $F_2$ of $\Aut(\fre_6)$ in Section 2. Among the subgroups $F_{13}$, $F_{14}$,
$F_{15}$, $F_{16}$, $F_{17}$, $F_{18}$, $F_{19}$, $F_{20}$, the maximal abelian ones are $F_{13}$, $F_{14}$,
$F_{15}$, $F_{16}$, $F_{17}$, $F_{19}$, $F_{20}$.

\begin{lemma}\label{L:E7-nonfinite}
Let $F$ be a non-finite closed abelian subgroup of $G$ satisfying the condition $(*)$. Then $F$ is either a
maximal torus, or is conjugate to one of $F_{13}$, $F_{14}$, $F_{15}$, $F_{16}$, $F_{17}$, $F_{18}$, $F_{19}$,
$F_{20}$.
\end{lemma}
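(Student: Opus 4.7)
The plan is to apply Lemma \ref{L:center} to reduce the classification to that of finite abelian subgroups of derived subgroups of Levi subgroups. Setting $\fra = \Lie F\otimes_{\bbR}\bbC$, $\frl = C_{\frg}(\fra)$, $L = C_{G}(\fra)$, and $L_s = [L,L]$, the lemma yields $\fra = Z(\frl)$, $F = Z(L)_0 \cdot F'$, and $F' := F\cap L_{s}$ a finite abelian subgroup of $L_{s}$ satisfying condition $(*)$ in $L_s$. Since $F$ is non-finite, $\fra\neq 0$ and $\frl$ is a proper Levi subalgebra of $\fre_{7}(\bbC)$, so the task splits into two parts: first, enumerate admissible Levi types; second, classify $F'$ in each admissible $L_s$.

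For the first part, I would enumerate Levi sub-root systems of $\E_{7}$ by sub-diagrams of its Dynkin diagram, and for each type determine the structure of $L_s\subset G=\Aut(\fre_7)=\E_7/\langle c\rangle$. Letting $\widetilde{L}$ denote the preimage of $L$ in $\E_{7}$, Lemma \ref{L:Levi} gives that $\widetilde{L}_s$ is simply connected, hence $L_s = \widetilde{L}_s/(\widetilde{L}_s \cap \langle c\rangle)$. Any direct factor of $L_s$ isomorphic to a simply connected $\SU(n)$ with $n\geq 2$ admits no finite abelian subgroup satisfying $(*)$, since every finite abelian subgroup of $\SU(n)$ is diagonalizable and so lies in a maximal torus with positive-dimensional centralizer. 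Going through the sub-root systems of $\E_7$ and the way $c$ acts on each potential factor, the only root systems yielding an $L_s$ with a finite abelian subgroup satisfying $(*)$ are $\emptyset$, $3A_1$ (for the specific copy on which $c$ restricts to $(-1,-1,-1)$), $D_4$, $D_4+A_1$, $D_5+A_1$, $D_6$, and $E_6$; these correspond precisely to the maximal torus together with the Levi subgroups $L_1,\dots,L_6$ listed above.

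For the second part, each admissible case is handled by an established classification. The empty type yields the maximal torus. For $3A_1$, a direct computation in $\Sp(1)^3/\langle(-1,-1,-1)\rangle$ using \cite{Yu2} produces $F_{13}$. For $D_4$, $D_4+A_1$, and $D_5+A_1$, the classification of elementary abelian $2$-subgroups of $\Spin$-groups from \cite{Yu3}, combined with the additional $\Sp(1)$ factor where applicable, gives $F_{14}$, $F_{15}$, and $F_{16}$. For $E_6$, Proposition \ref{P:E6-inner finite} forces $F'$ to project to the subgroup $F_{2}$ of $\Aut(\fre_{6})$, giving $F_{20}$. Adjoining $Z(L)_0$ in each case recovers the stated subgroups of $G$.

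The main obstacle is the $D_6$ case, where $L_s\cong\Spin(12)/\langle c\rangle$ admits three distinct conjugacy classes of finite abelian subgroups satisfying $(*)$, namely those producing $F_{17}$, $F_{18}$, $F_{19}$. Here one must separate the classes by the bimultiplicative commutator pairing on $F'$, equivalently by whether $F'$ lifts to an abelian subgroup of $\Spin(12)$ and by the pattern of conjugacy classes of involutions appearing in $F'$ inside $\Spin(12)/\langle c\rangle$. The detailed case analysis for spin and half-spin groups carried out in \cite{Yu2} and \cite{Yu3} then distinguishes these three subgroups and completes the classification.
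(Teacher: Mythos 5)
Your proposal follows essentially the same route as the paper: reduce via Lemma \ref{L:center} to a finite abelian subgroup $F'$ of $L_s$, use Lemma \ref{L:Levi} together with $c=\exp(\pi i(H'_2+H'_5+H'_7))$ to decide when $L_s$ is simply connected, rule out Levi types whose derived group has a type $\bf A$ direct factor, arrive at the same admissible list $\emptyset$, $3A_1$, $D_4$, $D_4+A_1$, $D_5+A_1$, $D_6$, $E_6$, and then quote the classifications from \cite{Yu2}, \cite{Yu3} and Section 2 for each case (with the $D_6$ case $\Spin(12)/\langle c\rangle$ producing the three classes $F_{17}$, $F_{18}$, $F_{19}$). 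The correspondence of Levi types to the subgroups $F_{13}$--$F_{20}$ matches the paper, so the argument is correct and not materially different.
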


\begin{proof}
Let $\fra=\Lie F\otimes_{\bbR}\bbC$, $L=C_{G}(\fra)$ and $L_{s}=[L,L]$. By Lemma \ref{L:center}, $F=Z(L)_0\cdot F'$
and $F':=F\cap L_{s}$ is a finite abelian subgroup of $G$ satisfying the condition $(*)$. We may assume that $F$ is
not a maximal torus and hence the root system of $L$ is not $\emptyset$. Then, $L_{s}$ does not
possess a direct factor of type $\bf A$. We may assume that a subset $Y$ of a simple system of the root system of $
G$ is a simple system of $L$. By Lemma \ref{L:Levi} and the fact that $c=\exp(\pi i(H'_2+H'_5+H'_7))$, one has $L_{s}$
being not simply connected if and only if $\alpha_2,\alpha_5,\alpha_7\in Y$. In the case of $L_{s}$ is not simply
connected, due to $L_s$ has a finite abelian subgroup satisfying the condition $(\ast)$ the root system of $L$ is
one of the types $3A_1$, $A_3+A_1$, $D_4+A_1$, $D_5+A_1$, $D_6$. Moreover, in the $A_3+A_1$ case,
$L_s\cong(\SU(4)\times\Sp(1))/\langle(-I,-1)\rangle$, which neither have finite abelian subgroup satisfying the
condition $(\ast)$. In the case of $L_{s}$ is simply connected, due to $L_s$ has a finite abelian subgroup satisfying
the condition $(\ast)$ the root system of $L$ is one of the types $D_4$, $D_5$, $\E_6$. In the $D_5$ case, $L_s$
neither have finite abelian subgroup satisfying the condition $(\ast)$ by \cite{Yu3}, Proposition 2.2. Thus $L$
is conjugate to one of $L_1$, $L_2$, $L_3$, $L_4$, $L_5$, $L_6$ listed ahead of this Lemma. In the case of $L_1$
is of type $3A_1$, one shows $F\sim F_{13}$ easily. The $D_4$ and $D_6$ are considered in Propositions 2.2 and 2.3
of \cite{Yu3}. The $\E_6$ is considered in Section 2. The $D_4+A_1$ and $D_5+A_1$ can be answered by
some argument as the arguments in \cite{Yu3}, Section 2.
\end{proof}


\begin{prop}\label{P:Weyl-E7}
There are exact sequences \[1\longrightarrow\Hom(\mathbb{F}_2^2,\mathbb{F}_2^2)\rtimes\GL(2,\mathbb{F}_2)
\longrightarrow W(F_{13})\longrightarrow W(\F_4)\longrightarrow 1,\]
\[1\longrightarrow\Hom(\mathbb{F}_{2}^{3},\mathbb{F}_{2}^{2})\rtimes\GL(3,\mathbb{F}_2)\longrightarrow W(F_{14})
\longrightarrow \{\pm{1}\}^{3}\rtimes S_3\longrightarrow 1,\]
\[1\longrightarrow(\mathbb{F}_{2}^{4}\times\mathbb{F}_{2})\rtimes(\Hom(\mathbb{F}_{2}^{2},
\mathbb{F}_{2}^{2})\rtimes\GL(2,\mathbb{F}_2))\longrightarrow W(F_{15})\longrightarrow D_4\longrightarrow 1,\]
\[1\longrightarrow S_4\times(\{\pm{1}\}^{3}\rtimes\GL(3,\mathbb{F}_2))\longrightarrow W(F_{16})\longrightarrow
\{\pm{1}\}\longrightarrow 1,\]
\[1\longrightarrow Q_1\longrightarrow W(F_{17})\longrightarrow\{\pm{1}\}\longrightarrow 1,\]
\[1\longrightarrow\mathbb{F}_2^{5}\rtimes(\mathbb{F}_2^{4}\rtimes S_6)\longrightarrow W(F_{18})
\longrightarrow\{\pm{1}\}\longrightarrow 1,\]
\[1\longrightarrow\mathbb{F}_2^{5}\rtimes((\mathbb{F}_2^{4}\rtimes S_6)\times S_2)\longrightarrow W(F_{19})
\longrightarrow\{\pm{1}\}\longrightarrow 1,\]
\[1\longrightarrow\mathbb{F}_3^{3}\rtimes\SL(3,\mathbb{F}_3)\longrightarrow W(F_{20})\longrightarrow\{\pm{1}\}
\longrightarrow 1,\] where $Q_1$ is the Weyl group of the subgroup $F_1$ of $\Spin(12)/\langle c\rangle$
defined in \cite{Yu3}, Proposition 2.5, which is of order $3\times 2^{12}$; in the exact sequences for
$W(F_{18})$ and $W(F_{19})$, $\mathbb{F}_2^{4}$ means
$\{\vec{x}\in\mathbb{F}_2^{6}: \sum_{1\leq i\leq 6} x_{i}=0\}/\mathbb{F}_2\cdot(1,1,1,1,1,1)$,
$\mathbb{F}_2^{5}$ means $\{\vec{x}\in\mathbb{F}_2^{6}: \sum_{1\leq i\leq 6} x_{i}=0\}$.
\end{prop}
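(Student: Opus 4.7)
The strategy is uniform for all eight subgroups $F_i$, $13\le i\le 20$. For each one I would analyze $W(F_i)=N_G(F_i)/C_G(F_i)$ via the natural surjective homomorphism
\[p_i\colon W(F_i)\longrightarrow W_G(T_i),\qquad T_i=(F_i)_0=Z(L_i)_0.\]
By Proposition \ref{P:Steinberg-centralizer}, $C_G(T_i)=L_i$, so $W_G(T_i)=N_G(T_i)/L_i$ is the \emph{restricted Weyl group} attached to the image in $T_i^{\ast}$ of the roots of $\frg$ not vanishing on $T_i$. It is read off directly from the Bourbaki tables for the Levi types $3A_1, D_4, D_4{+}A_1, D_5{+}A_1, D_6, \E_6$ in $\fre_7$: the results are $W(\F_4), \{\pm 1\}^3\rtimes S_3, D_4, \{\pm 1\}, \{\pm 1\}, \{\pm 1\}$ (the $D_6$ entry giving the same $\{\pm 1\}$ for the three non-conjugate finite parts $F'_{17}, F'_{18}, F'_{19}$), matching the quotients in all eight exact sequences. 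Surjectivity of $p_i$ is obtained by lifting $\bar g\in W_G(T_i)$ to $g\in N_G(T_i)$ and observing that $gF'_ig^{-1}$ is again a finite abelian subgroup of $(L_i)_s$ satisfying $(\ast)$, so by the $(L_i)_s$-conjugacy classification in the earlier sections one can adjust $g$ by an element of $L_i$ to normalize $F_i$.

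The kernel $\ker p_i=(N_G(F_i)\cap L_i)\,C_G(F_i)/C_G(F_i)$ fits into a further short exact sequence
\[1\longrightarrow H_i\longrightarrow \ker p_i\longrightarrow W_{(L_i)_s}(F'_i)\longrightarrow 1,\]
where $F'_i=F_i\cap (L_i)_s$ and $H_i\subseteq\Hom(F_i/T_i,\,F_i\cap T_i)$ is the ``twisting'' part: for $g\in\ker p_i$ acting trivially on $F'_i$ modulo $T_i$, the map $x\mapsto gxg^{-1}x^{-1}$ is a homomorphism $F_i/T_i\to T_i$ forced to land in $F_i\cap T_i$. I would compute $W_{(L_i)_s}(F'_i)$ using previously established results: $(L_2)_s=\Spin(8)$ and $(L_5)_s=\Spin(12)/\langle c\rangle$ are handled by \cite{Yu3}, Propositions 2.2, 2.3, 2.5 (producing $Q_1$ in the $F_{17}$ sequence and the $\mathbb{F}_2^4\rtimes S_6$ factors in the $F_{18}, F_{19}$ sequences); $(L_6)_s=\E_6$ is handled by Proposition \ref{P:Weyl-E6-F2 and F3}, giving the $\SL(3,\mathbb{F}_3)$ inside the $W(F_{20})$ kernel; and $(L_1)_s, (L_3)_s, (L_4)_s$ by direct computation inside $\Sp(1)^3$ and $\Spin(2k)\times\Sp(1)$ combining the Weyl groups of the spin factor with those of the $\Sp(1)$ factors.

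The twisting subgroup $H_i$ is then read off by computing, for each generator $\bar x$ of $F_i/T_i$, the subgroup $\{txt^{-1}x^{-1}:t\in T_i\}\subseteq T_i[N]$ (with $N$ the order of $\bar x$); these account precisely for the various $\Hom(\mathbb{F}_2^k,\mathbb{F}_2^\ell)$, $\mathbb{F}_2^m$, and $\mathbb{F}_3^3$ factors appearing in the stated kernels. The subgroup $F_{18}$ is not maximal abelian: the element $[\delta]$ commutes with it but does not lie in it, and adjoining $[\delta]$ produces $F_{19}$; this accounts for the extra $S_2$ factor in the kernel of $p_{19}$ relative to $p_{18}$. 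For $W(F_{20})$, the outer $\{\pm 1\}$ is induced by $-1\in W(\E_7)$ restricted to $\fra=Z(\frl_6)$, acting as inversion on $T_{20}$ and on $F'_{20}$.

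The principal difficulty will be the $F_{17}$ case: the finite part $F'_{17}$ is the subgroup $F_1$ of $\Spin(12)/\langle c\rangle$ from \cite{Yu3}, Proposition 2.5, whose Weyl group $Q_1$ of order $3\cdot 2^{12}$ is itself an intricate extension, and one must verify that no further twist from the one-dimensional torus $Z(L_5)_0$ enlarges the kernel (i.e.\ $H_{17}=1$); this reduces to checking that every homomorphism $F'_{17}/(F'_{17}\cap T_{17})\to T_{17}[2]$ realized by conjugation in $L_5$ is already realized inside $(L_5)_s$, which requires a careful check against the explicit generators. The next most delicate case is $F_{15}$, where both the $\Spin(8)$ and $\Sp(1)$ factors of $(L_3)_s$ contribute interacting twists, producing the $(\mathbb{F}_2^4\times\mathbb{F}_2)\rtimes(\Hom(\mathbb{F}_2^2,\mathbb{F}_2^2)\rtimes\GL(2,\mathbb{F}_2))$ piece of the stated kernel.
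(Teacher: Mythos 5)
Your proposal follows essentially the same route as the paper: project $W(F_i)$ onto $W(Z(L_i)_0)$, identify that quotient with the restricted Weyl group attached to the Levi (the paper does this for $L_1$ by realizing $Z(L_1)_0$ as a maximal torus of $(L_2)_s\cong\Spin(8)$ embedded in an $\F_4$-subgroup), and compute the kernel inside $L_i=C_{G}(Z(L_i)_0)$ as a twisting part of the form $\Hom(F_i/Z(L_i)_0,\cdot)$ extended by the Weyl group of the finite part $F_i\cap(L_i)_s$, quoting \cite{Yu3} and Proposition \ref{P:Weyl-E6-F2 and F3} for the latter. The two-step filtration of $\ker p$ that you make explicit is precisely what the paper's ``consideration in $(L_i)_s$'' carries out, so the approaches coincide.
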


\begin{proof}
Since $Z(L_1)_0$ is conjugate to a maximal torus of the derived subgroup $(L_2)_{s}$ of the Levi subgroup $L_2$.
We have $(L_2)_{s}\cong\Spin(8)$ and $W(Z(L_1)_0)\cong W(\F_4)$. There are two ways to show this: one is to use
the automorphism $\eta$ of $\mathfrak{\fre_6}$ as defined in Section 2, another is to embed $(L_2)_{s}$ into a
subgroup isomorphic to $\F_4$. There is a homomorphism $p: W(F_{13})\longrightarrow W(Z(L_1)_0)$, which is
apparently a surjective map. Since $L_1=C_{G}(Z(L_1)_0)$, consideration in $(L_1)_{s}$ shows
$\ker p\cong\Hom(\mathbb{F}_2^2,\mathbb{F}_2^2)\rtimes\GL(2,\mathbb{F}_2)$, where the first $\mathbb{F}_2^2$
means $F_{13}/Z(L_1)_0$, the second $\mathbb{F}_2^2$ means $F_{13}\cap Z(L_1)_{s}$, and $\GL(2,\mathbb{F}_2)$ acts
on the first $\mathbb{F}_2^2$ linearly and faithfully and acts on the second one trivially. The determination of
the other Weyl groups is along the same line by first identifying $W(Z(L_{i})_0)$ and then calculating $\ker p$.
The surjection of $p$ is clear. For $F_{14}$, $F_{17}$, $F_{18}$, $F_{19}$, we use results in \cite{Yu3},
Section 2. For $F_{20}$, we use Proposition \ref{P:Weyl-E6-F2 and F3}. For $F_{16}$, we note that both of
$$\langle[(-1,1)],[(e_1e_2e_3e_4,1)],[(e_1e_2e_5e_6,1)],[(e_1e_3e_5e_7,1)]\rangle$$ and
$$\langle[(-1,1)],[(e_8e_9,\textbf{i})],[(e_8e_{10},\textbf{j})]\rangle$$ are stable under the action of $\ker p$.
For $F_{15}$, we note that $$K=\langle[(-1,1)],[(c,1)],[(e_1e_2e_3e_4,1)],[(e_1e_2e_5e_6,1)]\rangle$$ stable under
the action of $\ker p$. The induced map $p': \ker p\longrightarrow W(K)$ is surjective and one has
$$W(K)\cong\Hom(\mathbb{F}_{2}^{2},\mathbb{F}_{2}^{2})\rtimes\GL(2,\mathbb{F}_2),$$ where
$\GL(2,\mathbb{F}_2)$ acts on the first $\mathbb{F}_2^2$ linearly and faithfully and acts on the second one
trivially. Moreover by choosing generators of $W(K)$ appropriately one sees that $W(K)\subset W(F_{15})$. Since
$$C_{(L_3)_{s}}(K)=\langle[(e_1e_3e_5e_7,1)],\Spin(2)^{4}\times\Sp(1)\rangle,$$ one can show that
$\ker p'=(\mathbb{F}_{2})^{4}\times\mathbb{F}_{2}$, which the $\mathbb{F}_{2}^{4}$ factor means the action on
$[(e_1e_3e_5e_7,\textbf{i})]$ and the factor $\mathbb{F}_{2}$ means the action on $[(e_1e_2,\textbf{j})]$.
\end{proof}

\section{Simple Lie group of type $\bf E_8$}

Let $G=\E_8$. It contains a subgroup $H_1$ isomorphic to $$(\SU(5)\times\SU(5))/\langle(\omega_5 I,\omega_5^{2}I)
\rangle.$$ In $H_1$ let $F_1=\langle[(\omega_5 I,I)],[A_5,B_5],[A_5,B_5^{2}]\rangle$, where
$$A_5=\diag\{1,e^{\frac{2\pi i}{5}},e^{\frac{4\pi i}{5}},e^{\frac{6\pi i}{5}},e^{\frac{8\pi i}{5}}\},\quad
B_5=\left(\begin{array}{ccccc}0&1&0&0&0\\0&0&1&0&0\\0&0&0&1&0\\0&0&0&0&1\\1&0&0&0&0\\\end{array}\right).$$

\begin{lemma}\label{L:E8-order}
Let $F$ be a finite abelian subgroup of $G$ satisfying the condition $(*)$. If $|F|$ has a prime factor $p\geq 5$,
then $F\sim F_1$.
\end{lemma}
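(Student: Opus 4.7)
The plan is to parallel the argument used for $\E_7$ in Lemma \ref{L:E7-order} and Proposition \ref{P:E7-finite1}. First I would pick any element $x\in F$ of prime order $p\geq 5$; such an $x$ exists by hypothesis. Because $\E_8$ is both simply connected and centerless, Steinberg's theorem (Proposition \ref{P:Steinberg-centralizer}) applies directly and gives that $G^{x}$ is connected. The condition $(*)$ on $F\subset G^{x}$ forces $\frg_0^{F}=0$, and in particular $Z(G^{x})_0\subset G^{F}$ is trivial; hence $G^{x}$ is semisimple.

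Next I would consult Oshima's list of root-system types of semisimple centralizers in $\fre_8$, and pick out those whose center contains an element of prime order $\geq 5$. Since $x\in Z(G^{x})$ has order $p\geq 5$, this constraint applies. Scanning the list ($A_8$, $D_8$, $E_7+A_1$, $E_6+A_2$, $A_4+A_4$, $D_5+A_3$, $A_7+A_1$, $2D_4$, $D_6+2A_1$, $A_5+A_2+A_1$, $2A_3+2A_1$, $4A_2$, $8A_1$), only $A_4+A_4$ has center of order divisible by a prime $\geq 5$ (namely $\bbZ/5\oplus\bbZ/5$), which forces $p=5$. A short lattice computation inside $E_8$ then identifies the centralizer $G^{x}$ as $H_1\cong (\SU(5)\times\SU(5))/\langle(\omega_5 I,\omega_5^{2}I)\rangle$, with the exponent $2$ determined by how the $A_4\oplus A_4$ sublattice embeds in the self-dual $E_8$ lattice.

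Having placed $F$ inside $H_1$, I would conclude exactly as in Proposition \ref{P:E7-finite1}: for any $(\lambda_1 I,\lambda_2 I)$ in the kernel of the projection $\SU(5)\times\SU(5)\to H_1$ one has $\lambda_2=\lambda_1^{2}$, so \cite{Yu2}, Proposition 2.1 applies and delivers $F\sim F_1$. The main potential obstacle is pinning down the correct exponent in $\langle(\omega_5 I,\omega_5^{2}I)\rangle$ (as opposed to $\omega_5 I$ or $\omega_5^{3}I$ in the second factor); once that identification is in hand, the matrix-group classification from \cite{Yu2} takes over and the rest is routine.
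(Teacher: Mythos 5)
Your proposal is correct and follows essentially the same route as the paper: Steinberg's theorem (applied directly since $\E_8$ is simply connected with trivial center) gives $G^{x}$ connected, condition $(*)$ forces it to be semisimple, Oshima's list singles out type $2A_4$ as the only possibility admitting a central element of order $\geq 5$, and the identification $G^{x}\cong(\SU(5)\times\SU(5))/\langle(\omega_5 I,\omega_5^{2}I)\rangle$ reduces the classification to \cite{Yu2}. The paper states the final step ``Hence $F\sim F_1$'' without elaboration, and your appeal to \cite{Yu2}, Proposition 2.1 in the manner of Proposition \ref{P:E7-finite1} is exactly the intended justification.
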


\begin{proof}
Choosing an element $x\in F$ with $o(x)=p\geq 5$, then $F\subset G^{x}$ and $G^{x}$ is connected by Steinberg's
theorem. Hence $G^{x}$ is semisimple as $F$ is a finite abelian subgroup of $G$ satisfying the condition $(*)$.
Therefore the root system of $\frg^{x}$ is of type $2A_4$ (cf. \cite{Oshima}) and
$G^{x}\cong(\SU(5)\times\SU(5))/\langle(\omega_5 I,\omega_5^{2}I)\rangle$. Hence $F\sim F_1$.
\end{proof}

\begin{prop}\label{P:Weyl-E8-F1}
We have $W(F_1)\cong\SL(3,\mathbb{F}_5)$.
\end{prop}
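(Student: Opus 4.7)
The plan is to mimic the transitivity-plus-stabilizer argument of Proposition \ref{P:Weyl-E7-F1}. By Lemma \ref{L:E8-order}, every non-identity element $x\in F_1$ has centralizer $G^{x}\cong(\SU(5)\times\SU(5))/\langle(\omega_5 I,\omega_5^{2}I)\rangle=H_1$, so all $124=5^{3}-1$ non-identity elements of $F_1$ lie in a single $G$-conjugacy class. The conjugating elements can be chosen in $N_{G}(F_1)$, so $W(F_1)$ acts transitively on $F_1\smallsetminus\{1\}$ and $|W(F_1)|=124\cdot|\Stab_{W(F_1)}(z)|$ for any fixed $z\in F_1\smallsetminus\{1\}$.

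I would take $z=[(\omega_5 I,I)]$, which is central in $H_1=G^{z}$. Any element of $N_{G}(F_1)$ fixing $z$ lies in $C_{G}(z)=H_1$, so $\Stab_{W(F_1)}(z)=N_{H_1}(F_1)/C_{H_1}(F_1)$, a computation that takes place entirely inside $H_1$. Since $\langle z\rangle=Z(H_1)\subseteq F_1$, the quotient $F_1/\langle z\rangle$ is a rank-2 elementary abelian 5-subgroup of $H_1/Z(H_1)\cong\PU(5)\times\PU(5)$, and the stabilizer fits into a short exact sequence
\[
1\longrightarrow\Hom(F_1/\langle z\rangle,\langle z\rangle)\longrightarrow\Stab_{W(F_1)}(z)\longrightarrow\Gamma\longrightarrow 1,
\]
with kernel $\cong\mathbb{F}_5^{2}$ and $\Gamma\subseteq\GL(F_1/\langle z\rangle)\cong\GL(2,\mathbb{F}_5)$. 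Using the analogous Weyl-group computation in $\PU(5)$ from \cite{Yu2}, Proposition~2.1, together with the constraint that the quotient by $\langle(\omega_5 I,\omega_5^{2}I)\rangle$ couples automorphisms of the two $\SU(5)$-factors, I expect $\Gamma=\SL(2,\mathbb{F}_5)$. The kernel would be produced by explicit inner conjugations in $H_1$ that preserve $F_1$ modulo $\langle z\rangle$ but shift specific generators by powers of $z$. Altogether this yields $|\Stab_{W(F_1)}(z)|=25\cdot 120=3000$ and hence $|W(F_1)|=124\cdot 3000=372000=|\SL(3,\mathbb{F}_5)|$.

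To conclude $W(F_1)=\SL(3,\mathbb{F}_5)$ rather than some other subgroup of $\GL(3,\mathbb{F}_5)$ of the same order, I would use that $\SL(3,\mathbb{F}_5)$ has trivial center and is simple, with smallest faithful permutation degree equal to $31$ (the cardinality of $\mathbb{P}^{2}(\mathbb{F}_5)$). Since $[\GL(3,\mathbb{F}_5):\SL(3,\mathbb{F}_5)]=4$, the elementary estimate $|W(F_1)\cap\SL(3,\mathbb{F}_5)|\geq|W(F_1)|/4$ gives $[\SL(3,\mathbb{F}_5):W(F_1)\cap\SL(3,\mathbb{F}_5)]\leq 4<31$, forcing $W(F_1)\cap\SL(3,\mathbb{F}_5)=\SL(3,\mathbb{F}_5)$, and equality of orders then forces $W(F_1)=\SL(3,\mathbb{F}_5)$.

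The main obstacle is pinning down $\Gamma$ as exactly $\SL(2,\mathbb{F}_5)$: neither a proper subgroup (which would undercount) nor the full $\GL(2,\mathbb{F}_5)$ (which would overcount). This hinges on tracking which pairs of $\SU(5)$-factor automorphisms descend through the twist $\langle(\omega_5 I,\omega_5^{2}I)\rangle$, and on verifying that the allowed descents act with determinant $1$ on $F_1/\langle z\rangle$.
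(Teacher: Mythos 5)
Your proposal is correct and follows essentially the same route as the paper's proof, which likewise just records that $W(F_1)$ acts transitively on $F_1\smallsetminus\{1\}$ and that $\Stab_{W(F_1)}(x)\cong\Hom((\mathbb{F}_5)^2,\mathbb{F}_5)\rtimes\SL(2,\mathbb{F}_5)$ from the construction of $F_1$ inside $H_1$, so the one step you flag as the main obstacle is exactly the assertion the paper makes without further detail. That step does close: the two non-central generators of $F_1$ project to a Heisenberg pair $[A_5],[B_5]$ in each $\PU(5)$ factor, conjugation in $H_1$ must preserve the (nondegenerate) commutator pairing on $F_1/\langle z\rangle\cong\mathbb{F}_5^{2}$ and every such symplectic transformation is realized by the normalizer of the Heisenberg group in $\SU(5)$, so $\Gamma=\Sp(2,\mathbb{F}_5)=\SL(2,\mathbb{F}_5)$; your concluding index-and-simplicity argument pinning down $W(F_1)$ inside $\GL(3,\mathbb{F}_5)$ is a correct addition that the paper leaves implicit.
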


\begin{proof}
From the the construction of $F_1$ in $(\SU(5)\times\SU(5))/\langle(\omega_5 I,\omega_5^{2}I)\rangle$, one has
$\Stab_{W(F_1)}(x)\cong\Hom((\mathbb{F}_5)^2,\mathbb{F}_5)\rtimes\SL(2,\mathbb{F}_5)$. On the other hand $W(F_1)$
acts transitively on $F_1-\{1\}$. Therefore $W(F_1)\cong\SL(3,\mathbb{F}_5)$.
\end{proof}

The group $G$ has a subgroup $H_2$ isomorphic to $(\E_6\times\SU(3))/\langle(c,\omega I)\rangle$, where $c$ is
an element of order $3$ generating $Z(\E_6)$. From Section 2, one sees that the group
$\E_6/\langle c\rangle$ has two conjugacy classes of finite abelian subgroups satisfying the condition $(\ast)$
and with a nontrivial bimultiplicative function $m$, with representatives $F_1$ and $F_3$ in the terminology
there. From them we have finite abelian subgroups $F_2$, $F_3$ of $G$ satisfying the condition $(\ast)$. One has
$F_2\cong(C_2)^{3}\times(C_3)^{3}$ and $F_3\cong(C_3)^{5}$.

\begin{prop}\label{P:E8-Non two}
If $F$ is a finite abelian subgroup of $G$ satisfying the condition $(*)$ and with $|F|$ being a multiple of $3$,
then $F\sim F_2$ or $F_3$.
\end{prop}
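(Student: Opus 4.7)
The plan is to pick an element of order $3$ and study its centralizer, which Steinberg's theorem makes accessible. Since $|F|$ is divisible by $3$, choose $x\in F$ with $o(x)=3$. Because $G=\E_8$ is simply connected with trivial center, Proposition~\ref{P:Steinberg-centralizer} gives that $G^{x}$ is connected, and condition~$(*)$ then forces $G^{x}$ to be semisimple. From \cite{Oshima}, the root system of $\frg^{x}$ is therefore of type $\E_6+A_2$ or $A_8$, and an inspection of coweight lattices (or the Kac presentation of order-$3$ automorphisms of $\fre_8$) identifies the corresponding connected semisimple groups as $G^{x}\cong H_2=(\E_6\times\SU(3))/\langle(c,\omega I)\rangle$ in the first case and $G^{x}\cong\SU(9)/\langle\omega I\rangle$ in the second.

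In the $\E_6+A_2$ case, I would analyze $F\subset H_2$ through the two adjoint projections, obtaining $\bar F\subset\E_6/\langle c\rangle$ and $\tilde F\subset\SU(3)/\langle\omega I\rangle\cong\PSU(3)$. A character computation for $\fre_8$ viewed as an $\E_6\times\SU(3)$-module shows that condition~$(*)$ for $F$ forces $(*)$ separately on $\bar F$ and $\tilde F$ in their respective adjoint groups. The $\PSU(3)$ condition pins $\tilde F$ down as the Heisenberg $(C_3)^2$, while Propositions~\ref{P:E6-inner finite 1} and~\ref{P:E6-inner finite} classify $\bar F$ as conjugate to one of $F_1$, $F_2$, or $F_3$ of Section~2. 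I expect the $\bar F\sim F_1$ option (containing 2-torsion) to lift, together with $\tilde F$ and $Z(H_2)$, to an abelian group of shape $(C_2)^3\times(C_3)^3$, giving $F\sim F_2$ of the present section; the $\bar F\sim F_2$ and $\bar F\sim F_3$ options both lift to elementary abelian $(C_3)^5$'s which I expect to coalesce into the single $\E_8$-conjugacy class $F_3$ of the present section.

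In the $A_8$ case, I would invoke \cite{Yu2}, Proposition~2.1 applied to $\SU(9)/\langle\omega I\rangle$ to classify the finite abelian subgroups satisfying $(*)$ there; such a subgroup is elementary abelian of rank $5$. Inside any such $F$, I expect to exhibit another order-$3$ element $y$ whose $\E_8$-centralizer is of type $\E_6+A_2$ (forced, because among the $242$ nontrivial elements only $x$ and $x^2$ can be central in $G^{x}$, so other elements must have a different Kac type), thereby reducing back to the first case and concluding $F\sim F_3$.

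The main obstacle will be showing that the $\bar F\sim F_2$ and $\bar F\sim F_3$ subcases produce a single $\E_8$-conjugacy class: this requires exhibiting an explicit conjugating element, which I expect to arise from a triality-like automorphism normalizing the $\E_6\times\SU(3)$ subgroup inside $\E_8$, in the spirit of the element $\eta$ of Section~2. A secondary delicacy is the verification of condition~$(*)$ on the off-diagonal $\mathbf{27}\otimes\mathbf{3}\oplus\overline{\mathbf{27}}\otimes\overline{\mathbf{3}}$ summand of $\fre_8$ under the proposed lifts, which amounts to a central-character bookkeeping under the joint $\E_6\times\SU(3)$ action.
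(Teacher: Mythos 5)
Your overall strategy is the paper's: Steinberg's theorem plus Oshima's list gives the dichotomy $\E_6+A_2$ versus $A_8$ for the root system of $\frg^{x}$, the first case is pushed down to the Section~2 classification, and the second is reduced to the first. Two steps, however, do not work as written. First, in the $\E_6+A_2$ case the subcase $\bar F\sim F_2$ (in the notation of Section~2) is not a live case to be ``coalesced'' with $\bar F\sim F_3$: it cannot occur. For $\mathfrak{su}(3)^{\tilde F}=0$ the group $F$ must contain two elements whose $\SU(3)$-components have commutator $\omega^{\pm1}I$, and since $F$ is abelian in $(\E_6\times\SU(3))/\langle(c,\omega I)\rangle$ the corresponding $\E_6$-components then have commutator $c^{\pm1}$; hence the bimultiplicative function $m$ on $\bar F$ is forced to be nontrivial, which rules out the Section~2 subgroup $F_2$ (whose $m$ is trivial) and leaves exactly $F_1$ and $F_3$ there. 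This is precisely the observation the paper makes in the paragraph preceding the proposition, and it replaces your proposed ``triality-like'' conjugation --- which would in any case be problematic, since $F_2$ and $F_3$ of Section~2 are not conjugate in $\Aut(\fre_6)$ and do not even have the same rank.

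Second, in the $A_8$ case your reason for the existence of an element of type $\E_6+A_2$ in $F$ --- that elements other than $x^{\pm1}$ are non-central in $G^{x}$ and ``so must have a different Kac type'' --- is a non sequitur: an element can be non-central in $G^{x}$ and still be $G$-conjugate to $x$. The reduction needs an actual identification of the $G$-conjugacy class of some element of $F\setminus\langle x\rangle$; the paper does this by checking that $[\diag\{A_3,A_3,A_3\}]$ is conjugate to $\exp(\frac{2\pi i}{3}(H'_1+H'_5+H'_8))$ and hence to $\exp(\frac{2\pi i}{3}(H'_1-H'_3))$, whose centralizer has root system $\E_6+A_2$. (An averaging argument would also close this gap: every $1\neq y\in F\cong(C_3)^5$ has connected semisimple centralizer, so $\tr\Ad(y)=-4$ or $5$ according to type $A_8$ or $\E_6+A_2$, and $0=\dim\frg^{F}=\tfrac{1}{243}\bigl(248+\sum_{y\neq1}\tr\Ad(y)\bigr)$ forces exactly $80$ elements of type $\E_6+A_2$.) With these two repairs your argument matches the paper's.
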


\begin{proof}
Choosing an element $x\in F$ of order $3$, then $F\subset G^{x}$ and $G^{x}$ is connected by
Steinberg's theorem. Hence $G^{x}$ is semisimple as $F$ is a finite abelian subgroup of $G$
satisfying the condition $(*)$. Therefore the root system of $\frg^{x}$ is of type $A_8$ or
$E_6+A_2$ (cf. \cite{Oshima}). In the case of the root system of $\frg^{x}$ is of type $E_6+A_2$,
one has $F\sim F_2$ or $F_3$ by Propositions \ref{P:E6-inner finite 1} and \ref{P:E6-inner finite}.
In the case of the root system of $\frg^{x}$ is of type $A_8$, one has $G^{x}\cong\SU(9)/\langle
\omega I\rangle$. In this case the conjugacy class of $F$ is unique and it contains an element
conjugate to $[\diag\{A_3,A_3,A_3\}]$. However the element $[\diag\{A_3,A_3,A_3\}]$ is conjugate
to $\exp(\frac{2\pi i}{3}(H'_1+H'_5+H'_8))$ in $G$ and which in turn is conjugate to the
$\exp(\frac{2\pi i}{3}(H'_1-H'_3))$ giving a centralizer group of type $E_6+A_2$. Therefore this
conjugacy class is the one containing $F_3$.
\end{proof}

\begin{prop}
We have $$W(F_2)\cong\SL(3,\mathbb{F}_3)\times\SL(3,\mathbb{F}_2),$$ $$W(F_3)\cong\mathbb{F}_{3}^{4}
\rtimes\GSp(2,\mathbb{F}_3),$$ where $\GSp(2,\mathbb{F}_3)$ means the group of $4\times 4$ general
symplectic matrices over the field $\mathbb{F}_3$.
\end{prop}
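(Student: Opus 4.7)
The strategy is to compute the two Weyl groups separately, in each case exploiting a characteristic structure of the subgroup that restricts the possible action of the normalizer.

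For $W(F_2)$: since $F_2 \cong (C_2)^3 \times (C_3)^3$ is the internal direct product of its Sylow $2$-subgroup $K'$ and Sylow $3$-subgroup $K$, both characteristic, restriction yields an injection $W(F_2) \hookrightarrow W(K) \times W(K')$. The factor $K'$ inherits from the $(C_2)^3$-part of $F_1 \subset \Aut(\fre_6)$ (Section~2, whose nontrivial involutions are all conjugate to $\sigma_1$) the property that all non-identity involutions of $K'$ remain mutually conjugate in $E_8$; a transitivity-plus-stabilizer computation inside the centralizer of a fixed involution gives $W(K') = \GL(3,\mathbb{F}_2) = \SL(3,\mathbb{F}_2)$. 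For $K$, by Proposition~\ref{P:E8-Non two} every order-$3$ element of $K$ has centralizer of type $E_6+A_2$, so $W(K)$ is transitive on $K\setminus\{1\}$; computing the stabilizer of one such element $x$ inside $G^x \cong H_2$ using the symmetries of the $\SU(3)$ factor yields $\Stab_{W(K)}(x) \cong \mathbb{F}_3^2 \rtimes \SL(2,\mathbb{F}_3)$, whence $|W(K)| = |\SL(3,\mathbb{F}_3)|$; the $\SL$-restriction (rather than $\GL$) reflects that any conjugation must preserve the extra invariant coming from the position of $K$ inside the covered quotient $(\E_6 \times \SU(3))/\langle(c,\omega I)\rangle$. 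Surjectivity of $W(F_2)$ onto $W(K)\times W(K')$ then follows by producing conjugating elements in $G$ that realize each factor independently, using that the relevant centralizers have a product structure.

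For $W(F_3)$: the key observation is that $F_3 \cong (C_3)^5$ contains the distinguished element $z := [(1,\omega I)]$, which is central in $H_2$ and hence has centralizer $H_2$ in $E_8$ (root system $E_6+A_2$); by Proposition~\ref{P:E8-Non two}, every element of $F_3 \setminus \langle z\rangle$ has centralizer of the other type $A_8$. Any normalizer of $F_3$ in $G$ therefore preserves the set $\langle z\rangle$. On the quotient $\overline{F_3} := F_3/\langle z\rangle \cong \mathbb{F}_3^4$ I would then construct a nondegenerate alternating form $\bar m$ from the commutator pairing on lifts in the cover $\E_6 \times \SU(3) \to H_2$: for $x,y \in F_3$ with lifts $\tilde x, \tilde y$, the commutator $[\tilde x,\tilde y]$ lies in the kernel $\langle(c,\omega I)\rangle \cong \mathbb{F}_3$ (the Heisenberg commutators on the $\SU(3)$ factor are exactly what is needed to ensure $F_3$ is abelian in $H_2$, given the bimultiplicative form $m$ on the underlying $F_3 \subset \Aut(\fre_6)$ from Section~2), and the radical of this pairing on the preimage of $F_3$ is exactly the preimage of $\langle z\rangle$. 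A normalizer of $F_3$ in $G$ necessarily preserves $\bar m$ up to the scalar by which it acts on $\langle z\rangle$, producing a homomorphism $\pi : W(F_3) \to \GSp(2,\mathbb{F}_3)$.

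The kernel of $\pi$ consists of automorphisms acting trivially on both $\langle z\rangle$ and $\overline{F_3}$, namely the translations $y \mapsto y \cdot z^{\phi(\bar y)}$ for $\phi \in \Hom(\overline{F_3},\langle z\rangle) \cong \mathbb{F}_3^4$; these are realized by conjugation by suitable elements of the connected centralizer $H_2 = G^z$. The main obstacle is the surjectivity of $\pi$ onto the full $\GSp(2,\mathbb{F}_3)$: the $\Sp(2,\mathbb{F}_3)$ subgroup can be assembled from normalizers inside $H_2$ by combining the Weyl group of $F_3 \subset \Aut(\fre_6)$ from Proposition~\ref{P:Weyl-E6-F2 and F3} with the intrinsic $\SL(2,\mathbb{F}_3)$-symmetry of the $\SU(3)$ factor, but the similitude scalar $-1 \in \mathbb{F}_3^\times$ must be realized by a conjugation in $E_8$ that inverts $z$ (for instance an involution swapping $\omega \leftrightarrow \omega^{-1}$ on the $\SU(3)$ factor), and producing such an element requires genuinely using the embedding $H_2 \subset E_8$ rather than any symmetry internal to $H_2$.
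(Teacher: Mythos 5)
Your treatment of $W(F_2)$ is essentially the paper's argument---transitivity of $W(F_2)$ on the $26$ elements of order $3$ together with the stabilizer computation $\Stab_{W(F_2)}(x)\cong(\mathbb{F}_3^{2}\rtimes\SL(2,\mathbb{F}_3))\times\SL(3,\mathbb{F}_2)$ carried out in $G^{x}\cong(\E_6\times\SU(3))/\langle(c,\omega I)\rangle$---repackaged through the Sylow decomposition; apart from the surjectivity onto $W(K)\times W(K')$ being only asserted, that half is sound.

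The $W(F_3)$ half, however, rests on a false premise. You claim that every element of $F_3\setminus\langle z\rangle$ has centralizer of type $A_8$, so that $\langle z\rangle$ is characteristic under $W(F_3)$ and one can work with a form on $F_3/\langle z\rangle$. This is not so. Writing $F_3=\langle[(c,1)],[(\theta'_1,A_3)],[(\eta,B_3)],[((A_3,A_3,A_3),I)],[((B_3,B_3,B_3),I)]\rangle$ inside $H_2$, the element $[((A_3,A_3,A_3),I)]$ has fixed subalgebra of dimension $24+8+27+27=86$ in $\fre_8=\fre_6\oplus\mathfrak{su}(3)\oplus(27,3)\oplus(\overline{27},\bar{3})$, hence is of type $E_6+A_2$ and is conjugate to $z$ in $G$ although it lies outside $\langle z\rangle$. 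In fact exactly $80$ of the $242$ nontrivial elements of $F_3$ are of type $E_6+A_2$, and together with the identity they form a subgroup $K$ of rank $4$; it is $K$, not $\langle z\rangle$, that is canonically preserved, and since $W(F_3)$ acts transitively on $K\setminus\{1\}$ it does not preserve $\langle z\rangle$ at all. Your symplectic form also does not exist as described: the commutator pairing of lifts along $\E_6\times\SU(3)\to H_2$ pairs $[(\theta'_1,A_3)]$ nontrivially with $[(\eta,B_3)]$ but annihilates both $[((A_3,A_3,A_3),I)]$ and $[((B_3,B_3,B_3),I)]$ (their $\SU(3)$-components are trivial and their $\E_6$-components already commute in $\E_6$, since $(\omega I,\omega I,\omega I)=1$ there), so the induced form on $F_3/\langle z\rangle$ has a radical of rank $2$. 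The paper's route is the opposite one: pass to an element $\theta_1$ of type $A_8$, realize $F_3$ inside $G^{\theta_1}\cong\SU(9)/\langle\omega I\rangle$, put the nondegenerate symplectic form on $K$ itself via the commutator pairing of lifts to $\SU(9)$, and obtain $W(F_3)$ as $\Hom(F_3/K,K)\rtimes\GSp(K)\cong\mathbb{F}_3^{4}\rtimes\GSp(2,\mathbb{F}_3)$, the similitude scalar $-1$ being realized by an element of $G$ acting on $\SU(9)/\langle\omega I\rangle$ by $[X]\mapsto[\overline{X}]$.
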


\begin{proof}
For $F_2$, by the construction of $F_2$ in $(\E_6\times\SU(3))/\langle(c,\omega I)\rangle$ and
Proposition \ref{P:Weyl-E6 inner finite 1}, one has $\Stab_{W(F_2)}(x)\cong(\mathbb{F}_{3}^{2}
\rtimes\SL(2,\mathbb{F}_3))\times\SL(3,\mathbb{F}_2)$, where we note that it is $\SL(2,\mathbb{F}_3)$
instead of $\GL(2,\mathbb{F}_3)$ since it is $\E_6$, not the disconnected group $\Aut(\fre_6)$ being
considered here. Since $W(F_2)$ acts transitively on the set of elements of $F_2$ of order $3$, one has
$W(F_2)\cong\SL(3,\mathbb{F}_3)\times\SL(3,\mathbb{F}_2)$.

Denote by $\theta_1=\exp(\frac{2\pi i(H'_1+H'_2-H'_3)}{3})$, $\theta_2=\exp(\frac{2\pi i(H'_1-H'_3)}{3})$.
Then, the root system of $\frg^{\theta_1}$ is of type $A_8$ and the root system of $\frg^{\theta_2}$ is of
type $E_6+A_2$. In $G^{\theta_1}\cong\SU(9)/\langle\omega I\rangle$, $F_3$ is conjugate to
$$F'_3=\langle[\omega I],[A'_3],[B'_3],[\diag\{A_3,A_3,A_3\}],[\diag\{B_3,B_3,B_3\}]\rangle,$$ where
$$A_3=\diag\{1,\omega,\omega^2\}, A'_3=\diag\{I_3,\omega I_3,\omega^{2} I_3\},$$
$$B_3=\left(\begin{array}{ccc}0&1&0\\0&0&1\\1&0&0\\\end{array}\right),$$
$$B'_3=\left(\begin{array}{ccc}0_3&I_3&0_3\\0_3&0_3&I_3\\I_3&0_3&0_3\\\end{array}\right).$$ Denote by
$$K=\langle[A'_3],[B'_3],[\diag\{A_3,A_3,A_3\}],[\diag\{B_3,B_3,B_3\}]\rangle.$$ Then, for any element
$x\in K$, one has $x\sim[A'_3]$, $\theta_1 x\sim\theta_1[A'_3]$, $(\theta_1)^{2}x\sim(\theta_1)^{2}[A'_3]$.
In $G$, one can show that $[A'_3]\sim\theta_2$ and $\theta_1 [A'_3]\sim(\theta_1)^{2}[A'_3]\sim\theta_1$.
That means $K$ is stable under the action of $W(F'_3)$. Note that there is a nondegenerate symplectic
form on $K$ taking values in the field $\mathbb{F}_3$ from the inclusion $K\subset G^{\sigma_1}\cong
\SU(9)/\langle\omega I\rangle $. Considertation in $G^{\theta_1}$ shows that
$\Stab_{W(F_3)}=\Sp(2,\mathbb{F}_3)$. By this $|W(F_{3})|=2\times 3^{4}\times|\Sp(2,\mathbb{F}_3)|$.
Also, we note that $G$ contains an element $g$ normalizing $\SU(9)/\langle\omega\rangle$ and acting on it
as $g[X]g^{-1}=[\overline{X}]$, $X\in\SU(9)$. Therefore $g\in N_{G}(F'_3)$ which acts on $K$ as a
transformation in $\GSp(2,\mathbb{F}_3)$ and outside $\Sp(2,\mathbb{F}_3)$, maps $\theta_1$ to $\theta_1^{-1}$.
There is a homomorphism $p: W(F'_{13})\longrightarrow W(K)$. We show that $W(K)\cong\GSp(2,\mathbb{F}_3)$.
Since by the above $\GSp(2,\mathbb{F}_3)\subset\Im p$ and $|W(F_{3})|=3^{4}\times|\GSp(2,\mathbb{F}_3)|$,
this in turn implies $W(F_3)\cong\mathbb{F}_{3}^{4}\rtimes\GSp(2,\mathbb{F}_3).$ To prove $W(K)\cong
\GSp(2,\mathbb{F}_3)$, note that each elementary abelian 3-subgroup of rank 2 with degenerate (or
nondegenerate) symplectic form is conjugate to $S_1=\langle[A'_3],[\diag\{A_3,A_3,A_3\}]\rangle$ (or
$S_2=\langle[A'_3],[B'_3]\rangle$). In Section 2, we define the subgroup $F_3$ of $\Aut(\fre_6)$ as
\[F_3=\langle\pi(\theta'_1),\eta,[(A_3,A_3,A_3)],[(B_3,B_3,B_3)]\rangle,\] where $\theta'_1$ means the
element $\theta_1$ there. Thus in $G^{\theta_2}\cong(\E_6\times\SU(3))/\langle(c,\omega I)\rangle$,
\[F_3=\langle[(c,1)],[(\theta'_1,A_3)],[(\eta,B_3)],[((A_3,A_3,A_3),I)],[((B_3,B_3,B_3),I)]\rangle.\]
By calculation one shows that the elements $[((A_3,A_3,A_3),I)]$, $[(\eta,B_3)]$ are conjugate to
$\theta_2$ in $G$. Denote by $$K'=\langle[(c,1)],[(\eta,B_3)],[((A_3,A_3,A_3),I)],[((B_3,B_3,B_3),I)]
\rangle,$$ $S'_1=\langle[(c,1)],[((A_3,A_3,A_3),I)]\rangle$, and $S'_2=\langle[(c,1)],[(\eta,B_3)]
\rangle$. Then, $K'$ is conjugate to $K$, and any rank 2 elementary abelian 3-subgroup of $F_3$ is
conjugate to $S'_1$ or $S'_2$. Thus $\{S_1,S_2\}$, $\{S'_1,S'_2\}$ are pairwise conjugate. As
$\frg^{S'_2}=\mathfrak{so}(8)+\mathbb{C}^{4}\not\cong\frg^{S'_1}=\mathfrak{su}(3)^{4}$, $S_1$ and $S_2$
are not conjugate in $G$. Then, the action of $W(K)$ on $K$ preserves the symplectic form up to scaler.
By it one has $W(K)\subset\GSp(2,\mathbb{F}_3)$. It is showed above that $W(K)\supset\GSp(2,\mathbb{F}_3)$.
Thus $W(K)|=|\GSp(2,\mathbb{F}_3)$.
\end{proof}

\begin{lemma}\label{L:E8-order2}
If $F$ is a finite abelian 2-subgroup of $G$ satisfying the condition $(*)$, then $x^4=1$ for any $x\in F$.
\end{lemma}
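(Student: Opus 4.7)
The plan is to suppose that some element $x\in F$ has order $2^k$ with $k\geq 3$ and derive a contradiction from $(\ast)$. By replacing $x$ with $x^{2^{k-3}}$ if necessary, it suffices to rule out the existence of an element of order exactly $8$, so I will assume throughout that $o(x)=8$. Since $F$ is abelian, $F\subset G^{x}=C_{G}(x)$, and since $G=\E_8$ is simply connected with trivial center, Steinberg's theorem (Proposition \ref{P:Steinberg-centralizer}) yields that $G^{x}$ is connected.

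The crux will be to show that $G^{x}$ cannot be semisimple. By Kac's theorem, or equivalently by Borel--de Siebenthal theory (cf.\ \cite{Oshima}), a connected semisimple centralizer of a finite-order element of $\E_8$ must have root system obtained by deleting a single node from the extended Dynkin diagram of $\E_8$, and the order of such an element then equals the mark of the deleted node. The marks on the extended Dynkin diagram of $\E_8$ form the set $\{1,2,3,4,5,6\}$, and $8$ is not among them. Hence no element of order $8$ in $\E_8$ admits a semisimple centralizer, and I conclude that $Z(G^{x})_0$ must be a nontrivial torus.

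To finish, since $Z(G^{x})_0$ commutes with everything in $G^{x}$ and $F\subset G^{x}$, I obtain $Z(G^{x})_0\subset C_{G}(F)_0$, whence $\dim\frg_0^{F}\geq\dim Z(G^{x})_0\geq 1$. This contradicts $(\ast)$, which requires $\dim\frg_0^{F}=\dim F=0$ since $F$ is finite. The only real step that takes input from outside this paragraph is the appeal to the Kac/Oshima classification of centralizers, but the pattern is identical to that of Lemma \ref{L:E7-order} and Lemma \ref{L:E8-order}, so no new difficulty should arise.
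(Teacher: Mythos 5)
Your argument is correct, but at the decisive step it runs along a different track from the paper's. The paper also reduces to an element $x$ of order $8$ with $G^{x}$ connected and semisimple, but then exploits that $x\in Z(G^{x})$ must itself have order $8$: scanning the possible full-rank semisimple centralizers in $\E_8$, only type $A_7+A_1$ has a factor ($\SU(8)$) whose center could a priori contain an order-$8$ element, and an explicit computation shows $Z\big((\SU(8)\times\Sp(1))/\langle(iI,-1)\rangle\big)\cong C_4$, so even that case is excluded. You instead invoke the Kac-coordinate fact that an element with semisimple (hence full-rank) centralizer has normalized coordinates supported on a single node with coefficient $1$, so its order equals the mark of that node, and $8$ is not a mark of $\widetilde{E}_8$. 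Both routes are sound; yours is more conceptual and avoids computing any quotient centers, while the paper's is self-contained given the list of centralizer types it already cites and reuses the same ``look at $Z(G^{x})$'' pattern as Lemmas \ref{L:E7-order} and \ref{L:E8-order}. One small caveat: the reference \cite{Oshima} supplies the list of subsystems but not the ``order equals the mark'' statement itself, which is really Kac's classification of torsion automorphisms (and uses that $\E_8$ is simply connected with trivial center, so normalized coordinates have $\gcd$ equal to $1$); you should cite that explicitly rather than fold it into the Borel--de Siebenthal reference.
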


\begin{proof}
Suppose the conclusion does not hold. Then $F$ contains an element $x$ of order $8$. One has $F\subset G^{x}$
and $G^{x}$ is connected by Steinberg's theorem. Hence $G^{x}$ is semisimple. Since $x\in Z(G^{x})$ is of
order $8$. The root system of $\frg^{x}$ is of type $A_7+A_1$. Even in this case
$G^{x}\cong(\SU(8)\times\Sp(1))/\langle (-I),(iI,-1)\rangle$ and hence $o(x)\neq 8$.
\end{proof}

In \cite{Huang-Yu}, we defined a Klein four subgroup $\Gamma_1$ of $G$ and one has \[G^{\Gamma_1}\cong
(\E_6\times\U(1)\times\U(1))/\langle(c,\omega,1)\rangle\rtimes\langle z\rangle,\] where
$(\fre_6\oplus i\bbR\oplus i\bbR)^{z}=\frf_4\oplus 0\oplus 0$. Given a finite abelian subgroup $F'$ of
$\E_6\rtimes\langle z\rangle$ satisfying the condition $(\ast)$, $F=F'\times\Gamma_1$ is a finite abelian
subgroup of $G$ satisfying the condition $(\ast)$. Let $F_4$, $F_5$, $F_6$, $F_7$, $F_8$, $F_9$ be finite
abelian subgroups of $G$ obtained in this way from finite abelian subgroups $F_6$, $F_8$, $F_9$, $F_{10}$,
$F_{11}$, $F_{12}$ of $\Aut(\fre_6)$ as defined in Section 3.

\begin{prop}\label{P:E8-two-3}
If $F$ is a finite abelian 2-subgroups of $G$ satisfying the condition $(*)$ and containing a Klein four
subgroup conjugate to $\Gamma_1$, then it is conjugate to one of $F_4$, $F_5$, $F_6$, $F_7$, $F_8$, $F_9$.
\end{prop}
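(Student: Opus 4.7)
The plan is to mimic the strategy used in Proposition \ref{P:E7-finite2}, replacing the role of $\sigma_2$ and its $\E_6$-type centralizer in $\E_7$ by the Klein four subgroup $\Gamma_1$ and its centralizer $G^{\Gamma_1}\cong(\E_6\times\U(1)\times\U(1))/\langle(c,\omega,1)\rangle\rtimes\langle z\rangle$ in $\E_8$. Without loss of generality, assume $\Gamma_1\subset F$, so $F\subset G^{\Gamma_1}$. First I would verify that the identity component $(G^{\Gamma_1})_0$ has a $2$-dimensional center, so that any subgroup contained in it violates the condition $(\ast)$; consequently $F$ must contain an element mapping nontrivially to the outer factor $\langle z\rangle$.

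Next, pick any outer element $y\in F\setminus F\cap(G^{\Gamma_1})_0$. Writing a general inner element of $F$ as $[(x,\lambda_1,\lambda_2)]$, the requirement that $y$ commutes with it in the quotient forces $\lambda_i=\omega^{k}\lambda_i^{-1}$ for some $k\in\{0,1,2\}$ (coming from $\ker=\langle(c,\omega,1)\rangle$ and the fact that $\Ad(z)$ inverts the two $\U(1)$ factors). Since $F$ is a $2$-group, $\lambda_i^{2}$ is a $2$-power root of unity, forcing $k=0$ and hence $\lambda_i\in\{\pm 1\}$. Thus $F\cap(G^{\Gamma_1})_0$ lies in the image of $\E_6\cdot\Gamma_1$, where the two $\pm 1$'s exactly recover $\Gamma_1$. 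Conjugating $y$ by an appropriate $[(1,\nu_1,\nu_2)]$ (which scales the $\U(1)$-parts of $y$ by $\nu_i^{2}$) one can normalize its $\U(1)$-components to be trivial, so after conjugation $F=F'\cdot\Gamma_1$ with $F'$ a finite abelian $2$-subgroup of the subgroup $(\E_6\rtimes\langle z\rangle)/\langle c\rangle\cong\Aut(\fre_6)$ that is not contained in $\Int(\fre_6)$.

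It remains to check that $F'$ satisfies the condition $(\ast)$ inside $\Aut(\fre_6)$, and then to invoke the classification from Section~3. For $(\ast)$: since $\frg_0^{\Gamma_1}=\fre_6\oplus i\bbR\oplus i\bbR$ and $F'$ contains an element acting as $z$ (hence as $-1$) on the two $i\bbR$ summands, one has $\frg_0^{F}=\fre_6^{F'}$, so $\dim\frg_0^{F}=\dim F$ iff $\dim\fre_6^{F'}=\dim F'$. By Lemma~\ref{L:E6-outer finite1} and Proposition~\ref{P:E6-outer finite2}, such $F'$ is conjugate to one of the subgroups $F_6,F_7,F_8,F_9,F_{10},F_{11},F_{12}$ of $\Aut(\fre_6)$. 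Since $F_7$ has a nontrivial $3$-part (it is isomorphic to $(C_3)^{3}\times C_2$) and $F'$ is a $2$-group, $F'\sim F_7$ is impossible. The remaining six cases correspond exactly to $F_4,F_5,F_6,F_7,F_8,F_9$ here by construction, proving the proposition.

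The main obstacle is the bookkeeping in the second paragraph: one must track the quotient $\langle(c,\omega,1)\rangle$ carefully to extract the constraint $\lambda_i\in\{\pm 1\}$ from the commutation relation (as opposed to the naive constraint $\lambda_i^{2}=1$), and then verify that after conjugation the outer element really lies in the $\E_6\rtimes\langle z\rangle$ factor complementary to $\Gamma_1$. Once this reduction is in place, the rest is a direct appeal to the classification already established in Section~3.
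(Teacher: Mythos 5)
Your proposal is correct and follows essentially the same route as the paper: reduce to $G^{\Gamma_1}$, use the presence of an outer element of $F$ to force the $\U(1)$-coordinates of inner elements into $\{\pm 1\}$ (where the $2$-group hypothesis kills the $\omega^{k}$ ambiguity from the kernel $\langle(c,\omega,1)\rangle$), split $F$ as $F'\times\Gamma_1$, and invoke Lemma \ref{L:E6-outer finite1} and Proposition \ref{P:E6-outer finite2}. The extra details you supply (the two-dimensional center of $(G^{\Gamma_1})_0$, the normalization of the outer element, the transfer of condition $(\ast)$ to $F'$, and the explicit exclusion of the $(C_3)^3\times C_2$ case) are all points the paper leaves implicit, and they are handled correctly.
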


\begin{proof}
We may and do assume that $\Gamma_1\subset F$. Then, \[F\subset G^{\Gamma_1}\cong(\E_6\times\U(1)\times
\U(1))/\langle(c,\omega,1)\rangle\rtimes\langle z\rangle.\] Hence $F\not\subset(G^{\Gamma_1})_0$. By this,
for any $[(x,\lambda_1,\lambda_2)]\in F\cap(G^{\Gamma_1})_0$, one has $[(x,\lambda_1,\lambda_2)]$ being
conjugate to $[(\phi(x),\lambda_1^{-1},\lambda_2^{-1})]$, where $\phi$ is an outer automorphism of $\E_6$.
Therefore $\lambda_1,\lambda_2\in\{\pm{1}\}$. Thus $F$ is conjugate to a subgroup of the form
$F=F'\times\Gamma_1$, where $F'$ is a subgroup of$\E_6\rtimes\langle z\rangle$ with $F'\not\subset\E_6$
and satisfying the condition $(*)$. By Lemma \ref{L:E6-outer finite1} and Proposition \ref{P:E6-outer finite2},
$F$ is conjugate to one of $F_4$, $F_5$, $F_6$, $F_7$, $F_8$, $F_9$.
\end{proof}

Among $F_4$, $F_5$, $F_6$, $F_7$, $F_8$, $F_9$, the maximal abelian ones are $F_4$, $F_5$, $F_8$, $F_9$.

\begin{prop}\label{P:Weyl-E8-3}
There is an exact sequence \[1\rightarrow\Hom'((\mathbb{F}_2)^3,(\mathbb{F}_2)^3)\rightarrow W(F_4)
\rightarrow\Hom(\bbF_2^{3},\bbF_2^{2})\rtimes(\GL(2,\bbF_2)\times\GL(2,\bbF_2))
\rightarrow 1,\] and we have \[W(F_5)\cong\Hom(\bbF_{2}^{6},\bbF_{2}^{2})\rtimes\big(\GL(2,\bbF_2)\times
((\GL(3,\bbF_2)\times\GL(3,\bbF_2))\rtimes S_2)\big),\]
\[W(F_6)\cong\bbF_2^{6}\rtimes\Sp(2,2;0,0),\]
\[W(F_7)\cong\bbF_2^{7}\rtimes\Sp(1,3;0,0),\]
\[|W(F_{8})|=3^{2}\times 2^{20},\]
\[W(F_9)\cong\bbF_2^{8}\rtimes\Sp(0,4;0,0),\] where the group $\Sp(r,s;\epsilon,\delta)$ is defined in \cite{Yu},
Page 259.
\end{prop}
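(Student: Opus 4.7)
The strategy follows the blueprint of earlier Weyl-group computations in this paper. By Proposition~\ref{P:E8-two-3}, each $F_i$ for $4\leq i\leq 9$ splits as $F_i = F'_i\times\Gamma_1$, where $F'_i\subset\E_6\rtimes\langle z\rangle$ projects onto one of the subgroups $F_6,F_8,F_9,F_{10},F_{11},F_{12}\subset\Aut(\fre_6)$ studied in Section~3, and $\Gamma_1$ is the distinguished Klein four subgroup of $\E_8$ whose non-identity elements form a single $G$-conjugacy class. This last property forces $\Gamma_1$ to be characteristic in $F_i$, hence stable under $W(F_i)$, so the structure of $W(F_i)$ can be analyzed through its restriction to $\Gamma_1$ and its induced action on $F_i/\Gamma_1$, both computable inside
\[
G^{\Gamma_1}\;\cong\;(\E_6\times\U(1)\times\U(1))/\langle(c,\omega,1)\rangle\rtimes\langle z\rangle.
\]

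For $F_5, F_6, F_7, F_9$ the corresponding $F'_i$ is elementary abelian of exponent~$2$, so $F_i$ itself is an elementary abelian $2$-subgroup of $\E_8$. My plan here is to match each $F_i$ with its type in the classification of elementary abelian $2$-subgroups of $\E_8$ in \cite{Yu} by computing the invariants $(r,s;\epsilon,\delta)$ from the standard quadratic form (parity of $\dim Z(G^x)$) and its defect on $F_i$; the stated Weyl groups $\bbF_2^N\rtimes\Sp(r,s;\epsilon,\delta)$ (and the minor variant for $F_5$) then follow directly from the general Weyl-group formulas in \cite{Yu}.

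The cases $F_4$ and $F_8$, which have exponent~$4$, require more work. For $F_4$ I would first show that the restriction $W(F_4)\to\GL(2,\bbF_2)=\Aut(\Gamma_1)$ is surjective by exhibiting elements of $N_G(F_4)$ that permute the three involutions of $\Gamma_1$, arising from conjugation by $z$ and by suitable elements of $\E_6$ inside $G^{\Gamma_1}$. The full quotient in the claimed exact sequence additionally involves a second $\GL(2,\bbF_2)$-factor (the action on $F_4/\Gamma_1$) and a $\Hom(\bbF_2^3,\bbF_2^2)$-factor, the latter produced by conjugation by the $\U(1)\times\U(1)$-torus in $Z(G^{\Gamma_1})_0$; the kernel $\Hom'(\bbF_2^3,\bbF_2^3)$ is lifted from the kernel in the exact sequence for $W(F_6^{\Aut\fre_6})$ given by Proposition~\ref{P:Weyl-E6-F6}. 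For $F_8$ I would run the counting argument used for $W(F_{11})$ in Proposition~\ref{P:Weyl-E6-F7 to F12} and $W(F_{12})$ in Proposition~\ref{P:Weyl-E7-3C_4}: set $K=\{x\in F_8:x^2=1\}$ (an elementary abelian $2$-subgroup of a type classified in \cite{Yu}) and $K'=\{x^2:x\in F_8\}$, note that both are $W(F_8)$-stable, and obtain $|W(F_8)|$ via $|\Stab_{W(K)}(K')|\cdot|\ker(W(F_8)\to\Stab_{W(K)}(K'))|$, computing the kernel in the centralizer of a suitable order-four element to arrive at the stated $3^2\times 2^{20}$.

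The principal obstacle I anticipate is the explicit verification, for $F_4$, that the full $\Hom(\bbF_2^3,\bbF_2^2)\rtimes(\GL(2,\bbF_2)\times\GL(2,\bbF_2))$-worth of outer automorphisms of $F_4$ is actually realized by $N_G(F_4)$ and that the kernel of the restriction map is exactly $\Hom'(\bbF_2^3,\bbF_2^3)$. A recurring secondary subtlety, as in the proof of the previous proposition, is distinguishing actions induced from $\E_6$ from those induced only from $\Aut(\fre_6)$: this controls whether the relevant factor appears as the full $\GL$ or only as its special subgroup $\SL$, and it is also what makes the factor of $3^2$ in the order of $W(F_8)$ appear.
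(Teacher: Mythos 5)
Your handling of $F_5$, $F_6$, $F_7$, $F_9$ (reduce to the classification of elementary abelian $2$-subgroups of $\E_8$ in \cite{Yu} and quote the Weyl groups computed there) and of $F_8$ (the counting argument with $K=\{x\in F_8:x^2=1\}$, $K'=\{x^2:x\in F_8\}$ and the index of the stabilizer of a generator of $K'$ in $W(K)$) coincides with the paper's proof. The gap is in your plan for $F_4$. The paper's key observation is that the quotient group $\Hom(\bbF_2^{3},\bbF_2^{2})\rtimes(\GL(2,\bbF_2)\times\GL(2,\bbF_2))$ is not assembled piecemeal: it is exactly $W(K)$ for $K=\{x\in F_4:x^2=1\}$, a rank-$5$ elementary abelian $2$-subgroup conjugate to the group $F'_{2,1}$ of \cite{Yu}, whose Weyl group $\Sp(2,1;1,0)$ is already known. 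Surjectivity of $W(F_4)\to W(K)$ then follows from transitivity of $W(F_4)$ on pairs of elements generating Klein four subgroups conjugate to $\Gamma_1$, and the kernel is identified as in Proposition \ref{P:Weyl-E6-F6}. You instead try to build the quotient out of the action on $\Gamma_1$, the action on $F_4/\Gamma_1$, and conjugations by the torus, and that assembly fails.

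Concretely, the factor $\Hom(\bbF_2^3,\bbF_2^2)$ cannot be ``produced by conjugation by the $\U(1)\times\U(1)$-torus in $Z(G^{\Gamma_1})_0$''. That torus is central in $(G^{\Gamma_1})_0$, so conjugation by its elements is trivial on $F_4\cap(G^{\Gamma_1})_0$, a subgroup of index $2$ in $F_4$ which contains $K$ (every element of $F_4$ outside $(G^{\Gamma_1})_0$ has order $4$, since the outer coset of the relevant subgroup $F_6$ of $\Aut(\fre_6)$ consists entirely of order-$4$ elements). Hence torus conjugation acts trivially on $K$ and lands in the \emph{kernel} of $W(F_4)\to W(K)$, contributing to the $\Hom'$-part of the exact sequence rather than to the quotient; moreover it yields at most $\Hom\bigl(F_4/(F_4\cap(G^{\Gamma_1})_0),\Gamma_1\bigr)\cong(\bbF_2)^2$, far short of $(\bbF_2)^6$. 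Similarly, the induced action on $F_4/\Gamma_1\cong(\bbZ/4\bbZ)^3$ is not intrinsically a $\GL(2,\bbF_2)$: in the statement one $\GL(2,\bbF_2)$ acts on the rank-$2$ piece $\Gamma_1$, while the other acts on $K'\cong\bbF_2^3$ only as a proper subgroup of $\GL(3,\bbF_2)$, a constraint invisible from your decomposition and supplied in the paper precisely by the identification $W(K)\cong\Sp(2,1;1,0)$. So the surjectivity you flag as ``the principal obstacle'' is not merely left unverified: the mechanism you propose for realizing the quotient is the wrong one, and you need the paper's route through $W(K)$ (or an equivalent substitute) to close the argument.
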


\begin{proof}
The abelian subgroups $F_5$, $F_6$, $F_7$, $F_9$ are elementary abelian 2-subgroups. The description of their Weyl
groups is given in \cite{Yu}, Proposition 8.17. For $F_4$, let $K=\{x\in F_2: x^2=1\}$ and $K'=\{x^2: x\in F_2\}$.
Then, $K'\subset K$, $\rank K=5$, and $\rank K'=3$. From the description of the subgoup $F_6$ in \cite{Yu3},
one sees that $K$ is conjugate to the elementary abelian $2$-subgroup $F'_{2,1}$ of $G$ in the terminology of
\cite{Yu}. Hence $W(K)\cong\Sp(2,1;1,0)\cong\Hom(\bbF_2^{3},\bbF_2^{2})\rtimes(\GL(2,\bbF_2)\times\GL(2,\bbF_2))$
by \cite{Yu} Proposition 7.26, where the first $\GL(2,\bbF)$ acts on $\bbF_2^{3}$ as a subgroup of $\GL(3,\bbF)$
and the second $\GL(2,\bbF)$ acts on $\bbF_2^{2}$. There is a homomorphism $p: W(F_4)\longrightarrow W(K)$. Using
the fact that $W(F_2)$ acts transitively on the set of pairs $(x_1,x_2)$ of elements of $F_4$ generating a Klein
four subgroup conjugate to $\Gamma_1$, one gets that $p$ is surjective. Similarly as the proof of
\cite{Yu3}, Proposition 3.1, one shows that $\ker p\cong\Hom'((\mathbb{F}_2)^3,(\mathbb{F}_2)^3$. For $F_8$, let
$K=\{x\in F_{8}: x^2=1\}$ and $K'=\{x^2: x\in F_{8}\}$. Then, $K'\cong\mathbb{F}_2$. Denote by $z$ a generator of
$K'$. Then $W(F_8)$ fixes $z$ and stabilizes $K$. Hence there is a homomorphism $p: W(F)\longrightarrow
\Stab_{W(K)}(z)$. Similar argument as \cite{Yu3}, Proposition 3.3 shows that $\ker p\cong(\mathbb{F}_2)^2$ and $p$
is surjective. Since $K'\sim F_{6}$, the orbit $W(K)z$ has three elements. One has $|W(K)|=|W(F_6)|=64\times 256
\times 6^3\times 2=3^3\times 2^{18}$ and hence $|W(F_{6})|=\frac{4}{3}|W(K)|=3^{2}\times 2^{20}$.
\end{proof}

\begin{example}
Denote by $F=F_5$, which is the elementary abelian 2-subgroup $F_{3,3}$ in the terminology of \cite{Yu}. Recall
that $F$ possesses a decomposition $F=A\times B\times B'$, where $A$ is a pure $\sigma_2$ elementary abelain
2-subgroup of rank 2, $B_1$, $B_2$ are pure $\sigma_1$ elementary abelain 2-subgroups of rank 3. Moreover, if we
define a function $\mu: F\rightarrow\{\pm{1}\}$ by $\mu(x)=-1$ if $x\sim\sigma_1$ and $\mu(x)=1$ otherwise, then
$\mu(xx_1x_2)=\mu(x)\mu(x_1)\mu(x_2)$ for any $x\in A$, $x_1\in B_1$, $x_2\in B_2$. We may assume that
$\sigma_1\in F$. Then, $F\subset G^{\sigma_1}\cong\Spin(16)/\langle c\rangle$. In $G^{\sigma_1}$, we have
\begin{eqnarray*}F&=&\langle[-1],[e_1e_2e_3e_4e_5e_6e_7e_8],[e_1e_2e_3e_4],[e_1e_2e_5e_6],[e_1e_3e_5e_7],\\&&
[e_{9}e_{10}e_{11}e_{12}],[e_{9}e_{10}e_{13}e_{14}],[e_{9}e_{11}e_{13}e_{15}]\rangle,\end{eqnarray*} where
$$A=\langle[-1],[e_1e_2e_3e_4e_5e_6e_7e_8]\rangle,$$ $$B_1=\langle[e_1e_2e_3e_4],[e_1e_2e_5e_6],[e_1e_3e_5e_7]
\rangle,$$ $$B_2=\langle[e_{9}e_{10}e_{11}e_{12}],[e_{9}e_{10}e_{13}e_{14}],[e_{9}e_{11}e_{13}e_{15}]\rangle.$$
For the fine group grading of $\fre_8(\bbC)$ associated to $F$, given a root $\alpha$, if $\alpha|_{A}=1$, then
$\alpha|_{B_1}=1$ or $\alpha|_{B_2}=1$ by the above description of $F$ in $G^{\sigma_1}$; if $\alpha|_{A}\neq 1$,
by the property of $\mu$, there are subgroups $B'_1$ of $B_2$, and $B'_2$ of $B_2$ such that
$F=A\times B'_1\times B'_2$ has similar property as the decomposition $F=A\times B_1\times B_2$ regarding
the function $\mu$, and $\alpha|_{B'_1}=\alpha|_{B'_2}=1$. In either case, we find that the induced action of
$s_{\alpha,\xi}$ on $F/A$ stabilizes both $B_1A/A$ and $B_2A/A$, which implies that
\[W_{small}(F)\subset\Hom(\bbF_{2}^{6},\bbF_{2}^{2})\rtimes\big(\GL(2,\bbF_2)\times\GL(3,\bbF_2)\times
\GL(3,\bbF_2)\big),\] where $W_{small}(F)$ is the small Weyl group associated to the fine group grading
corresponding to $F$ and $s_{\alpha,\xi}$ is transvection defined in \cite{Han-Vogan}. Therefore
$W_{small}(F)\neq W(F)$ in this example.
\end{example}

For the involution $\sigma_1$ of $G$, one has (cf. \cite{Huang-Yu}, Table 2) $$G^{\sigma_1}\cong(\E_7\times
\Sp(1))/\langle(c,-1)\rangle.$$ There are elements $\eta_4,\omega'\in \E_7$ (cf. \cite{Yu}, Sections 7 and 8)
with $\eta_4^2=c$, $\omega'=c$, $\eta_4\omega'\eta_4^{-1}\omega'^{-1}=c$,
$$\fre_7^{\eta_4}\cong\fre_7^{\omega'}\cong\fre_7^{\eta_4\omega'}\cong\mathfrak{su}(8)$$ and
$$\E_7^{\langle\eta_4,\omega'\rangle}\cong\SO(8)/\langle-I\rangle.$$ In $G^{\sigma_1}$,  let
\[F_{10}=\langle\sigma_1,[(\eta_4,\textbf{i})],[(\omega',\textbf{j})],[I_{4,4}],[\diag\{I_{2,2},I_{2,2}\}],
[\diag\{I_{1,1},I_{1,1},I_{1,1},I_{1,1}\}]\rangle,\]
\begin{eqnarray*}F_{11}&=&\langle\sigma_1,[(\eta_4,\textbf{i})],[(\omega',\textbf{j})],[I_{4,4}],
[\diag\{I_{2,2},I_{2,2}\}],\\&&[\diag\{I_{1,1},I_{1,1},I_{1,1},I_{1,1}\}],[\diag\{J'_{1},J'_{1},J_{1},J_{1}\}]
\rangle.\end{eqnarray*} Neither $F_{10}$ nor $F_{11}$ is a maximal abelian subgroup.

\begin{prop}\label{P:E8-two-1}
If $F$ is a finite abelian 2-subgroup of $G$ satisfying the condition $(*)$, containing an element conjugate to
$\sigma_1$ and no Klein four subgroups conjugate to $\Gamma_1$, then $F$ is conjugate to $F_{10}$ or $F_{11}$.
\end{prop}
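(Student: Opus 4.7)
The plan is to reduce to the $\E_7$-classification of Section 4 by projecting $F$ to $\Aut(\fre_7)$. Without loss of generality assume $\sigma_1 \in F$, so $F$ sits inside $G^{\sigma_1} \cong (\E_7 \times \Sp(1))/\langle(c,-1)\rangle$; in this description $\sigma_1$ corresponds to $[(c,1)] = [(1,-1)]$, which lies in the kernel of the first-factor projection $\pi_1 \colon G^{\sigma_1} \to \E_7/\langle c\rangle = \Aut(\fre_7)$. Setting $F' = \pi_1(F)$, the decomposition $\fre_8^{\sigma_1} = \fre_7 \oplus \mathfrak{sp}(1)$ together with $\fre_8^F = 0$ forces $\fre_7^{F'} = 0$, so $F'$ is itself a finite abelian $2$-subgroup of $\Aut(\fre_7)$ satisfying $(\ast)$, and likewise the image $F''$ of $F$ in $\Sp(1)/\langle -1\rangle$ must be the Klein four subgroup.

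The crucial step is to show that the hypothesis of no Klein four subgroup of $F$ conjugate to $\Gamma_1$ forces $F'$ to contain no element conjugate to $\sigma_2$ or $\sigma_3$ (in the sense of the involution classes of $\Aut(\fre_7)$ used in Section 4). For any $y \in F$ with $\pi_1(y) \sim \sigma_2$ or $\pi_1(y) \sim \sigma_3$, the involution-class tables of \cite{Huang-Yu} allow one to track both $y$ and $\sigma_1 y$ inside $G$: each such $y$ is forced to be an involution $G$-conjugate to $\sigma_1$, and $\sigma_1 y$ is likewise $G$-conjugate to $\sigma_1$, whence $\langle \sigma_1, y\rangle$ is a Klein four subgroup of $F$ conjugate to $\Gamma_1$, contradicting the hypothesis. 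With both possibilities excluded, Proposition \ref{P:E7-finite3} yields $F' \sim F_{12}$ in the notation of Section 4. The generators of $F_{12}$ sit inside a copy of $\Spin(12)/\langle c\rangle \subset \Aut(\fre_7)$, and under the lift to $\E_7$ correspond to the pair $\eta_4, \omega' \in \E_7$ satisfying $\eta_4^2 = \omega'^2 = c$ and $[\eta_4, \omega'] = c$, together with the elementary abelian $2$-subgroup generated by $[I_{4,4}]$, $[\diag\{I_{2,2},I_{2,2}\}]$, $[\diag\{I_{1,1},I_{1,1},I_{1,1},I_{1,1}\}]$, and, in the rank-$7$ case, the extra generator $[\diag\{J'_1,J'_1,J_1,J_1\}]$.

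Finally, reconstruction of $F$ from $F'$ and $F''$ is dictated by the commutator relation: since $[\eta_4, \omega'] = c$ in $\E_7$ matches $[\mathbf{i}, \mathbf{j}] = -1$ in $\Sp(1)$ precisely modulo the central identification $(c,-1) \sim 1$ in $G^{\sigma_1}$, the only way to lift $F'' = \langle[\mathbf{i}], [\mathbf{j}]\rangle$ to commuting elements of $F$ is via the pairings $\mathbf{i} \leftrightarrow [(\eta_4, \mathbf{i})]$ and $\mathbf{j} \leftrightarrow [(\omega', \mathbf{j})]$. Assembling these with the central $2$-torsion factors above reproduces exactly $F_{10}$ and $F_{11}$, according to whether the optional extra generator $[\diag\{J'_1,J'_1,J_1,J_1\}]$ is included. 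The principal obstacle is the case analysis of the second paragraph: one must trace explicit representatives of $\sigma_2$- and $\sigma_3$-conjugate elements in $G^{\sigma_1}$ and verify, via Tables 2 and 6 of \cite{Huang-Yu}, that such an element, together with $\sigma_1$, always generates a $\Gamma_1$-type Klein four subgroup.
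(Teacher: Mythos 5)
Your first paragraph (reduce into $G^{\sigma_1}\cong(\E_7\times\Sp(1))/\langle(c,-1)\rangle$ and project to $\Aut(\fre_7)$) matches the paper's setup, but the key claim of your second paragraph is false, and it derails the rest. You assert that the absence of a $\Gamma_1$-type Klein four subgroup forces the projection $F'$ to contain no element conjugate to $\sigma_3$ of $\Aut(\fre_7)$, on the grounds that any $y\in F$ with $\pi_1(y)\sim\sigma_3$ would give $\langle\sigma_1,y\rangle\sim\Gamma_1$. Take $y=[(\eta_4,\mathbf{i})]$: this is an involution of $G$ with $\pi_1(y)=[\eta_4]\sim\sigma_3$ (since $\fre_7^{\eta_4}\cong\mathfrak{su}(8)$), yet $\fre_8^{\langle\sigma_1,y\rangle}=\fre_7^{\eta_4}\oplus\mathfrak{sp}(1)^{\mathbf{i}}\cong\mathfrak{su}(8)\oplus i\bbR$ has dimension $64$, whereas $\fre_8^{\Gamma_1}\cong\fre_6\oplus i\bbR\oplus i\bbR$ has dimension $80$. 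Two commuting involutions each conjugate to $\sigma_1$ can generate Klein four subgroups in several distinct classes (this is exactly the point of the tables in \cite{Huang-Yu}), so "both conjugate to $\sigma_1$" does not imply "conjugate to $\Gamma_1$". Indeed the target subgroups $F_{10}$, $F_{11}$ of $\E_8$ themselves contain $[(\eta_4,\mathbf{i})]$, so their projections to $\Aut(\fre_7)$ \emph{do} contain $\sigma_3$-conjugates; your exclusion would rule out the very groups you are trying to produce.

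The resulting conclusion $F'\sim F_{12}$ (of $\Aut(\fre_7)$, via Proposition \ref{P:E7-finite3}) is not merely unproved but impossible. Since the image of $F$ in $\Sp(1)/\langle-1\rangle$ is the full Klein four group, any $u=[(a,\lambda)]$, $v=[(b,\mu)]$ in $F$ lifting $[\mathbf{i}]$, $[\mathbf{j}]$ have $\lambda\mu\lambda^{-1}\mu^{-1}=-1$, so commutativity in $G^{\sigma_1}$ forces $aba^{-1}b^{-1}=c$; hence the bimultiplicative function $m$ on $F'$ is nontrivial, while $m$ is trivial on the subgroup $F_{12}$ of $\Aut(\fre_7)$ (see the proof of Proposition \ref{P:Weyl-E7-3C_4}). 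The paper's proof runs differently: it uses this nontriviality of $m$, excludes only $\sigma_2$-conjugates from $F'$, invokes \emph{both} Propositions \ref{P:E7-finite5} and \ref{P:E7-finite3} to get $F'$ conjugate to one of $F_8,\dots,F_{12}$ of $\Aut(\fre_7)$, and then eliminates $F_8$, $F_9$ (the resulting subgroups of $\E_8$ contain $\Gamma_1$-type Klein four subgroups, detected through $\{x\in F:x^2=1\}$ and \cite{Yu}, Proposition 8.11) and $F_{12}$ (trivial $m$), leaving exactly $F_{10}$ and $F_{11}$. Your third paragraph's reconstruction of $F$ from $F'$ and the $\Sp(1)$-image via the relation $[\eta_4,\omega']=c$ is in the right spirit, but it is attached to the wrong $F'$.
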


\begin{proof}
We may and do assume that $\sigma_1\in F$. Then \[F\subset G^{\sigma_1}\cong(\E_7\times\Sp(1))/\langle(c,-1)
\rangle.\] Apparently the image of the projection of $F$ to $\Sp(1))/\langle-1\rangle$ is conjugate to
$\langle\textbf{i}),\textbf{j})\rangle$. Hence the image $F'$ of the projection of $F$ to $\E_7/\langle c\rangle$
has a nontrivial bimultiplicative function, i.e., it lifts to a non-abelian subgroup of $\E_7$. The group $F'$
is an abelian 2-subgroup of $\E_7/\langle c\rangle$ satisfying the condition $(*)$ and containing no elements
conjugate to $\sigma_2$ in the terminology of Section 2. By Propositions \ref{P:E7-finite5} and \ref{P:E7-finite3},
$F'$ is conjugate to one of $F_8$, $F_9$, $F_{10}$, $F_{11}$, $F_{12}$ in the terminology of Section 2. As
we assume that $F$ contains no Klein four subgroups conjugate to $\Gamma_1$, the subgroup $\{x\in F: x^2=1\}$ is
conjugate to one of $F''_{r,s}: r\leq 3,s\leq 2$ by \cite{Yu}, Proposition 8.11. The bimultipliclative function
on $F_{12}$ is trivial, and the maximal elementary abelian 2-subgroups of the abelian 2-subgroups of $G$ constructed
from $F_{8}$ or $F_9$ is not conjugate to any of $F''_{r,s}: r\leq 3,s\leq 2$, as they contain Klein four subgroups
conjugate to $\Gamma_1$. Therefore $F'$ is conjugate to $F_{10}$, $F_{11}$ in Section 2 and hence $F$ is conjugate
to $F_{10}$ or $F_{11}$.
\end{proof}

There is an involution $\sigma_2$ of $G$ with $G^{\sigma_2}\cong\Spin(16)/\langle c\rangle$ (cf. \cite{Huang-Yu},
Table 2). In $G^{\sigma_2}$, let
\begin{eqnarray*} F_{12}&=&\langle -1,c,e_1e_2e_3e_4e_5e_6e_7e_8,e_1e_2e_3e_4e_{9}e_{10}e_{11}e_{12},
e_1e_2e_5e_6e_{9}e_{10}e_{13}e_{14},\\&& e_1e_3e_5e_7e_{9}e_{11}e_{13}e_{15}\rangle,\end{eqnarray*}
\begin{eqnarray*} F_{13}=&=&\langle -1,c,e_1e_2e_3e_4e_5e_6e_7e_8,e_1e_2e_3e_4e_{9}e_{10}e_{11}e_{12},
e_1e_2e_5e_6e_{9}e_{10}e_{13}e_{14},\\&& e_1e_3e_5e_7e_{9}e_{11}e_{13}e_{15},e_1e_2e_5e_7e_9e_{12}\rangle.
\end{eqnarray*} None of them is a maximal abelian subgroup of $G$.

\begin{prop}\label{P:E8-two-2}
If $F$ is a finite abelian 2-subgroup of $G$ satisfying the condition $(*)$ and containing no elements conjugate
to $\sigma_1$, then $F$ is conjugate to $F_{12}$ or $F_{13}$.
\end{prop}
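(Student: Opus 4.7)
The plan parallels the strategy used in the proof of Proposition \ref{P:E7-finite3}. The compact group $\E_8$ has exactly two conjugacy classes of involutions, represented by $\sigma_1$ and $\sigma_2$ (cf.~\cite{Huang-Yu}, Table 2), and since condition $(\ast)$ forces $F \neq \{1\}$, the assumption that $F$ contains no element conjugate to $\sigma_1$ produces an involution conjugate to $\sigma_2$ in $F$. I therefore assume $\sigma_2 \in F$, so that $F \subset G^{\sigma_2} \cong \Spin(16)/\langle c\rangle$. By Lemma \ref{L:E8-order2} every element of $F$ has order dividing $4$. Set $A_1 = \{x \in F : x^2 = 1\}$ and $A_2 = \{x^2 : x \in F\}$, so $A_2 \subseteq A_1$; using the covering $\Spin(16) \to \Spin(16)/\langle c\rangle$ define a bimultiplicative form $m : F \times F \to \langle c\rangle$ via commutators of lifts. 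By Steinberg's theorem $G^x$ is connected for any involution $x \in F$, so $A_1 \subseteq \ker m$.

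The first and main task is to show that $F = A_1$, i.e.\ that $F$ is elementary abelian. Suppose for contradiction that $x \in F$ has order $4$; then $z := x^2 \in A_2$ is an involution conjugate to $\sigma_2$, and I may arrange $z = \sigma_2$. Lift $x$ to $\tilde{x} \in \Spin(16)$, so that $\tilde{x}^2$ lies in the preimage of $\sigma_2$ inside $Z(\Spin(16))$. The next step is to classify the conjugacy classes of such $\tilde{x}$ by parametrizing elements of $\Spin(16)$ either as products $\tilde{x} = x_1 x_2 \cdots x_8$ with $(x_1,\ldots,x_8) \in \Pin(2)^8 \cap \Spin(16)$ (exactly as in the proof of Lemma \ref{L:E7-finite3}) or as monomials $\pm e_{i_1} \cdots e_{i_k}$. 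For each resulting class I compute the connected centralizer $(G^{\sigma_2})_0^x$: most cases yield a centralizer with positive-dimensional center, contradicting condition $(\ast)$ on $F$; the remaining cases admit a finite abelian subgroup satisfying $(\ast)$, but a closer inspection shows that any such subgroup containing $x$ must contain either an element conjugate to $\sigma_1$ (contradicting the hypothesis directly) or a Klein four subgroup conjugate to $\Gamma_1$ (which by Proposition \ref{P:E8-two-3} would force $F$ into the list $F_4,\ldots,F_9$, each of which can be verified to contain an element conjugate to $\sigma_1$, again contradicting the hypothesis).

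Once $F = A_1$ is established as an elementary abelian $2$-subgroup, I invoke the classification of elementary abelian $2$-subgroups of $\E_8$ given in \cite{Yu}, Section 8. Condition $(\ast)$ picks out the saturated subgroups in that classification, and the further hypotheses (no element conjugate to $\sigma_1$, no Klein four conjugate to $\Gamma_1$) cut the list down to a short family of subgroups of type $F''_{r,s}$. Direct comparison of generating sets identifies the surviving conjugacy classes as precisely those of $F_{12}$ (rank $6$) and $F_{13}$ (rank $7$); a straightforward check with the explicit generators verifies that both subgroups indeed satisfy all the hypotheses, completing the proof. The main obstacle is the exhaustive case analysis ruling out order-$4$ elements: $\Spin(16)$ admits many conjugacy classes of elements whose square is central, and systematically handling each of them via centralizer and dimension considerations, while routine in any individual case, constitutes the technical heart of the argument.
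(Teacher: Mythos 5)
There is a genuine gap, and it is fatal to the overall strategy rather than to a technical step. Your ``first and main task'' is to show that $F$ is elementary abelian, and you then propose to read off both $F_{12}$ and $F_{13}$ from the classification of elementary abelian $2$-subgroups of $G$ in \cite{Yu}, treating $F_{13}$ as a rank $7$ elementary abelian group. But $F_{13}$ is \emph{not} elementary abelian: its generator $e_1e_2e_5e_7e_9e_{12}$ is a product of six Clifford generators, so it squares to $-1$ in $\Spin(16)$, and $[-1]=\sigma_2\neq 1$ in $G^{\sigma_2}\cong\Spin(16)/\langle c\rangle$. Hence $F_{13}$ contains an element of order $4$ whose square is conjugate to $\sigma_2$. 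Consequently the exhaustive case analysis you describe --- ruling out every order-$4$ element $x$ with $x^2\sim\sigma_2$ by producing either a positive-dimensional center or an element conjugate to $\sigma_1$ --- cannot succeed: the class of $x=[e_1e_2e_3e_4e_5e_6]$ survives all of these tests, and it must, because $F_{13}$ realizes it. Any argument that ``establishes'' $F=A_1$ proves something false and necessarily loses the conjugacy class of $F_{13}$ altogether.

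The paper's proof is organized around exactly the dichotomy you collapse. If $F$ is elementary abelian, then \cite{Yu}, Proposition 8.11 gives $F\sim F_{12}$ directly. If $F$ is not elementary abelian, one takes $x\in F$ of order $4$; since $x^2$ is an involution not conjugate to $\sigma_1$ one may assume $x^2=\sigma_2$, and inside $G^{\sigma_2}\cong\Spin(16)/\langle c\rangle$ one shows $x\sim[e_1e_2e_3e_4e_5e_6]$, so that $F\subset G^{x}\cong(\Spin(6)\times\Spin(10))/\langle(c_6,c_{10})\rangle$. The analysis then splits on whether the commutator form on lifts to $\Spin(16)$ is trivial: if it is, $F$ lifts to an abelian subgroup of $\Spin(16)$ and one gets $F\sim F_{13}$; if not, the projections to $\SO(6)/\langle -I\rangle$ and $\SO(10)/\langle -I\rangle$ force $k_1=k_2=1$ in the sense of \cite{Yu2}, which produces an element $y$ with $y^2=[-e_5e_6e_{15}e_{16}]\sim\sigma_1$, contradicting the hypothesis. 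The opening reduction in your proposal (two involution classes in $\E_8$, so $\sigma_2\in F$, plus $x^4=1$ from Lemma \ref{L:E8-order2} and $A_1\subset\ker m$ via Steinberg) is fine and matches the paper; the error is entirely in what you do with the order-$4$ elements, which should be used to construct $F_{13}$ rather than eliminated.
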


\begin{proof}
In the case of $F$ is an elementary abelian 2-subgroup, one has $F\sim F_{12}$ by \cite{Yu}, Proposition 8.11.
In the case of $F$ is not an elementary abelian 2-subgroup, we may assume that $\sigma_2\in F$ and there is
an element $x\in F$ with $x^2=\sigma_2$. Then, $F\subset G^{\sigma_2}\cong\Spin(16)/\langle c\rangle$ and
$x\sim [e_1e_2e_3e_4e_5e_6]$ in $\Spin(16)/\langle c\rangle$. Therfore $F\subset G^{x}\cong(\Spin(6)\times
\Spin(10))/\langle(c_6,c_{10})\rangle$. If there are no elements $x=[x'],y=[y']\in F$, $x',y'\in\Spin(16)$ with
$x'y'x'^{-1}y'^{-1}=c$, then $F$ lifts to an abelian 2-subgroup of $\Spin(16)$. In this case one can show that
$F\sim F_{13}$. If there are elements $x=[x'],y=[y']\in F$, $x',y'\in\Spin(16)$ with $x'y'x'^{-1}y'^{-1}=c$,
let $F'_1$, $F'_2$ be the images of the projections of $F$ to $\SO(6)/\langle -I\rangle$ and
$\SO(10)/\langle -I\rangle$ respectively. By \cite{Yu2}, Proposition 3.1, it is associated integers
$k_1,k_2,s_{0,1},s_{0,2}\geq 0$ with $6=2^{k_1}\cdot s_{0,1}$ and $10=2^{k_2}\cdot s_{0,2}$. Then, $k_1,k_2=0$
or $1$. Since $x'y'x'^{-1}y'^{-1}=c$, one has $k_1=k_2=1$. In this case $F'$ contains an element conjugate to
$[(\diag\{I_{1,1},J'_1,J_1\},\diag\{I_{1,1},I_{1,1},I_{1,1},J'_1,J_1\})]$ and hence $F$ contains an element
conjugate to $$y=[e_1\frac{e_3+e_4}{\sqrt{2}}\frac{1+e_5e_6}{\sqrt{2}}e_{7}e_{9}e_{11}
\frac{e_{13}+e_{14}}{\sqrt{2}}\frac{1+e_{15}e_{16}}{\sqrt{2}}].$$ But $y^2=[-e_{5}e_{6}e_{15}e_{16}]$. It leads to
a contradiction since $[-e_{5}e_{6}e_{15}e_{16}]$ is conjugate to $\sigma_1$ in $G$.
\end{proof}





\smallskip

Let $F\subset G$ be an abelian subgroup satisfying the condition $(*)$ and $\fra=\Lie F\otimes_{\bbR}\bbC\subset
\frg$. Denote by $L=C_{G}(\fra)$ and $L_s=[L,L]$. By Lemma \ref{L:center}, $F=Z(L)_0\cdot(F\cap L_s)$ and
$F':=F\cap L_s$ is a finite abelian subgroup of $L_{s}$ satisfying the condition $(*)$. In $G$, let $L_1$,
$L_2$, $L_3$, $L_4$, $L_5$ be Levi subgroups of $G$ with root systems $D_4$, $D_6$, $D_7$, $E_6$, $E_7$
respectively. For each of them, one has $Z((L_{i})_s)\subset Z(L_{i})_0$.

In $L_1$, let
$$F_{14}=\langle Z(L_2)_0,e_1e_2e_3e_4,e_1e_2e_5e_6,e_1e_3e_5e_7\rangle.$$ In $L_2$, let
$$F_{15}=\langle Z(L_2)_0,-1,c,e_1e_2e_3e_4,e_1e_2e_5e_6,e_1e_2e_7e_8,e_1e_2e_{9}e_{10},e_1e_3e_5e_7e_9e_{11}
\rangle.$$
In $L_3$, let \begin{eqnarray*}F_{16}&=&\langle Z(L_3)_0,-1,c,e_1e_2e_3e_4,e_1e_2e_5e_6,e_1e_3e_5e_7,
e_8e_9e_{10}e_{11},\\&&e_8e_9e_{12}e_{13},e_8e_{10}e_{12}e_{14}\rangle.\end{eqnarray*}
In $L_4$, let $F_{17}=\langle Z(L_4)_0,F'\rangle$, where $F'$ projects to the subgroup $F_2$ of $\Aut(\fre_6)$
in Section 2. In $L_5$, let $F_{18}=\langle Z(L_4)_0,F'_{18}\rangle$ and $F_{19}=\langle Z(L_4)_0,F'_{19}\rangle$,
where $F'_{18}$, $F'_{19}$ are finite abelian subgroups of $\E_7$ with images of projection to $\Aut(\fre_7)$
being the finite abelian subgroups $F_9$, $F_{12}$ of $\Aut(\fre_7)$ in Section 2.

\begin{lemma}\label{L:E8-nonfinite}
Let $F$ be a non-finite closed abelian subgroup of $G$ satisfying the condition $(*)$. Then $F$ is either a
maximal torus, or is conjugate to one of $F_{14}$, $F_{15}$, $F_{16}$, $F_{17}$, $F_{18}$, $F_{19}$.
\end{lemma}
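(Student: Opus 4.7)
The plan is to apply Lemma~\ref{L:center} to reduce the problem to classifying finite abelian subgroups of the derived subgroup of a Levi subgroup, then to enumerate the admissible Levi types. Set $\fra=\Lie F\otimes_{\bbR}\bbC$, $L=C_{G}(\fra)$, $L_{s}=[L,L]$. By Lemma~\ref{L:center} we have $\fra=Z(\frl)$, $F=Z(L)_0\cdot F'$, and $F':=F\cap L_{s}$ is a finite abelian subgroup of $L_{s}$ satisfying $(*)$. If the root system of $\frl$ is empty, then $F$ is a maximal torus; so I may assume this root system is nonempty. Since $G=\E_8$ is simply connected, Lemma~\ref{L:Levi} gives that $L_{s}$ is simply connected.

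Next I would rule out most Levi types. If the root system of $\frl$ has a simple component of type $\bf A_{k}$, then $L_{s}$ has a simply connected direct factor isomorphic to $\SU(k+1)$; but by the matrix classification of \cite{Yu2}, no simply connected $\SU(n)$ with $n\geq 2$ admits a finite abelian subgroup satisfying $(*)$, since any such subgroup would lift the Heisenberg-type maximal abelian subgroup of $\PSU(n)$ to a non-abelian subgroup of $\SU(n)$. This excludes every type containing an $\bf A$ component, in particular mixtures such as $\bf D_{4}+A_{1}$, $\bf D_{5}+A_{1}$, $\bf E_{6}+A_{1}$. Similarly, by \cite{Yu3}, Proposition~2.2, the simply connected $\Spin(10)$ has no finite abelian subgroup satisfying $(*)$, ruling out $\bf D_{5}$. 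The case $\bf E_{8}$ is excluded since $L=G$ would force $F$ finite. Inspecting the Dynkin diagram of $\bf E_{8}$ in Bourbaki numbering, the remaining pure Levi types are $\bf D_{4}$, $\bf D_{6}$, $\bf D_{7}$, $\bf E_{6}$, $\bf E_{7}$, each represented by a unique $G$-conjugacy class of Levi subgroups, namely $L_{1},\ldots,L_{5}$ of the statement.

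For each $L_{i}$, I would then invoke the known classification of finite abelian subgroups of $(L_{i})_{s}$ satisfying $(*)$: the $\bf D_{n}$ cases ($\Spin(8)$, $\Spin(12)$, $\Spin(14)$) from \cite{Yu3}; the $\bf E_{6}$ case from Section~2 via Propositions~\ref{P:E6-inner finite 1} and \ref{P:E6-inner finite}; and the $\bf E_{7}$ case from Section~4 via Propositions~\ref{P:E7-finite1}, \ref{P:E7-finite2}, \ref{P:E7-finite5}, and \ref{P:E7-finite3}. Combined with the inclusion $Z((L_{i})_{s})\subset Z(L_{i})_{0}$ noted just before the definitions of $F_{14},\ldots,F_{19}$, the decomposition $F=Z(L)_{0}\cdot F'$ then displays $F$ as conjugate to one of $F_{14},\ldots,F_{19}$.

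The main obstacle is the $\bf D_{7}$ case: $\Spin(14)$ is not singled out in the earlier sections (the $\bf E_{7}$ analogue only invoked $\bf D_{4}$ and $\bf D_{6}$), so one must extract the needed classification from the techniques of \cite{Yu3}, Section~2 or redo the $\Spin$-group argument directly, tracking the central subgroup $\langle -1,c\rangle$ and its interaction with the generators of $F'$ to produce the precise elements listed in the definition of $F_{16}$. A secondary difficulty is confirming uniqueness (up to $G$-conjugacy) of the Levi subgroup for each admissible root system type, which amounts to showing that any two subsets of the $\bf E_{8}$ simple roots giving the same sub-diagram are Weyl-equivalent.
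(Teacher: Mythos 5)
Your proposal is correct and follows essentially the same route as the paper: reduce via Lemma~\ref{L:center}, use Lemma~\ref{L:Levi} to see that $L_{s}$ is simply connected (which kills every Levi type with an $\bf A$ component and the $\bf D_5$ type), leaving $\emptyset$, $\bf D_4$, $\bf D_6$, $\bf D_7$, $\bf E_6$, $\bf E_7$, and then quote the known classifications in each case. The only difference is that the paper disposes of your ``main obstacle'' ($\bf D_7$, i.e.\ $\Spin(14)$) simply by citing \cite{Yu3}, Propositions~2.2 and~2.3 alongside the $\Spin(8)$ and $\Spin(12)$ cases, rather than redoing the spin-group argument.
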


\begin{proof}
By Lemma \ref{L:Levi}, $L_s$ is simply connected. As it has a finite abelian subgroup satisfying the
condition $(\ast)$, the root system of $L$ is one of the types $\emptyset$, $D_4$, $D_6$, $D_7$, $E_6$
or $E_7$. In the case of $D_4$, $D_6$, $D_7$, $F$ is conjugate to $F_{14}$, $F_{15}$, $F_{16}$ by \cite{Yu3},
Propositions 2.2 and 2.3. The $E_6$ case is answered in Section 2, and the $E_7$ case is answered in
Section 4.
\end{proof}

\begin{prop}\label{P:Weyl-E8}
There are exact sequences \[1\longrightarrow\Hom((\mathbb{F}_{2})^{3}, (\mathbb{F}_{2})^{2})\rtimes
\GL(3,\mathbb{F}_2)\longrightarrow W(F_{14})\longrightarrow W(\F_4)\longrightarrow 1,\]
\[1\longrightarrow Q_2\longrightarrow W(F_{15})\longrightarrow D_4\longrightarrow 1,\]
\[1\longrightarrow((\mathbb{F}_{2}^{3}\rtimes\GL(3,\mathbb{F}_2))\times(\mathbb{F}_{2}^{3}\rtimes
\GL(3,\mathbb{F}_2)))\rtimes S_2\longrightarrow W(F_{16})\longrightarrow\{\pm{1}\}\longrightarrow 1,\]
\[1\longrightarrow\mathbb{F}_{3}^{3}\rtimes\SL(3,\mathbb{F}_3)\longrightarrow W(F_{17})\longrightarrow
D_6 \longrightarrow 1,\]
\[1\longrightarrow\Sp(3,\mathbb{F}_2)\longrightarrow W(F_{18})\longrightarrow\{\pm{1}\}\longrightarrow 1,\]
\[1\longrightarrow Q_3\longrightarrow W(F_{19})\longrightarrow\{\pm{1}\}\longrightarrow 1.\]

Here $Q_2$ is the Weyl group of the subgroup $F_{12}$ of $\Spin(12)$ defined in \cite{Yu3}, Proposition 2.2,
which is of order $5\times 3^2\times 2^{14}$ and fits in an exact sequence $$1\rightarrow\Hom(\mathbb{F}_2,
\mathbb{F}_2^{6})\rightarrow Q_2\rightarrow(\Hom(\mathbb{F}_2^{4},\mathbb{F}_2)\rtimes S_6\rightarrow 1.$$
The group $Q_3$ is the Weyl group of a subgroup $F'_{12}$ of $\E_7$, which is the preimage in $\E_7$ of the
subgroup $F_{12}$ of $\Aut(\fre_7)$ defined in Section 2. By Proposition \ref{P:Weyl-E7-3C_4}, there is an exact
sequence $$1\rightarrow\mathbb{F}_2^3\times\Hom'(\mathbb{F}_2^3,\mathbb{F}_2^3)\rightarrow Q_3\rightarrow
\GL(3,\mathbb{F}_2)\rightarrow 1,$$ where $\Hom'(\mathbb{F}_2^3,\mathbb{F}_2^3)=\{f\in\Hom(\mathbb{F}_2^3,
\mathbb{F}_2^3): \tr f=0\}$.
\end{prop}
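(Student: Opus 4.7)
The plan follows the template already used for Propositions \ref{P:Weyl-E6-F4 and F5}, \ref{P:Weyl-E6-F13 to F15} and \ref{P:Weyl-E7}. For each $i \in \{14, \ldots, 19\}$, the identity component $(F_{i})_{0} = Z(L_{i})_{0}$ is a characteristic subgroup of $F_{i}$, so $W(F_{i})$ acts on this torus and on the quotient $F_{i}/Z(L_{i})_{0} \cong F_{i} \cap (L_{i})_{s}$. This produces a natural homomorphism
\[p : W(F_{i}) \longrightarrow W_{G}(Z(L_{i})_{0}) = N_{G}(Z(L_{i})_{0})/C_{G}(Z(L_{i})_{0}),\]
whose image will be the quotient on the right of each exact sequence. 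Any element of $\ker p$ is represented by an element of $G$ centralizing $Z(L_{i})_{0}$, hence lying in $L_{i} = C_{G}(Z(L_{i})_{0})$; therefore $\ker p$ coincides with the Weyl group of the finite abelian subgroup $F_{i}\cap(L_{i})_{s}$ inside $(L_{i})_{s}$, which has already been computed in earlier sections.

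The first step is to identify $W_{G}(Z(L_{i})_{0})$ in each case. For $F_{14}$ the Levi has root system $\bf D_{4}$; using either the triality element analogous to $\eta$ from Section 2 (which lifts to $\E_{8}$ via $\E_{6}\subset\E_{7}\subset\E_{8}$) or the embedding of $(L_{1})_{s}\cong\Spin(8)$ into a subgroup isomorphic to $\F_{4}$ of $\E_{8}$, the restricted Weyl action on $Z(L_{1})_{0}$ extends $W(\bf D_{4})$ to $W(\F_{4})$. For $F_{15}$ one inspects the lattice $X^{\ast}(Z(L_{2})_{0})$ directly inside the $\bf E_{8}$-root lattice to get dihedral group $D_{4}$. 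For $F_{16}, F_{18}, F_{19}$ the torus $Z(L_{i})_{0}$ is one-dimensional and the quotient $\{\pm 1\}$ is realized by a reflection (or by the longest Weyl element of $\E_{8}$) inverting the torus. For $F_{17}$ the 2-dimensional center of the $\E_{6}$-Levi carries an integral weight lattice isomorphic to the $\bf A_{2}$ root lattice (by the calculation already used in Proposition \ref{P:Weyl-E6-F4 and F5}), giving $W = D_{6}$.

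The second step is to read off $\ker p$ from prior results. For $F_{14}$ the kernel is $W_{\Spin(8)}(F_{14}\cap(L_{1})_{s})$, and by Proposition 2.2 of \cite{Yu3} this is $\Hom((\bbF_{2})^{3},(\bbF_{2})^{2})\rtimes\GL(3,\bbF_{2})$. For $F_{15}$ and $F_{16}$ we apply the analogous $\Spin(12)$ and $\Spin(14)$ statements from \cite{Yu3} Section 2. For $F_{17}$ we combine Proposition \ref{P:Weyl-E6-F2 and F3} with the action of $(Z(L_{4})_{0})[3]\cong\bbF_{3}^{3}$, which contributes the $\bbF_{3}^{3}$ summand on the kernel side. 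For $F_{18}$ and $F_{19}$ we use the results of Section 4, where $F_{9}$ and $F_{12}\subset\Aut(\fre_{7})$ were shown to have Weyl groups of the stated form (with $Q_{3}$ obtained by lifting $W(F_{12})$ of Proposition \ref{P:Weyl-E7-3C_4} to $\E_{7}$, which introduces the extra $\bbF_{2}^{3}$). Surjectivity of $p$ in each case is proved by exhibiting an explicit lift: for the $\{\pm 1\}$ quotients, a suitable Weyl reflection from the $\E_{8}$ Weyl group acting as inversion on the torus; for the $D_{6}$ and $W(\F_{4})$ quotients, the explicit triality/outer elements just constructed.

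The principal obstacle is the $F_{16}$ case, where one must show the $S_{2}$ swap exchanging the two $(\bbF_{2})^{3}\rtimes\GL(3,\bbF_{2})$ blocks is actually induced by an element of $N_{G}(F_{16})$, since within $\Spin(14) = (L_{3})_{s}$ the two halves spanned by $\{e_{1},\ldots,e_{7}\}$ versus $\{e_{8},\ldots,e_{14}\}$ are not exchanged by any inner automorphism. The swap must come from an element of $\E_{8}$ outside $L_{3}$ that nonetheless normalizes $Z(L_{3})_{0}$, and identifying such an element requires explicit $\bf E_{8}$ root-system bookkeeping (it is the element of the $\E_{8}$ Weyl group realizing a symmetry of the extended Dynkin diagram that flips the two $D_{4}$-legs inside $D_{7}$). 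A similar but easier issue for $F_{17}$ is checking that the natural $\SL(3,\bbF_{3})$ action on the $\bbF_{3}^{3}$-kernel matches the one coming from $W(F_{2})$ in Section 2; this is routine once the preceding structural identification is in place.
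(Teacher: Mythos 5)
Your overall strategy coincides with the paper's: the surjection $p\colon W(F_i)\to W(Z(L_i)_0)$, the identification of $Z(L_i)_0$ as a maximal torus of the derived group of a smaller Levi (of type $D_4$, $2A_1$, $A_2$, $A_1$, $A_1$) to compute the quotient, and the identification of $\ker p$ with the Weyl group of $F_i\cap (L_i)_s$ inside $(L_i)_s$, quoted from \cite{Yu3} and the earlier sections. Two of your elaborations, however, are wrong. The ``principal obstacle'' you raise for $F_{16}$ is not an obstacle, and your proposed resolution would contradict the very exact sequence you are proving: the $S_2$ factor sits inside $\ker p$, so by your own (correct) reduction it must be induced by an element of $L_3=C_{G}(Z(L_3)_0)$, whereas an element of $\E_8$ outside $L_3$ that normalizes the one-dimensional torus $Z(L_3)_0$ necessarily inverts it and hence maps to $-1$ under $p$. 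In fact the swap is realized inside $(L_3)_s\cong\Spin(14)$: the naive block permutation exchanging $\{e_1,\dots,e_7\}$ and $\{e_8,\dots,e_{14}\}$ has determinant $-1$, but after composing with a sign change on one coordinate it lies in $\SO(14)$, lifts to $\Spin(14)$, and its conjugation action sends each generator $e_ie_je_ke_l$ of $F_{16}\cap(L_3)_s$ to $\pm$ the corresponding generator of the other block, the signs being absorbed by $-1\in F_{16}$. No $\bf E_8$ root-system bookkeeping is needed.

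Second, for $F_{17}$ the $\bbF_3^{3}$ in the kernel does not come from ``the action of $(Z(L_4)_0)[3]$'': that group is isomorphic to $(\bbZ/3\bbZ)^2$, since the torus is two-dimensional ($8-6=2$), and in any case it is central in $L_4$ and so acts trivially on $F_{17}$ by conjugation. The $\bbF_3^{3}$ arises, exactly as in the computation of $W(F_{20})$ in Proposition \ref{P:Weyl-E7}, from elements of $\E_6=(L_4)_s$ that normalize the lift $F'$ and multiply its generators by powers of the central element $c\in Z(\E_6)\subset F'$, i.e.\ from $\Hom(\bbF_3^{3},\langle c\rangle)$. With these two corrections your argument is the paper's proof.
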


\begin{proof}
In each case we have a homomorphism $p: W(F)\longrightarrow W(Z(L)_0)$, which is clearly a surjective map. The
neutral component of the center of $L_1$, $L_2$, $L_3$, $L_4$, $L_5$ is a maximal torus of the derived subgroup of
a Levi subgroup with root system $D_4$, $2A_1$, $A_2$, $A_1$, $A_1$ respectively. by embedding the $D_4$, $A_2$
subgroups into $E_6$, one shows that $W(Z(L_1)_0)\cong W(\F_4)$ and $W(Z(L_4)_0)\cong D_6$. One has
$\ker p=W(F\cap L_s)$ for $F\cap L_s$ as a subgroup of $L_s$, which is determined in \cite{Yu3} and previous
sections of this paper.
\end{proof}

Jun Yu \\ School of Mathematics, \\ Institute for Advanced Study, \\ Einstein Drive, Fuld Hall, \\
Princeton, NJ 08540, USA \\
email:junyu@math.ias.edu.

\end{document}